\crefname{subsection}{Subsection}{Subsections}
\crefname{claim}{Claim}{Claims}
\crefname{problem}{Problem}{Problems}
\def\namedlabel#1#2{\begingroup
   \def\@currentlabel{#2}%
   \label{#1}\endgroup
}
\declaretheorem[name=Theorem, numberwithin=section]{theorem}
\declaretheorem[name=Lemma, sibling=theorem]{lemma}
\declaretheorem[name=Proposition, sibling=theorem]{proposition}
\declaretheorem[name=Corollary, sibling=theorem]{corollary}
\declaretheorem[name=Conjecture, sibling=theorem]{conjecture}
\declaretheorem[name=Problem, sibling=theorem]{problem}
\declaretheorem[name=Claim, sibling=theorem]{claim}
\declaretheorem[name=Claim, numbered=no]{claim*}
\newenvironment{proofofclaim}{\noindent{\emph{Proof of the Claim}:}}{\hfill$\Diamond$\medskip}
\declaretheorem[name=Remark, style=remark, sibling=theorem]{remark}
\declaretheorem[name=Example, style=remark, sibling=theorem]{example}
\def\cqedsymbol{\ifmmode$\lrcorner$\else{\unskip\nobreak\hfil
\penalty50\hskip1em\null\nobreak\hfil$\lrcorner$
\parfillskip=0pt\finalhyphendemerits=0\endgraf}\fi} 
\newcommand*\mean[1]{\overline{#1}}
\newcommand{\Cay}{\mathrm{Cay}}
\newcommand{\Aut}{\mathrm{Aut}}
\newcommand{\GL}{G^{\backslash L /}}
\newcommand{\RL}{R^{\backslash L /}}
\newcommand{\fc}{\mathrm{fc}}
\newcommand{\TL}{\mathcal{T}^{\backslash L /}}
\newcommand{\Tnd}{\mathcal{T}_{\mathrm{nd}}}
\newcommand{\Stab}{\mathrm{Stab}}
\newcommand*\torso[1]{\llbracket #1 \rrbracket}
\newcommand{\Sep}{(Y,S,Z)}
\newcommand{\Kinf}{K_{\infty}}
\newcommand{\Sepp}{(Y',S',Z')}
\newcommand{\Seppp}{(Y'',S'',Z'')}
\newcommand*\Sepi[1]{(Y_{#1},S_{#1},Z_{#1})}
\newcommand*\Seppi[1]{(Y'_{#1},S'_{#1},Z'_{#1})}
\newcommand*\Sepppi[1]{(Y''_{#1},S''_{#1},Z''_{#1})}
\newcommand{\Pres}{\langle S | R \rangle}
\newcommand*\sg[1]{\{ #1 \}}
\newcommand*\Contr[1]{^{\backslash #1 /}}
\newcommand{\GM}{G\Contr{M}}
\DeclareMathOperator{\separ}{Sep}
\newcommand*\Expand[1]{_{/ #1 \backslash}}
\newcommand*\Exnd[1]{E^\times_{\mathrm{nd}}(#1)}
\def\H{\mathcal{H}} % hypergraph 2
\def\M{\mathcal{M}} % meet
\newcommand{\NN}{\mathbb{N}} % integers
\def\T{\mathcal{T}} % supertree
\def\S{\mathcal{S}} % family of sets
\let\le\leqslant
\let\ge\geqslant
\let\leq\leqslant
\let\geq\geqslant
\begin{document}
	
	\title[The structure of quasi-transitive graphs avoiding a
          minor]{The structure of quasi-transitive graphs avoiding a
          minor with applications to the domino problem}

\author[L.~Esperet]{Louis Esperet}
\address[L.~Esperet]{Univ.\ Grenoble Alpes, CNRS, Laboratoire G-SCOP,
  Grenoble, France}
\email{louis.esperet@grenoble-inp.fr}

\author[U.~Giocanti]{Ugo Giocanti}
\address[U.~Giocanti]{Univ.\ Grenoble Alpes, CNRS, Laboratoire G-SCOP,
  Grenoble, France}
\email{ugo.giocanti@grenoble-inp.fr}

\author[C.~Legrand-Duchesne]{Cl\'ement Legrand-Duchesne}
\address[C.~Legrand-Duchesne]{CNRS, LaBRI, Université de Bordeaux, Bordeaux, France}
\email{clement.legrand@u-bordeaux.fr}

\thanks{All authors are partially supported by the French ANR Project
  GrR (ANR-18-CE40-0032). L.\ Esperet and U. Giocanti are partially supported by the French ANR Project TWIN-WIDTH
  (ANR-21-CE48-0014-01), and by LabEx
  PERSYVAL-lab (ANR-11-LABX-0025).}

        \begin{abstract}
       An infinite graph is quasi-transitive if its vertex set has finitely many orbits under the action of its automorphism group. In this paper we obtain  a structure
       theorem for locally  finite
       quasi-transitive graphs avoiding a minor, which is reminiscent
       of the Robertson-Seymour Graph Minor Structure Theorem. We
       prove that every locally finite quasi-transitive graph $G$ avoiding a
       minor has a
       tree-decomposition whose torsos are finite or planar; moreover
       the tree-decomposition is canonical, i.e.\ invariant under the action of the automorphism
       group of $G$. As applications of this result, we prove the following.
       \begin{itemize}
          \item Every locally finite quasi-transitive graph attains its
           Hadwiger number, that is, if such a graph contains
           arbitrarily large clique minors, then it contains an infinite
           clique minor. This extends a result of Thomassen (1992) who
           proved it in the (quasi-)4-connected case and suggested
           that this assumption could be omitted. In particular, this shows that a Cayley graph excludes a finite minor if and only if it avoids the countable clique as a minor.
       \item Locally finite quasi-transitive graphs avoiding a minor
         are accessible (in
         the sense of Thomassen and Woess), which extends known results
         on planar graphs to any proper minor-closed family.
       \item Minor-excluded finitely generated groups are accessible
         (in the group-theoretic sense) and finitely presented, which
         extends classical results on planar groups.
       \item The domino problem is decidable in a minor-excluded
         finitely generated group if and only if the group is
         virtually free, which proves the minor-excluded  case of a
         conjecture of Ballier and Stein (2018).
       \end{itemize}
     \end{abstract}
        
	\maketitle

        \section{Introduction}

\subsection{A structure theorem}
        
        A central result in modern graph theory is the Graph
        Minor Structure Theorem of Robertson and Seymour
        \cite{RS-XVI}, later extended to infinite graphs by Diestel
        and Thomas
        \cite{DT99}. This theorem states that any graph $G$ avoiding a fixed
        minor has a \emph{tree-decomposition}, such that each piece of the
        decomposition, called a \emph{torso}, is close to being embeddable on a surface of
        bounded genus (the notions of
        tree-decomposition and torso will be defined  in the next
        section). A natural question is the following: if the
        graph $G$ has non trivial symmetries, can we make these
        symmetries apparent in the tree-decomposition? In other
        words, do graphs avoiding a fixed minor have a tree-decomposition as
        above, but with the additional constraint that the
        decomposition is \emph{canonical}, i.e., invariant under the action of the
        automorphism group of $G$? In this paper we answer this
        question positively for infinite, locally finite graphs $G$ that
        are \emph{quasi-transitive}, i.e., the vertex set of $G$ has finitely many orbits under the action of the automorphism group of $G$. This additional restriction, which is a way of saying that the graph $G$ is highly symmetric, has the
        advantage of making the structure theorem much cleaner:
        instead of being almost embeddable on a surface of bounded
        genus, each torso of the tree-decomposition is now simply
        finite or planar.

        \begin{theorem}[see \cref{thm: mainCTTD}]
        \label{intro:main}
          Every locally finite quasi-transitive graph avoiding the countable clique as a minor
          has a canonical tree-decomposition whose torsos are finite
          or planar.
        \end{theorem}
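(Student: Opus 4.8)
Set $\Gamma := \Aut(G)$. The plan is to build the tree-decomposition in stages: first peel off all low-order separations in a $\Gamma$-invariant way, reducing to highly connected pieces; then show that these highly connected pieces are finite or planar by means of a Thomassen-type theorem for infinite quasi-transitive graphs; and finally reassemble.

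\textbf{Stage 1: a canonical quasi-4-connected decomposition.} First I would apply the theory of canonical tree-decompositions to obtain a $\Gamma$-invariant tree-decomposition $(T,(V_t)_{t\in V(T)})$ of $G$ of adhesion at most $3$ in which every torso is either finite, or a cycle or double ray, or \emph{quasi-4-connected} (no separation of order at most $3$ has two large sides). In the finite setting this is obtained by successively splitting along $1$-separations (blocks), along $2$-separations (the Tutte decomposition), and along $3$-separations (into quasi-4-connected components), and both the automorphism-invariant and the locally finite versions of each step are available; one then composes the three decompositions. Along the way one checks: \emph{(i)} every torso still excludes $K_{\aleph_0}$ as a minor --- a virtual edge of the torso of $V_t$ joins two vertices of an adhesion set $V_t\cap V_{t'}$ and is realised by a connected subgraph of $G$ strictly on the $t'$-side, and since the sides attached to distinct adhesion sets at $t$ are pairwise disjoint these realisations are disjoint, so a $K_{\aleph_0}$ minor of a torso lifts to one of $G$; \emph{(ii)} every torso is locally finite, from local finiteness of $G$ and bounded adhesion (having arranged that every vertex lies in boundedly many parts); \emph{(iii)} every torso is quasi-transitive --- $\Gamma$ acts on $T$ with finitely many orbits of nodes, and for each $t$ the stabiliser $\Stab_\Gamma(t)$ acts on $V_t$ with finitely many orbits, again using local finiteness and finite adhesion to control the behaviour near the boundary.

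\textbf{Stage 2: the highly connected pieces are finite or planar.} Fix a torso $H$. If $H$ is finite, a cycle, or a double ray, it is finite or planar and we are done. Otherwise $H$ is an infinite, locally finite, quasi-transitive, quasi-4-connected graph with no $K_{\aleph_0}$ minor, and here the plan is to invoke a structure statement for such graphs --- namely that they are planar --- in the spirit of Thomassen's 1992 work on the Hadwiger number of infinite vertex-transitive graphs, which treats exactly the $4$-connected-type case. If only the vertex-transitive version is available off the shelf, one either lifts $H$ to a vertex-transitive graph with the same separation and minor behaviour, or re-runs the argument allowing finitely many orbits; this is where a substantial part of the effort lies, since one must control the interaction between quasi-transitivity, $4$-connectivity and planarity, in particular ruling out hyperbolic-tiling-like graphs, which are quasi-transitive but do contain $K_{\aleph_0}$ as a minor.

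\textbf{Stage 3: assembly, and the main obstacle.} It then remains to observe that the Stage-1 decomposition already does the job: its torsos are precisely the pieces analysed in Stage 2, hence each is finite or planar, and it is canonical because it was built $\Gamma$-invariantly. (Had Stage 2 produced a further decomposition of some torso, one would simply compose it with the Stage-1 tree, using that a tree-decomposition of each part of a tree-decomposition yields a tree-decomposition of the whole graph, and that a composition of canonical decompositions is canonical.) The main obstacles I expect are, first, the bookkeeping of Stage 1 --- producing a genuinely canonical, locally finite decomposition into quasi-4-connected \emph{and} quasi-transitive torsos while keeping minors under control --- and, second and more seriously, the planarity statement of Stage 2 for quasi-4-connected quasi-transitive $K_{\aleph_0}$-minor-free graphs, with enough uniformity to remain automorphism-invariant.
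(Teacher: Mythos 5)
There is a genuine gap, and it sits exactly where the paper's main technical work lies. Your Stage~1 asserts that ``both the automorphism-invariant and the locally finite versions of each step are available,'' in particular a canonical tree-decomposition of adhesion at most~$3$ with quasi-4-connected torsos. This is false: Grohe's decomposition into quasi-4-connected pieces is \emph{not} canonical, and the paper shows by example (the $3$-connected planar graph obtained by replacing each vertex of the hexagonal grid by a triangle) that a locally finite quasi-transitive $3$-connected graph can fail to admit \emph{any} canonical tree-decomposition of adhesion~$3$ with quasi-4-connected torsos, because the only non-trivial $3$-separations come in $\Gamma$-orbits that cannot be consistently oriented. Once you accept Stage~1 as given, Stages~2--3 are essentially the paper's endgame, so the whole difficulty has been assumed away. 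What the paper actually does instead is: first reduce, via a canonical tree-decomposition distinguishing all tangles of order~$4$, to torsos with a \emph{unique} tangle $\T$ of order~$4$; then take the canonical star decomposition whose central bag is the canonically defined set $R_\T$, which is in general strictly larger than $X_\T$ and whose torso is \emph{not} quasi-4-connected; then prove that this torso is nevertheless planar or of bounded treewidth by contracting the $\Gamma$-invariant matching $M=\Exnd{\T}$ of crossedges (which does yield a quasi-4-connected quasi-transitive graph to which Thomassen's theorem applies) and showing that planarity survives uncontraction of $M$. That uncontraction argument (\cref{prop: planar}) and the bookkeeping making the whole construction canonical are the real content, and your proposal has no substitute for them.

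A smaller but still real issue: your Stage~2 claims the infinite quasi-4-connected torsos are planar, but Thomassen's theorem gives only ``planar or of bounded treewidth.'' Bounded-treewidth non-planar quasi-4-connected quasi-transitive graphs exist, and in that case you must further refine canonically into finite bags (the paper does this via \cref{thm: tw-ends}); your Stage~3 parenthetical gestures at composing a further decomposition, but the statement of Stage~2 needs to be corrected for this to make sense.
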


        The tree-decomposition in Theorem \ref{intro:main} will be obtained by refining the tree-decomposition obtained in the following more detailed version of the result, which might be useful for applications.
        
\begin{theorem}[see \cref{thm: main2}]
\label{intro:main2}
          Every locally finite quasi-transitive graph $G$ avoiding the countable clique as a minor
          has a canonical tree-decomposition with adhesion at most 3 in which each torso is a minor of $G$, and is planar or has bounded treewidth. 
        \end{theorem}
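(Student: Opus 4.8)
The plan is to proceed in two phases: first split $G$ canonically along separations of order at most three, then analyse the resulting torsos; the key difficulty is the structural dichotomy for the highly connected torsos, where Thomassen-type arguments enter.

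\emph{Phase 1: a canonical tree-decomposition of adhesion at most three.} I would combine three canonical decompositions, each invariant under the relevant automorphism group: the block decomposition of $G$ (splitting along $1$-separations), the Tutte decomposition of each $2$-connected block into its $3$-connected components (splitting along $2$-separations, in the form extended to infinite graphs), and, inside each $3$-connected part, a canonical tree-decomposition efficiently distinguishing all tangles of order $4$ (available for infinite graphs through the infinite tangle--tree duality machinery). Nesting these yields a single canonical tree-decomposition $(T,\mathcal V)$ of $G$ of adhesion at most $3$ whose torsos are of two types: those containing no tangle of order $4$, which have branchwidth at most $3$ and hence bounded treewidth; and those that are \emph{quasi-$4$-connected}, i.e.\ admit no separation of order at most $3$ with two large sides. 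Verifying that the nesting preserves canonicity and keeps the adhesion at most $3$ is routine but must be done with care.

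\emph{Phase 2: torsos are quasi-transitive minors of $G$.} A torso $\torso{V_t}$ is $G[V_t]$ with a clique of size at most $3$ added on each adhesion set $S$; since we split only along separations whose two sides are connected and attach to all of $S$, every added edge is realised by contracting a connected subgraph of $G-V_t$, so $\torso{V_t}$ is a minor of $G$, hence locally finite and still free of $\Kinf$ as a minor. As $(T,\mathcal V)$ is canonical and $G$ is locally finite and quasi-transitive, $\Aut(G)$ acts on $T$ with finitely many orbits on vertices and on edges --- a standard feature of canonical decompositions of quasi-transitive graphs into connected parts of bounded adhesion --- so the stabiliser of each part acts on its torso with finitely many orbits and every torso is quasi-transitive.

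\emph{Phase 3 (the crux): quasi-$4$-connected torsos are planar or have bounded treewidth.} Let $H$ be such a torso: quasi-$4$-connected, locally finite, quasi-transitive, and $\Kinf$-minor-free. First, $H$ attains its Hadwiger number --- this is the quasi-$4$-connected case, essentially due to Thomassen (extended if needed from vertex-transitive to quasi-transitive graphs) --- so $H$ excludes some finite $K_t$. I would then apply the infinite Graph Minor Structure Theorem of Diestel and Thomas: $H$ is obtained by clique-sums of bounded order from graphs almost embeddable in a fixed surface. Quasi-$4$-connectedness eliminates all nontrivial clique-sums and all vortices, and quasi-transitivity forbids a bounded, positive number of apex vertices (their $\Aut(H)$-orbit is infinite, which with $K_t$-minor-freeness and a large-grid argument is contradictory) unless $H$ already has bounded treewidth. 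Hence $H$ embeds in a fixed surface, and a locally finite quasi-transitive graph embeddable in a fixed surface is planar or has bounded treewidth (pass to the ``universal cover'' of the embedding, or invoke accessibility of quasi-transitive graphs of bounded genus). The extension of Thomassen's argument to the quasi-transitive setting and the control of the surface-embedding perturbations are the main obstacles; assembling the pieces --- bounded-treewidth torsos from Phase 1, planar-or-bounded-treewidth torsos from Phase 3, minor-of-$G$ and quasi-transitivity from Phase 2, canonicity and adhesion from Phase 1 --- then gives the theorem.
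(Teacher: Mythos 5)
There is a genuine gap at the end of your Phase~1, and it is precisely the point where the paper's main technical work lives. A canonical tree-decomposition that distinguishes all tangles of order~$4$ does \emph{not} produce torsos that are quasi-$4$-connected. What you actually get is that each infinite torso has a \emph{unique} tangle of order~$4$, which is a strictly weaker property: such a torso can still have infinitely many proper non-degenerate $3$-separations with two large sides, merely all oriented consistently by the unique tangle. Example~\ref{ex: gro} in the paper (the hexagonal grid in which every vertex is replaced by a triangle) is exactly such a graph: it has a unique tangle of order~$4$, is $3$-connected but not quasi-$4$-connected, and in fact \emph{no} canonical tree-decomposition of it with adhesion~$3$ and quasi-$4$-connected torsos exists. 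Bridging this gap is the paper's main contribution: for a $3$-connected torso $H$ with a unique region tangle $\T$ of order~$4$, one identifies the $\Aut(H)$-invariant matching $M$ of crossedges coming from crossing minimal non-degenerate separations, shows that the contraction $H\Contr{M}$ restricted to $R_{\T}\Contr{M}$ is quasi-$4$-connected and quasi-transitive (so Thomassen applies), and then proves a planarity-preservation result (\cref{prop: planar}) showing that planarity survives uncontracting $M$, so that the actual torso $G_t\torso{R_{\T}}$ --- not quasi-$4$-connected, but canonically defined --- is still planar or of bounded treewidth. Your proposal jumps directly from ``tangle-distinguishing decomposition'' to ``quasi-$4$-connected torsos'', and that jump is false.

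A secondary issue is that Phase~3 routes through the Diestel--Thomas infinite Graph Minor Structure Theorem, which the paper deliberately avoids. The paper instead uses Thomassen's theorem (\cref{thm: Thomassenq4c}) that a locally finite quasi-transitive quasi-$4$-connected graph with a thick end is planar or contains $K_\infty$ as a minor; combined with the treewidth/thin-end characterization (\cref{thm: tw-ends}), this gives the dichotomy immediately. Your sketch that quasi-$4$-connectedness kills all clique-sums and vortices, and that quasi-transitivity forbids apices via a grid argument, is far from routine and would require substantial work (vortices have bounded depth but unbounded size, and the apex argument as stated is not correct without extra care). But even if Phase~3 could be repaired along those lines, the missing crossedge-contraction machinery from Phase~1 means the proof does not go through as proposed.
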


        Interestingly, the proof does not use the original structure
        theorem of Robertson and Seymour \cite{RS-XVI} or its extension to infinite
        graphs by Diestel and Thomas \cite{DT99}. Instead, we rely mainly on a series
        of results and tools introduced by Grohe \cite{Gro16}  to study decompositions
        of finite 3-connected graphs into quasi-4-connected components, together
        with a result of Thomassen \cite{Thomassen92} on  locally finite quasi-4-connected
        graphs. The main technical contribution of our work consists in  extending the results of Grohe  to infinite, locally finite graphs and in addition, making sure that the decompositions we obtain are canonical  (in a certain weak sense). Our proof crucially relies on a recent result of Carmesin, Hamann, and Miraftab \cite{CTTD}, which shows that there exists a canonical tree-decomposition that distinguishes all tangles of a given order (in our case, of order 4). 

        Thomassen proved that if a locally finite quasi-transitive graph has only one end, then this end must be thick \cite[Proposition 5.6]{Thomassen92} (see below for a definition of thick end). At some point of our proof, we also need to show the stronger result (see \cref{prop: thin-ends}), of independent interest, that for any $k\ge 1$,  a locally finite quasi-transitive graph cannot have only one end of degree $k$.

        \medskip

        We now discuss some applications of \cref{intro:main}.

        \subsection{Hadwiger number}

        As a consequence of \cref{intro:main}, we obtain a result on the Hadwiger number of locally finite
        quasi-transitive graphs. The \emph{Hadwiger number} of a graph
        $G$ is the supremum of the sizes of all finite complete minors in $G$. We
        say that a graph $G$ \emph{attains its Hadwiger number} if the
        supremum above is attained, that is if it is either finite, or
        $G$ contains an infinite clique minor. Thomassen
        \cite{Thomassen92} proved that every locally finite
        quasi-transitive 4-connected graph attains its Hadwiger
        number, and suggested that the 4-connectedness assumption
        might be unnecessary. We prove that this is indeed the case.

        \begin{theorem}[see \cref{thm: had}]
        \label{intro:had}
          Every locally finite
        quasi-transitive graph attains its Hadwiger
        number.
      \end{theorem}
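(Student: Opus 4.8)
The plan is to reduce the statement to \cref{intro:main} (or its refined version \cref{intro:main2}). Let $G$ be a locally finite quasi-transitive graph. We may assume $G$ does not contain an infinite clique minor, and we must show that the sizes of finite clique minors in $G$ are bounded. Suppose for contradiction that $G$ contains arbitrarily large finite clique minors; in particular, for every $t$ it contains a $K_t$ minor, hence it avoids the countable clique $K_{\aleph_0}$ as a minor (otherwise we would be done), so \cref{intro:main} applies: $G$ has a canonical tree-decomposition $(T,\mathcal{V})$ whose torsos are finite or planar, and by \cref{intro:main2} we may moreover take the adhesion to be at most $3$ and each torso to be a minor of $G$ with bounded treewidth or planar.

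The first key step is that since the tree-decomposition is canonical, the automorphism group of $G$ acts on $T$, and since $G$ is quasi-transitive and locally finite with adhesion at most $3$, this action has finitely many orbits on the nodes of $T$ (the adhesion sets have bounded size, so each part meets boundedly many orbits of vertices, and the quotient $T/\Aut(G)$ is finite). Consequently there are only finitely many isomorphism types of torsos up to the action; in particular the planar torsos, being quasi-transitive-like pieces arising from a finite quotient, come in finitely many ``shapes'', and the bounded-treewidth torsos have treewidth bounded by some absolute constant $w$. The second key step is to track where a large clique minor can live: a $K_t$ minor in $G$, being highly connected (connectivity $t-1$), cannot be split nontrivially by an adhesion set of size at most $3$ once $t \ge 5$, so it must be ``captured'' by a single part of the decomposition — more precisely, its branch sets can be pushed into a single torso (using that torsos are minors of $G$, a $K_t$ minor in $G$ with $t\ge 5$ yields a $K_t$ minor in some torso). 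Hence some torso contains $K_t$ minors for arbitrarily large $t$.

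The third step finishes the argument by case analysis on that torso $H$. A torso of bounded treewidth $w$ cannot contain $K_{w+2}$ as a minor, so the ``large clique minor'' torso must be planar; but a planar graph contains no $K_5$ minor, contradiction. Therefore no torso contains arbitrarily large clique minors, so the size of the largest clique minor in $G$ is bounded, as desired.

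The main obstacle I anticipate is the ``capturing'' step: making precise that a sufficiently large (and thus sufficiently connected) clique minor in $G$ must be localized within a single part/torso of a tree-decomposition of bounded adhesion, and that this produces a clique minor of comparable size \emph{in the torso} rather than merely in $G$. This is a standard fact about tangles and highly connected minors relative to tree-decompositions of bounded adhesion — essentially, a $K_t$ minor defines a tangle of large order that must be ``located'' at a single node — but one has to handle the interaction between branch sets of the minor and the (small) adhesion sets carefully, possibly absorbing the adhesion vertices into the branch sets, and to check that passing to the torso (which replaces the rest of the decomposition by cliques on the adhesion sets of size $\le 3$) does not destroy the minor. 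Once this localization is in hand, the planarity/bounded-treewidth dichotomy of the torsos does the rest immediately, since neither $K_5$-minor-free planar graphs nor bounded-treewidth graphs can host arbitrarily large clique minors.
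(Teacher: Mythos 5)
Your proposal is correct in outline but takes a genuinely different route from the paper, and is more self-contained. The paper's proof of \cref{thm: had} (the strengthening of \cref{intro:had}) observes that the decomposition produced by \cref{thm: main2} — adhesion at most $3$, torsos planar or of treewidth at most $k$ — matches the conclusion of the Robertson–Seymour structure theorem for excluding a singly-crossing minor, and invokes the \emph{converse} direction of that theorem: any graph with such a decomposition excludes a fixed singly-crossing graph $H_k$, hence a fixed clique, as a minor. Your route instead directly localizes a large clique minor into a single torso and then derives the contradiction from planarity (no $K_5$ minor) or bounded treewidth (no $K_{k+2}$ minor); this is essentially a reproof of the classical fact that clique-sums of bounded order do not increase the Hadwiger number beyond $\max(\text{adhesion}+1, \text{torso Hadwiger numbers})$, and you are right that it goes through. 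You honestly flag the ``capturing'' step as the gap, and indeed it is where all the work lies: one orients each edge-separation towards the side containing the $\ge t-3$ branch sets that avoid the adhesion set, argues this orientation is consistent once $t>6$ (not $t\ge 5$ — for two edges oriented away from a common node one gets two disjoint families of $\ge t-3$ branch sets, forcing $t\le 6$), finds the sink node $t_0$ by confining the finite branch sets to a finite subtree, and then checks that the intersections $V_i\cap V_{t_0}$ remain connected and pairwise adjacent \emph{in the torso} because adhesion sets become cliques there. The parenthetical ``using that torsos are minors of $G$'' is a red herring: that fact goes the wrong way for this step; what you actually use is that adhesion sets are cliques in the torso. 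Also, you do not need the finitely-many-orbits observation — the uniform treewidth bound on torsos is already guaranteed by \cref{thm: main2}. Net comparison: your approach buys an elementary, hands-on argument avoiding the Robertson–Seymour singly-crossing black box, while the paper's approach buys the stronger conclusion of \cref{thm: had}, namely that $G$ excludes a fixed finite singly-crossing graph as a minor, not merely a clique.
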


      We will indeed prove a  stronger statement, namely that every locally finite quasi-transitive graph avoiding the countable clique as a minor also avoids a finite graph with crossing number 1 as a minor.

      \subsection{Accessibility in graphs}

We now introduce the notion of accessibility in graphs considered by Thomassen
and Woess \cite{TW}. To distinguish it from the related notion in
groups (see below), we
will call it vertex-accessibility in the remainder of the paper. A
\emph{ray} in an infinite graph $G$ is an infinite one-way path in
$G$. Two rays of $G$ are
\emph{equivalent} if there are infinitely many disjoint paths between them in
$G$ (note that this is indeed an equivalence relation).
An \emph{end} of $G$ is an
equivalence class of rays in $G$. When there is a finite set $X$ of vertices
of $G$, two distinct components $C_1,C_2$ of $G- X$, and two distinct ends
$\omega_1,\omega_2$ of $G$ such that for each $i=1,2$, all but finitely many vertices of all (equivalently any) rays of $\omega_i$ are in $C_i$, we say that $X$
\emph{separates} $\omega_1$ and $\omega_2$.  A graph $G$ is
\emph{vertex-accessible} if there is an integer $k$ such that for any
two distinct ends $\omega_1,\omega_2$ in $G$, there is a set of at most $k$
vertices that separates $\omega_1$ and $\omega_2$.

\medskip

It was proved by Dunwoody \cite{Dunwoody07} (see also \cite{HamannPlanar,HamannAccessibility} for a more combinatorial approach) that
locally finite quasi-transitive
planar graphs are vertex-accessible. Here we extend the result to locally finite quasi-transitive graphs
excluding the countable clique $K_\infty$ (and not necessarily $K_5$ and $K_{3,3}$) as a minor, and in particular to locally finite quasi-transitive graphs from any
proper minor-closed family.

   \begin{theorem}[see \cref{thm: access}]
   \label{intro:vacc}
          Every locally finite
        quasi-transitive $K_\infty$-minor-free graph  is vertex-accessible.
      \end{theorem}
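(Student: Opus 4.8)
The plan is to read off vertex-accessibility from the canonical tree-decomposition of \cref{intro:main}, by controlling where the ends of $G$ sit in it. By \cref{thm: mainCTTD} and \cref{thm: main2}, $G$ has a canonical tree-decomposition $(T,\mathcal V)$ of adhesion at most $3$ whose torsos are finite or planar. Since this decomposition is $\Aut(G)$-invariant and $G$ is quasi-transitive, $\Aut(G)$ acts on $T$ with finitely many orbits of nodes, each node-stabiliser acts with finitely many orbits on the corresponding torso, and $T$ is locally finite; these properties come out of the construction. Consequently each torso $\torso{V_t}$ is a locally finite quasi-transitive planar or finite graph, so by the theorem of Dunwoody \cite{Dunwoody07} quoted above it is vertex-accessible, and since there are finitely many orbits of nodes there is a single integer $k'$ so that any two ends of any torso are separated by at most $k'$ of its vertices. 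I claim $G$ is then vertex-accessible with parameter $k:=\max(3,k')$.

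Before the case analysis I would set up two ingredients. The first is the standard dichotomy: since $(T,\mathcal V)$ has finite adhesion and $T$ is locally finite, every end $\omega$ of $G$ either \emph{lives in} a unique node $t$ (every ray of $\omega$ has a tail inside $V_t$) or \emph{converges to} a unique end of $T$. Indeed, $\omega$ orients each edge of $T$ consistently — a ray can cross a fixed adhesion set only finitely often — and in a locally finite tree a consistent orientation of all edges points either at a common node or off to an end; in the first case the ray eventually leaves each of the finitely many branches at $t$ for good, hence settles in $V_t$. The second ingredient is a transfer lemma: if $S\subseteq V_t$ separates two ends inside the torso $\torso{V_t}$, then $S$ separates the corresponding ends of $G$. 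This holds because any $S$-avoiding path of $G$ between the relevant tails decomposes into subpaths inside $V_t$ and excursions leaving $V_t$, each excursion having both endpoints in a single adhesion set, which is a clique of the torso; replacing each excursion by the corresponding virtual edge yields an $S$-avoiding walk of $\torso{V_t}$, contradicting that $S$ separates there. I would also note that two distinct ends of $G$ living in $V_t$ restrict to distinct ends of $\torso{V_t}$: they restrict to distinct ends already of the subgraph $G[V_t]$ (else infinitely many disjoint paths between their tails inside $G[V_t]\subseteq G$ would identify the two ends), and these cannot be merged by the virtual edges, since infinitely many disjoint paths through virtual edges would require infinitely many adhesion sets at $t$, contradicting local finiteness of $T$.

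Now fix two distinct ends $\omega_1\ne\omega_2$ of $G$. If they are located at different places of $T$ (two distinct nodes, a node and an end, or two distinct ends of $T$), there is an edge $e$ of $T$ whose associated separation of $G$ puts $\omega_1$ and $\omega_2$ on opposite sides, and then the adhesion set of $e$ — of size at most $3$ — separates them. If $\omega_1$ and $\omega_2$ both live in a node $t$, then by the above they restrict to distinct ends of $\torso{V_t}$, hence are separated there by a set $S$ with $|S|\le k'$, and by the transfer lemma $S$ separates them in $G$. The last case, where $\omega_1$ and $\omega_2$ converge to the same end $\eta$ of $T$, is the one that requires an extra argument and is, I expect, the main obstacle. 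Write $t_0t_1\cdots$ for the ray of $T$ representing $\eta$ and $A_i$ for the adhesion set of $t_it_{i+1}$. For each $i$, either $A_i$ separates $\omega_1$ and $\omega_2$ (and we are done), or both ends have a tail in a common component $R_i$ of $G-A_i$; assume the latter for all $i$. Any finite set $X$ separating $\omega_1$ from $\omega_2$ must then meet $R_i$ for every $i$ (otherwise the connected graph $G[R_i]\subseteq G-X$ would put both tails in one component of $G-X$). As $X$ is finite, some $x\in X$ lies in $R_i$ for infinitely many $i$; but then the subtree of $T$ formed by the bags containing $x$ reaches arbitrarily far along $\eta$, hence contains a tail of $t_0t_1\cdots$, so $x$ lies in $A_i$ for all large $i$ — contradicting $x\in R_i$, which is disjoint from $A_i$, for infinitely many $i$. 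Hence some $A_i$ does separate $\omega_1$ and $\omega_2$. In all cases $\omega_1$ and $\omega_2$ are separated by at most $\max(3,k')$ vertices, so $G$ is vertex-accessible. The only genuinely delicate points are verifying the auxiliary properties of the canonical decomposition used in the first paragraph (local finiteness of $T$, cofiniteness of the actions on the torsos) — for which one relies on the construction behind \cref{thm: main2} — and the handling of ends escaping along $T$ just described.
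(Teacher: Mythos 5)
Your argument follows the same route as the paper's proof of \cref{thm: access}: take the canonical tree-decomposition from \cref{thm: main2}, separate ends that are split apart by the tree using an adhesion set of size at most~$3$, and separate ends living in a common torso by appealing to vertex-accessibility of the torso and the finiteness of $V(T)/\Gamma$. Your extra paragraph ruling out two distinct ends of $G$ converging to the same end of $T$ is correct in substance and fills a case that the paper handles only implicitly.

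The genuine gap is your assertion that $T$ is locally finite, which you use twice. In this construction $T$ is in general \emph{not} locally finite: the star decompositions of \cref{lem: cross-star} have a center of infinite degree whenever $G-R_{\T}$ has infinitely many components, as in the one-ended example of \cref{fig: Tangles}, where the central node of $T$ has one branch per triangular face of the infinite grid. Consequently, (i) your justification that an end whose orientation of $T$ points at a node $t$ has a ray settling in $V_t$ ("eventually leaves each of the finitely many branches at $t$ for good") is not a valid argument at an infinite-degree node; and (ii) your justification that two distinct ends of $G$ living in $V_t$ remain distinct in $\torso{V_t}$ is actively wrong as stated — "infinitely many disjoint paths through virtual edges would require infinitely many adhesion sets at $t$, contradicting local finiteness of $T$" derives no contradiction, because there can be infinitely many adhesion sets at $t$. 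Claim~(ii) is true, but for a different reason: since each adhesion set has size at most~$3$, at most one of a family of pairwise-disjoint torso-paths can use a virtual edge inside any fixed adhesion set, so one can lift the torso-paths (routing each virtual edge through the corresponding branch, which is possible by tightness of the edge-separations) to pairwise-disjoint paths in $G$, forcing $\omega_1=\omega_2$. Two smaller remarks: your opening "adhesion at most $3$ whose torsos are finite or planar" conflates \cref{thm: mainCTTD} and \cref{thm: main2} — the paper uses \cref{thm: main2}, whose torsos are planar or of bounded treewidth, with \cref{thm: tw-ends} rather than Dunwoody covering the bounded-treewidth case; and in the escaping-end argument, the correct contradiction is that the subtree $T_x$ of bags containing $x$ would have to lie inside $T_{i+1}$ for every $i$, impossible since $\bigcap_i T_{i+1}=\emptyset$, rather than $T_x$ "containing a tail of $t_0t_1\cdots$".
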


      \subsection{Accessibility in groups}

      The notion of vertex-accessibility introduced above is related to the notion of accessibility in
groups. Given a finitely generated group $\Gamma$, and a finite set of
generators $S$, the \emph{Cayley graph} of $\Gamma$ with respect to
the set of generators $S$ is the edge-labeled graph 
$\Cay(\Gamma,S)$ whose vertex set is the set of elements of $\Gamma$ and where for every two elements $g,h\in \Gamma$ we put an arc $(g,h)$ labeled with $a\in S$ when $h=a\cdot g$. Cayley graphs have to be seen as highly symmetric graphs; in particular they are transitive: the right action of the group $\Gamma$ onto itself can be easily seen to induce a transitive group action on $\Cay(\Gamma,S)$.
It is known that the number of ends of a Cayley
graph of a finitely generated group does not depend of the choice of
generators, so we can talk about the number of ends of a finitely generated
group. A classical theorem of Stallings \cite{Sta72} states that if a
finitely generated group $\Gamma$ has more than one end, it can be
split as a non-trivial free product with finite amalgamation, or as an
HNN-extension over a finite subgroup. If any group produced by the
splitting still has more than one end we can keep splitting it using
Stallings theorem. If the process eventually stops (with $\Gamma$
being obtained from finitely many
0-ended or 1-ended groups using free products with
amalgamation and HNN-extensions), then $\Gamma$ is said to be
\emph{accessible}. Thomassen
and Woess \cite{TW} proved that a finitely generated group is
accessible if and only if at least one of its locally finite Cayley graphs is
vertex-accessible, if and only if all of its locally finite Cayley graphs are
vertex-accessible.

\medskip

A finitely generated group is \emph{minor-excluded} if at least one of its Cayley graphs
avoids a finite minor. Similarly a finitely generated group is \emph{$K_\infty$-minor-free} if one of its Cayley graphs avoids the countable clique as a minor, and \emph{planar} if one of its Cayley graphs
is planar. Note that planar groups are minor-excluded and \cref{intro:had} immediately implies that a finitely generated group is minor-excluded if and only if it is $K_\infty$-minor-free.

\medskip

Droms \cite{Droms} proved that finitely generated planar groups are finitely
presented, while Dunwoody \cite{Dunwoody1985} proved that finitely presented groups are accessible, which implies that finitely generated planar groups are accessible.
\cref{intro:vacc} immediately implies the following, which
extends this result to all minor-excluded finitely generated groups, and equivalently to all finitely generated $K_\infty$-minor-free groups.

  \begin{corollary}\label{intro:acc}
          Every  finitely generated $K_\infty$-minor-free group is accessible.
        \end{corollary}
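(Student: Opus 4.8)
The plan is to deduce this directly from \cref{intro:vacc} together with the characterization of group accessibility due to Thomassen and Woess \cite{TW} recalled above; so the only work is a translation between the graph-theoretic and group-theoretic settings, and no new combinatorial argument is needed.

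Concretely, let $\Gamma$ be a finitely generated $K_\infty$-minor-free group. By definition there is a finite generating set $S$ of $\Gamma$ such that the Cayley graph $G=\Cay(\Gamma,S)$ avoids the countable clique as a minor. Since $S$ is finite, $G$ is locally finite; and since the right action of $\Gamma$ on itself induces a transitive (hence quasi-transitive) action on $G$, the graph $G$ is a locally finite quasi-transitive $K_\infty$-minor-free graph. Thus \cref{intro:vacc} applies and yields that $G$ is vertex-accessible. Finally, by the theorem of Thomassen and Woess \cite{TW}, a finitely generated group having at least one locally finite Cayley graph that is vertex-accessible is accessible; applying this to $\Gamma$ with the Cayley graph $G$ gives the conclusion.

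I do not expect any genuine obstacle here: all the content lies in \cref{intro:vacc} (and, behind it, in the canonical structure theorem), whereas the present statement only requires observing that a Cayley graph with respect to a finite generating set is a locally finite quasi-transitive graph and then invoking the result of \cite{TW} as a black box. The single point worth stressing is that being ``$K_\infty$-minor-free'' for a group is witnessed by one Cayley graph, which is precisely the hypothesis we use; moreover, by \cref{intro:had} this is equivalent to being minor-excluded, so the corollary in fact applies to every minor-excluded finitely generated group, and in particular to every finitely generated planar group, thereby recovering and extending the combination of Droms' \cite{Droms} and Dunwoody's \cite{Dunwoody1985} theorems mentioned above.
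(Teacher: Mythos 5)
Your proposal is correct and is essentially identical to one of the two derivations the paper itself explicitly mentions for \cref{intro:acc}: combining \cref{intro:vacc} (\cref{thm: access}) with the Thomassen--Woess characterization (\cref{thm: TW}). The observation that a Cayley graph over a finite generating set is locally finite and transitive, hence quasi-transitive, is exactly the translation the paper relies on, and your remark about the alternative route via finite presentability and Dunwoody's theorem matches the paper's other stated derivation.
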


        In fact, combining \cref{intro:main} with techniques
        introduced by Hamann \cite{HamannPlanar,HamannAccessibility}
        in the planar case, we prove the
        following stronger result which also implies \cref{intro:acc} using the result of Dunwoody \cite{Dunwoody1985} that all finitely
        presented groups are accessible.

         \begin{theorem}[see \cref{cor: finpres}]\label{intro:fp}
          Every  finitely generated $K_\infty$-minor-free group is finitely presented.
        \end{theorem}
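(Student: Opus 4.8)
The plan is to convert the structure theorem into a splitting of $\Gamma$ as the fundamental group of a finite graph of groups, and then to quote a standard combination result for finite presentability. Fix a finite generating set $S$ and set $G=\Cay(\Gamma,S)$, so that $\Gamma$ embeds into $\Aut(G)$ through the (free and vertex-transitive) regular action. Applying \cref{intro:main2} to $G$ yields a canonical, and hence $\Gamma$-invariant, tree-decomposition $(T,(V_t)_{t\in V(T)})$ of adhesion at most $3$ whose torsos are minors of $G$ and are planar or of bounded treewidth. The first step is to arrange that this decomposition has only finitely many $\Gamma$-orbits of parts, and therefore of adhesion sets; here one imports the methods Hamann developed in the planar case \cite{HamannPlanar,HamannAccessibility}, using in addition the vertex-accessibility of $G$ from \cref{intro:vacc} to keep the decomposition of bounded complexity. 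Granting this, the action of $\Gamma$ on $T$ has finite quotient; moreover, since $\Gamma$ acts freely on $V(G)$ and each adhesion set has at most $3$ vertices, the setwise stabilizer of an adhesion set is finite (of order at most $6$), so all edge stabilizers of the action on $T$ are finite. By Bass--Serre theory, $\Gamma$ is then the fundamental group of a finite graph of groups with finite edge groups and with vertex groups the part stabilizers $\Gamma_t=\Stab_\Gamma(V_t)$; as $\Gamma$ is finitely generated and the edge groups are finitely generated, each $\Gamma_t$ is finitely generated.

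Since the fundamental group of a finite graph of groups whose vertex and edge groups are all finitely presented is itself finitely presented, it suffices to show that every vertex group $\Gamma_t$ is finitely presented. The group $\Gamma_t$ acts freely and quasi-transitively on $V_t$, and hence on the torso $\torso{V_t}$ (permuting its added cliques), which is locally finite and, by \cref{intro:main2}, planar or of bounded treewidth. If the torso has bounded treewidth, then — being a locally finite quasi-transitive graph of bounded treewidth — it is quasi-isometric to a tree, so by the Milnor--\v{S}varc lemma $\Gamma_t$ is quasi-isometric to a tree and is therefore virtually free, hence finitely presented. If the torso is planar, then $\Gamma_t$ is a finitely generated group acting freely and quasi-transitively on a locally finite planar graph, and the finite presentability of such a group is obtained exactly as in Hamann's treatment of the planar case \cite{HamannPlanar,HamannAccessibility} — through the essentially unique embeddings of the $3$-connected pieces and an analysis of the associated quotient — which applies here without change. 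In both cases $\Gamma_t$ is finitely presented, and hence so is $\Gamma$. Combined with Dunwoody's theorem \cite{Dunwoody1985} that finitely presented groups are accessible, this also reproves \cref{intro:acc}.

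The step I expect to be the main obstacle is the very first one: passing from a merely canonical (that is, $\Aut(G)$-invariant) tree-decomposition to one with a \emph{finite} quotient, so that Bass--Serre theory becomes applicable. Invariance under $\Aut(G)$ does not by itself bound the number of orbits of parts, and extracting a genuine finite graph-of-groups decomposition with finite edge groups is precisely where the structure theorem must be combined with Hamann's accessibility machinery. A related point requiring care is that the torsos one works with must be locally finite — equivalently, each vertex should lie in only finitely many adhesion sets — which constrains the choice of decomposition. Once such a finite graph-of-groups decomposition is in hand, the remainder is routine: the combination lemma for finite graphs of groups, the fact that quasi-transitive graphs of bounded treewidth are virtually trees, and the planar case already handled by Hamann.
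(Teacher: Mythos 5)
Your argument is correct but takes a genuinely different route from the paper. The paper does not pass through Bass--Serre theory at all: it proves (Theorem~\ref{thm: finpres}) that the set of \emph{closed walks} of $G$ admits a $\Gamma$-invariant generating set with finitely many $\Gamma$-orbits, by using the finite-or-planar decomposition of Theorem~\ref{thm: mainCTTD} and combining Hamann's generating sets of closed walks for the planar torsos across the tree (Claims~\ref{clm: fin-presaux}--\ref{clm: Gen}); it then reads the relators of a finite presentation directly off the edge-labels of these finitely many closed walks (Corollary~\ref{cor: finpres}). Your route instead extracts a splitting of $\Gamma$ as a finite graph of groups with finite edge groups, reduces to the vertex groups, and invokes the combination theorem for finite presentability. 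A few comments. First, the obstacle you flag as the main worry --- arranging finitely many orbits --- is already built into the paper's machinery: since the edge-separations of the $\Gamma$-canonical decomposition are tight and of order at most~$3$, Lemma~\ref{lem: TWcut} (see Remark~\ref{rem: nbaretes}) gives $E(T)/\Gamma$ finite automatically, and the vertex-accessibility of $G$ is not needed for this. Second, your treatment of the bounded-treewidth torsos via Milnor--\v{S}varc and quasi-isometry to a tree works, but is more direct via the paper's own ingredients: by Theorem~\ref{thm: tw-ends} the torso admits a $\Gamma_t$-canonical tree-decomposition with finite bags, so $\Gamma_t$ (acting freely on the torso) acts on that tree with finite stabilizers and finite quotient, and Theorem~\ref{thm: multi-ended} gives virtual freeness. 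Third, in the planar case one should be a little careful: $\Gamma_t$ need not literally have a planar Cayley graph, only a free, proper, cocompact action on a locally finite planar graph; finite presentability in that setting is indeed available through Hamann's analysis of $3$-connected quasi-transitive planar graphs and their essentially unique embeddings, as you indicate, but it is worth stating explicitly that this is the form of the result being used rather than Droms's theorem for planar groups. A minor point: the Bass--Serre vertex groups are the node stabilizers $\Stab_\Gamma(t)$ rather than $\Stab_\Gamma(V_t)$; by Remark~\ref{rem: stab} these coincide when $V_t$ is infinite, and both are finite when $V_t$ is finite, so this does not affect the argument. What each approach buys: the closed-walk argument is self-contained, applies verbatim to any quasi-transitive action (not only free ones), and yields the slightly stronger Theorem~\ref{thm: finpres} as a byproduct; the Bass--Serre argument is more familiar from geometric group theory and makes the reduction to the planar/bounded-treewidth pieces conceptually transparent, at the cost of citing the combination theorem, the finite generation of vertex groups, and the virtual-freeness of groups quasi-isometric to trees.
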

      
        \subsection{The domino problem}

        We refer to \cite{Aubrun18subshift} for a detailed
        introduction to the domino problem. A \emph{coloring} of a graph $G$
        with colors from a set $\Sigma$ is simply a map $V(G)\to \Sigma$.
The \emph{domino problem} for a finitely generated group $\Gamma$ together
with a finite generating set $S$ is defined as follows. The input is a finite alphabet $\Sigma$ and a finite set $\mathcal F=\{F_1,\ldots,F_p\}$ of
forbidden \emph{patterns}, which are colorings with colors from
$\Sigma$ of the closed neighborhood of the neutral element
$1_{\Gamma}$ in the  Cayley graph $\Cay(\Gamma,S)$, viewed as an
edge-labeled subgraph of $\Cay(\Gamma,S)$ (recall that each element of $S$ corresponds to a different label). The problem then asks if
there is a coloring of $\Cay(\Gamma,S)$ with colors from $\Sigma$,
such that for each  $v\in \Gamma$, the coloring of the closed
neighborhood of $v$ in $\Cay(\Gamma,S)$ (viewed as an edge-labeled
subgraph of  $\Cay(\Gamma,S)$), is not isomorphic to any of the
colorings $F_1,\ldots,F_p$, where we consider isomorphisms preserving
the edge-labels (equivalently, isomorphisms corresponding to the right multiplication by elements  of $\Gamma$).

\medskip

It turns out that the decidability of the domino problem for
$(\Gamma,S)$ is independent of the choice of the finite generating set
$S$, hence we can talk of the decidability of the domino problem for a
finitely generated group $\Gamma$.
If we consider $\Gamma=(\mathbb Z^2, +)$, then the domino problem
corresponds exactly to the well-known Wang tiling problem, which was
shown to be undecidable by Berger in \cite{Berger}. On the other hand,
there is a simple greedy procedure to solve the domino problem in free
groups, which admit trees as Cayley graphs. More generally, the domino
problem is decidable in virtually free groups, which can equivalently
be defined as finitely generated groups having a locally finite Cayley graph of bounded treewidth \cite{Aubrun18subshift, Pichel09}. 
%\louis{retrouver la référence dans Muller et Schupp 1985 si elle existe ?}
A remarkable conjecture of Ballier and Stein \cite{BS}  asserts that these groups are the only ones for which the domino problem is decidable.

\begin{conjecture}[Domino problem conjecture \cite{BS}]
\label{conj: DP}
A finitely generated group has a decidable domino problem if
and only if it is virtually free.
\end{conjecture}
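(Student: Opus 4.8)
The plan is to prove the two directions separately: the forward direction (virtually free $\Rightarrow$ decidable) is classical, while the reverse direction carries the real content.

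\textbf{Virtually free implies decidable.} This direction is due to \cite{Aubrun18subshift,Pichel09}, and I would only recall the idea. Using the characterisation of virtually free groups as those admitting a locally finite Cayley graph of bounded treewidth, fix such a Cayley graph $\Cay(\Gamma,S)$ together with a canonical (automorphism-invariant) tree-decomposition of bounded width; by transitivity, this decomposition has only finitely many isomorphism types of bags and of adhesion sets. Given an instance $(\Sigma,\mathcal F)$ of the domino problem, validity of a coloring is a local constraint, so a valid coloring of the whole graph exists if and only if a suitable bottom-up automaton on the decomposition tree has non-empty language, where the state at an adhesion set records the set of colorings of that adhesion set that extend to a valid coloring of the part processed so far. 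Finitely many bag/adhesion types means finitely many states, so emptiness of this automaton is decidable.

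\textbf{Decidable implies virtually free.} Here I would argue by contraposition and attempt to show that if $\Gamma$ is not virtually free then its domino problem is undecidable, by reducing from the undecidable $\mathbb Z^2$ domino problem (Berger \cite{Berger}). Not being virtually free means every locally finite Cayley graph of $\Gamma$ has unbounded treewidth, so by the excluded grid theorem $\Cay(\Gamma,S)$ contains arbitrarily large grid minors. The target is an \emph{effective simulation}: a computable map turning a Wang tiling instance into an instance $(\Sigma,\mathcal F)$ for $(\Gamma,S)$ so that valid $\Gamma$-colorings correspond to valid tilings of $\mathbb Z^2$. This requires the grids to occur inside $\Cay(\Gamma,S)$ \emph{coherently with respect to the group action}, so that a finite, translation-invariant set of local rules can force the encoding in both directions; the bare existence of large minors is not enough, and this is where structural information is indispensable.

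\textbf{The role of the structure theorem, and the main obstacle.} When $\Gamma$ is minor-excluded, \cref{intro:main} supplies a canonical tree-decomposition of a locally finite Cayley graph with finite or planar torsos; boundedness of the adhesion forces at least one torso to be infinite (otherwise $\Cay(\Gamma,S)$ would have bounded treewidth and $\Gamma$ would be virtually free), hence planar, and by canonicity it is a quasi-transitive planar minor of $\Cay(\Gamma,S)$ of unbounded treewidth. Such a graph is vertex-accessible (\cref{intro:vacc}) and decomposes further into finite pieces and a one-ended planar quasi-transitive piece, whose end is thick by \cite[Proposition 5.6]{Thomassen92}; a planar, quasi-transitive, one-ended graph contains arbitrarily large square grids arranged periodically under its automorphism group, which is exactly the coherence needed to hard-code Berger's construction (invoking, as a black box, the known undecidability results for such periodic planar structures). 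It then remains to lift the forbidden patterns back through the tree-decomposition --- making them trivially satisfiable on the finite torsos and enforcing the tiling on the planar one --- to obtain undecidability for $\Gamma$ itself; this transfer through a canonical bounded-adhesion tree-decomposition is the technical heart. The main obstacle, and the reason \cref{conj: DP} is settled here only in the minor-excluded case, is precisely the general form of the coherence step: the excluded grid theorem produces large grid \emph{minors} but no control over their placement relative to the group action, and in the absence of a structural description of unbounded-treewidth quasi-transitive graphs --- exactly what \cref{intro:main} provides under the minor-excluded hypothesis, and what is unavailable in general --- there is no evident way to turn those minors into a reduction from Wang tilings.
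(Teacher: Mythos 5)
The statement you were asked about is a \emph{conjecture} (due to Ballier and Stein), and neither the paper nor your proposal proves it: the paper only establishes the special case of $K_\infty$-minor-free groups (\cref{thm: Domino}), and your own last paragraph concedes that in the general case ``there is no evident way to turn those minors into a reduction from Wang tilings.'' So as a proof of \cref{conj: DP} the proposal has an acknowledged, genuine gap --- the reverse direction for arbitrary (non-minor-excluded) groups of unbounded treewidth is exactly the open content of the conjecture, and invoking the excluded grid theorem gives grid minors with no compatibility with the group action, which, as you say yourself, is not enough to encode Berger's construction.

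Even restricted to the minor-excluded case, your sketch diverges from the paper's argument and leaves its key steps unjustified. You propose to find large square grids ``arranged periodically under the automorphism group'' inside a one-ended quasi-transitive planar torso and then to ``lift the forbidden patterns back through the tree-decomposition''; neither step is backed by a citable result, and transferring a domino instance across a tree-decomposition of a graph (rather than across a subgroup inclusion) is not something the machinery of the paper provides. The paper's actual route in the one-ended case is purely group-theoretic: Thomassen's \cref{thm: Th92} shows the Cayley graph is planar, \cref{thm: Zieschang} gives a closed orientable surface subgroup of genus $g\ge 1$, \cref{thm: aubrun} gives undecidability of the domino problem for that surface group, and undecidability passes from a finitely generated subgroup to the ambient group; in the infinitely-ended case one uses accessibility (\cref{intro:acc}), the graph-of-groups description (\cref{thm: multi-ended}) to extract a one-ended vertex group, and \cref{thm: Babai} to see that this subgroup is again $K_\infty$-minor-free, reducing to the previous case. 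If you reshape your second half along these lines you recover the paper's \cref{thm: Domino}, but no argument currently known --- yours or the paper's --- settles \cref{conj: DP} itself.
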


Recall that virtually free groups are precisely the groups having a locally finite Cayley
graph of bounded treewidth. Since having bounded treewidth is a property that is closed under
taking minor, it is natural to
ask whether \cref{conj: DP}  holds for minor-excluded groups (or equivalently, using \cref{intro:had}, to $K_\infty$-minor-free groups). Using
\cref{intro:acc}, together with classical results on planar
groups and recent results on fundamental groups of surfaces \cite{Aubrun18surface}, we prove that
this is indeed the case.

\begin{theorem}[see \cref{thm: Domino}]\label{intro:domino}
A finitely generated $K_\infty$-minor-free group has a decidable domino problem if
and only if it is virtually free.
\end{theorem}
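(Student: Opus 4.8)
The plan is to prove the two implications separately; only the implication ``decidable $\Rightarrow$ virtually free'' relies on the structure theory developed in this paper, the converse being classical. For the converse, recall \cite{Aubrun18subshift, Pichel09} that a finitely generated group is virtually free if and only if it admits a locally finite Cayley graph of bounded treewidth, and that on such a graph the domino problem is decidable by a routine bottom-up dynamic programming argument along a tree-decomposition of bounded width (see \cite{Aubrun18subshift}). So from now on assume that $\Gamma$ is finitely generated, $K_\infty$-minor-free, and \emph{not} virtually free, and let us show that its domino problem is undecidable; this establishes the contrapositive of the nontrivial implication.

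The first step is to extract a one-ended ``factor'' of $\Gamma$. By \cref{intro:acc}, $\Gamma$ is accessible, so it is the fundamental group of a finite graph of groups whose edge groups are all finite and whose vertex groups are all finitely generated and either finite or one-ended. If all vertex groups were finite, then $\Gamma$ would act cocompactly on the associated Bass--Serre tree with finite edge and vertex stabilizers, hence would be virtually free; since it is not, some vertex group $H$ is one-ended. As $H$ appears as a vertex group of a splitting of $\Gamma$ over finite subgroups, it is an undistorted (quasi-isometrically embedded) subgroup of $\Gamma$, and therefore it is again $K_\infty$-minor-free (equivalently minor-excluded, by \cref{intro:had}); in particular the structure theorem \cref{intro:main} applies to $H$.

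The second step pins down $H$ up to commensurability. Since $H$ is one-ended it does not split over any finite subgroup, so a canonical tree-decomposition with finite adhesion of a locally finite Cayley graph of $H$ must be trivial; applying \cref{intro:main} to $H$, its single torso is the whole Cayley graph, which is infinite and hence planar. Thus $H$ is a one-ended planar group, and by the classical description of planar groups (cocompact planar discontinuous groups, which are virtually surface groups, together with the rank-$2$ Euclidean crystallographic groups) $H$ is virtually the fundamental group of a closed surface: either a closed hyperbolic surface group, or $\mathbb{Z}^2$, or the Klein bottle group (which contains $\mathbb{Z}^2$ with finite index).

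It remains to observe that the domino problem is undecidable for each such $H$ and to transfer this back to $\Gamma$. For $\mathbb{Z}^2$ this is Berger's theorem \cite{Berger}, and for closed hyperbolic surface groups it was established recently \cite{Aubrun18surface}; since decidability of the domino problem is invariant under commensurability, $H$ itself has an undecidable domino problem. Finally, decidability of the domino problem passes from $\Gamma$ to the vertex groups of any splitting of $\Gamma$ over finite edge groups (alternatively, $\Gamma$ has a finite-index subgroup that is a free product of a free group with finitely many of its one-ended vertex groups, and a free product has decidable domino problem only if all its factors do, while passing to a finite-index subgroup preserves decidability). Hence the domino problem of $\Gamma$ is undecidable, as desired. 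I expect the main obstacle to be precisely this last transfer step, combined with making the classification ``one-ended planar group $\Rightarrow$ virtually a closed-surface group'' fully rigorous in all cases (in particular the Euclidean ones), so that no one-ended $K_\infty$-minor-free group with a potentially decidable domino problem slips through.
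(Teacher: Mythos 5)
Your overall route matches the paper's: isolate a one-ended vertex group $H$ from the accessibility decomposition, show that $H$ is a planar group, invoke undecidability of the domino problem for surface groups (including $\mathbb{Z}^2$), and transfer undecidability back to $\Gamma$. However, two of your intermediate steps do not hold as written, and both are points where the paper does something different.

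First, you claim that $H$ is undistorted in $\Gamma$ and hence inherits $K_\infty$-minor-freeness. A quasi-isometric embedding of $H$ into $\Gamma$ does \emph{not} yield a minor relationship between Cayley graphs: edges of $\Cay(H,S_H)$ correspond to words of bounded length in $\Cay(\Gamma,S)$, and such a rough embedding preserves neither planarity nor exclusion of a fixed minor in any obvious way. The paper instead uses Babai's theorem (\cref{thm: Babai}): for \emph{any} finitely generated subgroup $\Gamma'$ of $\Gamma$, some Cayley graph of $\Gamma'$ is an honest minor of $\Cay(\Gamma,S)$. This requires no distortion hypothesis at all and gives the desired minor-freeness directly. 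Second, you argue that because $H$ is one-ended, any canonical finite-adhesion tree-decomposition of its Cayley graph is trivial, hence the single torso — the whole Cayley graph — is planar. That inference is false: a one-ended quasi-transitive graph can have a nontrivial canonical tree-decomposition of bounded adhesion, with one infinite planar torso and many finite torsos hanging off (see \cref{rem: one-end}); planarity of the central torso alone does not force planarity of the whole graph. The paper obtains planarity of the entire Cayley graph of a one-ended transitive $K_\infty$-minor-free group directly from Thomassen's theorem (\cref{thm: Th92}: a locally finite, transitive, one-ended, non-planar graph contains $K_\infty$ as a minor), a cleaner and correct route. Finally, your transfer step via free products and commensurability works but is unnecessarily elaborate: the paper simply cites the general fact that if a finitely generated subgroup has undecidable domino problem, so does the ambient group (\cite[Proposition 9.3.30]{Aubrun18subshift}).
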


\subsection{Overview of the proof of \cref{intro:main,intro:main2}} 
Consider a locally finite quasi-transitive graph $G$ that excludes the countable clique $K_\infty$ as a minor. The graph $G$ is said to be \emph{quasi-$4$-connected} if it is $3$-connected 
and for every set $S\subseteq V(G)$ of size $3$ such that $G- S$ is not
connected, $G- S$ has exactly two connected components and one of them
consists of a single vertex. Thomassen proved that if $G$ is quasi-4-connected, then $G$ is planar or has finite treewidth \cite{Thomassen92}, which implies \cref{intro:main2} in this case (with a trivial tree-decomposition consisting of a single node).
 
To deal with the more general case, the first step is to obtain a canonical tree-decomposition of $G$ of adhesion at most 2 in which all torsos are minors of $G$ that are 3-connected graphs, cycles, or complete graphs on at most 2 vertices. The existence of such a decomposition in the finite case is a well-known result of Tutte \cite{Tutte} and was proved in the locally finite case in \cite{DSS98}. 
For our proof we  need to go one step further. Grohe \cite{Gro16}  proved that every finite graph $G$ has  a tree-decomposition  of adhesion at most $3$ whose torsos are minors of $G$ and are complete graphs on at most 4 vertices or  quasi-$4$-connected graphs. A crucial step for us would be to prove a version of this result in which the tree-decomposition would be canonical, and which would hold for locally finite graphs.

However, as observed by Grohe, even in the finite case the decomposition he obtains is not canonical in general.
Our main technical contribution is to extend the resut of Grohe \cite{Gro16} mentioned in the previous paragraph to locally finite graphs, while making sure that most of the construction (except the very end) is canonical. For this, we proceed in two steps. First, we use a result of \cite{CTTD} to find a canonical tree-decomposition of any $3$-connected graph $G$ that distinguishes all its tangles of order $4$. Using this result, we show that we can assume that the graph under consideration admits a unique tangle $\T$ of order $4$. 
We then follow the main arguments from \cite{Gro16} and  
show that $G$ has a canonical tree-decomposition of adhesion $3$ which is a star and whose torsos are all minors of $G$ and finite, except for the torso $H$ associated to the center of the star, which has the following property: there exists a matching $M\subseteq E(H)$ which is invariant under the action of the automorphism group of $G$ and such that the graph $H':=H/M$ obtained after the contraction of the edges of $M$ is quasi-transitive, locally finite, and quasi-$4$-connected. In particular, a result of Thomassen \cite{Thomassen92} then implies that $H'$ is  planar or has bounded treewidth.
We then prove that even if $H$ itself is not necessarily quasi-$4$-connected, it is still planar or has bounded treewidth, which is enough to conclude the proof of \cref{intro:main2}.
The final step to prove \cref{intro:main} consists in refining the tree-decomposition to make sure that torsos of bounded treewidth are replaced by torsos of finite size (moreover, this refinement has to be done in a canonical way).

\subsection{Related work}
Independently of our work, Carmesin and Kurkofka \cite{CK23} recently worked on decompositions of $3$-connected graphs with an approach that differs from Grohe's approach. They obtained a canonical decomposition into basic pieces consisting in quasi-$4$-connected graphs, wheels or thickenings of $K_{3,m}$ for $m\geq 0$. It is possible that their approach could also imply some of the applications we describe. However it is not clear to us whether this work directly implies the existence of canonical tree-decompositions with the properties described in \cref{intro:main,intro:main2}, as they consider \emph{mixed separations}, i.e.\ separations containing both vertices and edges, while we focus on vertex-separations. Vertex-separations yield tree-decompositions, while mixed separations do not yield tree-decompositions in a traditional sense. 

%Moreover, their result cannot be extended to canonical tree-decompositions, as shown by \cref{ex: gro}. The only canonical tree-decomposition having the properties of \cref{thm: main2} for the graph described in \cref{ex: gro} must be the trivial one, and its (unique) torso is neither quasi-$4$-connected, nor a wheel or a thickening of some $K_{3,m}$.

\subsection{Organization of the paper}

We start with some preliminary definitions and results about graphs
and groups in \cref{sec: prel}. \cref{sec: III} is
dedicated to the study of canonical tree-decompositions. It contains
the main technical contribution of the paper, a partial extension of
results of Grohe \cite{Gro16} to infinite graphs. \cref{sec: struct} contains the proof of \cref{intro:main,intro:main2}. \cref{sec: appli} is dedicated to the main applications of \cref{intro:main,intro:main2}.
We conclude with a number of open problems in \cref{sec:ccl}.

\section{Preliminaries}
\label{sec: prel}
For every $n\in \mathbb N$, we let $[n]:=\sg{1, 2, \ldots, n}$. 

\subsection{Graphs}
\label{sec: graphes}
In what follows, we will consider undirected graphs $G=(V(G),E(G))$ which are \emph{simple} (i.e.\ without loops and multi-edges), unless specifically stated otherwise. 
The set of vertices $V(G)$ will always be finite or infinite countable, and we will
always assume our graphs to be connected. Most of the time $G$ will be \emph{locally finite},
meaning that every vertex has finite degree (the only graphs that will not satisfy this additional requirement will be trees of the tree-decompositions). We equip a graph $G$ with
its shortest-path metric $d_G$. For any set of vertices $X\subseteq
V(G)$, the \emph{neighborhood} of $X$ in $G$ is denoted by 
\[N_G(X):=\sg{u\in V(G)\setminus X: \exists v\in X, uv \in E(G)}.\]
When the graph $G$ is clear from the context we will drop the
subscript and write $N(X)$ instead of $N_G(X)$. For each $v\in V(G)$, we set $N_G(v):=N_G(\sg{v})$.
For every graph $G$ and every subset of vertices $X\subseteq V(G)$, we
denote by $G[X]$ the \emph{subgraph of $G$ induced by $X$}, which is  the graph with vertex set $X$ whose edge set consists of all the pairs $uv$ such that $uv\in E(G)$. We let $G- X:=G[V(G)\setminus X]$.
We denote by $G\llbracket X \rrbracket$ the graph with vertex set
$X$ whose edge set consists of all the pairs $uv$ such that  $uv\in E(G)$ or there
exists a connected component $C$ of $G- X$ such that
$\sg{u,v}\subseteq N(C)$. The graphs $G\llbracket X \rrbracket$ are
called the \emph{torsos} of $G$.

\medskip

For each $n\in \mathbb N$, we let $K_n$ denote the complete graph with vertex set $[n]$. We let $\Kinf$ be the \emph{countable clique}, that is the infinite complete graph with vertex set $\mathbb N$. This graph is  sometimes also denoted by $K_{\aleph_0}$, which is less ambiguous, but we prefer to keep the notation $K_\infty$ as used by Thomassen in \cite{Thomassen92}, since we reuse a number of results proved in his paper.

\subsubsection*{Minors and models} Given two graphs $G,H$, we say that $H$ is a \emph{minor} of $G$ if it
can be obtained from $G$ after removing some vertices and edges, and
contracting edges. A \emph{model} of $H$ in $G$ is a family
$(V_{v})_{v\in V(H)}$ of pairwise disjoint vertex subsets of $G$ such
that each $V_v$ induces a connected subgraph of $G$, and for each
$uv\in E(H)$, there exists $u'\in V_u, v'\in V_v$ such that $u'v'\in
E(G)$. Note that $H$ is a minor of $G$ if and only if there is a model
of $H$ in $G$. When $V(H)\subseteq V(G)$, a model $(V_v)_{v\in V(H)}$
of $H$ in $G$ is said to be \emph{faithful} if for each $v\in V(H),
v\in V_v$. $H$ is a \emph{faithful minor} of $G$ if it admits a
faithful model in $G$. Consider for instance a graph $G$ with a subset of vertices $X\subset V(G)$ such that $G-X$ is connected and only two vertices of $X$ (call them $x$ and $y$) have a neighbor in $G-X$. Then the torso $G\llbracket X \rrbracket$ (as defined above), is a faithful minor of $G$ and it consists of the graph $G[X]$ with the addition of the edge $xy$ (if it is not already present in $G$).

\subsubsection*{Connectedness} For every $k\ge 0$, a graph $G$ is \emph{$k$-connected} if it has at least $k+1$ vertices and for every subset $S\subseteq V(G)$ of at most $k-1$ vertices, the graph $G- S$ is connected. We recall that
a graph is said to be \emph{quasi-$4$-connected} if it is $3$-connected 
and for every set $S\subseteq V(G)$ of size $3$ such that $G- S$ is not
connected, $G- S$ has exactly two connected components and one of them
consists of a single vertex.

\subsubsection*{Rays and ends} A \emph{ray} in a graph $G$ is an infinite simple one-way path $P=(v_1,v_2,\ldots)$. A \emph{subray} $P'$ of $P$ is a ray of the form $P'=(v_i,v_{i+1},\ldots)$ for some $i\geq 1$. We say that a ray \emph{lives} in a set $X\subseteq V(G)$ if one of its subrays is included in $X$.
We define an equivalence relation $\sim$ over the set of rays $\mathcal R(G)$ by letting $P\sim P'$ if and only if for every finite set of vertices $S\subseteq V(G)$, there is a component of $G-S$ that contains infinitely many vertices from both $P$ and $P'$. When $G$ is infinite, this is equivalent to saying that for any finite set $S\subseteq V(G)$, $P$ and $P'$ are living in the same component of $G- S$. The \emph{ends} of $G$ are the elements of $\mathcal R(G)/\sim$, the equivalence classes of rays under $\sim$.
For every $X \subseteq V(G)$, we say that an end $\omega$
\emph{lives in $X$} if one of its rays lives in $X$.

When there is a set $X$ of vertices
of $G$, two distinct components $C_1,C_2$ of $G-X$, and two distinct ends
$\omega_1,\omega_2$ of $G$ such that for each $i=1,2$, 
$\omega_i$ lives  in $C_i$, we say that $X$
\emph{separates} $\omega_1$ and $\omega_2$. A graph $G$ is
\emph{vertex-accessible} if there is an integer $k$ such that for any 
two distinct ends $\omega_1,\omega_2$ in $G$, there is a set of at most $k$
vertices that separates $\omega_1$ and $\omega_2$. The \emph{degree} of an end $\omega$ is the supremum
number $k\in \mathbb{N}\cup \sg{\infty}$ of pairwise disjoint rays
that belong to $\omega$. By a result of Halin \cite{HalinGrid}, this supremum is a maximum i.e. if an end $\omega$ has infinite degree, then there exists an infinite countable family of pairwise disjoint rays belonging to $\omega$.
An end is \emph{thin} if it has finite
degree, and \emph{thick} otherwise. It is an easy exercise to check that for every
end $\omega$ of finite degree $k$ and every end $\omega'\ne \omega$, there is a set of size at most $k$ that separates $\omega$ from $\omega'$.

\medskip

The interested reader is referred to Chapter 8 in \cite{diestel2017graph} for more background and important results in infinite graph theory.

\subsection{Groups and Cayley graphs}
An \emph{automorphism} of a graph $G$ is a graph isomorphism from $G$ to itself (i.e., a bijection from $V(G)$ to $V(G)$ that maps edges to edges and non-edges to non-edges). The set of automorphisms of $G$ has a natural group structure (as a subgroup of the symmetric group over $V(G)$); the group of automorphisms of $G$ is denoted by $\mathrm{Aut}(G)$.

For a graph $G$ and a group $\Gamma$, we will say that $\Gamma$ \emph{acts by automorphisms on $G$} (or simply that $\Gamma$ \emph{acts on $G$} when the context is clear) if every element of $\Gamma$ induces an automorphism $g$ of $G$, such that the induced application $\Gamma \to \mathrm{Aut}(G)$ is a group morphism. We will usually use the right multiplicative notation $x\cdot g$ instead of $g(x)$ for $g\in \Gamma$, $x\in V(G)$. For every $X\subseteq V(G), \Gamma'\subseteq \Gamma$ and $g\in \Gamma$, we let $X\cdot g:= g(X) =\sg{x\cdot g: x\in X}$ and $X\cdot \Gamma':=\bigcup_{g\in \Gamma'}X\cdot g$. 
We denote the  set of orbits of $V(G)$ under the action of $\Gamma$ by
$G/\Gamma$ ($\Gamma$ naturally induces an equivalence relation on
$V(G)$, relating elements in the same orbit of $\Gamma$). For every
subset $X\subseteq V(G)$ we let $\Stab_{\Gamma}(X):=\sg{g\in \Gamma:
  X\cdot g=X}$ denote the \emph{stabilizer} of $X$, which is always a
subgroup of $\Gamma$. For each $x\in X$, we let
$\Gamma_x:=\Stab_{\Gamma}(\sg{x})$.

\subsubsection*{Quasi-transitive graphs} The action of
a group $\Gamma$ on a graph $G$ is said to be \emph{vertex-transitive} (or simply \emph{transitive})
when there is only one orbit in $G/\Gamma$, i.e.\ when for every two
vertices $u,v\in V(G)$ there exists an element $g\in \Gamma$ such that
$u\cdot g=v$. The action of $\Gamma$ on $G$ is said to be
\emph{quasi-transitive} if there is only a finite number of  orbits in
$G/\Gamma$. We say that $G$ is \emph{transitive} (resp.\
\emph{quasi-transitive}) if it admits a transitive (resp.\
quasi-transitive) group action.

\smallskip

It was proved, first for finitely generated groups and then in the more general graph-theoretic context, that the number of ends of a quasi-transitive graph is either $0,1,2$ or $\infty$ \cite{Freudenthal44,Hopf43,Diestel93}. A graph with a single end is said to be \emph{one-ended}.

\subsubsection*{Finitely presented groups} A \emph{group presentation} is a pair $\Pres$ where $S$ is a set of letters called \emph{generators} and $R$ a set of finite words over the alphabet $S$ called \emph{relators}. We will always assume that $S$ is finite and closed under taking inverse, i.e.\ that every generator $a\in S$ comes with an associated \emph{inverse} $a^{-1}\in S$ (which may be equal to $a$) such that $(a^{-1})^{-1}=a$, and we assume that $aa^{-1}\in R$.
We say that $\Pres$ is \emph{finite} when both $S$ and $R$ are finite. The group associated to $\Pres$ is the group $F(S)/N(R)$, where $F(S)$ is the free group over $S$ and $N(R)$ is the normal closure of $R$ in $F(S)$, i.e.\ the set of elements of $F(S)$ of the form $(w_1\cdot r_1\cdot w_1^{-1})\cdots (w_\ell\cdot r_\ell\cdot w_\ell^{-1})$ for any $\ell\in \mathbb{N}$, $r_1,\ldots, r_\ell\in R$ and $w_1,\ldots, w_\ell \in F(S)$. For simplicity, we will also denote this group with $\Pres$. Note that by definition of $\Pres$, since $aa^{-1}\in R$ for every element $a\in S$,  the (formal) inverse $a^{-1}$ of $a$ in $S$ is indeed the inverse of  $a$ in the group $\Pres$.

\subsubsection*{Cayley graphs}  The \emph{Cayley graph} of a finitely
generated group $\Gamma$ with respect to the finite set of generators $S$ is the edge-labeled graph 
$\Cay(\Gamma,S)$ whose vertex set is the set of elements of $\Gamma$ and where for every two $g,h\in \Gamma$ we add an arc $(g,h)$ labeled with $a\in S$ when $h=a\cdot g$. Note that $\Cay(\Gamma,S)$ is always locally finite (as we assumed $S$ to be finite), connected and that the group 
$\Gamma$ acts transitively by right multiplication on
$\Cay(\Gamma,S)$. As mentioned above we will always assume that the set $S$ of
generators is symmetric, so whenever there is an arc $(u,v)$ labeled
with $a\in S$, the graph also contained the arc $(v,u)$ labeled
$a^{-1}$. It this case we can consider the non-labeled version of
$\Cay(\Gamma,S)$ as an undirected simple graph, with a single edge
$uv$ instead of each pair of arcs $(u,v)$ and $(v,u)$.

\smallskip

We say that a finitely generated group $\Gamma$ is \emph{planar} (resp.\ \emph{minor-excluded}) if it admits a finite generating set $S$ such that $\Cay(\Gamma, S)$ is planar (resp.\ does not contain every finite graph as a minor). Similarly, we say that $\Gamma$ is $K_\infty$-minor-free if it admits a finite generating set $S$ such that $\Cay(\Gamma, S)$ is $K_\infty$-minor-free.

\subsubsection*{Accessibility}
In order to give a precise definition of accessibility in groups
(without using to Stallings theorem and the notions of free product
with amalgamation and HNN-extension explicitly as we did in the introduction), we
first need to define the notion of a graph of groups. The reader is
referred to \cite[Chapter 1]{Serre} for more details on graphs of
groups and Bass-Serre theory.

\medskip

A \emph{graph of groups} consists of  a pair $(G, \mathcal G)$ such that $G=(V,E)$ is a graph (possibly having loops and
multi-edges), and $\mathcal G$ is a family of 
\emph{vertex-groups} $\Gamma_v$ for
each $v\in V$, \emph{edge-groups} $\Gamma_{uv},
\Gamma_{vu}$ for each edge $uv\in E$, and of group isomorphisms
$\phi_{u,v}:\Gamma_{uv}\to \Gamma_{vu}$ for each $uv\in E(G)$, such that $\Gamma_{uv}$ and $\Gamma_{vu}$ are respectively subgroups of
$\Gamma_u$ and $\Gamma_v$ for each $uv\in E$. 
%Let
%$\mathcal{G}:=(\Gamma_v)_{v\in V} \cup (\Gamma_{uv})_{uv\in E}$
%denote the family of these vertex- and edge-groups indexed by the
%vertices and edges of $G$.
%We call such a pair $(G, \mathcal{G})$ a \emph{graph of groups}.

Let $T$ be a spanning tree of $G$ and $\langle S_v| R_v \rangle$ be a presentation of $\Gamma_v$ for each $v\in V$. The \emph{fundamental group} $\Gamma:=\pi_1(G, \mathcal{G})$ of the graph of groups $(G, \mathcal{G})$ is defined as the group having as generators the set:
$$S:=\left(\sqcup_{v\in V}S_v\right) \sqcup \left(\sqcup_{e\in E\setminus E(T)} \sg{t_e}\right)$$
and as relations:
\begin{itemize}
    \item the relations of each $R_v$;
    \item for every edge $uv\in E(T)$ and every $g\in \Gamma_{uv}\subseteq \Gamma_u$, the relation $\phi_{uv}(g)=g$;
    \item for every edge $e=uv\in E\setminus E(T)$ and every $g\in \Gamma_{uv}$, the relation $t_e\phi_{uv}(g)t_{e}^{-1}=g$.
\end{itemize}
It can be shown that for a given graph of groups $(G, \mathcal{G})$, the definition of its fundamental group does not depend of the choice of the spanning tree $T$ (see for example \cite[Section I.5.1]{Serre}). 

\begin{remark}
At the beginning of this section we have defined  $\Gamma_v$ as the stabilizer of $\{v\}$ by the action of $\Gamma$ on a graph $G$ and the reader might be worried about a possible confusion with the notation $\Gamma_v$ of vertex-groups above. On the one hand the vertex-group $\Gamma_v$ has a close connection with the stabilizer of $\{v\}$ in this context, so the objects are not completely unrelated, and on the other hand vertex-groups will only be used at the very end (in \cref{sec: Domino}, where stabilizers will not be used at all), so hopefully there should not be any risk of confusion.
\end{remark}

A group $\Gamma$ is said to be  \emph{accessible} if it is the fundamental group of a finite graph of groups $G$ with finite vertex set $V(G)$ such that:
\begin{itemize}
    \item the vertex-groups have at most one end, and
    \item the edge-groups are finite.
\end{itemize}

By Bass-Serre theory \cite{Serre,DD}, accessible groups are exactly those groups acting on trees without edge-inversion, such that the vertex-stabilizers have at most one end and the edge-stabilizers of the action are finite.

\medskip

As mentioned in the introduction, Thomassen and Woess \cite{TW}
obtained the following connection between accessibility in groups and
vertex-accessibility in graphs.

\begin{theorem}[\cite{TW}]
\label{thm: TW}
A finitely generated group is accessible if and only if it 
admits a Cayley graph which is vertex-accessible. 
\end{theorem}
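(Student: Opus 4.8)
The plan is to prove the two implications separately, using Bass--Serre theory as a dictionary between splittings of $\Gamma$ and actions of $\Gamma$ on trees, and Dunwoody's structure tree theory as the tool that manufactures the relevant tree out of a vertex-accessible Cayley graph. Throughout I use the Bass--Serre characterisation of accessibility recorded above: $\Gamma$ is accessible if and only if it is the fundamental group of a finite graph of groups (equivalently, acts with finitely many orbits on a tree without edge-inversion) with finite edge-groups and with vertex-groups having at most one end.

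\emph{From group accessibility to a vertex-accessible Cayley graph.} Assume $\Gamma$ is accessible, so $\Gamma=\pi_1(X,\mathcal X)$ for a finite graph of groups with finite vertex set, finite edge-groups, and vertex-groups $\Gamma_v$ having at most one end. Each $\Gamma_v$ is finitely generated (a vertex-group of a finite graph of groups is finitely generated once the fundamental group and all edge-groups are, and here the edge-groups are finite). Choosing finite generating sets $S_v$ of the $\Gamma_v$ and throwing in the stable letters $t_e$ of the edges outside a spanning tree of $X$ yields a finite generating set $S$ of $\Gamma$. The group $\Gamma$ acts on the Bass--Serre tree $T$ of $(X,\mathcal X)$ with finite quotient, the stabiliser of a vertex being a conjugate of some $\Gamma_v$ and the stabiliser of an edge a conjugate of a finite edge-group. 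One then checks that $\Cay(\Gamma,S)$ is a ``tree of Cayley graphs'' modelled on $T$: the subgraph induced on a coset $g\Gamma_v$ is (essentially) $\Cay(\Gamma_v,S_v)$, and each edge of $T$ corresponds to a set of at most $m:=\max_e|\Gamma_e|$ edges of $\Cay(\Gamma,S)$ whose removal disconnects the two sides. Since every such piece is finite or one-ended, any two distinct ends of $\Cay(\Gamma,S)$ lie on opposite sides of one of these edge-cuts, and picking one endpoint of each of its at most $m$ edges gives a vertex-separator of size at most $m$. Hence $\Cay(\Gamma,S)$ is vertex-accessible with parameter $m$.

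\emph{From a vertex-accessible Cayley graph to group accessibility.} Conversely, suppose $G=\Cay(\Gamma,S)$ is vertex-accessible with constant $k$. Dunwoody's structure tree theory applies: starting from the set of orbits of ``small'' edge-cuts of $G$, one extracts a nested, $\Aut(G)$-invariant family of cuts that together distinguish all pairs of ends, and this family gives rise to a structure tree $T$ carrying an action of $\Aut(G)$, hence of $\Gamma$. Vertex-accessibility is precisely what bounds the order of the cuts one must use; since $G$ is transitive and locally finite there are then only finitely many orbits of such cuts, so $T/\Gamma$ is finite. Two points finish the argument. First, $\Gamma$ acts \emph{freely} on its own Cayley graph, so the setwise stabiliser of any finite edge-cut is finite: a finite-index subgroup of it fixes both endpoints of some edge of the cut and is therefore trivial; in particular every edge-stabiliser of the action of $\Gamma$ on $T$ is finite. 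Second, since $T$ is built to separate all ends of $G$, no piece corresponding to a vertex of $T$ can contain rays of two distinct ends, so every vertex-stabiliser has at most one end. Feeding the action of $\Gamma$ on $T$ into Bass--Serre theory now exhibits $\Gamma$ as the fundamental group of a finite graph of groups with finite edge-groups and vertex-groups having at most one end, i.e.\ as an accessible group.

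\emph{Main obstacle.} The hard part is the second implication, and within it the structure tree construction: producing a \emph{nested}, automorphism-invariant family of cuts of uniformly bounded order, verifying that it distinguishes all pairs of ends, and checking that the resulting pieces really are one-ended rather than merely lacking a small separation that the tree happens to see. This is the heart of Dunwoody's accessibility machinery (optimal and tight cuts, thin ends, and the bookkeeping needed to keep the cut family nested and invariant), and one must also reconcile the vertex-separator formulation of vertex-accessibility used here with the edge-cut formulation in which structure trees are naturally built, which is routine but alters the relevant constants by a bounded amount.
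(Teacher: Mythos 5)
The paper does not give a proof of this theorem: it is quoted verbatim as a result of Thomassen and Woess \cite{TW}, so there is no in-paper argument to compare your proposal against. Your outline does follow the broad strategy of the original Thomassen–Woess proof (Bass–Serre theory in one direction, Dunwoody's structure-tree machinery in the other), so it is in the right spirit, but there are two places where the argument as written has genuine gaps rather than just suppressed routine detail.

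First, in the converse direction you deduce ``every vertex-stabiliser has at most one end'' from ``no piece corresponding to a vertex of $T$ contains rays of two distinct ends.'' These are not the same statement: the ends referred to in the definition of accessibility are ends of the abstract group $\Gamma_v=\Stab_\Gamma(v)$, while the ``piece'' is a subset of $G=\Cay(\Gamma,S)$. Passing from one to the other requires knowing that $\Gamma_v$ is finitely generated and quasi-isometric to that piece equipped with the induced metric; neither is automatic, and establishing the quasi-isometry (typically via a bounded ``region'' of $G$ on which $\Gamma_v$ acts quasi-transitively with finite stabilisers) is precisely where the work of Thomassen--Woess and the surrounding structure-tree literature lies. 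You flag some of this under ``main obstacle,'' but as stated the step is asserted rather than proved. Second, in the forward direction the counting is muddled: an edge of the Bass--Serre tree corresponds to a coset of $\Gamma_e$, hence to a set of at most $m=\max_e|\Gamma_e|$ \emph{vertices} of $G$ whose removal separates, not to a set of at most $m$ \emph{edges} of $G$; the number of $G$-edges crossing such a cut is bounded by $m\cdot|S|$ rather than by $m$. The cleanest formulation is simply that the Bass--Serre tree equips $G$ with a $\Gamma$-invariant tree-decomposition with adhesion bounded by $m$, whose parts each carry at most one end; the bound on separators is then immediate and the ``one endpoint of each edge'' detour is unnecessary.
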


It was open for a long time whether there exist finitely generated
groups which are not accessible, and Dunwoody answered this question
negatively in \cite{Dunwoody93}. On the other hand he also proved the following result. 

\begin{theorem}[\cite{Dunwoody1985}]
\label{thm: Dunwoody}
Every finitely presented group is accessible.
\end{theorem}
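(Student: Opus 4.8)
The plan is to follow Dunwoody's proof via \emph{tracks} in a simply connected $2$-complex (see \cite{Dunwoody1985}, and \cite{DD} for a combinatorial treatment). Fix a finite presentation $\Pres$ of $\Gamma$, let $Y$ be the associated presentation $2$-complex (which is finite since the presentation is finite), and let $X$ be its universal cover, suitably subdivided so as to be simplicial. Then $X$ is simply connected, $\Gamma$ acts freely, properly and cocompactly on $X$, and $\bar X := X/\Gamma$ is a \emph{finite} $2$-complex. This finiteness of $\bar X$ is the only place where finite presentability is used beyond finite generation, and it is indispensable: Dunwoody \cite{Dunwoody93} later constructed finitely generated groups that are not accessible.

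Next I would set up the dictionary between splittings and patterns in $X$. A \emph{pattern} is a two-sided subset of $X$ meeting each edge in finitely many interior points and each $2$-simplex in a disjoint union of arcs joining distinct sides; a \emph{track} is a connected pattern, and it is \emph{thin} if it is compact. By Bass--Serre theory \cite{Serre}, a graph-of-groups decomposition of $\Gamma$ with finite edge groups is the same as an action of $\Gamma$ on a simplicial tree $T$ without edge inversions and with finite edge stabilizers. Since $X$ is simply connected and $T$ is contractible there is a $\Gamma$-equivariant simplicial map $f\colon X\to T$ (after further subdivision), which one may take surjective; after putting $f$ in minimal position, the preimages of the midpoints of the edges of $T$ form a $\Gamma$-invariant family of pairwise disjoint thin tracks. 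Each such track has finite stabilizer (contained in an edge stabilizer), each edge of $T$ has nonempty midpoint-preimage, and tracks lying over distinct edges are disjoint, so the number of edges of the decomposition is at most the number of $\Gamma$-orbits of tracks in this family. Minimal position also guarantees, when the decomposition is reduced (no edge group equals an incident vertex group), that no track bounds a trivial disc region and that no two tracks in distinct orbits are \emph{parallel}, i.e.\ cobound a product region $t\times[0,1]$ meeting no other track of the family.

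The crux --- and the step I expect to be the main obstacle --- is the combinatorial finiteness lemma: there is an integer $n=n(\bar X)$, depending only on $\bar X$ (and in fact bounded in terms of its number of $2$-simplices), such that every family of pairwise disjoint, pairwise non-parallel, non-trivial thin tracks in $X$ has at most $n$ $\Gamma$-orbits. One would pick orbit representatives in minimal position and analyze, inside each of the finitely many $2$-simplices of $\bar X$, the finitely many combinatorial \emph{types} of arc a track can contribute to that simplex (a type recording which pair of sides the arc joins); minimality together with the non-parallel hypothesis forces tracks in distinct orbits to be witnessed by distinct local data in the finite complex $\bar X$, and this bounds the number of orbits. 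Making this rigorous --- handling the equivariance bookkeeping, the twisted (one-sided) tracks, and the correct notion of minimal position for an infinite family --- is the genuinely delicate part of the proof.

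Finally I would assemble the pieces. By the previous two steps, every reduced graph-of-groups decomposition of $\Gamma$ with finite edge groups has at most $n(\bar X)$ edges; since the one-vertex decomposition with vertex group $\Gamma$ is such a decomposition, there is one, $\mathcal D$, with the maximum number of edges, and its underlying graph is finite. Each vertex group $\Gamma_v$ of $\mathcal D$ is finitely generated, as is standard for a finite graph of groups with finite edge groups and finitely generated fundamental group. If some $\Gamma_v$ had at least two ends, then by Stallings' theorem \cite{Sta72} it would split nontrivially over a finite subgroup; inserting this splitting at $v$ and discarding any trivial pieces it creates would produce a reduced graph-of-groups decomposition of $\Gamma$ with finite edge groups and strictly more edges than $\mathcal D$, contradicting maximality. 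Hence every vertex group of $\mathcal D$ has at most one end while every edge group is finite, which is precisely the statement that $\Gamma$ is accessible. The remaining verifications --- that the refinement strictly increases the edge count, stays reduced, and yields a finite vertex set --- are routine.
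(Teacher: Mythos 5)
The paper does not prove this statement; it simply cites Dunwoody~\cite{Dunwoody1985}, so there is no internal proof to compare against. Your sketch is a faithful outline of Dunwoody's own track argument: presentation complex, universal cover, the dictionary between splittings over finite subgroups and families of disjoint compact tracks (patterns), and the combinatorial bound $n(\bar X)$ on the number of orbits of pairwise disjoint, non-parallel tracks. You correctly identify that this finiteness lemma is where all the real work is, and that finite presentability enters precisely through compactness of $\bar X$.

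One step you dismiss as routine is not. Your final argument takes a reduced decomposition $\mathcal D$ with a maximum number of edges, splits a multi-ended vertex group $\Gamma_v = A *_C B$ over a finite $C$ by Stallings, and claims the resulting reduced decomposition has strictly more edges. This can fail: if some old edge group $\Gamma_e$ incident to $v$ happens to equal the factor $A$ (which is possible when $A$ is finite), then after inserting the new $C$-edge and reducing, the old edge $e$ collapses, and the total edge count is unchanged --- only the vertex group $\Gamma_v$ gets replaced by the smaller group $B$. So ``strictly increases the edge count'' is false as stated, and Dunwoody's actual termination argument needs a more refined complexity than raw edge count (he tracks something closer to the cumulative number of non-parallel tracks produced along the refinement process, together with a careful analysis of what happens under reduction). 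This is not a peripheral bookkeeping issue; it is exactly the place where Linnell's earlier partial result required a uniform bound on the orders of finite subgroups and where Dunwoody's contribution lies. You should either replace ``maximum number of edges'' with Dunwoody's complexity, or argue directly that the family of tracks arising across all stages of refinement remains pairwise non-parallel and hence is globally bounded by $n(\bar X)$.
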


In particular, planar groups form a proper subclass of accessible groups.
\begin{theorem}[\cite{Droms}]
 \label{thm: Droms}
 Every finitely generated planar group is finitely presented, and thus accessible.
\end{theorem}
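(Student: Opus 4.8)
The plan is to derive both conclusions from a single claim --- that a finitely generated planar group $\Gamma$ is finitely presented --- since accessibility then follows immediately from \cref{thm: Dunwoody} (or, even more cheaply, from the accessibility of all finitely generated groups \cite{Dunwoody93}). The overall strategy for finite presentability rests on two soft facts: finite presentability is preserved by amalgamated free products and HNN-extensions over finitely presented (in particular finite) subgroups, so that the fundamental group of a \emph{finite} graph of groups with finite edge-groups is finitely presented as soon as all its vertex-groups are; and the geometry of a planar embedding gives, for a $3$-connected planar graph, an explicit finite set of relators via its finitely many orbits of faces.

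First I would reduce to a $3$-connected situation. Fix a finite generating set $S$ with $G:=\Cay(\Gamma,S)$ planar and locally finite, and apply the canonical Tutte-type tree-decomposition of $G$ into $3$-connected torsos, cycles, and graphs on at most two vertices \cite{Tutte,DSS98}. Being canonical, it is invariant under $\Aut(G)$, hence $\Gamma$ acts on the decomposition tree $T$, and since $G$ is transitive this action has finitely many orbits of nodes and edges. As $\Gamma$ acts \emph{freely} on $V(G)$, the setwise stabilizer of any finite vertex set embeds into its symmetric group, so all edge-groups of the induced graph-of-groups decomposition (stabilizers of adhesion sets of size $\le 2$) are finite. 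By Bass--Serre theory, $\Gamma$ is thus the fundamental group of a finite graph of groups with finite edge-groups whose vertex-groups are the torso stabilizers $\Gamma_t=\Stab_\Gamma(X_t)$; by the preservation fact above it suffices to show each $\Gamma_t$ is finitely presented. Each $\Gamma_t$ acts freely (again since $\Gamma$ acts freely on $V(G)$) and quasi-transitively on the torso $\torso{X_t}$, which is locally finite and is either $3$-connected planar (minors of planar graphs being planar), a cycle, or a graph on at most two vertices. In the last two cases $\Gamma_t$ is virtually cyclic or finite, hence finitely presented, so we are left with the case of a group $\Gamma_t$ acting freely and quasi-transitively on a locally finite $3$-connected planar graph $Y$.

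Here I would exploit the plane. By Whitney's uniqueness theorem, extended to infinite locally finite $3$-connected planar graphs by Imrich, the embedding of $Y$ in the sphere is essentially unique, so $\Aut(Y)$ --- hence $\Gamma_t$ --- permutes the faces of $Y$, with finitely many orbits (bounded degree together with finitely many vertex orbits). Assume for a moment that every face of $Y$ has finite boundary, and form the $2$-complex $Y^{+}$ by attaching a $2$-cell along the boundary walk of each face. Then $Y^{+}$ is a genus-$0$ surface, namely $S^2$ if $Y$ is finite and $\mathbb R^2$ otherwise, hence simply connected; and $\Gamma_t$ acts cocompactly on $Y^{+}$ with finite cell-stabilizers and free action on the $1$-skeleton. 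A group acting cocompactly on a simply connected CW-complex with finite stabilizers is finitely presented --- take one representative per orbit of vertices, edges, and $2$-cells --- so $\Gamma_t$ is finitely presented.

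The principal obstacle is exactly the parenthetical assumption above: a $3$-connected locally finite planar graph may have faces with infinite boundary, and then filling only the finite faces leaves $Y^{+}$ not simply connected. To handle this I would argue that an infinite face of $Y$, under a quasi-transitive free action, forces further structure: its boundary walk is a thin, $\mathbb Z$-like double ray, and cutting $Y$ along the (boundedly many, up to the action) such walks decomposes $Y$ over finite separators into pieces each having only finite faces; this can be made precise using the vertex-accessibility of quasi-transitive planar graphs \cite{Dunwoody07}. One then reapplies the previous paragraph to the pieces and feeds the decomposition back into the amalgam/HNN machinery via Bass--Serre theory. (Alternatively, in this $3$-connected one-ended core one may invoke the classical structure theory of planar discontinuous, i.e.\ Fuchsian-type, groups, which are finitely presented.) Once finite presentability of $\Gamma$ is established, accessibility follows from \cref{thm: Dunwoody}, completing the proof.
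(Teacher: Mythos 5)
The paper does not give a proof of this statement; it is cited as a known theorem of Droms, so there is no internal argument to compare against. Your proposal should therefore be assessed on its own terms, and it has a genuine gap.

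The reduction via the Tutte decomposition (\cref{thm: Tutte}), the finiteness of the induced edge-groups from the freeness of the Cayley action, the Bass--Serre assembly step, and the CW-complex argument for a $3$-connected torso $Y$ with all faces finite are all sound: filling faces of $Y$ yields a simply connected $2$-complex on which $\Gamma_t$ acts freely and cocompactly, and finite presentability follows. The problem is that a $3$-connected quasi-transitive planar torso of a multi-ended Cayley graph can perfectly well have infinitely many ends (Tutte's decomposition only splits at separators of size $\le 2$), and then its face boundaries can be double rays. Your main argument does not apply there, and the fallback you sketch does not close the gap. Concretely: ``cutting $Y$ along the\ldots\ such walks decomposes $Y$ over finite separators'' is not justified --- the double rays you cut along are infinite vertex sets, and no finite separators are exhibited. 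The appeal to vertex-accessibility of quasi-transitive planar graphs \cite{Dunwoody07} is the right ingredient, but what you actually need from it is a canonical tree-decomposition with finite adhesion whose parts are one-ended or finite, together with the facts that the resulting vertex groups are still planar (via \cref{thm: Babai}) and that the one-ended pieces have only finite face boundaries; none of these steps is carried out. The alternative fallback --- invoking the theory of planar discontinuous/Fuchsian-type groups --- only covers the one-ended case, so the reduction of the multi-ended case to one-ended pieces is precisely what remains missing. That reduction is, in effect, a proof of accessibility of planar groups, which is the nontrivial core of Droms' own argument; stating it as a one-line remark does not discharge it.

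A clean way to repair the proposal is to reverse the logical order you set up at the start: first obtain accessibility directly from \cite{Dunwoody07} together with \cref{thm: TW}, then use Bass--Serre theory to present $\Gamma$ as the fundamental group of a finite graph of groups with finite edge-groups and one-ended or finite vertex-groups, note that these vertex-groups are again planar by \cref{thm: Babai}, apply the classical result that one-ended planar groups are planar discontinuous (hence finitely presented), and conclude finite presentability of $\Gamma$ from the preservation of finite presentability under amalgams and HNN-extensions over finite subgroups. This is essentially Droms' route; your CW-complex argument then becomes a nice self-contained proof of the one-ended case, replacing the black-box appeal to planar discontinuous group theory.
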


\subsubsection*{Virtually free groups} A group is \emph{virtually free} if it contains a
finitely generated free group as a subgroup of finite index. For graphs of
groups, this property can be related to the structure of the vertex groups as
follows.

\begin{theorem}[\cite{KPS}]
\label{thm: multi-ended}
A finitely generated group is virtually free if and only if it is the
fundamental group of a finite graph of groups in which all
vertex-groups are finite.
\end{theorem}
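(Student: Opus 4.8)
The plan is to prove the two implications separately, in both cases via Bass--Serre theory; the forward direction will additionally use Dunwoody's accessibility theorem (\cref{thm: Dunwoody}), and the converse the residual finiteness of fundamental groups of finite graphs of finite groups.

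For the implication ``virtually free $\Rightarrow$ finite graph of finite groups'', I would start from a free subgroup $F\le\Gamma$ of finite index and, after replacing $F$ by the intersection of its finitely many conjugates, assume $F\trianglelefteq\Gamma$. Since $F$ is finitely generated and free it is finitely presented, and a finite extension of a finitely presented group is finitely presented, so $\Gamma$ is finitely presented; \cref{thm: Dunwoody} then provides a decomposition $\Gamma=\pi_1(G,\mathcal G)$ with $G$ a finite graph, all edge groups finite, and all vertex groups having at most one end. It remains to upgrade ``at most one end'' to ``finite'' for the vertex groups: each $\Gamma_v$ is a subgroup of $\Gamma$, hence virtually free (as $\Gamma_v\cap F$ is free by Nielsen--Schreier and of finite index in $\Gamma_v$), and finitely generated (the vertex groups of a decomposition of a finitely generated group as the fundamental group of a \emph{finite} graph of groups over finitely generated edge groups are finitely generated); and an infinite virtually free group contains an infinite finitely generated free group of finite index, which has $2$ ends (rank one) or infinitely many ends (rank at least two), so by commensurability invariance of the number of ends an infinite virtually free group has at least two ends. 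Hence each $\Gamma_v$ must be finite, which is what we want.

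For the converse ``finite graph of finite groups $\Rightarrow$ virtually free'', I would use Bass--Serre theory to make $\Gamma=\pi_1(G,\mathcal G)$ act without inversion on its Bass--Serre tree $T$, noting that all vertex and edge stabilizers are finite (edge groups are subgroups of vertex groups) and that $T/\Gamma\cong G$ is finite. The goal is to produce a free subgroup of finite index, and the strategy is to find a finite-index subgroup acting \emph{freely} on $T$, which is then free as the fundamental group of the quotient graph. Since a finite group acting without inversion on a tree fixes a vertex, every finite subgroup of $\Gamma$ is conjugate into one of the finitely many vertex groups, so $\Gamma$ has only finitely many conjugacy classes of finite subgroups; and $\Gamma$ is residually finite, being built from finite groups by amalgamated products and HNN extensions over finite subgroups. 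Intersecting finitely many finite-index normal subgroups — one avoiding each non-identity element of each conjugacy representative of a finite subgroup — yields a finite-index normal $H\trianglelefteq\Gamma$ meeting every finite subgroup trivially, hence acting freely on $T$, and therefore free, as required.

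The hard part is the passage, in the converse direction, from a cocompact action on a tree with finite vertex stabilizers to a free subgroup of finite index: this carries the real content of the theorem and is not a formal consequence of Bass--Serre theory. The plan isolates the needed input as the residual finiteness of fundamental groups of finite graphs of finite groups, which is classical but itself rests on the non-trivial fact that amalgamated products and HNN extensions of residually finite groups over finite subgroups remain residually finite. An alternative I would consider is an induction on $|E(G)|$: deleting an edge writes $\Gamma$ as an amalgamated product $A*_CB$ or an HNN extension $A*_C$ over the finite edge group $C$, with $A$ and $B$ fundamental groups of finite graphs of finite groups with fewer edges, reducing everything to closure of the class of virtually free groups under amalgamated products and HNN extensions over finite subgroups; but proving that closure property requires essentially the same ingredient, so I do not expect a shortcut around it. The forward direction, by contrast, should be routine once Dunwoody's theorem is invoked, the only care being the finite generation of the vertex groups it produces.
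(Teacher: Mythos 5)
The paper does not prove this theorem; it cites it to [KPS] (Karrass--Pietrowski--Solitar), so there is no in-paper argument to compare against, and your proposal must be judged on its own. Your proof is correct, but it is not the original route.

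In the forward direction you invoke Dunwoody's accessibility of finitely presented groups (\cref{thm: Dunwoody}, 1985), which postdates the 1973 result being proved; the original [KPS] argument iterates Stallings' ends theorem and establishes accessibility of virtually free groups directly. Your route is nevertheless valid: virtually free $\Rightarrow$ finitely presented (finite extension of a finitely generated free, hence finitely presented, group) $\Rightarrow$ accessible, and the upgrade from ``vertex groups have at most one end'' to ``vertex groups are finite'' is correctly handled via Nielsen--Schreier plus the fact that an infinite finitely generated virtually free group has $2$ or $\infty$ ends. You are also right that the vertex groups produced are finitely generated; this is a standard lemma about cocompact actions on trees with finitely generated edge stabilizers, and it is worth knowing it is genuinely needed. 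The converse via residual finiteness of fundamental groups of finite graphs of finite groups (Baumslag for amalgams over finite subgroups, the analogous statement for HNN extensions), combined with the finiteness of the set of conjugacy classes of finite subgroups (every finite subgroup fixes a vertex of the Bass--Serre tree), is a classical and correct line; your normality trick for extending ``misses a representative'' to ``misses every conjugate'' is exactly right. One small point you leave implicit: to land in the paper's definition of virtually free you need the finite-index free subgroup $H$ to be \emph{finitely generated}; this holds because $\Gamma$ itself is finitely generated (being $\pi_1$ of a finite graph of finite groups), hence so is any finite-index subgroup, but it should be stated. Overall: correct proof, modern toolkit rather than the original one, no genuine gap.
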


\section{Tree-decompositions and tangles}
\label{sec: III}
\subsection{Separations and canonical tree-decompositions}
As we will be heavily relying on results of Grohe \cite{Gro16}, we use
his notation for all objects related to separations and tangles. A \emph{separation} in a graph $G=(V,E)$ is a triple $\Sep$
such that $Y,S,Z$ are pairwise disjoint, $V=Y\cup S\cup Z$ and there is no edge
between vertices of $Y$ and $Z$. A separation $\Sep$ is \emph{proper}
if $Y$ and $Z$ are nonempty. In this case, $S$ is a \emph{separator} of
$G$. 

The separation $\Sep$ is said to be
\emph{tight} if there are some components $C_Y,C_Z$ respectively of $G[Y],G[Z]$
such that $N_G(C_Y)=N_G(C_Z)=S$. The \emph{order} of a
separation $\Sep$ is $|S|$ and the \emph{order} of a family $\mathcal N$ of separations
is the supremum of the orders of its separations. In what follows,
we will always consider sets of separations of finite order. We will denote
$\separ_{k}(G)$ (respectively $\separ_{<k}(G)$) the set of all separations of
$G$ of order $k$ (respectively less than $k$). 

\medskip

The following lemma was originally stated in \cite{TW} for transitive graphs, but the same proof immediately implies that the result also holds for quasi-transitive graphs.
\begin{lemma}[Corollary 4.3 in \cite{TW}]
\label{lem: TWcut}
 Let $G$ be a locally finite graph. Then for every $v\in V(G)$ and $k\geq 1$, there is 
 only a finite number of tight separations $\Sep$ of order $k$ in $G$ such that $v\in S$.
 Moreover, for any group $\Gamma$  acting
 quasi-transitively on $G$ and  any $k\geq 1$, there is only a
 finite number of $\Gamma$-orbits of tight separations of order at
 most $k$ in $G$. 
\end{lemma}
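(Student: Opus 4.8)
The plan is to reduce the first assertion of the lemma to a statement about separators between two \emph{fixed} vertices, prove that statement by induction on the order, and then deduce the second assertion from the first using quasi-transitivity. For the reduction, suppose $(Y,S,Z)$ is a tight separation of order at most $k$ with witnessing components $C_Y$ of $G[Y]$ and $C_Z$ of $G[Z]$, and suppose $v\in S$. Since there is no edge between $Y$ and $Z$, the sets $C_Y$ and $C_Z$ are actually components of $G-S$ and $N_G(C_Y)=N_G(C_Z)=S$; in particular, as $v\in S$, the vertex $v$ has a neighbour $y\in C_Y$ and a neighbour $z\in C_Z$, with $y\neq z$. Thus $S$ is a set of size at most $k$ separating $y$ from $z$ such that the components of $y$ and of $z$ in $G-S$ both have neighbourhood exactly $S$; call such a set a \emph{tight $(y,z)$-separator}. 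As $G$ is locally finite, $v$ has only finitely many neighbours, so only finitely many pairs $(y,z)$ arise; and for a fixed separator $S$ there are only finitely many separations with middle part $S$, since $G-S$ has finitely many components ($G$ is locally finite and $S$ is finite) and $Y$ must be a union of some of them. Hence it suffices to prove: for every locally finite connected graph $H$, all $a,b\in V(H)$ and all $k\ge 0$, there are only finitely many tight $(a,b)$-separators of order at most $k$ in $H$.

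I would prove this by induction on $k$, the case $k=0$ being vacuous since $H$ is connected. For the inductive step, fix a (finite) path $P$ from $a$ to $b$ in $H$; every tight $(a,b)$-separator of order at most $k$ meets $V(P)$, so it is enough to bound, for each $p\in V(P)$, the number of such separators containing $p$. If $a$ and $b$ lie in distinct components of $H-p$, then for any tight $(a,b)$-separator $S\ni p$ the components of $a$ and of $b$ in $H-S$ lie in those two disjoint components of $H-p$, so $S$ — being the neighbourhood of each of them — is contained in $\{p\}$, whence $S=\{p\}$ and there is at most one such $S$. Otherwise $a$ and $b$ lie in a common component $H'$ of $H-p$, again connected and locally finite; here one checks that $S\setminus\{p\}\subseteq V(H')$ and that $S\setminus\{p\}$ is a tight $(a,b)$-separator of order at most $k-1$ in $H'$ (the components of $a$ and $b$ in $H-S$ already lie in $H'$, coincide with their components in $H'-(S\setminus\{p\})$, and have $H'$-neighbourhood $S\setminus\{p\}$), so the induction hypothesis bounds the number of possibilities for $S\setminus\{p\}$, hence for $S$. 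Summing over the finitely many $p\in V(P)$ finishes the induction. This step — in particular checking that deleting the right vertex $p$ lowers the order by exactly one — is where the real work lies; everything else rests on local finiteness.

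The second assertion then follows quickly. Fix representatives $v_1,\dots,v_m$ of the finitely many $\Gamma$-orbits on $V(G)$, and let $(Y,S,Z)$ be a tight separation of order at most $k$. Since $Y$ and $Z$ are nonempty with no edge between them and $G$ is connected, $S\neq\emptyset$; choose $s\in S$ and $g\in\Gamma$ with $s\cdot g=v_i$ for some $i$. Then $(Y\cdot g,S\cdot g,Z\cdot g)$ is a tight separation of order at most $k$ whose separator contains $v_i$, so by the first assertion it belongs to a fixed finite set; hence $(Y,S,Z)$ lies in the $\Gamma$-orbit of one of finitely many separations, and there are only finitely many such orbits. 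As the group action enters only at this last step, this is precisely the argument of Thomassen and Woess, now run for quasi-transitive rather than transitive actions.
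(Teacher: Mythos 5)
The paper does not prove this lemma itself; it cites Corollary~4.3 of Thomassen--Woess and only remarks that their argument for transitive graphs carries over verbatim to quasi-transitive ones. Your proposal supplies a full self-contained proof, and it is correct. The reduction to tight $(y,z)$-separators for the finitely many pairs of neighbours $y,z$ of $v$ is sound, since $C_Y$ and $C_Z$ are in fact components of $G-S$ (having $N_G(C_Y)=N_G(C_Z)=S$ forces this), $v$ must have a neighbour in each, and $G-S$ has only finitely many components because every component contains a vertex of the finite set $N_G(S)$. The induction on $k$ is handled carefully: when $p$ separates $a$ from $b$ in $H$, every vertex of $S$ would need neighbours in two distinct components of $H-p$, forcing $S=\{p\}$; otherwise $S\setminus\{p\}$ lies in the component $H'$ of $H-p$ containing both $a$ and $b$ and is a tight $(a,b)$-separator there of strictly smaller order, so the induction hypothesis applies. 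The passage from the first assertion to the second, translating an arbitrary $s\in S$ (which exists since $G$ is connected and $Y,Z\neq\emptyset$) to one of finitely many orbit representatives, is exactly where quasi-transitivity enters, just as the paper's remark indicates.
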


\subsubsection*{Canonical tree-decompositions}

A \emph{tree-decomposition} of a graph $G$ is a pair $(T,\mathcal V)$ where $T$ is a tree and $\mathcal V=(V_t)_{t\in V(T)}$ is a family of subsets $V_t$ of $V(G)$ such that:
\begin{itemize}
 \item $V(G)=\bigcup_{t\in V(T)}V_t$;
 \item for every nodes $t,t',t''$ such that $t'$ is on the unique path of $T$ from $t$ to $t''$, $V_t\cap V_{t''}\subseteq V_{t'}$; 
 \item every edge $e\in E(G)$ is contained in an induced subgraph $G[V_t]$ for some $t\in V(T)$. 
\end{itemize}
Note that in our definition of tree-decomposition, we allow $T$ to have vertices of infinite degree. The sets $V_t$ for every $t\in V(T)$ are called the \emph{bags} of $(T,\mathcal V)$, and the induced subgraphs $G[V_t]$ the \emph{parts} of $(T,\mathcal V)$. The \emph{width} of $(T,\mathcal V)$ is the supremum of $|V_t|-1$, for $t\in V(T)$. Note that the width of a tree-decomposition can be infinite.
The sets $V_t\cap V_{t'}$ for every $tt'\in E(T)$ are called the \emph{adhesion sets} of $(T,\mathcal V)$ and the \emph{adhesion} of $(T,\mathcal V)$ is the supremum of the sizes of its adhesion sets (possibly infinite). We also let $V_\infty(T)\subseteq V(T)$ denote the set of nodes $t\in V(T)$ such that $V_t$ is infinite.

\medskip

For a group $\Gamma$ acting (by automorphisms) on a graph $G$, we say that a
tree-decomposition $(T,\mathcal V)$ of $G$ is \emph{canonical with respect to
  $\Gamma$}, or simply \emph{$\Gamma$-canonical}, if $\Gamma$ induces a group action on $T$ such that for every $\gamma
\in \Gamma$ and $t\in V(T)$, $V_t\cdot \gamma= V_{t\cdot \gamma}$. By definition of a group action on a graph, $t\mapsto t\cdot \gamma$ is an automorphism of $T$ for any $\gamma\in \Gamma$.
In particular, for every $\gamma\in \Gamma$, note that 
$\gamma$ sends bags of $(T,\mathcal V)$ to bags, and
adhesion sets to adhesion sets. When $(T,\mathcal V)$ is
$\Aut(G)$-canonical, we simply say that it is \emph{canonical}. 

\medskip

\begin{remark}
\label{rem: stab}
 If $(T,\mathcal V)$ is a $\Gamma$-canonical tree-decomposition of a graph $G$, then $\Gamma$ acts both on $G$ and $T$, so there are two different notions of a stabilizer of a node $t\in V(T)$: $\Gamma_t=\Stab_{\Gamma}(t)$ (where we consider the action of $\Gamma$ on $T$), and $\Stab_{\Gamma}(V_t)$ (where we consider the action of $\Gamma$ on $G$). Observe that for any $t\in V(T)$ we have $\Gamma_t\subseteq \Stab_{\Gamma}(V_t)$. The reverse inclusion does not hold in general (when there are adjacent nodes $s,t\in V(T)$ with $V_s=V_t$,  automorphisms of $T$ exchanging $s$ and $t$ stabilize $V_s=V_t$ without stabilizing $s$ or $t$). However, if $t\in V(T)$ is such that $\sg{t'\in V(T): V_{t'}=V_t}=\sg{t}$, then $\Gamma_t= \Stab_{\Gamma}(V_t)$. In particular, if $(T, \mathcal V)$ has finite adhesion, then every bag $V_t$ with $t\in V_\infty(T)$ appears only once in the decomposition, and thus for each such node $t\in V_\infty(T)$ we have $\Gamma_t= \Stab_{\Gamma}(V_t)$. The property that the two notions of stabilizers coincide for infinite bags when the canonical tree-decomposition has finite adhesion will be used repeatedly in the remainder of the paper.
\end{remark}

\subsubsection*{Edge-separations and torsos}

Consider a tree-decomposition $(T,\mathcal V)$ of a graph $G$, with
$\mathcal{V}=(V_t)_{t\in V(T)}$. Let $A$ be an orientation of the
edges of $E(T)$, i.e.\ a choice of either $(t_1,t_2)$ or $(t_2,t_1)$
for every edge $t_1t_2$ of $T$. For an arbitrary pair $(t_1,t_2)\in
A$, and for each $i\in \{1,2\}$, let $T_i$ denote the component of
$T-\{t_1t_2\}$ containing $t_i$. Then the \emph{edge-separation} of
$G$ associated to $(t_1,t_2)$ is $(Y_1,S,Y_2)$ with $S:= V_{t_1}\cap
V_{t_2}$ and $Y_i:=
\bigcup_{s\in V(T_i)} V_s\setminus S$ for $i\in \sg{1,2}$.

\medskip

Given a separation
$\Sep$ and an automorphism $\gamma$ of a graph $G$, let $\Sep\cdot \gamma := (Y\cdot \gamma, S\cdot \gamma, Z\cdot
\gamma)$. 
If $\Gamma\subseteq
\mathrm{Aut}(G)$ and $\mathcal N$ is a family of separations of $G$, we say that
$\mathcal N$ is \emph{$\Gamma$-invariant} if for every $\Sep\in
\mathcal N$ and $
\gamma\in \Gamma$, we have $\Sep\cdot \gamma\in \mathcal N$.
Note that if $(T,\mathcal V)$ is $\Gamma$-canonical, then the associated set of edge-separations is $\Gamma$-invariant.

\medskip

The \emph{torsos} of $(T,\mathcal V)$ are the graphs  with vertex set $V_t$ and edge set 
$E(G[V_t])$ together with the edges $xy$ such that $x$ and $y$ belong
to a common adhesion set of $(T,\mathcal V)$. Note that this
definition coincides with the general definition of torso $G\llbracket V_t\rrbracket$ we gave in
\cref{sec: graphes} when the edge-separations of $(T, \mathcal
V)$ are tight. To prevent any ambiguity between these definitions, we
will always ensure that the tree-decompositions we work with have this
property (i.e., all the associated edge-separations are tight).

\smallskip

\begin{remark}
\label{rem: nbaretes}
 If $(T,\mathcal V)$ is a $\Gamma$-canonical tree-decomposition of a locally finite graph $G$ whose edge-separations are tight, then by \cref{lem: TWcut} the action of $\Gamma$ on $E(T)$ must induce a finite number of orbits. In particular, $\Gamma$ must also act quasi-transitively on $V(T)$.   
\end{remark}

The \emph{treewidth} of a graph $G$ is the infimum of the width of $(T,\mathcal V)$, 
among all tree-decompositions $(T,\mathcal V)$ of $G$.
Note that adding to a tree-decomposition of bounded width the
restriction that it must be canonical can be very costly in the finite case: while it is well known
that every cycle graph $C_n$ on $n$ vertices has treewidth $2$, the
example below shows that in any canonical
tree-decomposition of $C_n$, some bag contains all the nodes of $C_n$.

\begin{example}
\label{ex: cycle}
Let $C_n$ be the cycle graph on $n$ elements. Note that the additive
group $\mathbb Z_n$ acts transitively by rotation on $C_n$. We let $a$
be a generator of $\mathbb{Z}_n$ of order $n$. Let  $(T,(V_t)_{t\in V(T)})$ be a
$\mathbb Z_n$-canonical tree-decomposition of $C_n$. Without loss of generality we may assume that $T$ is finite, by contracting every edge $tt'\in E(T)$ such that $V_t=V_{t'}$. We may also assume that no edge $tt'$ of $T$ is \emph{inverted} by $a$, i.e.\ such that $(t,t')\cdot a=(t',t)$, as if it was the case we could subdivide the edge $tt'$ (i.e.\ add a new vertex $t^*$ between $t$ and $t'$) and let $V_{t^*}:= V_{t}\cap V_{t'}$. If we let $T'$ be the tree of the tree-decomposition obtained after performing such a subdivision, note that the obtained tree-decomposition is still $\mathbb{Z}_n$-canonical as $a$ induces an automorphism of $T'$ that stabilizes the vertex $t^*$ and acts on $V(T)$ the same way that it did before the subdivision.
After this operation none of the edges $tt^*$, $t^*t'$ is inverted by $a$. 
It is an easy exercise to prove that if no edge of $T$ is inverted by $a$,  there exists a vertex $t\in V(T)$ stabilized by $a$, and hence by all the elements of $\mathbb Z_{n}$. Then as $\mathbb Z_n$ acts transitively on $G$, we must have $V_t=V(C_n)$ for such a $t\in V(T)$.  
\end{example}

It turns out that in the quasi-transitive locally-finite case, if a graph has bounded treewidth then adding the
restriction that the tree-decomposition must be canonical is
fairly inexpensive if we only care about having finite bags.

\begin{theorem}[Theorem 7.5 in \cite{HamannStallings22}, \cite{MS83}, \cite{Woess89}, \cite{TW}]
\label{thm: tw-ends}
 Let $G$ be a connected quasi-transitive locally finite graph. Then the following are equivalent:
 \begin{itemize}
     \item $G$ has finite treewidth;
     \item all the ends of $G$ are thin;
     \item there exists $k\geq 1$ such that every end of $G$ has degree at most $k$;
     \item there exists a canonical tree-decomposition of $G$ with tight edge-separations and finite width.
 \end{itemize}
\end{theorem}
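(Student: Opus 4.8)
The plan is to prove the cycle of equivalences by establishing the implications in the order: finite treewidth $\Rightarrow$ all ends thin $\Rightarrow$ bounded end-degree $\Rightarrow$ canonical tree-decomposition with tight edge-separations and finite width $\Rightarrow$ finite treewidth. Several of these are essentially routine or quotable, so the real content is in one implication, which I address last.

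First, for \emph{finite treewidth $\Rightarrow$ all ends thin}: if $G$ has a tree-decomposition of width $w$, then any ray of $G$ is eventually confined (up to finitely many vertices) to a part of bounded size, and a standard argument shows that an end with more than $w+1$ disjoint rays would have to "cross" an adhesion set of size $\le w$ too many times, contradicting Menger. Concretely, one uses the fact that in a tree-decomposition of adhesion $\le w$, any two ends are separated by an adhesion set of size $\le w$, and that the degree of an end is bounded by the size of the smallest separator isolating it; hence every end has degree $\le w$, in particular every end is thin. This also immediately gives \emph{finite treewidth $\Rightarrow$ bounded end-degree} (with $k = w$), so at this point the first three items are linked. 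The implication \emph{bounded end-degree $\Rightarrow$ all ends thin} is trivial, and conversely \emph{all ends thin $\Rightarrow$ bounded end-degree} is exactly where quasi-transitivity enters: by \cref{lem: TWcut} there are finitely many $\Gamma$-orbits of tight separations of order $\le k$ for each $k$, and by \cref{prop: thin-ends} (the statement that a locally finite quasi-transitive graph cannot have exactly one end of degree $k$) one argues that the set of end-degrees is bounded — if ends of arbitrarily large degree existed, a compactness/orbit-counting argument together with quasi-transitivity would produce infinitely many ends of some fixed degree or an end of infinite degree, contradicting thinness of all ends. (Alternatively this is quotable from \cite{TW, Woess89, MS83}.)

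The implication \emph{canonical tree-decomposition with tight edge-separations and finite width $\Rightarrow$ finite treewidth} is immediate from the definition of treewidth, since such a decomposition witnesses a bound. So the entire weight of the theorem rests on \textbf{bounded end-degree $\Rightarrow$ existence of a canonical tree-decomposition with tight edge-separations and finite width}, and this is the step I expect to be the main obstacle. Here I would invoke the structure theory of \cite{HamannStallings22, DSS98}: since all ends of $G$ have degree $\le k$, the graph $G$ is vertex-accessible (a bounded-degree-ends quasi-transitive locally finite graph has a bound on the size of separators needed to split ends), and one applies a canonical version of the tree-decomposition into "highly connected" pieces. The key point is that because end-degrees are bounded by $k$, there are no thick ends, so the relevant tangles all have bounded order; feeding the $\Gamma$-invariant family of tight separations of order $\le k$ (finitely many orbits, by \cref{lem: TWcut}) into a theorem producing canonical tree-decompositions distinguishing these separations yields a canonical tree-decomposition whose parts contain no "large" highly-connected piece and hence have bounded size. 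Making the edge-separations tight is a standard normalization (contract edges $tt'$ with $V_t = V_{t'}$, and discard non-tight separators), and canonicity is preserved throughout since every operation is defined intrinsically from $G$ and $\Gamma$ acts compatibly. The delicate part — and the reason this is cited as a package of four references — is simultaneously guaranteeing canonicity, tightness, and \emph{finite} (not merely locally finite) bags; I would lean on \cite[Theorem 7.5]{HamannStallings22} as the black box delivering exactly this, and only sketch why the hypothesis "all ends thin / bounded end-degree" is what makes its conclusion apply.
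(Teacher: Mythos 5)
The paper does not actually prove this statement: it is presented as a combination of known results from the cited sources, together with a remark that the fourth item (canonical tree-decomposition with tight edge-separations and finite width) is \emph{not} explicit in Theorem~7.5 of the Hamann reference and has to be ``easily deduced'' from it, using \cref{lem: TWcut} and \cref{lem: quasitrans}. Your proposal is in the same spirit---a cycle of implications with the heavy lifting delegated to citations---so at the level of strategy there is no divergence.

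Two of your in-line sketches, however, contain genuine gaps. For ``all ends thin $\Rightarrow$ bounded end-degree'' you invoke \cref{prop: thin-ends} and claim a compactness argument yields ``infinitely many ends of some fixed degree or an end of infinite degree, contradicting thinness.'' The first alternative is not a contradiction at all (a regular tree is quasi-transitive with uncountably many ends all of degree~$1$), and the existence of a limit end of infinite degree from ends of unbounded degree is asserted but not argued---this is not a consequence of any obvious continuity of end-degree on the end space. Moreover \cref{prop: thin-ends} is orthogonal to the task: it forbids having \emph{exactly one} end of a given finite degree, which says nothing about whether the set of finite degrees occurring is bounded. This implication genuinely requires the cited structure theory. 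Second, you describe Theorem~7.5 of the Hamann reference as ``the black box delivering exactly'' the fourth item, while the paper explicitly records that this item is \emph{not} stated there and must be deduced (via \cref{lem: TWcut} and \cref{lem: quasitrans}, as done in the cited follow-up). So the step you acknowledge as ``the delicate part'' is precisely the one where your citation overclaims what the reference provides, and you would still need to supply the deduction.
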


Note that the final item above is not stated explicitly in Theorem 7.5 in \cite{HamannStallings22}, but it can be easily deduced from this result (see also Theorem 2.4 in \cite{Ham25}, which uses the result of \cite{HamannStallings22} together with our Lemmas 3.1 and 3.13).

\subsubsection*{Separations of order at most 3}
If $G$ is not connected, then the tree-decomposition $(T,\mathcal V)$ where $T$ is a star whose central bag is empty and where we put a bag for each connected component of $G$ can easily be seen to be a canonical tree-decomposition with adhesion $0$, as every automorphism of $G$ acts on $T$ by permuting some branches.
If we start from a connected graph $G$, it is well-known that the block cut-tree of $G$ is a canonical tree-decomposition $(T,(V_t)_{t\in V(T)})$ of $G$ whose adhesion sets have size $1$ and such that for each $t\in V(T)$, $G[V_t]=G\llbracket V_t \rrbracket$ has either size at most $2$ or is $2$-connected.  A similar result holds for separations of order $2$ (this was proved by Tutte \cite{Tutte} in the finite case, and generalized to infinite graphs in \cite{DSS98}).

\begin{theorem}[\cite{DSS98}]
 \label{thm: Tutte}
 Every  locally finite graph $G$ has  a canonical tree-decompo\-sition  of adhesion at most $2$, whose torsos are minors of $G$ and are complete graphs of order at most $2$, 
 cycles, or $3$-connected graphs.
\end{theorem}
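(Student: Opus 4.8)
This theorem is due to \cite{DSS98}, extending the classical finite construction of Tutte \cite{Tutte}; I only sketch the strategy one would follow. The first move is a reduction to the $2$-connected case. If $G$ is disconnected, the star decomposition with empty centre bag and one leaf bag per connected component is a canonical tree-decomposition of adhesion $0$. If $G$ is connected but not $2$-connected, its block--cut tree is a canonical tree-decomposition of adhesion at most $1$ whose torsos are the blocks of $G$ together with copies of $K_1$ and $K_2$; it is canonical because it is defined purely in terms of $G$. It then suffices to refine each block-bag by a canonical tree-decomposition of that block and glue the results along the block--cut tree: an automorphism of $G$ permutes the blocks and carries the (canonically constructed) local decomposition of a block to that of its image, so the glued decomposition is again canonical.

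So assume $G$ is $2$-connected; the goal is Tutte's decomposition into $3$-blocks, made canonical and valid for locally finite graphs. Let $\mathcal N$ be the $\Aut(G)$-invariant set of all tight proper separations of $G$ of order exactly $2$. Call two members of $\mathcal N$ \emph{crossing} if no side of one is contained in a side of the other, and let $\approx$ be the transitive closure of ``equal or crossing'' on $\mathcal N$. There are two claims to establish. First, using submodularity of the order function $\Sep\mapsto|S|$, a standard analysis of crossing separations of order $2$ shows that each $\approx$-class $\mathcal C$ of size at least $2$ arranges the corresponding part of $G$ as a subdivided cycle --- the separators appearing in $\mathcal C$, suitably cyclically ordered, split $G$ into a cyclic sequence of connected ``branches'' so that the torso on the union of a branch with its two attachment vertices is a cycle --- and we collapse $\mathcal C$ into a single bag of this kind. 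Second, the separations forming singleton $\approx$-classes are pairwise nested and remain nested with all the cyclic bags just produced; a nested family of separations of bounded order canonically yields a tree-decomposition, with the separators as adhesion sets, and here local finiteness of $G$ --- in the guise of \cref{lem: TWcut}, which bounds the number of tight $2$-separators through a fixed vertex --- guarantees that the index structure is a genuine tree $T$ and that the torsos are well defined. Taking $(T,\mathcal V)$ to be this tree-decomposition, every adhesion set is the separator of a tight separation of order at most $2$, so each torso $G\torso{V_t}$ differs from $G[V_t]$ by at most one edge per adhesion set and is a faithful minor of $G$, obtained by contracting a suitable component of $G-V_t$ across each added edge (exactly as in the example in \cref{sec: graphes}). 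A bag coming from an $\approx$-class of size at least $2$ has a cycle as torso; every other bag is split by no separation of $\mathcal N$, so its torso is $3$-connected or complete on at most two vertices. Canonicity is then immediate, since $\mathcal N$, the relation $\approx$, the cyclic bags, and hence $T$ and $(V_t)_t$ are all $\Aut(G)$-invariant. (As a sanity check: for $G=C_n$ with $n\ge4$ all of $\mathcal N$ forms a single $\approx$-class, which collapses to the bag $V(C_n)$, matching \cref{ex: cycle}.)

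The main obstacle is the pair of claims in the $2$-connected case: the crossing analysis --- extracting the cyclic structure of an $\approx$-class of size at least $2$ and verifying that collapsing it produces a genuine cycle torso while every adhesion set stays tight --- and, in the infinite setting, the verification that an infinite nested system of finite-order separations in a locally finite graph really does index a tree rather than a limit-laden poset. These are exactly the points carried out in \cite{DSS98}, and I would follow their treatment, possibly streamlined via the modern correspondence between nested separation systems and tree-decompositions; alternatively one might attempt to extract the decomposition from the canonical tangle-distinguishing tree-decompositions of \cite{CTTD} applied at order $3$, at the cost of additional work to recognise the torsos.
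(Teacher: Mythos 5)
The paper does not prove \cref{thm: Tutte}; it invokes \cite{DSS98} (with the finite case due to Tutte \cite{Tutte}) as a black box, so there is no in-paper argument to compare against. Your sketch does capture the strategy underlying those references: reduce to the $2$-connected case via the block--cut tree, then organise the tight proper $2$-separations into $\approx$-classes of pairwise-crossing separations (which yield cycle bags) plus a residual nested family indexing a tree, with local finiteness --- through \cref{lem: TWcut} --- taming the limit behaviour so that the nested family really indexes a tree.

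Two places to tighten if this were to be spelled out. First, the description of the cyclic bag is garbled: the bag you collapse an $\approx$-class of size at least $2$ to should be the set of all separator vertices occurring in that class, and it is \emph{that} bag whose torso is a cycle, not ``the union of a branch with its two attachment vertices'' (the branches become their own bags, attached to the cycle bag along the two-element separators). Second, the case of several pairwise-nested tight $2$-separations sharing the same separator $\{u,v\}$ (e.g.\ when $G-\{u,v\}$ has three or more components, as in a theta graph) produces a hub bag $\{u,v\}$ with torso $K_2$; these bags arise from the residual nested family, not from the crossing analysis, and they are exactly the source of the ``complete graph of order at most $2$'' torsos in the statement, so they deserve explicit mention rather than being folded silently into ``split by no separation of $\mathcal N$.'' You correctly flag the verification that the infinite nested poset indexes a genuine tree as the technical heart of \cite{DSS98} and defer to them, which is fair when sketching a result the paper itself only cites.
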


For separations of order $3$, a similar result  was obtained by Grohe for finite graphs \cite{Gro16}. 

\begin{theorem}[\cite{Gro16}]
\label{thm: Grofini}
 Every finite graph $G$ has  a tree-decomposition  of adhesion at most $3$ whose torsos are minors of $G$ and are complete graphs on at most 4 vertices or  quasi-$4$-connected graphs.
\end{theorem}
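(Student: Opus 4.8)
The plan is to reduce to the case where $G$ is $3$-connected, and then to decompose a $3$-connected graph along its $3$-separations, using the tangles of order $4$ to organise the construction.

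First I would apply \cref{thm: Tutte} to get a tree-decomposition of $G$ of adhesion at most $2$, with tight edge-separations, whose torsos are minors of $G$ and are complete graphs of order at most $2$, cycles, or $3$-connected graphs. A cycle has treewidth $2$, so it has a path-decomposition into bags of size $3$ whose torsos are subgraphs of $K_3$ and minors of it; a complete graph of order at most $2$ already has the required form. Since a minor of a minor of $G$ is a minor of $G$, and since one may substitute, at each node $t$ of a tree-decomposition, a tree-decomposition of the torso at $t$ (attaching every incident branch to a bag containing the corresponding adhesion set, which is possible because adhesion sets of a tree-decomposition with tight edge-separations form cliques in the torsos), it suffices to prove the statement when $G$ is $3$-connected.

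Assume now that $G$ is $3$-connected. Using a tree-decomposition that distinguishes all tangles of $G$ of order $4$ (see \cite{RS-XVI}; a canonical version is given in \cite{CTTD}, but canonicity is not needed here), in a form where the edge-separations are tight $3$-separations and the torsos are minors of $G$, I obtain a tree-decomposition $(T,\mathcal V)$ of $G$ of adhesion at most $3$ with torsos minors of $G$ in which every torso either contains no tangle of order $4$, or contains exactly one. A torso $H$ with no tangle of order $4$ has branch-width at most $3$, hence treewidth at most $3$, so it admits a tree-decomposition into torsos with at most $4$ vertices; substituting this at the corresponding node, and (by a routine argument) completing these small torsos into cliques while keeping them minors of $G$, takes care of this case. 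It remains to handle a torso $H$ --- which, after re-applying the first step, we may take $3$-connected --- containing a unique tangle $\mathcal T$ of order $4$.

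Such an $H$ need not be quasi-$4$-connected: it may have non-trivial $3$-separations $(Y,S,Z)$ in which the side $Y$ is $\mathcal T$-small but arbitrarily large, that is, a graph of treewidth at most $3$ hanging off the triple $S$. Following Grohe \cite{Gro16}, the idea is to split off all such sides at once. Concretely I would consider the $3$-separations $(Y,S,Z)$ of $H$ with $Y\cup S$ on the $\mathcal T$-small side, choose an inclusion-maximal such $Y$ in each direction, and prove that (i) these maximal $\mathcal T$-small sides can be organised into a nested family, hence define a tree-decomposition of $H$ of adhesion at most $3$; (ii) the part $H_0$ lying on the $\mathcal T$-large side of all of them is quasi-$4$-connected, since a non-trivial $3$-separation of $H_0$ would, together with the peeled-off sides, form a strictly larger $\mathcal T$-small side, contradicting maximality; and (iii) each peeled torso $H[Y\cup S]$, together with the triangle on $S$, has treewidth at most $3$ and is decomposed as in the previous paragraph. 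Assembling the tree-decomposition of $G$ from the Tutte decomposition, the order-$4$ tangle decomposition of each $3$-connected torso, and the peeling decomposition of each unique-tangle torso yields the statement. I expect the main obstacle to be precisely this last construction: among the many, possibly crossing, $\mathcal T$-small $3$-separations one must single out a nested subfamily realising a clean peeling that leaves a quasi-$4$-connected core --- and, as Grohe notes, this is exactly the step that cannot be performed canonically in general.
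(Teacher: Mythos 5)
Your overall architecture is the right one and matches Grohe's (and the paper's extension of it): reduce to the $3$-connected case via \cref{thm: Tutte}, decompose by distinguishing the tangles of order $4$, and in a torso with a unique order-$4$ tangle $\T$ try to peel off the $\T$-small sides to leave a quasi-$4$-connected core. The problem is that the two central claims in your peeling step — (i) that the inclusion-maximal $\T$-small sides (equivalently, the $\preceq$-minimal separations of $\T$) can be organised into a nested family covering all nontrivial $3$-separations, and (ii) that the resulting core is quasi-$4$-connected because any leftover $3$-separation could be merged with a peeled side "contradicting maximality" — are not true as stated, and the argument you give for (ii) is exactly the one that breaks for tangles of order $4$. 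If $(Y_1,S_1,Z_1)$ and $(Y_2,S_2,Z_2)$ are two minimal non-degenerate separations that \emph{cross} (and \cref{lem: cross}, i.e.\ Grohe's Lemma 4.16, shows this happens, with $S_1\cap Y_2$ and $S_2\cap Y_1$ singletons joined by a crossedge), then the corner that would contain both small sides has separator $(S_1\cap Z_2)\cup(S_2\cap Z_1)$ of size $4$, so you cannot absorb one peeled side into the other within adhesion $3$. \cref{ex: gro} (the hexagonal grid of triangles) exhibits an infinite family of pairwise-crossing minimal separations; any nested subfamily you select omits some of them, and the omitted ones survive as nontrivial $3$-separations of the putative core, so neither (i) nor (ii) holds. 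You do flag this as "the main obstacle", which shows you see where the difficulty lies, but the proposal does not actually close it.

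The missing ingredient is precisely Grohe's crossedge machinery, which the paper reproduces and extends in \cref{sec: single,sec: all}: the crossedges $M=\Exnd{\T}$ form a matching (\cref{lem: cross}); contracting $M$ produces a $3$-connected graph $G^{\backslash M/}$ with a region tangle $\T^{\backslash M/}$ whose minimal non-degenerate separations are \emph{orthogonal} (\cref{thm:contract tangle1}); \cref{lem: ortho} then shows $G^{\backslash M/}\torso{R^{\backslash M/}}$ is quasi-$4$-connected (\cref{thm: GroRegion}); finally, one lifts back to $G$ by a (non-canonical) choice of one endpoint per crossedge, obtaining the central bag $X'\subseteq R_\T$ with $G\torso{X'}\cong G^{\backslash M/}\torso{R^{\backslash M/}}$, and the star decomposition of \cref{lem: cross-star} (with $R_\T$ replaced by $X'$) supplies the peeled, treewidth-$\le 3$ parts. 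Without this contraction-and-uncontraction step, "single out a nested subfamily realising a clean peeling that leaves a quasi-$4$-connected core" is an unproved (and, via \cref{ex: gro}, genuinely non-obvious) assertion; it is the heart of the theorem, not a routine deferral.
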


Our main technical contribution will be to extend \cref{thm: Grofini} to locally finite graphs, while making sure that most of the construction (except the very end) is canonical. More precisely, we reproduce in \cref{sec: single,sec: all} the main steps of the work of \cite{Gro16} and give the additional arguments to extend them to locally finite graphs. A consequence is that \cref{thm: Grofini} extends to locally finite graphs. However in our case, the main difficulty we face is that the decomposition of Grohe is not canonical (although some parts of the construction are canonical, which will be crucial for our purposes).
To give a rough idea of the bulk of the problem it is helpful to consider \cref{ex: gro} below, which was introduced in \cite{Gro16} in the finite case.

\tikzexternaldisable
  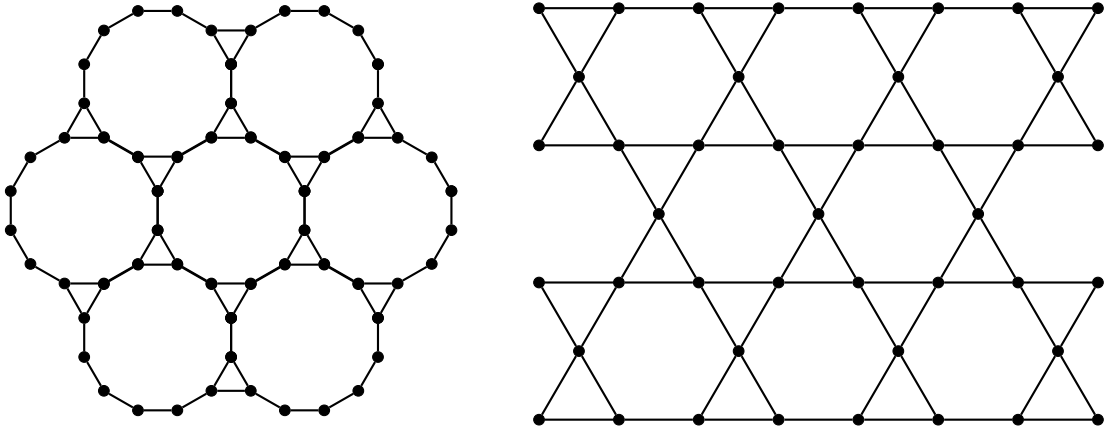
\begin{figure}[h]
    \centering
    \begin{tikzpicture}[scale=1]
    
    \begin{scope}[yshift=6cm]
      \tikzstyle{every node}=[draw, circle,fill=black,minimum size=4pt, inner sep=0pt]
%     \tikzstyle{lab}=[draw, minimum size=0pt, inner sep=0pt];
    \pgfmathsetmacro{\alph}{(360/12)}
    \pgfmathsetmacro{\c}{2*sin(\alph/2)}
    \pgfmathsetmacro{\hu}{cos(\alph/2)}
    \pgfmathsetmacro{\hd}{\hu*\c}
    \pgfmathsetmacro{\htr}{\c/2}
    \pgfmathsetmacro{\h}{\hu +\hd +\htr -0.0015cm}
    
%     \node (a) at (0:1) [label=below: \h] {};
     
    \foreach \i/\xs/\ys in {1/-1/1, 2/1/1, 3/-2/0, 4/0/0, 5/2/0, 6/-1/-1, 7/1/-1} {
        \pgfmathsetmacro{\xx}{\xs*\hu}
        \pgfmathsetmacro{\yy}{\ys*\h}
        \begin{scope}[xshift=\xx cm, yshift=\yy cm]
            \foreach \j in {0,...,12} {
                \pgfmathsetmacro{\angl}{\alph/2 + \j*\alph}
                \node (x\i\j) at (\angl:1) [] {};
            }
            \draw[thick] (x\i0) -- (x\i1) -- (x\i2) -- (x\i3) -- (x\i4) -- (x\i5) -- (x\i6) -- (x\i7) -- (x\i8) -- (x\i9) -- (x\i10) -- (x\i11) -- (x\i12);
        \end{scope}
    }
    \foreach \i/\j/\ii/\jj in {1/6/3/3, 1/1/2/4, 2/11/5/2, 3/8/6/5, 6/10/7/7, 5/9/7/0}{
        \draw[thick] (x\i\j) -- (x\ii\jj); 
    }
    \end{scope}
    \begin{scope}[xshift=3cm, yshift=0.5cm, scale = 0.35]
     \pgfmathsetmacro{\r}{3}
    \pgfmathsetmacro{\tri}{1}  
    \pgfmathsetmacro{\long}{8}
    \pgfmathsetmacro{\haut}{1}

    \pgfmathsetmacro{\hh}{int(\haut+1)}
    \pgfmathsetmacro{\lll}{int(\long-1)}
    \pgfmathsetmacro{\ld}{int((\long-1)/2)} 
    \pgfmathsetmacro{\lld}{int((\long)/2)} 
    \tikzstyle{every node}=[draw, circle,fill=black,minimum size=4pt, inner sep=0pt]

    \foreach \j [evaluate=\j] in {1,...,\hh} 
    {\foreach \i [evaluate=\i] in {1,...,\long}
      {\pgfmathsetmacro{\y}{int(4*\j-1)}
        \pgfmathsetmacro{\x}{int(\i)}
        \node[draw=black,circle] (v\x_\y) at ($({\x*\r},{\y*\r*sqrt(3)/2})$) {};
       \ifthenelse{\i > 1}{
        \pgfmathsetmacro{\xm}{int(\x-1)}
        \draw[thick] (v\xm_\y) -- (v\x_\y) {};}{}
      }
    }
    \foreach \j [evaluate=\j] in {1,...,\hh}
    {\foreach \i [evaluate=\i] in {1,...,\long}
      {\pgfmathsetmacro{\y}{int(4*\j+1)}
       \pgfmathsetmacro{\x}{int(\i)}
        \node[draw=black,circle] (v\x_\y) at ($({\x*\r},{\y*\r*sqrt(3)/2})$) {};
       \ifthenelse{\i > 1}{
        \pgfmathsetmacro{\xm}{int(\x-1)}
        \draw[thick] (v\xm_\y) -- (v\x_\y) {};}{} 
      }
    }
    \foreach \j [evaluate=\j] in {1,...,\hh}
        {\foreach \i [evaluate=\i] in {1,...,\lld}
        {\pgfmathsetmacro{\y}{int(4*\j)}
       \pgfmathsetmacro{\x}{int(2*\i)}
        \node[draw=black,circle] (v\x_\y) at ($({(\x-0.5)*\r},{\y*\r*sqrt(3)/2})$) {};
       \pgfmathsetmacro{\ym}{int(\y-1)}
       \pgfmathsetmacro{\yp}{int(\y+1)}
       \pgfmathsetmacro{\xp}{int(2*\i)}
       \pgfmathsetmacro{\xm}{int(2*\i-1)}       
       \draw[thick] (v\xp_\ym) -- (v\x_\y) {};
       \draw[thick] (v\xp_\yp) -- (v\x_\y) {};
       \draw[thick] (v\xm_\ym) -- (v\x_\y) {};
       \draw[thick] (v\xm_\yp) -- (v\x_\y) {};       
        }
    }
    \foreach \j [evaluate=\j] in {1,...,\haut}
    {\foreach \i [evaluate=\i] in {1,...,\ld}
            {\pgfmathsetmacro{\y}{int(4*\j+2)}
        \pgfmathsetmacro{\x}{int(2*\i)}
        \node[draw=black,circle] (v\x_\y) at ($({(\x+0.5)*\r},  {\y*\r*sqrt(3)/2})$) {};
        \pgfmathsetmacro{\ym}{int(\y-1)}
        \pgfmathsetmacro{\yp}{int(\y+1)}
        \pgfmathsetmacro{\xp}{int(2*\i+1)}
        \pgfmathsetmacro{\xm}{int(2*\i)}       
        \draw[thick] (v\xm_\ym) -- (v\x_\y) {};
        \draw[thick] (v\xp_\yp) -- (v\x_\y) {};
        \draw[thick] (v\xp_\ym) -- (v\x_\y) {};
        \draw[thick] (v\xm_\yp) -- (v\x_\y) {};       
            }
    }
    
    \end{scope}

\end{tikzpicture}
\caption{Left: a finite section of the $3$-connected infinite graph obtained by replacing in the infinite hexagonal planar grid each vertex by a triangle with three vertices of degree $3$. 
Right: a finite section of the quasi-$4$-connected torso $G\torso{V_{z_0}}$ of $(T,\mathcal V)$. Note that it does not depend of the choice of $V_{z_0}$.}
\label{fig: Gro2}
\end{figure}

\begin{example}
\label{ex: gro}
Consider  the $3$-connected infinite planar graph $H$ obtained from the infinite hexagonal planar grid by replacing each vertex by a triangle with three vertices of degree $3$ (see \cref{fig: Gro2} (left) for a finite part of this graph). We let $M$ be the set of edges connecting pairs of triangles, or equivalently the set of edges that do not belong to any triangle (note that $M$ is a perfect matching). The tree-decomposition $(T,\mathcal V)$ of $H$ obtained by extending the ideas in \cite{Gro16} to the infinite case has an infinite bag $V_{z_0}$ obtained by
selecting one endpoint of each edge of $M$ (which is equivalent to fixing an orientation of each of these edges). The tree $T$ is a subdivision of a star with center $z_0$, and its other bags are finite. While there are many different choices for $V_{z_0}$, none of them gives a canonical tree-decomposition. 
%Moreover, it is even possible to choose $V_{z_0}$ such that $(T,\mathcal V)$ is not $\Gamma$-canonical for any subgroup $\Gamma$ of $\Aut(G)$ with a quasi-transitive action on $G$. For example, one can fix some special vertex $v_0\in V(G)$, and let $A$ be the orientation
%of the edges $uv$ connecting two triangles of $G$ defined by $(u,v)\in A$ if $d_G(u,v_0)<d_G(v,v_0)$.
%If $d_G(u,v_0)=d_G(v,v_0)$, then we choose an arbitrary orientation of $uv$ to be in $A$. We let $V_{z_0}$ be the set of tails of the arcs in $A$. Assume that $\Gamma$ is a subgroup of $\Aut(G)$ such that the decomposition $(T,\mathcal V)$ obtained for this choice $V_{z_0}$ is $\Gamma$-canonical. Then note that as $V_{z_0}$ is the only infinite bag of $(T,\mathcal V)$, the only automorphisms $g\in \Aut(G)$ that can induce an automorphism on $T$ must fix $V_{z_0}$, so they must also preserve the orientation induced by $A$. As the triangle containing $v_0$ is the only triangle of $G$ with three incoming edges, $\Gamma$ must be a subgroup of its stabilizer, hence it cannot act quasi-transitively on $G$.
Indeed one can check more generally that no tree-decomposition of $H$ satisfying the properties of Theorem \ref{thm: Grofini} can be canonical. To see this, assume for the sake of contradiction that such a decomposition $(T,\mathcal V)$ exists. Then one of its edge-separations should be proper of order $3$. Note that the only such separations separate a subgraph of a triangle from the rest of the graph. Let $\Sep$ be such a separation, such that $Z$ is finite. Then there exists an edge $e$ from $M$ with one endpoint in $Z$ and the other in $S$. Note that there exists an automorphism $\gamma\in \Aut(H)$ exchanging the two endpoints of $e$. In particular, as $(T, \mathcal V)$ is canonical, both $\Sep\cdot \gamma$ and $\Sep$ must be edge-separations of $(T,\mathcal V)$, which can be seen to be impossible.

\smallskip

This example illustrates the fact that in general it is impossible to obtain a \emph{canonical} tree-decomposition having exactly the properties described in \cref{thm: Grofini}. However note that here, if we want a canonical tree-decomposition whose torsos are either planar or finite, it is sufficient to take the trivial tree-decomposition with a single bag containing all the vertices. This is exactly what our proof will do when applied to this graph. More precisely, we note that on this example, the set $M$ of edges is invariant under the action of $\Aut(H)$. Based on this observation, our solution to obtain a canonical tree-decomposition will be to start with the same decomposition as that of \cite{Gro16}, but to keep the two endpoints of each edge of $M$ instead of choosing only one of its endpoints as above.
\end{example}

\subsubsection*{Combining canonical tree-decompositions}
Let $(T,\mathcal V)$ and $(T',\mathcal V')$ be  tree-decomposi\-tions of two graphs $G,G'$, respectively. We say that $(T,\mathcal V)$ and $(T',\mathcal V')$ are \emph{isomorphic} if there exists  an isomorphism $\varphi$ from $G$ to $G'$, and an isomorphism $\psi$ from $T$ to $T'$  such that for each $t\in V(T)$, we have: $V'_{\psi(t)}=\varphi(V_t)$.

Let $G$ be a graph and let $\Gamma$ be a group acting on $G$. Let $(T,\mathcal V)$, with $\mathcal{V}=(V_t)_{t\in V(T)}$, be a $\Gamma$-canonical tree-decomposition of $G$, and recall that for any $t\in V(T)$, $\Gamma_t=\Stab_\Gamma(t)$ denotes the stabilizer of the node $t$ in the action of $\Gamma$ on the tree $T$. For each $t\in V(T)$, let $(T_t,\mathcal V_t)$ be a $\Gamma_t$-canonical tree-decomposition of $G\torso{V_t}$.  Our goal will be to refine $(T,\mathcal V)$ by combining it with the tree-decompositions $(T_t,\mathcal V_t)_{t\in V(T)}$. If we want the resulting refined tree-decomposition of $G$ to be $\Gamma$-canonical, we need to impose a condition on the tree-decompositions $(T_t,\mathcal V_t)_{t\in V(T)}$ (namely that they are consistent with the action of $\Gamma$ on $G$). This is captured by the following definition.
For $g\in \Gamma$ and $t\in V(T)$, we define $(T_t, \mathcal V_t)\cdot g:=(T_t, \mathcal V_t \cdot g)$ as the  tree-decomposition of $G\torso{V_t\cdot g}=G\torso{V_{t\cdot g}}$ with underlying tree $T_t$ and bags $\mathcal V_t\cdot g:=(V_s \cdot g)_{s\in V(T_t)}$. Observe that  $(T_t, \mathcal V_t)\cdot g$ is $\Gamma_{t\cdot g}$-canonical. We say that the construction $t\mapsto (T_t, \mathcal V_t)$ is \emph{$\Gamma$-canonical} if for each $g\in \Gamma$ and $t\in V(T)$, the tree-decompositions $(T_t, \mathcal V_t)\cdot g$ and $(T_{t\cdot g}, \mathcal V_{t\cdot g})$ are isomorphic. We emphasize here that the first tree-decomposition is indexed by $T_t$, while the second is indexed by $T_{t\cdot g}$.
%\ugo{Intuitively, it means that $(T_{t\cdot g}, \mathcal V_{t\cdot g})$ was constructed ``the same way than $(T_t, \mathcal V_t)$.}

%We say that the construction $t\mapsto (T_t, \mathcal V_t)$ is \emph{$\Gamma$-canonical} if for each $g\in \Gamma$ and $t\in V(T)$, $(T_t, \mathcal V_t)\cdot g = (T_{t\cdot g}, \mathcal V_{t\cdot g})$, where $(T_t, \mathcal V_t)\cdot g:=(T_t, \mathcal V_t \cdot g)$ is defined as the $\Gamma_{t\cdot g}$-canonical tree-decomposition of $G$ with bags $\mathcal V_t\cdot g:=(V_s \cdot g)_{s\in V(T_t)}$.
%Note that if $\mathcal V_t =(V^{(t)}_s)_{s\in V(T_t)}$,
%the previous definition does not impose that $V^{(t)}_{s}\cdot g = V^{(t\cdot g)}_s$, but simply that the families of sets $\mathcal V_{t\cdot g}$ and $\mathcal V_t \cdot g$ are equal for each $(t,g)\in V(T)\times \Gamma$. 

\medskip

The \emph{trivial tree-decomposition} of a graph $G$ consists of a tree $T$ with a single node, whose bag is $V(G)$. Note that the trivial tree-decomposition is canonical.

\begin{lemma}
 \label{lem: canonical}
 Assume that $G$ is locally finite, $\Gamma$ is a group acting on $G$, and
 $(T,\mathcal V)$ is a $\Gamma$-canonical tree-decomposition of $G$ with
 finitely bounded adhesion. Let $\sg{t_i: i\in I_{\infty}}$ be a set of representatives of the $\Gamma$-orbits of $V_\infty(T)$, indexed by some set $I_{\infty}$. 
 Assume that for every $i\in I_{\infty}$ there exists a $\Gamma_{t_i}$-canonical tree-decomposition $(T_{t_i}, \mathcal V_{t_i})$ of the torso $G\torso{V_{t_i}}$ of finitely bounded adhesion. Then we can find some family $(T_t, \mathcal V_t)_{t\in V(T)}$ extending the family $(T_{t_i}, \mathcal V_{t_i})_{i\in I_{\infty}}$  such that each $(T_t, \mathcal V_t)$ is a $\Gamma_t$-canonical tree-decomposition of $G\torso{V_t}$, the construction $t\mapsto (T_t,\mathcal V_t)$ is $\Gamma$-canonical, and for each $t\in V(T)\setminus V_\infty(T)$, $(T_t, \mathcal V_t)$ is the trivial tree-decomposition of $G\torso{V_t}$.
\end{lemma}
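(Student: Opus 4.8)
The plan is to build the family $(T_t,\mathcal V_t)_{t\in V(T)}$ by translating each of the given decompositions $(T_{t_i},\mathcal V_{t_i})$ around the $\Gamma$-orbit of $t_i$, and by attaching the trivial tree-decomposition to every node whose bag is finite. Two preliminary observations make this well posed. First, since $(T,\mathcal V)$ is $\Gamma$-canonical we have $V_t\cdot\gamma=V_{t\cdot\gamma}$ for all $\gamma\in\Gamma$ and $t\in V(T)$, so $|V_{t\cdot\gamma}|=|V_t|$ and hence $V_\infty(T)$ is $\Gamma$-invariant (in particular $t\in V_\infty(T)$ if and only if $t\cdot\gamma\in V_\infty(T)$). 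Second, for $t\in V_\infty(T)$ and $g\in\Gamma$ the graph $G\torso{V_t\cdot g}$ equals $G\torso{V_{t\cdot g}}$. I would then, for every $t\in V_\infty(T)$, fix an element $g_t\in\Gamma$ mapping the representative $t_{i(t)}$ of the orbit of $t$ to $t$ (using the axiom of choice), normalised so that $g_{t_i}=1_\Gamma$ for every $i\in I_\infty$, and define $(T_t,\mathcal V_t):=(T_{t_{i(t)}},\mathcal V_{t_{i(t)}})\cdot g_t$; for $t\in V(T)\setminus V_\infty(T)$ I set $(T_t,\mathcal V_t)$ to be the trivial tree-decomposition of $G\torso{V_t}$. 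The normalisation $g_{t_i}=1_\Gamma$ guarantees that this family extends $(T_{t_i},\mathcal V_{t_i})_{i\in I_\infty}$, and the last clause of the statement holds by construction.

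Next come the routine verifications. For $t\notin V_\infty(T)$ the trivial tree-decomposition is canonical, hence in particular $\Gamma_t$-canonical, since $\Gamma_t\subseteq\Stab_\Gamma(V_t)$ acts on $G\torso{V_t}$. For $t\in V_\infty(T)$, write $i=i(t)$; by the observation recalled just before the lemma, $(T_{t_i},\mathcal V_{t_i})\cdot g_t$ is a tree-decomposition of $G\torso{V_{t_i}\cdot g_t}=G\torso{V_{t_i\cdot g_t}}=G\torso{V_t}$ and it is $\Gamma_{t_i\cdot g_t}$-canonical; since $t_i\cdot g_t=t$ we have $\Gamma_{t_i\cdot g_t}=\Gamma_t$, as required.

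The crux is showing that the construction $t\mapsto(T_t,\mathcal V_t)$ is $\Gamma$-canonical, i.e.\ that $(T_t,\mathcal V_t)\cdot g\cong(T_{t\cdot g},\mathcal V_{t\cdot g})$ for all $g\in\Gamma$ and $t\in V(T)$. If $t\notin V_\infty(T)$ then $t\cdot g\notin V_\infty(T)$ and both sides are the (unique) trivial tree-decomposition of $G\torso{V_t\cdot g}=G\torso{V_{t\cdot g}}$, so there is nothing to prove. If $t\in V_\infty(T)$, write $i=i(t)$, and note that $t\cdot g$ lies in the same $\Gamma$-orbit, so $i(t\cdot g)=i$ and $t_i\cdot g_{t\cdot g}=t\cdot g$. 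Using $(T_t,\mathcal V_t)=(T_{t_i},\mathcal V_{t_i})\cdot g_t$ and associativity of the action,
\[
(T_t,\mathcal V_t)\cdot g=(T_{t_i},\mathcal V_{t_i})\cdot(g_tg)\qquad\text{and}\qquad(T_{t\cdot g},\mathcal V_{t\cdot g})=(T_{t_i},\mathcal V_{t_i})\cdot g_{t\cdot g}.
\]
Both $g_tg$ and $g_{t\cdot g}$ map $t_i$ to $t\cdot g$, so $\gamma:=(g_tg)\,g_{t\cdot g}^{-1}\in\Gamma_{t_i}$ and $g_tg=\gamma\,g_{t\cdot g}$. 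Writing $(W_s)_{s\in V(T_{t_i})}$ for the bags of $(T_{t_i},\mathcal V_{t_i})$ and $s\mapsto s\cdot\gamma$ for the automorphism of $T_{t_i}$ induced by $\gamma$ (which exists, and satisfies $W_s\cdot\gamma=W_{s\cdot\gamma}$, by $\Gamma_{t_i}$-canonicity of $(T_{t_i},\mathcal V_{t_i})$), we get for every $s$ that $W_s\cdot(g_tg)=(W_s\cdot\gamma)\cdot g_{t\cdot g}=W_{s\cdot\gamma}\cdot g_{t\cdot g}$. This is precisely the statement that the identity of $G\torso{V_t\cdot g}=G\torso{V_{t\cdot g}}$ together with the tree automorphism $s\mapsto s\cdot\gamma$ of $T_{t_i}$ constitutes an isomorphism from $(T_t,\mathcal V_t)\cdot g$ to $(T_{t\cdot g},\mathcal V_{t\cdot g})$, which finishes this step.

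The only real obstacle is this final verification, and the point to be careful about is that the arbitrary choices of the coset representatives $g_t$ are immaterial: any two admissible choices differ by an element of the stabiliser $\Gamma_{t_i}$, and translating the inner decomposition by such an element merely relabels its tree through the automorphism supplied by $\Gamma_{t_i}$-canonicity. The bounded-adhesion hypotheses are not used here; they are inherited by the translated decompositions and only become relevant when these pieces are subsequently combined into a refinement of $(T,\mathcal V)$.
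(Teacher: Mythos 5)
Your proof is correct and follows essentially the same strategy as the paper's: extend the family by translating the given decompositions around each orbit, attach trivial decompositions to finite bags, and verify that the construction is consistent with the action. The one notable difference in presentation is that you spell out the verification that the construction $t\mapsto(T_t,\mathcal V_t)$ is $\Gamma$-canonical (by exhibiting the tree automorphism $s\mapsto s\cdot\gamma$ arising from $\gamma=(g_tg)g_{t\cdot g}^{-1}\in\Gamma_{t_i}$), which the paper declares as immediate from the definition, while the paper instead devotes its first paragraph to deriving $g^{-1}\Gamma_t g=\Gamma_{t\cdot g}$ for $t\in V_\infty(T)$ via Remark~3.2, a fact you absorb through the observation stated just before the lemma; both routes are sound, and your explicit choice of coset representatives $g_t$ is the cleaner way to account for the fact that the resulting decompositions are only determined up to the relabelings coming from $\Gamma_{t_i}$-canonicity.
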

\begin{proof}
First we check that for each $g\in \Gamma$ and every $t\in V_\infty(T)$, $g^{-1}\cdot
\Gamma_t \cdot g=\Gamma_{t\cdot g}$. For this we claim that we only need to prove the inclusion $g^{-1}\cdot
\Gamma_t \cdot g\subseteq\Gamma_{t\cdot g}$, as the converse then follows from replacing $(g,t)$ by $(g^{-1},t\cdot g)$. Let $y\in V_{t\cdot g}= V_t\cdot g$ (where the equality follows from the assumption that $(T,\mathcal V)$ is $\Gamma$-canonical) and let $h\in \Gamma_t$. Then $y=x\cdot g$ for some $x\in V_t$ and we have:
$$y\cdot (g^{-1}\cdot h \cdot g) = x\cdot g \cdot (g^{-1}\cdot h \cdot g) = (x\cdot h)\cdot g.$$
Since $h\in \Gamma_t$,  we have $x\cdot h\in V_t$, we thus get that $y\cdot (g^{-1}\cdot h \cdot g)\in V_t \cdot g = V_{t\cdot g}$, so we just proved that every element of $g^{-1} \cdot \Gamma_t \cdot g$ stabilizes $V_{t\cdot g}$. Since $t\in V_\infty(T)$, $V_t$ and $V_{t\cdot g}$ are infinite and it follows from Remark \ref{rem: stab} that $\Stab_{\Gamma}(V_{t\cdot g})=\Gamma_{t\cdot g}$. This implies that every element of $g^{-1} \cdot \Gamma_t \cdot g$ lies in $\Gamma_{t\cdot g}$, and thus $g^{-1}\cdot
\Gamma_t \cdot g\subseteq\Gamma_{t\cdot g}$, as desired. 

We complete $I_{\infty}$ into a set $I$ of representatives of the $\Gamma$-orbits of $V(T)$, and for each $i\in I\setminus I_{\infty}$ we let $(T_{t_i}, \mathcal V_{t_i})$ denote the trivial tree-decomposition of $G\torso{V_{t_i}}$.
For each $t\in V(T)$, we let $g\in \Gamma$ and $i\in I$ be
such that $t=t_i\cdot g$ and let $(T_{t}, \mathcal V_t):=(T_{t_i},\mathcal V_{t_i})\cdot g$. We
check that $(T_t,\mathcal V_t)$ is well-defined: for any two $g,g'\in \Gamma$ such that $t_i\cdot
g=t_i\cdot g' = t$, we have $g'\cdot g^{-1}\in \Gamma_{t_i}$. As $(T_{t_i}, \mathcal V_{t_i})$ is $\Gamma_{t_i}$-canonical, $(T_{t_i}, \mathcal V_{t_i})\cdot g'\cdot g^{-1}= (T_{t_i}, \mathcal V_{t_i})$ so we have $(T_{t_i}, \mathcal V_{t_i})\cdot g=(T_{t_i}, \mathcal V_{t_i})\cdot g'$ and  $(T_t, \mathcal V_t)$ is well-defined for each $t\in V(T)$. The fact that the construction
$t\mapsto (T_t,\mathcal V_t)$ is $\Gamma$-canonical immediately follows from the definition. Finally, the tree-decomposition $(T_t, \mathcal V_t)$ is $\Gamma_t$-canonical because if it is not trivial, and $i\in I_{\infty}$ and $g\in \Gamma$ are such that $t_i\cdot g = t$, then $\Gamma_t=g^{-1}\cdot \Gamma_{t_i} \cdot g$ and thus $\Gamma_t$ induces a group action on $(T_t, \mathcal V_t)=(T_{t_i}, \mathcal V_{t_i})\cdot g$.
\end{proof}

Given two tree-decompositions $(T,\mathcal V),(T',\mathcal V')$ of a graph $G$, with $\mathcal{V}=(V_t)_{t\in V(T)}$ and $\mathcal{V}'=(V_t')_{t\in V(T')}$, we say that $(T',\mathcal V')$ \emph{refines} $(T,\mathcal V)$ with respect to some family $(T_t, \mathcal V_t)_{t\in V(T)}$ of tree-decompositions if for every $t\in V(T)$,
$T_t$ is a subtree of $T'$ such that $V_t= \bigcup_{s\in V(T_t)}V'_s$ and the trees $(T_t)_{t\in V(T)}$ are pairwise vertex-disjoint, cover $V(T')$ and for every edge $uv\in E(T)$, there exist $u'\in V(T_u), v'\in V(T_v)$ such that $u'v'\in E(T')$.

We say that $(T',\mathcal V')$ is a \emph{subdivision} of $(T,\mathcal V)$ if $T'$ is obtained from $T$ after considering a subset $E'\subseteq E(T)$ and doing the following for every edge $tt'\in E'$: we subdivide the edge $tt'$ (by adding a new vertex $t^*$ between $t$ and $t'$), and we add  a corresponding bag $V_{t^*}:=V_t\cap V_{t'}$ in the tree-decomposition. Note that if $(T',\mathcal V')$ is a subdivision of $(T,\mathcal V)$, the two tree-decompositions have the same edge-separations.

\medskip

The following result from \cite{CTTD}  will allow us to construct  canonical tree-decomposi\-tions inductively:
\begin{proposition}[Proposition 7.2 in \cite{CTTD}]
\label{prop: refinement}
Assume that $G$ is locally finite, $\Gamma$ is a group acting on $G$ and $(T,\mathcal V)$ is a $\Gamma$-canonical tree-decomposition of $G$ with finitely bounded adhesion, with $\mathcal{V}=(V_t)_{t\in V(T)}$. Assume that for every $t\in V(T)$, there exists a $\Gamma_t$-canonical tree-decomposition $(T_t,\mathcal V_t)$ of the torso $G\llbracket V_t \rrbracket$ of finitely bounded adhesion such that the edge-separations induced by $(T_t,\mathcal V_t)$ in $G\torso{V_t}$ are tight
%, and moreover no two of them are equal in $G\torso{V_t}$
and pairwise distinct, and the construction $t\mapsto (T_t,\mathcal V_t)$ is $\Gamma$-canonical. Then there exists a $\Gamma$-canonical tree-decomposition $(T',\mathcal V')$ of $G$ that refines $(T,\mathcal V)$ with respect to a family $(T'_t,\mathcal V'_t)_{t\in V(T)}$ such that for each $t\in V(T)$, $(T'_t, \mathcal V'_t)$ is a $\Gamma_t$-canonical tree-decomposition of $G\torso{V_t}$ which is a subdivision of $(T_t,\mathcal V_t)$, and such that every adhesion set of $(T', \mathcal V')$ is either an adhesion set of $(T,\mathcal V)$ or an adhesion set of some $(T_t, \mathcal V_t)$ for some $t\in V(T)$. Moreover, the construction $t\mapsto (T'_t, \mathcal V'_t)$ is $\Gamma$-canonical.
\end{proposition}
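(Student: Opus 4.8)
The plan is to build $(T',\mathcal V')$ by gluing the given tree-decompositions $(T_t,\mathcal V_t)$ along the tree $T$, attaching $T_t$ to each neighbour at a canonically chosen node, after a harmless subdivision. First I would record two preliminary facts. Since all edge-separations of $(T,\mathcal V)$ are tight (our standing convention), the remark following the definition of torsos identifies $G\torso{V_t}$ with the torso of the tree-decomposition; in particular each adhesion set $S_{tt'}:=V_t\cap V_{t'}$ ($tt'\in E(T)$) is a clique of $G\torso{V_t}$, finite because $(T,\mathcal V)$ has finitely bounded adhesion. Moreover $G\torso{V_t}$ is locally finite: a vertex $v\in V_t$ has boundedly many $G$-neighbours, and any torso-neighbour $w$ of $v$ that is not a $G$-neighbour satisfies $\{v,w\}\subseteq N_G(C)$ for some component $C$ of $G-V_t$, which, being connected, lies on one side of some edge $tt'$ of $T$, so $\{v,w\}\subseteq S_{tt'}$; by \cref{lem: TWcut} applied to $G$ only finitely many tight adhesion sets of $(T,\mathcal V)$ contain $v$, and each is bounded, so $v$ has finite degree in $G\torso{V_t}$.

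The core of the argument is to locate each $S_{tt'}$ canonically inside $(T_t,\mathcal V_t)$. As $S_{tt'}$ is a finite clique of $G\torso{V_t}$, the set $B_t(t')$ of nodes of $T_t$ whose bag contains $S_{tt'}$ induces a non-empty subtree of $T_t$, and the key point is that \emph{this subtree is finite}: fixing $v\in S_{tt'}$, every edge of $B_t(t')$ is an edge of $T_t$ whose adhesion set contains $v$, and since the edge-separations induced by $(T_t,\mathcal V_t)$ in $G\torso{V_t}$ are tight, of bounded order, and pairwise distinct, \cref{lem: TWcut} applied to the \emph{locally finite} graph $G\torso{V_t}$ bounds their number. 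A finite tree has a \emph{centre} --- a single node or a single edge --- fixed by all its automorphisms, and this is the canonical choice I would make; write $c_t(t')$ for the centre of $B_t(t')$.

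I would then define the refinement as follows. For $t\in V(T)$, let $E'_t\subseteq E(T_t)$ be the set of edges arising as $c_t(t')$ for some $T$-neighbour $t'$ of $t$; this set is $\Gamma_t$-invariant, since $g\in\Gamma_t$ permutes the $T$-neighbours of $t$ and maps each $B_t(t')$ onto $B_t(t'\cdot g)$, hence centres onto centres. Let $(T'_t,\mathcal V'_t)$ be the subdivision of $(T_t,\mathcal V_t)$ subdividing exactly the edges in $E'_t$; it is a $\Gamma_t$-canonical tree-decomposition of $G\torso{V_t}$ with the same adhesion sets as $(T_t,\mathcal V_t)$. For each $tt'\in E(T)$ let $a_{tt'}\in V(T'_t)$ be $c_t(t')$ if the centre is a node, and the subdivision vertex of $c_t(t')$ if it is an edge; in either case $S_{tt'}\subseteq V'_{a_{tt'}}$, since both endpoints of $c_t(t')$ lie in $B_t(t')$. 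Finally let $T'$ be the disjoint union of the $T'_t$ together with, for each $tt'\in E(T)$, one edge joining $a_{tt'}$ to $a_{t't}$, and let $\mathcal V'$ collect all bags. Since the whole recipe --- the subtrees $B_t(t')$, their centres, the sets $E'_t$, the subdivisions, the attachment vertices --- is given by canonical formulas, every $g\in\Gamma$ carries $(T_t,\mathcal V_t)$ isomorphically onto $(T_{t\cdot g},\mathcal V_{t\cdot g})$ (this is the hypothesis that $t\mapsto(T_t,\mathcal V_t)$ is $\Gamma$-canonical) in a way sending $a_{tt'}$ to $a_{t\cdot g,\,t'\cdot g}$; in particular the case where $g$ swaps $t$ and $t'$ is handled automatically, the new edge $a_{tt'}a_{t't}$ being mapped to itself.

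It then remains to check, routinely, that $(T',\mathcal V')$ has the asserted properties. It is a tree-decomposition of $G$: the bags containing any fixed vertex $x$ induce a subtree, because whenever $x\in S_{tt'}$ the new edge $a_{tt'}a_{t't}$ joins two nodes whose bags contain $x$; $T'$ is a tree since $T$ and all $T'_t$ are, with exactly one new edge per edge of $T$; and every edge of $G$ lies in some $G[V_t]\subseteq G\torso{V_t}$, hence in some part of $(T'_t,\mathcal V'_t)$. By construction it refines $(T,\mathcal V)$ with respect to $(T'_t,\mathcal V'_t)_{t\in V(T)}$; each of its adhesion sets is either an adhesion set of some $(T_t,\mathcal V_t)$ (those internal to a $T'_t$) or equals some $S_{tt'}$ (since $V'_{a_{tt'}}\subseteq V_t$ and $V'_{a_{t't}}\subseteq V_{t'}$ force $V'_{a_{tt'}}\cap V'_{a_{t't}}\subseteq S_{tt'}$, and the reverse inclusion holds by the previous paragraph), hence an adhesion set of $(T,\mathcal V)$; and the canonicity statements, as well as the well-definedness of the $\Gamma$-action on $T'$, follow from the equivariance of the recipe, exactly as in \cref{lem: canonical} and \cref{rem: stab}. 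The one step where genuine work is needed is the finiteness of $B_t(t')$ --- that is what turns ``take the centre'' into a legitimate $\Gamma$-equivariant choice --- the equivariant bookkeeping around automorphisms of $T$ that flip an edge being the only other mild nuisance.
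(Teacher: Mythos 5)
The paper does not prove this proposition; it only cites it (as Proposition~7.2 of Carmesin--Hamann--Miraftab), so there is no internal proof to compare against. Evaluating your reconstruction on its own terms: the argument is sound and the key step is correctly identified. The local finiteness of the torsos (via \cref{lem: TWcut} applied to the tight, bounded-order edge-separations of $(T,\mathcal V)$) and then, again via \cref{lem: TWcut} applied to the locally finite torso and the tight, pairwise-distinct, bounded-order edge-separations of $(T_t,\mathcal V_t)$, the finiteness of the subtree $B_t(t')$ of bags containing the clique $S_{tt'}$ --- these are exactly the facts that make ``take the centre of $B_t(t')$'' an equivariant choice, and this is where the hypotheses (tightness, distinctness, finitely bounded adhesion) get used. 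The resulting centre being a node or an edge, and the edge case being absorbed by a subdivision whose set $E'_t$ is $\Gamma_t$-invariant, is correct; the verification that the glued-up $(T',\mathcal V')$ is a tree-decomposition, that its adhesion sets are exactly $S_{tt'}$ on the new edges and the old adhesion sets of $(T_t,\mathcal V_t)$ on the internal edges, and that it refines $(T,\mathcal V)$ with respect to $(T'_t,\mathcal V'_t)$, are all as you say.

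The one place that is slightly under-argued is the well-definedness of the $\Gamma$-action on $T'$. The hypothesis ``$t\mapsto(T_t,\mathcal V_t)$ is $\Gamma$-canonical'' asserts, for each $g$, the \emph{existence} of an isomorphism $(T_t,\mathcal V_t)\cdot g\cong(T_{t\cdot g},\mathcal V_{t\cdot g})$, but a priori not a coherent, composable choice of such isomorphisms; gluing the copies $T'_t$ into a single $\Gamma$-tree requires such a coherent system (or, as in \cref{lem: canonical}, the stronger normalization where all trees in one orbit are literally the same tree $T_{t_i}$). This is a genuine bookkeeping point rather than a gap in the mathematical content --- your recipe is manifestly equivariant under \emph{any} isomorphism of tree-decompositions, so whichever coherent system one fixes (e.g.\ the one produced by \cref{lem: canonical}), your $T'$ inherits a well-defined $\Gamma$-action --- but a complete write-up should make that choice explicit rather than wave at ``the equivariance of the recipe''.
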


\begin{remark}
In the original statement of \cite[Proposition 7.2]{CTTD}, the fact that each tree-decomposition $(T'_t, \mathcal V'_t)$ is $\Gamma_t$-canonical and that  the construction $t\mapsto (T'_t, \mathcal V'_t)$ is also $\Gamma$-canonical is not stated explicitly, however the authors show it explicitly in the proof.
\end{remark}

Hence putting \cref{lem: canonical} together with \cref{prop: refinement}, we immediately get: 

\begin{corollary}
\label{cor: refinement}
 Assume that $G$ is locally finite, $\Gamma$ is a group acting on $G$, and $(T,\mathcal V)$ a $\Gamma$-canonical tree-decomposition of $G$ of finitely bounded adhesion, with $\mathcal{V}=(V_t)_{t\in V(T)}$. Let $\sg{t_i: i\in I_\infty}$ denote a set of representatives of the orbits $V_\infty(T)/\Gamma$ such that for each $i\in I_\infty$, 
 there exists a $\Gamma_{t_i}$-canonical tree-decomposition $(T_{t_i}, \mathcal V_{t_i})$ of $G\torso{V_{t_i}}$ with finitely bounded adhesion, such that the edge-separations induced by each $(T_{t_i},\mathcal V_{t_i})$ in $G\torso{V_{t_i}}$ are tight and pairwise distinct. Then there exists a $\Gamma$-canonical tree-decomposition of $G$ that refines $(T,\mathcal V)$ with respect to some  family $(T'_t, \mathcal V'_t)_{t\in V(T)}$ of $\Gamma_t$-canonical tree-decompositions of $G\torso{V_t}$ such that for each $i\in I_{\infty}$, $(T'_{t_i}, \mathcal V'_{t_i})$ is a subdivision of $(T_{t_i}, \mathcal V_{t_i})$, and for every $t\in V(T)\setminus V_\infty(T)$, 
 $(T'_t, \mathcal V'_t)$ is the trivial tree-decomposition of $G\torso{V_t}$. Moreover, the construction $t\mapsto (T'_t, \mathcal V'_t)$ is $\Gamma$-canonical.
\end{corollary}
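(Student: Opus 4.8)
The plan is to derive the corollary by directly chaining \cref{lem: canonical} with \cref{prop: refinement}. First I would apply \cref{lem: canonical} to the data $G$, $\Gamma$, $(T,\mathcal V)$ together with the representatives $\{t_i : i\in I_\infty\}$ of $V_\infty(T)/\Gamma$ and the tree-decompositions $(T_{t_i},\mathcal V_{t_i})$; this is legitimate since $(T,\mathcal V)$ is $\Gamma$-canonical of finitely bounded adhesion and each $(T_{t_i},\mathcal V_{t_i})$ is a $\Gamma_{t_i}$-canonical tree-decomposition of $G\torso{V_{t_i}}$ of finitely bounded adhesion. The lemma returns a family $(T_t,\mathcal V_t)_{t\in V(T)}$ extending $(T_{t_i},\mathcal V_{t_i})_{i\in I_\infty}$ in which each $(T_t,\mathcal V_t)$ is a $\Gamma_t$-canonical tree-decomposition of $G\torso{V_t}$, the construction $t\mapsto(T_t,\mathcal V_t)$ is $\Gamma$-canonical, and $(T_t,\mathcal V_t)$ is the trivial tree-decomposition of $G\torso{V_t}$ whenever $t\in V(T)\setminus V_\infty(T)$.

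Before feeding this family into \cref{prop: refinement}, I need to check its extra hypothesis: that for every $t\in V(T)$ the tree-decomposition $(T_t,\mathcal V_t)$ has finitely bounded adhesion and induces edge-separations in $G\torso{V_t}$ that are tight and pairwise distinct. For $t\notin V_\infty(T)$ this is vacuous, since the trivial tree-decomposition has no edge. For $t\in V_\infty(T)$ I would write $t=t_i\cdot g$ with $i\in I_\infty$ and $g\in\Gamma$, and recall that by the construction in \cref{lem: canonical} we have $(T_t,\mathcal V_t)=(T_{t_i},\mathcal V_{t_i})\cdot g$; since $g$ restricts to an isomorphism $G\torso{V_{t_i}}\to G\torso{V_t}$ carrying the edge-separations of $(T_{t_i},\mathcal V_{t_i})$ bijectively onto those of $(T_t,\mathcal V_t)$, and since tightness, pairwise distinctness, and a prescribed finite adhesion bound are all isomorphism-invariant, these properties pass from $(T_{t_i},\mathcal V_{t_i})$, where they hold by assumption, to $(T_t,\mathcal V_t)$.

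With all hypotheses verified, I would then apply \cref{prop: refinement} to $G$, $\Gamma$, $(T,\mathcal V)$ and the family $(T_t,\mathcal V_t)_{t\in V(T)}$ built above (using also that $t\mapsto(T_t,\mathcal V_t)$ is $\Gamma$-canonical), obtaining a $\Gamma$-canonical tree-decomposition $(T',\mathcal V')$ of $G$ that refines $(T,\mathcal V)$ with respect to a family $(T'_t,\mathcal V'_t)_{t\in V(T)}$ of $\Gamma_t$-canonical tree-decompositions of the torsos $G\torso{V_t}$, each $(T'_t,\mathcal V'_t)$ a subdivision of $(T_t,\mathcal V_t)$, with $t\mapsto(T'_t,\mathcal V'_t)$ again $\Gamma$-canonical. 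Specializing: for $i\in I_\infty$, $(T'_{t_i},\mathcal V'_{t_i})$ subdivides the given $(T_{t_i},\mathcal V_{t_i})$; and for $t\notin V_\infty(T)$, $(T'_t,\mathcal V'_t)$ subdivides the single-node tree-decomposition (which has no edge to subdivide), hence equals the trivial tree-decomposition of $G\torso{V_t}$. This is exactly the assertion of the corollary.

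Since the statement is an assembly of two results already established, I do not expect a genuine obstacle. The only point requiring a little care — the one I would single out — is the compatibility check of the second paragraph, namely confirming that the tightness-and-distinctness hypothesis of \cref{prop: refinement}, assumed only for the orbit representatives $t_i$, really does propagate to every node of $T$; this is immediate once one observes that the family is produced by $\Gamma$-translation and that the relevant properties are invariant under graph automorphisms.
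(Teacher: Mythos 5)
Your proof is correct and takes exactly the paper's route: the paper derives \cref{cor: refinement} in one line by ``putting \cref{lem: canonical} together with \cref{prop: refinement}.'' You usefully spell out the implicit verification that the tightness, distinctness, and adhesion hypotheses of \cref{prop: refinement} propagate from the orbit representatives $t_i$ to all nodes $t\in V(T)$ via $\Gamma$-translation (since the family $(T_t,\mathcal V_t)$ produced by \cref{lem: canonical} is $(T_{t_i},\mathcal V_{t_i})\cdot g$ for $t=t_i\cdot g$), which the paper leaves to the reader.
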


The main objects of study of this paper are canonical tree-decompositions of quasi-transitive graphs. A crucial property is that the torsos or parts of the tree-decomposition are themselves quasi-transitive. This is proved in \cite[Proposition 4.5]{HamannStallings22} in the special case where $\Gamma$ acts transitively on $E(T)$. We give here a more general proof, which is self-contained.

\begin{lemma}
 \label{lem: quasitrans}
Let $k\in \mathbb N$,  let $G$ be a locally finite graph, and let $\Gamma$ be a group acting  quasi-transitively on $G$. Let $(T,\mathcal V)$,   with $\mathcal V=(V_t)_{t\in V(T)}$, be a $\Gamma$-canonical tree-decomposition of $G$ whose edge-separations are tight and have order at most $k$. Then, for any $t\in V(T)$, the group
$\Gamma_t := \mathrm{Stab}_{\Gamma}(t)$ induces a quasi-transitive action on $G[V_t]$, and thus also on $G\llbracket V_t \rrbracket$.
\end{lemma}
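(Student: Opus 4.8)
The plan is to fix $t\in V(T)$ and show that $V_t$ meets only finitely many $\Gamma_t$-orbits; since $G$ is locally finite, this immediately gives a quasi-transitive action of $\Gamma_t$ on $G[V_t]$, and hence also on the torso $G\llbracket V_t\rrbracket$ (which has the same vertex set and a superset of the edges, each $\Gamma_t$-equivariantly added). So the whole content is the orbit count. Let $v_1,\dots,v_m$ be representatives of the (finitely many) $\Gamma$-orbits on $V(G)$. Given $x\in V_t$, there is $g\in\Gamma$ with $x=v_i\cdot g$ for some $i$; the issue is that $g$ need not stabilize $t$, so this does not directly place $x$ in one of finitely many $\Gamma_t$-orbits. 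The idea is to control the ``displacement'' $t\cdot g$ relative to $t$ using the tree structure.

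First I would record the key finiteness input: by \cref{rem: nbaretes}, since $(T,\mathcal V)$ is $\Gamma$-canonical with tight edge-separations and $G$ is locally finite, \cref{lem: TWcut} implies that $\Gamma$ acts on $V(T)$ with finitely many orbits, and moreover $\Gamma_t=\mathrm{Stab}_\Gamma(t)$ acts on $V(T)$; I claim $\Gamma_t$ acts on $V(T)$ with \emph{finitely many orbits on each sphere} around $t$, i.e.\ for each $r\ge 0$ the set of nodes at distance $r$ from $t$ in $T$ splits into finitely many $\Gamma_t$-orbits. Indeed each such node $s$ determines a unique edge of $T$ on the $t$--$s$ path incident to $s$, hence (via the associated tight edge-separation) a separator $S_{t,s}$ of $G$ of order at most $k$; as $\Gamma$ has finitely many orbits of tight separations of order $\le k$ (\cref{lem: TWcut}), and the path from $t$ to $s$ also has bounded combinatorial type, a pigeonhole/induction on $r$ shows $\Gamma_t$ has boundedly many orbits of nodes at distance $r$. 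Actually the cleanest route is: for a node $s$ at distance $r$ from $t$, the pair (edge-separation of the first edge out of $t$ toward $s$, then induction inside the corresponding branch) — but one must be careful, so I would instead argue directly on vertices as follows.

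The cleaner argument: take $x\in V_t$ and pick $g\in\Gamma$, $i\in[m]$ with $x=v_i\cdot g$. Consider the node $t\cdot g^{-1}$; then $v_i\in V_{t\cdot g^{-1}}$. Now I would use that $x\in V_t\cap V_{t\cdot g^{-1}}$, so by the tree-decomposition axioms $x$ lies in every bag along the $T$-path from $t$ to $t\cdot g^{-1}$; in particular $x$ lies in the adhesion set of the first edge $e$ of that path at $t$ (if the path is nontrivial), equivalently in $V_t\cap V_{t'}$ for the neighbor $t'$ of $t$ toward $t\cdot g^{-1}$, a set of size $\le k$. Since $\Gamma$ has finitely many orbits of edges of $T$, and $\Gamma_t$ has finitely many orbits of edges \emph{incident to} $t$ (again by \cref{lem: TWcut}: tight separations of order $\le k$ through a fixed vertex region, or directly: $\mathrm{Stab}_\Gamma(t)$ permutes the finitely-many-up-to-$\Gamma$ edges at $t$ and local finiteness of $G$ bounds the relevant data), I can reduce to finitely many cases for the $\Gamma_t$-orbit of $e$, hence finitely many adhesion sets up to $\Gamma_t$, each of size $\le k$, contributing at most $k$ vertices each up to $\Gamma_t$. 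The remaining case is when $t\cdot g^{-1}=t$, i.e.\ $g\in\Gamma_t$, which gives directly $x\in v_i\cdot\Gamma_t$. Combining: $V_t$ meets at most $m + (\text{number of } \Gamma_t\text{-orbits of edges at }t)\cdot k$ orbits of $\Gamma_t$, which is finite.

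The step I expect to be the main obstacle is establishing that $\Gamma_t$ has only finitely many orbits on the edges of $T$ incident to $t$ — this is where one genuinely needs the tightness hypothesis together with \cref{lem: TWcut}, rather than just abstract group theory, because a priori $t$ could have infinite degree in $T$. The resolution is that each edge at $t$ yields a tight separation of $G$ of order $\le k$ whose separator is contained in the finite bag-neighbourhood structure... more precisely, each edge at $t$ has an adhesion set of size $\le k$ consisting of vertices of $G$, these sit inside at most $k\cdot(\text{number of }\Gamma\text{-orbits on }V(G))$ many $\Gamma_t$-classes of vertices after a first reduction, and \cref{lem: TWcut} (finiteness of tight separations of bounded order through a fixed vertex) bounds how many distinct such tight separations can share a vertex; an orbit-counting/pigeonhole argument then bounds the number of $\Gamma_t$-orbits of edges at $t$. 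Once this local finiteness at $t$ is in hand, the rest is a short formal argument as sketched above.
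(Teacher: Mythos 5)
Your overall approach is the same as the paper's: fix orbit representatives $v_1,\dots,v_m$ of $V(G)/\Gamma$, split $V_t$ into vertices for which the witnessing $g$ can be made to ``land on'' $t$ versus vertices trapped in adhesion sets at $t$, and bound the $\Gamma_t$-orbits in each class. However, two of your concrete steps fail as written.

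From $x = v_i\cdot g$ and $x\in V_t$ you correctly deduce $v_i = x\cdot g^{-1}\in V_{t\cdot g^{-1}}$, but you then assert ``$x\in V_t\cap V_{t\cdot g^{-1}}$'', which does not follow (applying $g^{-1}$ puts $v_i$, not $x$, into $V_{t\cdot g^{-1}}$). The fix is to fix, for each $i$, a node $t_{(i)}$ with $v_i\in V_{t_{(i)}}$; then $x = v_i\cdot g\in V_{t_{(i)}}\cdot g = V_{t_{(i)}\cdot g}$, so $x\in V_t\cap V_{t_{(i)}\cdot g}$, and the tree-path argument should run between $t$ and $t_{(i)}\cdot g$ (this is the paper's ``$t_0\cdot\gamma$''). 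Relatedly, your remaining case ``$t\cdot g^{-1}=t$, i.e.\ $g\in\Gamma_t$'' becomes, after the correction, the case $t_{(i)}\cdot g = t$, which does \emph{not} imply $g\in\Gamma_t$ when $t_{(i)}\ne t$. The intended conclusion — that for fixed $i$ all such $x$ form a single $\Gamma_t$-orbit — still holds, but requires the short computation the paper performs for its set $\Theta_z$: if $x = v_i\cdot g$ and $x' = v_i\cdot g'$ with $t_{(i)}\cdot g = t_{(i)}\cdot g' = t$, then $\alpha := g'^{-1}g$ satisfies $t\cdot\alpha = t_{(i)}\cdot g = t$, so $\alpha\in\Gamma_t$ and $x'\cdot\alpha = x$.

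Finally, the step you single out as the main obstacle — that $\Gamma_t$ has only finitely many orbits of oriented edges at $t$ — is in fact immediate once $E(T)/\Gamma$ is known to be finite (this is where tightness and \cref{lem: TWcut}, via \cref{rem: nbaretes}, are genuinely used): any $\gamma\in\Gamma$ carrying an oriented edge $(t,t_1)$ to $(t,t_2)$ already satisfies $t\cdot\gamma = t$, hence $\gamma\in\Gamma_t$, so $\Gamma$-orbits and $\Gamma_t$-orbits of edges at $t$ coincide. No additional local-finiteness or pigeonhole argument is needed at that point. With these repairs the proof goes through and coincides with the paper's.
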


\begin{proof}
 \medskip
Let $G,\Gamma, (T, \mathcal V)$ be as described above.
By \cref{lem: TWcut}, as the edge-separations are tight and of bounded size, there are only finitely many $\Gamma$-orbits of $E(T)$. We fix an orientation $A$ of $E(T)$.
Let $e_1,\ldots, e_m\in A$ be representatives of each of the $\Gamma$-orbits of $A$, and let $\mean{e_1}, \ldots, \mean{e_m}$ denote their inverse pairs. Fix any node $t\in V(T)$.

We consider an arbitrary vertex $z\in V(G)$, and let $t_0\in V(T)$ be such that $z\in V_{t_0}$. Let $\Omega_z:=z\cdot \Gamma$ denote the $\Gamma$-orbit of $z$. We define the following subset of $V_t\cap \Omega_z$.
\[\Theta_z:=\sg{y \in V_t\cap \Omega_z: y=z\cdot \gamma \text{ for some }\gamma \text{ such that } t_0\cdot \gamma = t}.\]
We first show that $\Gamma_t$ acts transitively on $\Theta_z$. 
Let $y,y'\in \Theta_z$ and $\gamma,\gamma'\in \Gamma$ be such that $y=z\cdot \gamma$, $y'=z\cdot \gamma'$ and $ t_0\cdot \gamma=t_0\cdot \gamma'=t$. Then if we set $\alpha:=\gamma'^{-1}\cdot \gamma$, we have  \[t\cdot \alpha=t\cdot\gamma'^{-1}\cdot \gamma=t_0\cdot \gamma=t,\] and thus $\alpha \in \Gamma_t$. As $y'\cdot \alpha=y$, this  shows that $\Gamma_t$ acts transitively on $\Theta_z$. 

\medskip

For $i\in [m]$, we define:
\[\Psi_{i}:=\sg{y \in V_t\cap V_{t'}: \text{there exists } \gamma \in \Gamma \text{ such that }
(t,t') = e_i\cdot \gamma}\]
\[\Psi_{i+m}:=\sg{y \in V_t\cap V_{t'}: \text{there exists } \gamma \in \Gamma \text{ such that }
(t,t') = \mean{e_i}\cdot \gamma}.\]
We observe that if a vertex of $V_t\cap \Omega_z$ does not lie in  $\Theta_z$, it has to lie in one of sets $\Psi_i$ for $i\in [2m]$. To see this, let $y\in V_t\cap \Omega_z$, and $\gamma \in \Gamma$ be such that $y=z\cdot \gamma$. If $y\notin \Theta_z$, then $t_0\cdot \gamma\neq t$. In this case the unique path in $T$ from $t$ to $t_0\cdot \gamma$ contains at least one edge. Let $t'$ be the neighbor of $t$ on this path. As $V_{t_0\cdot \gamma}=V_{t_0}\cdot \gamma$ and $z\in V_{t_0}$,
we have $y\in V_t \cap V_{t_0 \cdot \gamma}$. Hence as $(T,\mathcal V)$ is a tree-decomposition, $y\in V_t\cap V_{t'}$.
Thus if we let $i$ be such that $(t,t')=e_i\cdot \beta$ or $(t,t')=\mean{e_i}\cdot \beta$ for some $\beta \in \Gamma$, we obtain that $y\in \Psi_i\cup \Psi_{i+m}$. This shows that $V_t$ is covered by the union of the sets $\Theta_z$, $z\in V(G)$ (there are at most $|V(G)/\Gamma|$ such sets), and the sets $\Psi_i$, $i\in [2m]$.

\medskip

We now show that $\Gamma_t$  acts quasi-transitively on each $\Psi_i$, $i\in [2m]$. 
Let $i\in [m]$, $y_1,y_2\in \Psi_{i}, t_1,t_2\in V(T)$ and $\beta_1,\beta_2\in \Gamma$ such that $(t,t_1),(t,t_2)\in E(T)$, $(t,t_1) = e_i\cdot \beta_1$ and $(t,t_2) = e_i\cdot \beta_2$. We set $\alpha:=\beta_1^{-1}\cdot \beta_2$ and note that $\alpha$ sends the directed edge $e_i\cdot \beta_1$ to $e_i\cdot \beta_2$. Let $S_i$ be the separator of $G$ associated to the edge-separation induced by the edge $e_i$ in $(T, \mathcal V)$. The previous remark implies that $\alpha$ sends $S_i\cdot \beta_1$ to $S_i\cdot \beta_2$ and that $\alpha\in \Gamma_t$. As for every $i\in [m]$, $S_i$ has size at most $k$, we just proved that the action of $\Gamma_t$ on $\Psi_{i}$ induces at most $k$ orbits. The case $i\in \sg{m+1,\ldots,2m}$ is exactly the same.

\medskip

As $V_t$ is covered by the union of the sets $\Theta_z$, $z\in V(G)$ (there are at most $|V(G)/\Gamma|$ such sets, and $\Gamma_t$ acts transitively on each of these sets), and the sets $\Psi_i$, $i\in [2m]$ (and the action of $\Gamma_t$ on each of these sets induces at most $k$ orbits), we have  $|V_t/\Gamma_t|\leq 2km+|V(G)/\Gamma|$, which implies that $\Gamma_t$ acts quasi-transitively on $G[V_t]$. As $(T,\mathcal{V})$ is $\Gamma$-canonical, for each $g\in \Gamma$ and each edge $e$ lying inside some adhesion set of the tree-decomposition, $g$ sends $e$ to a pair of vertices in another adhesion set of the tree-decomposition (and this pair of vertices must thus be joined by an edge in the corresponding torso). It follows that any automorphism $g\in \Gamma_t$ of $G[V_t]$ is also an automorphism of the torso $G\llbracket V_t \rrbracket$. Hence, $\Gamma_t$ also acts quasi-transitively on $G\llbracket V_t \rrbracket$. 
\end{proof}
Note that \cref{lem: quasitrans} still holds if we only require  $E(T)/\Gamma$ to be finite (instead of requiring the edge-separations of $(T,\mathcal V)$ to be tight).

\subsection{Tangles}
\label{sec: Tangles}

Tangles were introduced by Robertson and Seymour \cite{RSX} and play a
fundamental role in their proof of the Graph Minor Structure Theorem. We will consider here the
equivalent definition used by Grohe \cite{Gro16}.  A \emph{tangle of order}
$k$ in $G$ is a subset $\T$ of $\separ_{<k}(G)$ such that
\begin{enumerate}[label=($\mathcal T$\arabic*)]
\item\label{tangle: it1} For all separations $\Sep \in \separ_{<k}(G)$, either $\Sep \in \T$ or
  $(Z,S,Y) \in \T$;
\item\label{tangle: it2} For all separations $\Sepi{1}, \Sepi{2}, \Sepi{3} \in \T$, either $Z_1
  \cap Z_2 \cap Z_3 \neq \emptyset$ or there exists an edge with an endpoint in
  each $Z_i$.
% \item\label{tangle: it3} For all separations $\Sep \in \T$, $Z \neq \emptyset$.
\end{enumerate}
Note that \ref{tangle: it2} with $\Sepi{i}=\Sep$ for each $i\in [3]$ implies in particular that  for every separation $\Sep \in \T$, $Z \neq
  \emptyset$.   A tangle in $G$ will be called a
$G$-tangle, for brevity. Intuitively, a $G$-tangle is a consistent orientation
of the separations of $G$, pointing towards a highly connected region of $G$. We
refer the reader to \cite{RSX,Gro16} for background on tangles.  In general when $G$ is finite
there is a one-to-one correspondence between the $G$-tangles of order 1 and the
connected components of $G$, between the $G$-tangles of order 2 and the \emph{biconnected} components of $G$,
%\ugo{inclusionwise maximal subsets of $V(G)$ which remains connected after the deletion single vertex (usually called \emph{blocks} of $G$)} 
and between the $G$-tangles of order 3 and the
\emph{triconnected} components of $G$ %(a 3-connected component being the torso of a
%maximal subset of vertices inducing a 2-connected graph), 
(we omit the definitions as these notions will not be needed in the remainder of the paper, and instead refer the interested reader to \cite{RSX} and \cite{Gro3-conn}).

\medskip 

In infinite graphs, tangles can also be seen as a notion generalizing the notion of ends: for each end $\omega$ of a graph $G$ and every $k\geq 2$, we define the $G$-tangle $\T_{\omega}^k$ \emph{of order $k$ induced by $\omega$} by: 
\[ \T_{\omega}^k := \sg{\Sep\in \separ_{<k}(G): \omega ~\text{lives in a component of $Z$}}.\]

\medskip

The fact that $\T_{\omega}^k$ is indeed a tangle is a folklore result. One of the basic properties of tangles is that for any fixed model of a graph $H$ in a graph $G$, any $H$-tangle of order $k$ induces a $G$-tangle of order $k$. More precisely, if $\mathcal M = (M_v)_{v\in V(H)}$ is a model of $H$ in $G$ and $\Sep$ is a separation of order less than $k$ in $G$, then its \emph{projection} with respect to $\mathcal{M}$ is the separation $\pi_{\mathcal M}\Sep=\Sepp$ of $H$ of order less than $k$ defined by: 
$Y':=\sg{v\in V(H): M_v \subseteq Y}, S':=\sg{v\in V(H): M_v\cap S \neq \emptyset}$ and $Z':=\sg{v\in V(H): M_v \subseteq Z}$.

\smallskip

A proof of the following result can be found in \cite[$(6.1)$]{RSX} or in a more similar version in \cite[Lemma 3.11]{Gro16}. Its proof extends to the locally finite case.

\begin{lemma}[Lemma 3.11 in \cite{Gro16}]
 \label{lem: RS}
 Let $G$ be a locally finite graph. Let $\mathcal M = (M_v)_{v\in V(H)}$ be a
 model of a graph $H$ in $G$ and $\T'$ be an $H$-tangle of order $k\ge2$. Then the set 
 $$\T:= \sg{\Sep \in \separ_{<k}(G):\pi_{\mathcal M}\Sep\in \T'}$$
 is a $G$-tangle of order $k$, called the \emph{lifting} of $\T'$ in $G$ with respect to $\mathcal M$. 
\end{lemma}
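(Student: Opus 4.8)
The plan is to verify directly that $\T$ satisfies the two tangle axioms \ref{tangle: it1} and \ref{tangle: it2}, using the corresponding properties of $\T'$ together with elementary facts about the projection map $\pi_{\mathcal M}$. First I would record the key observation that if $\Sep\in\separ_{<k}(G)$, then $\pi_{\mathcal M}\Sep=\Sepp$ is indeed a separation of $H$ of order less than $k$: the sets $Y',S',Z'$ partition $V(H)$ because the $M_v$ are pairwise disjoint and cover enough of $G$ (each $M_v$ is connected, hence lies entirely in $Y$, entirely in $Z$, or meets $S$; these three cases are mutually exclusive since there is no edge between $Y$ and $Z$ and each $M_v$ is connected), there is no $H$-edge between $Y'$ and $Z'$ (such an edge would give a $G$-edge between $M_u\subseteq Y$ and $M_v\subseteq Z$), and $|S'|\le|S|<k$ since distinct $v\in S'$ require distinct vertices of $S$ inside the disjoint sets $M_v$. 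The only subtlety (the local finiteness hypothesis) enters in checking the edge condition of \ref{tangle: it2}, via the following: if $M_{v_1},M_{v_2},M_{v_3}$ are connected subgraphs of $G$ pairwise joined by $G$-edges, and these witness an ``edge with an endpoint in each'' configuration at the $H$-level, I need to transfer it back to $G$.

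Second, for axiom \ref{tangle: it1}: given $\Sep\in\separ_{<k}(G)$, note that $\pi_{\mathcal M}(Z,S,Y)$ is exactly the reverse $(Z',S',Y')$ of $\pi_{\mathcal M}\Sep=\Sepp$ (this is immediate from the definition of $\pi_{\mathcal M}$, which is symmetric in the roles of the first and third coordinates). Since $\T'$ is a tangle of order $k$, either $\Sepp\in\T'$ or $(Z',S',Y')\in\T'$, and correspondingly either $\Sep\in\T$ or $(Z,S,Y)\in\T$.

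Third, for axiom \ref{tangle: it2}: let $\Sepi{1},\Sepi{2},\Sepi{3}\in\T$, so $\pi_{\mathcal M}\Sepi{j}=\Sepppi{j}$ with $\Sepppi{j}\in\T'$ for each $j$ (here I reuse $(Y''_j,S''_j,Z''_j)$ as ad hoc notation for the projections). Since $\T'$ is a tangle, either $Z''_1\cap Z''_2\cap Z''_3\ne\emptyset$, or there is an $H$-edge with an endpoint in each $Z''_j$. In the first case, pick $v\in Z''_1\cap Z''_2\cap Z''_3$; then $M_v\subseteq Z_1\cap Z_2\cap Z_3$ and $M_v\ne\emptyset$ (each $M_v$ is nonempty by definition of a model), so $Z_1\cap Z_2\cap Z_3\ne\emptyset$. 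In the second case, write the $H$-edge as $v_av_b$ with, after relabelling, $v_a\in Z''_1$ (say) — more carefully, the ``endpoint in each $Z''_j$'' condition for a single edge $v_av_b$ means each $Z''_j$ contains $v_a$ or $v_b$; by pigeonhole some endpoint, say $v_a$, lies in at least two of them. Then $M_{v_a}\subseteq Z_j$ for those two indices and $M_{v_b}\subseteq Z_{j'}$ for the remaining index, and the $G$-edge realizing $v_av_b\in E(H)$ (which exists between $M_{v_a}$ and $M_{v_b}$ by the model definition) together with the connectivity of $M_{v_a}$ and $M_{v_b}$ yields: one endpoint of this $G$-edge lies in $Z_j$ for the two indices (since $M_{v_a}\subseteq Z_j$) and the other lies in $Z_{j'}$ — wait, this only directly gives an endpoint in each $Z_j$ if $M_{v_a}$ meets all three, so I instead use that $M_{v_a}\cup M_{v_b}$ is connected and contained in $Z_1\cup Z_2\cup Z_3$, meeting each $Z_j$; an easy argument (walk along a path in the connected graph $M_{v_a}\cup M_{v_b}$ through the $G$-edge $v_av_b$) produces a single $G$-edge with one endpoint in each of two of the $Z_j$'s and, combined with $M_{v_a}$ already lying in a third, the required configuration. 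Finally, once \ref{tangle: it1} and \ref{tangle: it2} are verified, $\T$ is a $G$-tangle; that $\T$ has order exactly $k$ (not less) follows because $\T'$ has order $k$, so some separation of $H$ of order $k-1$ is ``small'' relative to $\T'$ in a way that lifts — but actually we only need order $\ge k$ in the weak sense that $\T\subseteq\separ_{<k}(G)$ and axioms hold, which is our definition, so I would just note $\T$ is defined as a subset of $\separ_{<k}(G)$ and we are done.

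I expect the \textbf{main obstacle} to be the bookkeeping in the second case of \ref{tangle: it2}: correctly extracting from an $H$-edge witnessing the tangle condition a genuine $G$-edge (or the required three-way edge configuration) while respecting the partition $V=Y_j\cup S_j\cup Z_j$ and using only that the $M_v$'s are connected and pairwise joined. The local finiteness hypothesis is a red herring for \ref{tangle: it2} as stated with finite models but is needed to ensure projections of bounded-order separations remain well-behaved in the infinite setting; I would remark that since $H$ need not be finite this transfer is entirely combinatorial and uses no finiteness of $G$ beyond what is already assumed. This is exactly the argument in \cite[$(6.1)$]{RSX} and \cite[Lemma 3.11]{Gro16}, so I would present it concisely.
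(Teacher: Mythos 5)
Your proof is correct and follows the standard lifting argument from \cite[Lemma~3.11]{Gro16} and \cite[(6.1)]{RSX}, which is exactly what the paper cites (the paper gives no proof of its own). The verification that $\pi_{\mathcal M}$ sends $\separ_{<k}(G)$ into $\separ_{<k}(H)$ and the treatment of \ref{tangle: it1} are both clean.

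One point worth flagging: your ``wait'' in the middle of the \ref{tangle: it2} argument is a false alarm, and the patch you then reach for (walking along a path in $M_{v_a}\cup M_{v_b}$) is both unnecessary and, as written, confused — ``a $G$-edge with one endpoint in each of two of the $Z_j$'s, combined with $M_{v_a}$ lying in the third'' is not the configuration the axiom asks for. But the direct argument you wrote two lines earlier already works: once $v_a\in Z''_{j_1}\cap Z''_{j_2}$ and $v_b\in Z''_{j_3}$ (the remaining subcase, $v_a\in Z''_{j_3}$, reduces to $M_{v_a}\subseteq Z_1\cap Z_2\cap Z_3$), take the $G$-edge $xy$ witnessing $v_av_b\in E(H)$ with $x\in M_{v_a}$ and $y\in M_{v_b}$. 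Then $x\in Z_{j_1}\cap Z_{j_2}$ and $y\in Z_{j_3}$, so the single edge $xy$ has an endpoint in each $Z_i$ — the axiom does not require the three witnesses to be distinct vertices. You should delete the hedged detour and keep the direct argument. You are also right that local finiteness plays no role here; the lemma and its proof are entirely combinatorial, and the hypothesis is carried along in the statement only for uniformity with the rest of the paper.
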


\begin{remark}
\label{rem: proj-inj}
Assume that $\mathcal M$ is a faithful model of $H$ in $G$ with the property that for each $\Sepp\in \separ_{<k}(H)$, there exists some $\Sep\in \separ_{<k}(G)$ such that $\pi_{\mathcal M}\Sep=\Sepp$ and $S'=S$. Then the function that maps every tangle of order $k$ in $H$ to its lifting in $G$ with respect to $\mathcal M$ is injective. To see this, consider two distinct tangles $\mathcal T'_1\neq \mathcal T'_2$ of order $k$ in $H$. Then there exists some $\Sepp\in \mathcal T'_1$ such that $(Z',S',Y')\in \mathcal T'_2$. If we consider $\Sep\in \separ_{<k}(G)$ such that $\pi_{\mathcal M}\Sep=\Sepp$, we have $\Sep\in \mathcal T_1$ and $(Z,S,Y)\in \mathcal T_2$, where for each $i\in \sg{1,2}$, $\mathcal T_i$ denotes the lifting of $\mathcal T'_i$ with respect to $\mathcal M$. It then follows that $\mathcal T_1\ne \mathcal T_2$, as desired. Note that if $(T,\mathcal V)$ is a tree-decomposition with finitely bounded adhesion and $t\in V(T)$ is such that $G\torso{V_t}$ is a faithful minor of $G$, then any faithful model $\mathcal M$ of $G\torso{V_t}$ has the property we just described.
\end{remark}

If $\mathcal M = (M_v)_{v\in V(H)}$ is a model of $H$ in $G$, and
$\T$ is a tangle of $G$, then $\T' := \{\pi_{\mathcal{M}} \Sep : \Sep \in \T\}$
is called the \emph{projection} of $\T$. Note that $\T'$ is not a tangle in general.
Projecting is the converse operation of lifting in the sense that if
$\M$ is faithful, and $\T$ and $\T'$ are tangles of $G$ and $H$, then $\T$ is the
lifting of $\T'$ if and only if $\T'$ is the projection of $\T$.

\medskip

We define a partial order $\prec$ over the set of separations of a graph $G$ by letting for every two separations $\Sep, \Sepp$, $(Y,S,Z)\preceq(Y',S',Z')$
if and only if $S\cup Z\subsetneq S'\cup Z'$ or $(S\cup Z=S'\cup Z' \text{ and } S\subseteq S')$. Intuitively, $\Sep\preceq \Sepp$ means that $\Sep$ points towards a direction in a more accurate way than $\Sepp$ does.
Note that our definition of $\preceq$ is the same as in 
\cite[Subsection 3.2]{Gro16} and 
slightly differs from the more conventional one of \cite{ RSX, CTTD}. 
% We also emphasize that the order used in \cite{RSX,
%   CTTD} is \ugo{} the reverse order compared to our definition of $\preceq$ (when comparing separations of equal order), hence decreasing sequences in \cite{RSX,
%   CTTD}  correspond to increasing sequences in the present paper and
% \cite{Gro16} and vice-versa.

\medskip

A partially ordered set $(X, <)$ is said to be \emph{well-founded} if every strictly decreasing sequence of elements of $X$ is finite. In particular, if $(X, <)$ is well-founded then for every $x\in X$, there exists $y\in X$ which is minimal with respect to $<$ and such that $y\le x$. In the remainder of the paper, whenever we consider a minimal separation or a well-founded family of separations, we always implicitly refer to the partial order $\prec$ defined in the paragraph above.

\medskip
We will distinguish two types of tangles in infinite graphs:
\begin{itemize}
 \item the \emph{region tangles}, defined as those which are
   well-founded (with respect to the order $\prec$), and 
   
 \item the \emph{evasive tangles}, which contain some infinite decreasing sequence of separations (with respect to the order $\prec$).
\end{itemize}

The tangles we consider in this work will always have order at most $4$. Note that if $G$ is $3$-connected, an evasive tangle $\T$ of order $4$ is exactly a tangle $\T_{\omega}^4$ induced by an end $\omega$ of degree $3$. On the other hand, a region tangle is either a tangle of order $4$ induced by some end $\omega$ of degree at least $4$, or a tangle which is not induced by an end. 
For example, one can check that both graphs in \cref{fig: Gro2} have a unique tangle of order $4$ which is the tangle induced by their unique end (which is thick), and this tangle is a region tangle in both cases.

We say that a separation $(Y,S,Z)$ \emph{distinguishes} two tangles $\mathcal T, \mathcal T'$ if $\Sep \in \mathcal T$ and $(Z,S,Y)\in \mathcal T'$, or vice versa. We say that $\Sep$ distinguishes  $\mathcal T$ and $\mathcal T'$  \emph{efficiently} if there is no separation of smaller order distinguishing $\mathcal T$ and $\mathcal T'$.
A tree-decomposition $(T,\mathcal V)$ \emph{distinguishes} a set of tangles $\mathcal A$ if for every two distinct tangles $\mathcal T, \mathcal T'\in \mathcal A$ there exists an edge-separation of $(T, \mathcal V)$ distinguishing $\mathcal T$ and $\mathcal T'$. A separation is called \emph{relevant} with respect to $\mathcal A$ if it distinguishes at least two tangles of $\mathcal A$. A tree-decomposition is \emph{nice} (with respect to $\mathcal A$) if all its edge-separations are relevant (with respect to $\mathcal A$). 

\medskip

We will need the following result, which extends earlier results of \cite{RSX, DHL18}, and which is a canonical version of one of the main results of the grid-minor series in the locally finite case. We will only use it with $k=4$, but we nevertheless state the result in its most general form.
\begin{theorem}[Theorem 7.3 in \cite{CTTD}]
 \label{thm: tree-tangles}
 Let $k\geq 1$ and let $G$ be a locally finite graph. Then there exists a canonical tree-decomposition $(T,\mathcal V)$ of $G$ that efficiently distinguishes the set $\mathcal A_k$ of tangles of order at most $k$ and that is nice with respect to $\mathcal A_k$.
\end{theorem}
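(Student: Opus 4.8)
The plan is to deduce the statement from the standard correspondence between \emph{nested sets of separations} and tree-decompositions: if $N$ is a set of separations of $G$ any two of which are nested (i.e.\ do not cross), then there is a tree-decomposition $(T,\mathcal V)$ of $G$ whose edge-separations are exactly the members of $N$, and conversely the edge-separations of any tree-decomposition form a nested set. This correspondence is functorial in the automorphism group: if $N$ is $\Aut(G)$-invariant, then $\Aut(G)$ acts on the associated tree $T$ and the resulting tree-decomposition is canonical. Thus the task reduces to producing an $\Aut(G)$-invariant nested set $N$ of separations of order $<k$ such that every two tangles of $\mathcal A_k$ that admit \emph{some} distinguishing separation are distinguished by a member of $N$ of minimum possible order (efficiency), and such that every member of $N$ distinguishes at least two tangles of $\mathcal A_k$ (niceness).

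To build $N$, I would start from the set $D$ of \emph{all} separations of order $<k$ that efficiently distinguish some pair of tangles in $\mathcal A_k$; the set $D$ is automorphism-invariant by construction, and since all its members have bounded order, \cref{lem: TWcut} tells us that its tight members meet only finitely many $\Aut(G)$-orbits. Members of $D$ may cross, so the core of the argument is a canonical \emph{uncrossing}: for two crossing separations in $D$, submodularity of the order function yields a corner separation of order no larger than one of the two originals, and a case analysis of how the relevant tangles orient the four corners shows that this corner still efficiently distinguishes two tangles of $\mathcal A_k$, and hence may be substituted. Organizing these substitutions so that they never require an arbitrary choice — for instance by always picking, among efficient distinguishers of a given pair, one that is minimal with respect to the order $\prec$ when such a minimal element exists, and treating evasive tangles (those with no $\prec$-minimal separation) through a limiting argument — yields a nested, still-distinguishing, $\Aut(G)$-invariant family; automorphism-invariance is then automatic, precisely because the recipe makes no choices. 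Finally one discards from this family every separation that fails to distinguish any pair in $\mathcal A_k$: this preserves both nestedness and the distinguishing property, and produces the desired $N$. Feeding $N$ into the correspondence above gives the canonical, nice, efficiently distinguishing tree-decomposition $(T,\mathcal V)$.

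Two features of the locally finite setting need care. First, one must allow $T$ to have nodes of infinite degree — which the definition of tree-decomposition in \cref{sec: III} already permits — because a single bounded-order separator can be crossed by infinitely many pairwise non-nested members of $D$ whose corners all survive into $N$. Second, although $G$ and the set of tangles are infinite, every separation in play has order $<k$, so \cref{lem: TWcut} lets the uncrossing and minimization be carried out orbit by orbit and supports the compactness arguments that replace the finite bookkeeping used in the classical finite case \cite{RSX}.

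The genuine obstacle is exactly the canonical uncrossing: one must show that crossing efficient distinguishers can be replaced by corner separations according to an automorphism-invariant rule that is simultaneously well-defined (no ties to break), terminates — or converges, in the presence of evasive tangles and infinite $\prec$-descending chains — and produces a nested set. This is the content of the canonical tangle--tree theorem of Carmesin, Hamann and Miraftab \cite{CTTD}, which refines the non-canonical version of Robertson and Seymour \cite{RSX} and the canonical versions of \cite{DHL18}; making the bookkeeping precise really requires the framework of abstract separation systems and tree sets developed in that line of work, which is why for our purposes we invoke the result directly rather than reproving it.
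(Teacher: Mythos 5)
Your proposal ultimately does exactly what the paper does: it cites Theorem 7.3 of Carmesin, Hamann and Miraftab \cite{CTTD} as a black box, since (as you correctly observe) the canonical uncrossing argument genuinely requires the abstract-separation-system machinery developed there and cannot be summarized briefly. The sketch you give of the underlying nested-set/tree-decomposition correspondence and the role of \cref{lem: TWcut} in the locally finite setting is accurate background, but the paper itself proves nothing here, so there is no alternative route to compare.
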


\begin{remark}
 \label{rem: relevant} The fact that $(T,\mathcal V)$ is nice in \cref{thm: tree-tangles} is not explicit in the original statement, however it directly follows from the proof. Moreover, the proof also ensures that the edge-separations of $(T,\mathcal V)$ are pairwise distinct.
\end{remark}

\subsection{An example}

We give here an example of a one-ended graph that excludes some minor and has infinitely many region tangles of order $4$. We show how to distinguish them on this example with a canonical tree-decomposition. As the application of \cref{thm: tree-tangles} allowing to distinguish all tangles of order $4$ is the very first step of our proof of \cref{thm: mainCTTD,thm: main2}, this example may also be useful to have some intuition on it.

\tikzexternaldisable
\begin{figure}[h]
\centering
\begin{tikzpicture}[scale=0.7]
\pgfmathsetmacro{\r}{5}
\pgfmathsetmacro{\long}{4}
\pgfmathsetmacro{\haut}{1}
\pgfmathsetmacro{\alpha}{1/3}

\pgfmathsetmacro{\hh}{int(\haut+1)}
\pgfmathsetmacro{\lll}{int(\long-1)}
\tikzstyle{every node}=[draw, circle,fill=black,minimum size=4pt, inner sep=0pt]
\foreach \j [evaluate=\j] in {1,...,\hh}
    {\foreach \i [evaluate=\i] in {1,...,\lll}
    {
        \pgfmathsetmacro{\y}{int(2*\j-1)}
        \pgfmathsetmacro{\ys}{\r*(sqrt(3)/2)*(\y + (2/3))}        
        \pgfmathsetmacro{\x}{int(\i)}
        \pgfmathsetmacro{\xs}{\r*(\x +0.5)}
        \pgfmathsetmacro{\H}{\r*(sqrt(3)/2)*(2/3)}
        \pgfmathsetmacro{\HP}{\H*\alpha}       
        \begin{scope}[xshift=\xs cm, yshift=\ys cm]
            \node (v1\x_\y) at (90:\H) {};
            \node (v2\x_\y) at (210:\H) {};
            \node (v3\x_\y) at (330:\H) {};
            \node (w1\x_\y) at (90:\HP) {};
            \node (w2\x_\y) at (210:\HP) {};
            \node (w3\x_\y) at (330:\HP) {};
            \node (z\x_\y) at (0:0) {};            
            \draw (v1\x_\y) -- (v2\x_\y) -- (w1\x_\y) -- (v2\x_\y) -- (v3\x_\y) -- (w1\x_\y) -- (z\x_\y) --(w2\x_\y) -- (v1\x_\y) -- (w1\x_\y) -- (v1\x_\y) -- (v3\x_\y) -- (w3\x_\y) -- (z\x_\y) -- (w3\x_\y) -- (v2\x_\y) --(w2\x_\y) -- (v3\x_\y) -- (w3\x_\y) -- (v1\x_\y);
        \end{scope}
    }
}
\foreach \j [evaluate=\j] in {1,...,\hh}
    {\foreach \i [evaluate=\i] in {1,...,\long}
    {
        \pgfmathsetmacro{\y}{int(2*\j-1)}
        \pgfmathsetmacro{\ys}{\r*(sqrt(3)/2)*(\y + 1)}        
        \pgfmathsetmacro{\x}{int(\i)}
        \pgfmathsetmacro{\xs}{\r*(\x)}
        \pgfmathsetmacro{\H}{\r*(sqrt(3)/2)*(2/3)}
        \pgfmathsetmacro{\HP}{\H*\alpha}       
        \begin{scope}[xshift=\xs cm, yshift=\ys cm]
            \node (v1\x_\y) at (-90:\H) {};
            \node (v2\x_\y) at (30:\H) {};
            \node (v3\x_\y) at (150:\H) {};
            \node (w1\x_\y) at (-90:\HP) {};
            \node (w2\x_\y) at (30:\HP) {};
            \node (w3\x_\y) at (150:\HP) {};
            \node (z\x_\y) at (0:0) {};            
            \draw (v1\x_\y) -- (v2\x_\y) -- (w1\x_\y) -- (v2\x_\y) -- (v3\x_\y) -- (w1\x_\y) -- (z\x_\y) --(w2\x_\y) -- (v1\x_\y) -- (w1\x_\y) -- (v1\x_\y) -- (v3\x_\y) -- (w3\x_\y) -- (z\x_\y) -- (w3\x_\y) -- (v2\x_\y) --(w2\x_\y) -- (v3\x_\y) -- (w3\x_\y) -- (v1\x_\y);
        \end{scope}
    }
}
\foreach \j [evaluate=\j] in {1,...,\haut}
    {\foreach \i [evaluate=\i] in {1,...,\lll}
    {
        \pgfmathsetmacro{\y}{int(2*\j)}
        \pgfmathsetmacro{\ys}{\r*(sqrt(3)/2)*(\y + 1)}        
        \pgfmathsetmacro{\x}{int(\i)}
        \pgfmathsetmacro{\xs}{\r*(\x+0.5)}
        \pgfmathsetmacro{\H}{\r*(sqrt(3)/2)*(2/3)}
        \pgfmathsetmacro{\HP}{\H*\alpha}       
        \begin{scope}[xshift=\xs cm, yshift=\ys cm]
            \node (v1\x_\y) at (-90:\H) {};
            \node (v2\x_\y) at (30:\H) {};
            \node (v3\x_\y) at (150:\H) {};
            \node (w1\x_\y) at (-90:\HP) {};
            \node (w2\x_\y) at (30:\HP) {};
            \node (w3\x_\y) at (150:\HP) {};
            \node (z\x_\y) at (0:0) {};            
            \draw (v1\x_\y) -- (v2\x_\y) -- (w1\x_\y) -- (v2\x_\y) -- (v3\x_\y) -- (w1\x_\y) -- (z\x_\y) --(w2\x_\y) -- (v1\x_\y) -- (w1\x_\y) -- (v1\x_\y) -- (v3\x_\y) -- (w3\x_\y) -- (z\x_\y) -- (w3\x_\y) -- (v2\x_\y) --(w2\x_\y) -- (v3\x_\y) -- (w3\x_\y) -- (v1\x_\y);
        \end{scope}
    }
}
\foreach \j [evaluate=\j] in {1,...,\haut}
    {\foreach \i [evaluate=\i] in {1,...,\long}
    {
        \pgfmathsetmacro{\y}{int(2*\j)}
        \pgfmathsetmacro{\ys}{\r*(sqrt(3)/2)*(\y + (2/3))}        
        \pgfmathsetmacro{\x}{int(\i)}
        \pgfmathsetmacro{\xs}{\r*(\x)}
        \pgfmathsetmacro{\H}{\r*(sqrt(3)/2)*(2/3)}
        \pgfmathsetmacro{\HP}{\H*\alpha}       
        \begin{scope}[xshift=\xs cm, yshift=\ys cm]
            \node (v1\x_\y) at (90:\H) {};
            \node (v2\x_\y) at (210:\H) {};
            \node (v3\x_\y) at (330:\H) {};
            \node (w1\x_\y) at (90:\HP) {};
            \node (w2\x_\y) at (210:\HP) {};
            \node (w3\x_\y) at (330:\HP) {};
            \node (z\x_\y) at (0:0) {};            
            \draw (v1\x_\y) -- (v2\x_\y) -- (w1\x_\y) -- (v2\x_\y) -- (v3\x_\y) -- (w1\x_\y) -- (z\x_\y) --(w2\x_\y) -- (v1\x_\y) -- (w1\x_\y) -- (v1\x_\y) -- (v3\x_\y) -- (w3\x_\y) -- (z\x_\y) -- (w3\x_\y) -- (v2\x_\y) --(w2\x_\y) -- (v3\x_\y) -- (w3\x_\y) -- (v1\x_\y);
        \end{scope}
    }
    \foreach \j [evaluate=\j] in {1,...,\haut}
    {\foreach \i [evaluate=\i] in {2}
    {
        \pgfmathsetmacro{\y}{int(2*\j)}
        \pgfmathsetmacro{\ys}{\r*(sqrt(3)/2)*(\y + 1)}        
        \pgfmathsetmacro{\x}{int(\i)}
        \pgfmathsetmacro{\xs}{\r*(\x+0.5)}
        \pgfmathsetmacro{\H}{\r*(sqrt(3)/2)*(2/3)}
        \pgfmathsetmacro{\HH}{\r*(sqrt(3)/2)*(1/3 + 0.05)}    
        \pgfmathsetmacro{\HHH}{\r*(sqrt(3)/2)*(1/3 - 0.05)}        
        \pgfmathsetmacro{\HP}{\H*\alpha}
        \pgfmathsetmacro{\HHP}{\HH* \alpha + 0.1}    
        \pgfmathsetmacro{\HHHP}{\HHH *\alpha - 0.05}        
        
        \begin{scope}[xshift=\xs cm, yshift=\ys cm]
            \node[draw, circle,fill=red,minimum size=4pt, inner sep=0pt, label=below:\textcolor{red}{$v_1$}] (v1\x_\y) at (-90:\H) {};
            \node[draw, circle,fill=red,minimum size=4pt, inner sep=0pt, label=above:\textcolor{red}{$v_2$}] (v2\x_\y) at (30:\H) {};
            \node[draw, circle,fill=red,minimum size=4pt, inner sep=0pt, label=above:\textcolor{red}{$v_3$}] (v3\x_\y) at (150:\H) {};
            \node[draw, circle,fill=blue,minimum size=4pt, inner sep=0pt, label=below:\textcolor{blue}{$w_1$}] (w1\x_\y) at (-90:\HP) {};
            \node[draw, circle,fill=blue,minimum size=4pt, inner sep=0pt, label=above:\textcolor{blue}{$w_2$}] (w2\x_\y) at (30:\HP) {};
            \node[draw, circle,fill=blue,minimum size=4pt, inner sep=0pt, label=above:\textcolor{blue}{$w_3$}] (w3\x_\y) at (150:\HP) {};
            \node[draw, circle,fill=blue,minimum size=4pt, inner sep=0pt, label=above:\textcolor{blue}{$z$}] (z\x_\y) at (0:0) {};            
            \draw (v1\x_\y) -- (v2\x_\y) -- (w1\x_\y) -- (v2\x_\y) -- (v3\x_\y) -- (w1\x_\y) -- (z\x_\y) --(w2\x_\y) -- (v1\x_\y) -- (w1\x_\y) -- (v1\x_\y) -- (v3\x_\y) -- (w3\x_\y) -- (z\x_\y) -- (w3\x_\y) -- (v2\x_\y) --(w2\x_\y) -- (v3\x_\y) -- (w3\x_\y) -- (v1\x_\y);
            \draw[thick, color=red] (v2\x_\y) -- (v3\x_\y) -- (v1\x_\y) -- (v2\x_\y);
            \draw[->, very thick, red] (90:\HH) -- (90:\HHH);
            \draw[->, very thick, red] (210:\HH) -- (210:\HHH);
            \draw[->, very thick, red] (330:\HH) -- (330:\HHH);
            \draw[->, very thick, red] (90:\HHHP) -- (90:\HHP);
            \draw[->, very thick, red] (210:\HHHP) -- (210:\HHP);
            \draw[->, very thick, red] (330:\HHHP) -- (330:\HHP);            
            \node[draw, circle,fill=red,minimum size=4pt, inner sep=0pt, label=below:\textcolor{red}{$v_1$}] (v1\x_\y) at (-90:\H) {};
            \node[draw, circle,fill=red,minimum size=4pt, inner sep=0pt, label=above:\textcolor{red}{$v_2$}] (v2\x_\y) at (30:\H) {};
            \node[draw, circle,fill=red,minimum size=4pt, inner sep=0pt, label=above:\textcolor{red}{$v_3$}] (v3\x_\y) at (150:\H) {};

        \end{scope}
    }
}
}

\end{tikzpicture}
\caption{A useful example.}
\label{fig: Tangles}
\end{figure}
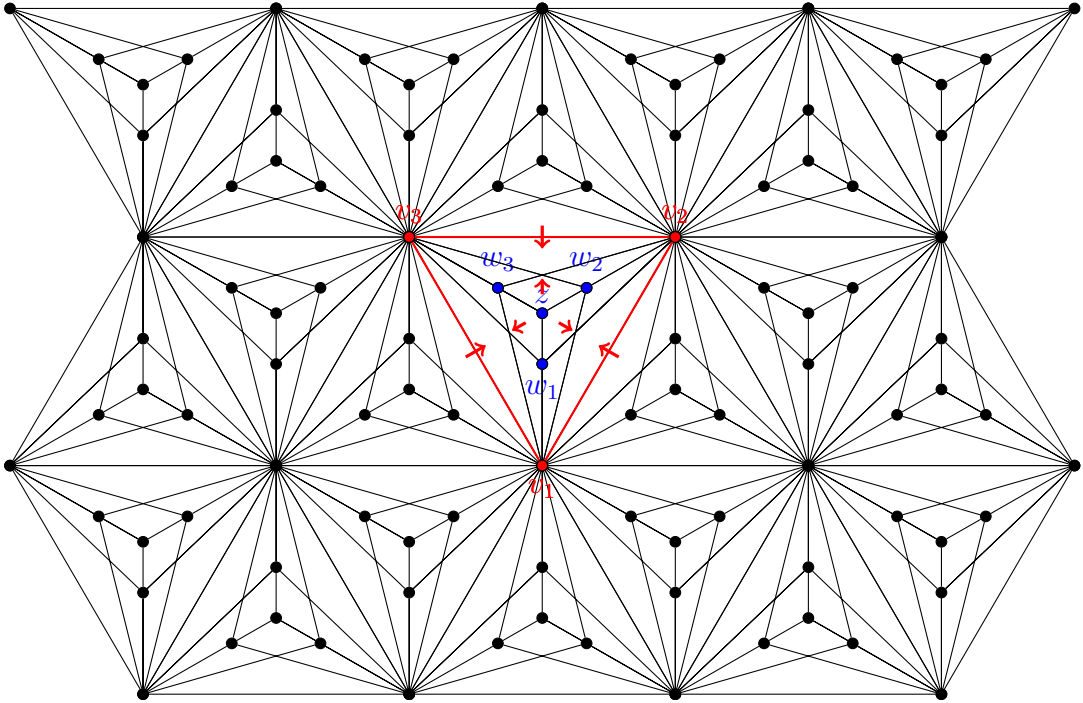

We consider the infinite graph $G$ (a finite section of which is illustrated in \cref{fig: Tangles}), which is obtained from the infinite triangular grid by adding in each triangular face $f=v_1v_2v_3$ three vertices $\sg{w_1, w_2, w_3}$ inducing a $K_{3,3}$ with the vertices of the triangle, and another vertex $z$ connected to each of the $w_i$'s. $G$ has two types of tangles of order $4$: one is the tangle $\T_{\omega}^4$ induced by the unique end $\omega$ of $G$, and all the others are the tangles $\T_{f}^4$ pointing towards each face $f=v_1v_2v_3$ of the triangular grid; more precisely, $\T_{f}^4$ has the same set of separations as $\T_{\omega}^4$ except for $(G- A, \sg{v_1, v_2, v_3}, \sg{w_1, w_2, w_3, z})\in \T_f^4$, where $A:=\sg{v_1, v_2, v_3, w_1, w_2, w_3, z}$. Note that with respect to our definition, all the tangles of order $4$ of $G$ are region tangles.
We represented with red arrows the two separations of $\T_f^4$ that are minimal with respect to the order $\prec$ but which  are not minimal separations of $\T_{\omega}^k$ (for one fixed face $f$). The three red arrows crossing the red triangle correspond to the minimal separation $(G- A, \sg{v_1, v_2, v_3}, \sg{w_1, w_2, w_3, z})$ of $\T_f^4$ that points towards the triangular face $v_1v_2v_3$, while the three arrows directed away from $z$ correspond to the minimal separation $(\sg{z}, \sg{w_1, w_2, w_3}, \sg{v_1, v_2, v_3}\cup (G- A))$ of $\T_f^4$.
The tree-decomposition $(T,\mathcal V)$ of \cref{thm: tree-tangles} distinguishing all the tangles of order $4$ is such that $T$ is a star with center $t_0\in V(T)$ such that $G\torso{V_{t_0}}$ is the infinite planar triangular grid. Then $T$ has one vertex $t_f$ for each face $f=v_1v_2v_3$ of $G\torso{V_{t_0}}$ and the bag $V_{t_f}$
is finite and contains the $7$ vertices $\sg{v_1,v_2,v_3,w_1,w_2,w_3,z}$ associated to $f$. Note that such a tree-decomposition enjoys the properties of \cref{thm: mainCTTD,thm: main2}. However, this is not always the case and we need in general to decompose further some torsos of the tree-decomposition given by \cref{thm: tree-tangles} in order to obtain such a decomposition.

\subsection{Tangles of order $4$: orthogonality and crossing-lemma}
In this section we introduce some notions from \cite{Gro16} and briefly explain  how to extend them in the locally finite case. Unless specified otherwise,  we assume in the whole section that the graphs we consider are locally finite and $3$-connected.

\medskip

A separation $\Sep\in \separ_{<4}(G)$ is said to be \emph{degenerate} if 
\begin{itemize}
    \item $\Sep$ has order $3$, \item $G[S]$ is an independent set, and 
    \item $|Y|=1$.
\end{itemize}
The following result from \cite{Gro16} immediately generalizes to locally finite graphs:
\begin{lemma}[Lemma 4.13 and Remark 4.14 in \cite{Gro16}]
 \label{lem: non-degenerate}
 Let $G$ be a  locally finite $3$-connected graph, and $\Sep$ be a proper
 separation of order $3$. Then $G\llbracket Z\cup S\rrbracket$ is a faithful
 minor of $G$ if and only if $\Sep$ is non-degenerate.
\end{lemma}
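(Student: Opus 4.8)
\textit{Setup and reduction.} First I would record a structural fact: since $G$ is $3$-connected and $\Sep$ is proper of order $3$, every connected component $C$ of $G[Y]$ satisfies $N_G(C)=S$. Indeed $N_G(C)\subseteq S$ always, and if $|N_G(C)|\le 2$ then $G-N_G(C)$ is connected ($3$-connectivity), forcing $V(G)=C\cup N_G(C)$ and hence $Z=\emptyset$, contradicting properness. Consequently $G\torso{Z\cup S}$ is precisely $G[Z\cup S]$ together with the (at most three) edges making $S$ into a triangle. Next I would unravel what a faithful model $(M_v)_{v\in Z\cup S}$ of this torso in $G$ looks like: for $z\in Z$, disjointness and faithfulness force $M_z\subseteq\{z\}\cup Y$, and since $z$ has no neighbour in $Y$ (no $Y$--$Z$ edges), connectedness forces $M_z=\{z\}$; likewise $M_{s_i}\subseteq\{s_i\}\cup Y$. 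Conversely, any three pairwise-disjoint connected sets $M_{s_1}\ni s_1$, $M_{s_2}\ni s_2$, $M_{s_3}\ni s_3$ inside $G[Y\cup S]$ that are pairwise adjacent yield, together with $M_z:=\{z\}$, a faithful model of the torso. So the lemma is equivalent to: $G[Y\cup S]$ contains a $K_3$-minor rooted at $\{s_1,s_2,s_3\}$ if and only if $\Sep$ is non-degenerate.

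\textit{A rooted-minor characterization.} For this I would invoke the classical fact that a connected graph $H$ with three prescribed vertices $a,b,c$ admits pairwise-disjoint connected pairwise-adjacent sets $A\ni a$, $B\ni b$, $C\ni c$ if and only if no single vertex $v$ of $H$ has $a,b,c$ in three distinct components of $H-v$. (For locally finite $H$ this is standard; in the case at hand branch sets may be taken finite up to absorbing entire components of $G[Y]$.) Applying this with $H:=G[Y\cup S]$, which is connected because each component of $G[Y]$ is adjacent to all of $S$, the torso is a faithful minor of $G$ if and only if no vertex of $G[Y\cup S]$ separates $s_1,s_2,s_3$ into three parts.

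\textit{Identifying the obstruction with degeneracy.} This is the crux, and where $3$-connectivity does the work. Suppose some vertex $v$ puts $s_1,s_2,s_3$ into three distinct components $D_1\ni s_1$, $D_2\ni s_2$, $D_3\ni s_3$ of $G[Y\cup S]-v$. If $v=s_1$, then $G[Y\cup\{s_2,s_3\}]$ is connected (each component of $G[Y]$ is adjacent to both $s_2$ and $s_3$), contradicting that $s_2,s_3$ lie in different components; so $v\notin S$, i.e.\ $v\in Y$. Any further component $D$ of $G[Y\cup S]-v$ would lie in $Y$ with $N_G(D)\subseteq\{v\}$ (its neighbours cannot escape $Y\cup S$), making $\{v\}$ a cutvertex of $G$ separating $D$ from $Z$ --- impossible. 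Hence $Y\setminus\{v\}=(D_1\cap Y)\sqcup(D_2\cap Y)\sqcup(D_3\cap Y)$ and $D_i=\{s_i\}\cup(D_i\cap Y)$. If some $E:=D_i\cap Y$ is nonempty, then $N_G(E)\subseteq\{v,s_i\}$, so $\{v,s_i\}$ separates $E$ from $Z$ in $G$, contradicting $3$-connectivity. Therefore every $D_i\cap Y=\emptyset$, i.e.\ $Y=\{v\}$, and the $D_i=\{s_i\}$ being distinct components of $G[S]$ forces $G[S]$ to be edgeless. Conversely, if $Y=\{v\}$ and $G[S]$ is edgeless, then $v$ is adjacent to $s_1,s_2,s_3$ (as $N_G(\{v\})=S$) and $G[Y\cup S]-v$ is three isolated vertices. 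Thus a separating vertex exists exactly when $Y$ is a single vertex and $G[S]$ is edgeless, i.e.\ exactly when $\Sep$ is degenerate; combined with the previous paragraph this proves the lemma.

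\textit{Where the difficulty lies.} The only genuinely delicate step is the last one in the previous paragraph --- turning a nonempty $D_i\cap Y$ into a forbidden $2$-separator $\{v,s_i\}$; the hypothesis $|Y|=1$ in the definition of degeneracy is precisely the configuration in which this step cannot be run, since then $Y\setminus\{v\}$ is empty and no small separator appears. Extending Grohe's finite argument to the locally finite setting costs nothing here: none of the steps uses finiteness, branch sets are permitted to be infinite, and the single external ingredient --- the rooted-$K_3$-minor characterization --- is insensitive to the size of the graph.
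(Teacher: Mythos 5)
The overall reduction—to showing that $G[Y\cup S]$ has a $K_3$-minor rooted at $S$—is correct and clean, and the final case analysis of the obstruction is essentially right. The gap is in the middle: the ``classical fact'' you invoke is false as stated. Take $H$ to be the $3$-vertex path $a-c-b$, with $c$ the middle vertex; then $a,b,c$ admit no rooted $K_3$-minor (the three branch sets are forced to be the three singletons, and $ab$ is not an edge), yet no vertex $v$ of $H$ has $a,b,c$ in three distinct components of $H-v$, because after deleting one of the three vertices there are not even three vertices left. The correct statement must additionally forbid a vertex $w\in\{a,b,c\}$ whose deletion leaves the other two roots in distinct components of $H-w$. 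Your proof cites the incomplete version, and then in the next paragraph silently patches it: the clause ``If $v=s_1$, then $G[Y\cup\{s_2,s_3\}]$ is connected\ldots'' is exactly the argument that rules out this second kind of obstruction, but under the hypothesis you state there (that $v$ puts $s_1,s_2,s_3$ into three distinct components of $G[Y\cup S]-v$) the sub-case $v=s_1$ is vacuous, since $s_1$ is not in any component of $H-s_1$. So all the ingredients are present and correct, but the stated rooted-minor lemma is wrong, and the logic wrapping the pieces together does not quite close. The fix is small and uses no new ideas: state the characterization with both types of obstructing vertex ($w\notin\{a,b,c\}$ separating all three, and $w\in\{a,b,c\}$ separating the other two), then observe that your ``$v\in S$'' clause kills the second type and your ``$v\in Y$'' analysis shows the first type occurs exactly when $\Sep$ is degenerate.

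For comparison with the paper: the paper gives no proof of this lemma, only the remark that Grohe's Lemma 4.13 and Remark 4.14 ``immediately generalize to locally finite graphs,'' so I cannot say whether Grohe argues via a rooted-$K_3$-minor characterization as you do; in any case your route is a reasonable one, and once the characterization is stated in its correct two-case form, your argument goes through.
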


We say that the edge-separations of a tree-decomposition $(T,\mathcal{V})$ are \emph{non-degenerate} if for every $e\in E(T)$, none of the two edge-separations associated to $e$ are degenerate.

\begin{lemma}
  \label{lem: torso-minor} 
   Let $G$ be a locally finite $3$-connected graph and let $(T,\mathcal V)$, with $\mathcal{V}=(V_t)_{t \in V(T)}$, be a tree-decomposition of $G$ whose edge-separations have order $3$ and are non-degenerate. Then $G\llbracket V_t\rrbracket$ is a faithful minor of $G$ for each $t\in V(T)$.
\end{lemma}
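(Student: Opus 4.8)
The plan is to produce a faithful model of the torso $H_t:=G\llbracket V_t\rrbracket$ in $G$ by gluing together, over all branches of $(T,\mathcal V)$ hanging off $t$, the faithful models supplied by \cref{lem: non-degenerate}. Fix $t\in V(T)$. For a neighbour $t'$ of $t$ in $T$ let $T_{t'}$ be the component of $T-t$ containing $t'$, let $W_{t'}:=\bigl(\bigcup_{s\in V(T_{t'})}V_s\bigr)\setminus V_t$ and $S_{t'}:=V_t\cap V_{t'}$ (so $|S_{t'}|=3$, as all edge-separations have order $3$), and put $A_{t'}:=V(G)\setminus(W_{t'}\cup S_{t'})$. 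Standard properties of tree-decompositions give: the sets $W_{t'}$ partition $V(G)\setminus V_t$; there is no edge of $G$ between $W_{t'}$ and $W_{t''}$ when $t'\neq t''$; and $N_G(W_{t'})\subseteq S_{t'}$. Consequently every component $C$ of $G-V_t$ lies in a unique branch $W_{t'}$ and is a connected component of $G[W_{t'}]=G-(V(G)\setminus W_{t'})$, and every edge of $H_t$ which is not already an edge of $G[V_t]$ has both endpoints in some $S_{t'}$. If $G-V_t=\emptyset$ then $H_t=G$ and we are done, so assume otherwise.

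First dispose of the boundary cases. A neighbour $t'$ with $W_{t'}=\emptyset$ contributes no vertex of $G-V_t$ and no edge to $H_t$, so it may be ignored. If some neighbour $t'$ has $A_{t'}=\emptyset$, then $V(G)=W_{t'}\cup S_{t'}$, hence $V_t=S_{t'}$ has exactly three vertices and $H_t$ is a triangle on $V_t$ (together with whatever edges $G[V_t]$ already has); since $G$ is $3$-connected, $K_3$ is a faithful minor of $G$ rooted at any three vertices (e.g.\ any three vertices of a $2$-connected graph lie on a common cycle, a subdivision of $K_3$), so $H_t$ is a faithful minor of $G$. From now on assume $W_{t'}\neq\emptyset$ and $A_{t'}\neq\emptyset$ for every remaining neighbour $t'$. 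Then $(W_{t'},S_{t'},A_{t'})$ is a proper separation of order $3$, and it is one of the two edge-separations associated to the edge $tt'$, hence non-degenerate by hypothesis; by \cref{lem: non-degenerate} applied with $(Y,S,Z)=(W_{t'},S_{t'},A_{t'})$, the torso $G\llbracket A_{t'}\cup S_{t'}\rrbracket=G\llbracket V(G)\setminus W_{t'}\rrbracket$ is a faithful minor of $G$; fix a faithful model $\mathcal M^{(t')}=(M^{(t')}_v)_{v\in V(G)\setminus W_{t'}}$ of it in $G$. The crucial observation is that, since the classes of $\mathcal M^{(t')}$ are pairwise disjoint and each contains its own index vertex, a class $M^{(t')}_v$ with $v\in S_{t'}$ meets $V(G)\setminus W_{t'}$ only in $v$; that is, $M^{(t')}_v\subseteq\{v\}\cup W_{t'}$ for every $v\in S_{t'}$.

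Now set, for each $v\in V_t$,
\[ M_v\ :=\ \{v\}\ \cup\ \bigcup_{t'\sim t\,:\ v\in S_{t'}}\bigl(M^{(t')}_v\setminus\{v\}\bigr). \]
Each $G[M^{(t')}_v]$ (for $v\in S_{t'}$) is connected and contains $v$, so $G[M_v]$ is a union of connected subgraphs all passing through $v$, hence connected; since $M^{(t')}_v\setminus\{v\}\subseteq W_{t'}$ and the branches $W_{t'}$ are pairwise disjoint and disjoint from $V_t$, while within a single $\mathcal M^{(t')}$ the classes are pairwise disjoint, the sets $M_v$ ($v\in V_t$) are pairwise disjoint; and $v\in M_v$. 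Finally, an edge $uv$ of $H_t$ either already lies in $G[V_t]$, and is then witnessed by $u\in M_u$ and $v\in M_v$, or it arises from a component $C$ of $G-V_t$, which lies in some branch $W_{t'}$ with $u,v\in S_{t'}$; since $C$ is a component of $G[W_{t'}]$ adjacent to both $u$ and $v$, the edge $uv$ belongs to $G\llbracket V(G)\setminus W_{t'}\rrbracket$, so $\mathcal M^{(t')}$ provides a $G$-edge between $M^{(t')}_u$ and $M^{(t')}_v$, and since $M^{(t')}_u\subseteq M_u$ and $M^{(t')}_v\subseteq M_v$ this is an edge between $M_u$ and $M_v$. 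Hence $(M_v)_{v\in V_t}$ is a faithful model of $H_t$ in $G$, as required.

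The real input is \cref{lem: non-degenerate}; what makes its many instances (one per branch) combine into a single faithful model is the localization observation, namely that a faithful model of $G\llbracket V(G)\setminus W_{t'}\rrbracket$ can only reach into $G$ through the three vertices of $S_{t'}$, which keeps the contributions of distinct branches disjoint. The point I expect to require the most care is that $t$ may have infinitely many neighbours in $T$: one must not run an inductive "fold one branch at a time" argument (it would not obviously terminate), and instead glue all branches simultaneously as above, checking that the possibly infinite union defining each $M_v$ still yields connected, pairwise disjoint classes — which is exactly what the argument does. The boundary cases where an edge-separation is improper are a minor technical nuisance but are disposed of directly via the rooted-$K_3$ fact for $3$-connected graphs.
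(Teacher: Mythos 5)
Your proof is correct and follows essentially the same route as the paper's: apply \cref{lem: non-degenerate} to each branch hanging off $t$, observe that a faithful model of $G\llbracket V(G)\setminus W_{t'}\rrbracket$ can only spill into $G$ through $W_{t'}$ (so the contributions of distinct branches are disjoint), and take the union over $t'$; the paper's $M_v:=\bigcup_{t'}M^{t'}_v$ is exactly your construction. You are slightly more careful than the paper in explicitly dispatching the improper edge-separations ($W_{t'}=\emptyset$ or $A_{t'}=\emptyset$), which is a genuine (if minor) gap in the paper's write-up since \cref{lem: non-degenerate} assumes properness; however, your parenthetical justification for the $A_{t'}=\emptyset$ case is misstated --- three vertices of a $2$-connected graph need not lie on a common cycle (take the three degree-$2$ vertices of $K_{2,3}$); you want Dirac's theorem that any three vertices of a $3$-connected graph lie on a common cycle, which applies since $G$ is $3$-connected, so the conclusion stands.
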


\begin{proof}
Let $t\in V(T)$, and $t'$ be a neighbor of $t$ in $T$. Let $(Y_{t'}, S_{t'}, Z_{t'})$ be the edge-separation of $G$ associated to the (oriented) edge $(t',t)\in E(T)$, that is  $S_{t'}=V_t\cap V_{t'}$, $V_{t'}\subseteq Y_{t'}\cup S_{t'}$, and $V_t\subseteq Z_{t'}\cup S_{t'}$. By \cref{lem: non-degenerate}, there is a faithful model $(M^{t'}_v)_{v\in (Z_{t'}\cup S_{t'})}$ of $G\torso{Z_{t'}\cup S_{t'}}$ in $G$. As the only edges of $G\llbracket Z_{t'}\cup S_{t'}\rrbracket$ that are not edges of $G$ must be between pairs of vertices of $S_{t'}$, we may assume that every $M^{t'}_v$ has size $1$, except possibly when $v\in S_{t'}$, in which case the only vertices distinct from $v$ that $M^{t'}_v$ can have must lie in $Y_{t'}$. 
For every $v\in V_t$, we let:
$$M_v:=\bigcup_{\substack{t'\in V(T),\\ tt'\in E(T)}}M^{t'}_v.$$

We show that $(M_v)_{v\in V_t}$ is a faithful model of $G\llbracket V_t\rrbracket$ in $G$.
As $(T,\mathcal V)$ is a tree-decomposition, for every two distinct neighbors $t',t''$ of $t$ in $T$, $Y_{t'}\cap Y_{t''}=\emptyset$ so we must have $M^{t'}_v\cap M^{t''}_v=\sg{v}$ and $M^{t'}_v\cap M^{t''}_u=\emptyset$ for each distinct vertices $u,v \in V_t$. As $(M^{t'}_v)_{v\in (Z_{t'}\cup S_{t'})}$ is a model, we have $M^{t'}_u\cap M^{t'}_v = \emptyset$ for each $u\neq v\in V_t$. It follows that $M_u\cap M_v = \emptyset$ for each distinct $u,v\in V_t$. 
Now if  $uv\in E(G\torso{V_t})$ and $uv\notin E(G)$, there must exist some edge-separation $\Sepi{t'}$ such that $u,v\in S_{t'}$ and there exists a path from $u$ to $v$ in $G[S_{t'}\cup Y_{t'}]$. In particular, there must exist $u'\in M_u^{t'}$ and $v'\in M_v^{t'}$ such that $u'v'\in E(G)$. As $u'\in M_u$ and $v'\in M_v$,  we proved that $(M_v)_{v\in V_t}$ is a faithful model of $G\torso{V_t}$ in $G$.
\end{proof}

For every tangle $\mathcal T$ of a graph $G$, we denote by $\mathcal T_{\mathrm{min}}$ its set of minimal separations (here and in the remainder, minimality of separations is always with respect to the partial order  $\preceq$ defined above). If $\mathcal T$ has order $4$, then we let $\mathcal T_{\mathrm{nd}}$ be its set of non-degenerate minimal separations. 

\begin{remark}
 \label{rem: degenerate-tangle}
 Let $G$ be locally finite, let $\mathcal T$ be a $G$-tangle of order $4$, and let $(Y,S,Z)$ be a degenerate separation of $G$. Then $(Y,S,Z)\in \mathcal T$. This is a direct consequence of \cite[Lemma 3.3]{Gro16}, which states that if $\mathcal T$ is a tangle of order $k$ then for every separation $(Y,S,Z)$ of order $k-1$ such that $|Y\cup S|\leq \frac{3}{2}(k-1)$ we have $(Y,S,Z)\in \mathcal T$.
\end{remark}

For every tangle $\mathcal T$ of order $4$, we let:
$$X_{\mathcal T} := \bigcap_{\substack{(Y,S,Z)\in \mathcal T,\\ (Y,S,Z) \text{ is non-degenerate}}} (Z\cup S).
$$

Note that if $\mathcal T$ is an evasive tangle, then $X_{\mathcal T}$ is empty.  In this case, and because $G$ is $3$-connected, there exists a unique end $\omega$ of degree $3$ such that for any finite subset $\S$ of $ \T$, the end $\omega$ lies in $$\bigcap_{\substack{(Y,S,Z)\in \mathcal \S,\\ (Y,S,Z) \text{ is non-degenerate}}} (Z\cup S).$$

 \begin{remark}
 \label{rem: non-degenerate-tangle}
 If $(Y,S,Z),(Y',S',Z')\in \mathcal T$ are such that $(Y',S',Z')\preceq(Y,S,Z)$ and $(Y,S,Z)$ is non-degenerate, then it is easy to see that $(Y',S',Z')$ is also non-degenerate (recall that $G$ is $3$-connected). Also if $\mathcal T$ is a region tangle, for every $(Y,S,Z)\in \mathcal T$, there exists a separation $(Y',S',Z')\in \mathcal T_{\mathrm{min}}$ such that $(Y',S',Z')\preceq(Y,S,Z)$. These observations imply that if $\mathcal T$ is a region tangle of order $4$ and $G$ is $3$-connected and locally finite, then:

$$X_{\mathcal T} = \bigcap_{(Y,S,Z)\in \mathcal T_{\mathrm{nd}}} (Z\cup S).$$
\end{remark}

Two separations $\Sepi{1},\Sepi{2}$ are \emph{orthogonal} if $(Y_1\cup S_1)\cap
(Y_2\cup S_2) \subseteq S_1\cap S_2$ (see \cref{fig:orthogonal}). A set $\mathcal N$ of
separations  is said to be  \emph{orthogonal} if its separations are
pairwise orthogonal. One can easily show that the set of minimal separations of
a (region) tangle of order at most 3 is orthogonal. This does not hold for tangles of
order 4, but Grohe~\cite{Gro16} proved that for tangles of order 4, minimal separations can only cross
in a restricted way. Two separations $\Sepi{1}$ and $\Sepi{2}$ are
\emph{crossing} if $Y_1 \cap Y_2 = S_1 \cap S_2 = \emptyset$ and there is an
edge $s_1s_2 \in E(G)$, with $S_1\cap Y_2=\{s_1\}$ and $S_2\cap Y_1=\{s_2\}$
(see \cref{fig:crossing}). In this case, we call $s_1s_2$ the \emph{crossedge}
of $\Sepi{1}$ and $\Sepi{2}$. We denote by $\Exnd{\T}$
the set of crossedges of $\T_\mathrm{nd}$.  Lemma 4.16 from~\cite{Gro16}
generalizes to region tangles of order 4 of locally finite graphs:
\begin{lemma}[Lemma 4.16 and Corollary 4.20 in~\cite{Gro16}]
\label{lem: cross}
  Let $G$ be a locally finite $3$-connected graph. Let $\T$ be a region $G$-tangle of order 4. 
  Then every two distinct minimal separations of $\T$ are either crossing or orthogonal. Moreover, $\Exnd{\T}$ forms a
  matching in $G$.
\end{lemma}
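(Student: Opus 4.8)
The plan is to adapt the proofs of \cite[Lemma 4.16 and Corollary 4.20]{Gro16}, isolating the few places where finiteness of the ambient graph is used and replacing it by local finiteness together with the region hypothesis. Throughout, fix two distinct separations $\sigma_i=\Sepi i$, $i\in\sg{1,2}$, of $\T_{\mathrm{min}}$. Since $G$ is $3$-connected it has no proper separation of order at most $2$, so $\sigma_1$ and $\sigma_2$ are proper of order exactly $3$, and by \ref{tangle: it2} applied with all three separations equal to $\sigma_i$ we have $Z_i\neq\emptyset$; moreover $\T$ ``points towards'' $Z_1$ and $Z_2$. First I would observe that a minimal separation of $\T$ necessarily satisfies $S_i=N_G(Y_i)=N_G(Z_i)$: otherwise shrinking $S_i$ to $N_G(Y_i)$ produces a separation $\sigma_i'\prec\sigma_i$ which still lies in $\T$ (one checks $\overline{\sigma_i'}\notin\T$ using \ref{tangle: it1}--\ref{tangle: it2} and the absence of edges between $Y_i$ and $Z_i$), contradicting minimality. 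So we may assume $\sigma_1,\sigma_2$ are tight in this weak sense.

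The heart of the argument is a corner/submodularity analysis. Because $N_G(Y_1)=S_1$ and $N_G(Y_2)=S_2$ are finite, the submodular inequality $|N_G(A)|+|N_G(B)|\ge|N_G(A\cap B)|+|N_G(A\cup B)|$ holds verbatim for vertex sets of the locally finite graph $G$, so each pair of opposite corner separations formed from $\sigma_1$ and $\sigma_2$ has orders summing to at most $|S_1|+|S_2|=6$. Let $\mu$ be the corner separation with $Z(\mu)=Z_1\cap Z_2$ and $S(\mu)=N_G(Z_1\cap Z_2)$; its opposite corner $\rho$ has small side $Y_1\cap Y_2$, and $\mathrm{ord}(\mu)+\mathrm{ord}(\rho)\le 6$. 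If $\mathrm{ord}(\mu)\le 3$, then $\mu\in\T$ (a tangle is closed under taking joins of order below its own, a standard consequence of \ref{tangle: it1}--\ref{tangle: it2}); since $\T$ is a region tangle, $\mu$ lies above some $\tau\in\T_{\mathrm{min}}$, and from $(S\cup Z)(\mu)\subseteq(S_1\cup Z_1)\cap(S_2\cup Z_2)$ together with the tightness of $\sigma_1,\sigma_2$ one gets $\mu\preceq\sigma_1$ and $\mu\preceq\sigma_2$, hence $\tau\preceq\sigma_1$ and $\tau\preceq\sigma_2$; minimality of $\sigma_1$ and $\sigma_2$ then forces $\tau=\sigma_1=\sigma_2$, contradicting $\sigma_1\neq\sigma_2$. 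Therefore $\mathrm{ord}(\mu)=4$, which forces $\mathrm{ord}(\rho)\le 2$; as $G$ is $3$-connected, $\rho$ is improper, and since its big side contains $Z_1\neq\emptyset$, its small side $Y_1\cap Y_2$ must be empty. A further case analysis on the two ``mixed'' corners, again exactly Grohe's, using submodularity, $3$-connectivity, minimality, and the absence of edges between $Y_i$ and $Z_i$, then shows that either $Y_1\cap S_2=S_1\cap Y_2=\emptyset$ --- whence $(Y_1\cup S_1)\cap(Y_2\cup S_2)\subseteq S_1\cap S_2$, i.e.\ $\sigma_1$ and $\sigma_2$ are orthogonal --- or $S_1\cap S_2=\emptyset$ and there is a unique edge $s_1s_2$ with $S_1\cap Y_2=\sg{s_1}$ and $S_2\cap Y_1=\sg{s_2}$, i.e.\ $\sigma_1$ and $\sigma_2$ are crossing. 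This part contains the bulk of the (elementary) work, and nothing in it uses finiteness beyond the finiteness of the separators, which we have.

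For the ``moreover'' statement, suppose two crossedges $e=s_1s_2$ and $e'=s_1's_2'$ of $\T_{\mathrm{nd}}$ share a vertex, say $s_1=s_1'$, and let $\sigma_1,\sigma_2\in\T_{\mathrm{nd}}$ realise $e$ while $\sigma_1',\sigma_2'\in\T_{\mathrm{nd}}$ realise $e'$, with $s_1\in S_1\cap S_1'$ and the crossing adjacencies prescribed on the far sides. Applying the dichotomy just proved to the pairs $(\sigma_1,\sigma_1')$, $(\sigma_1,\sigma_2')$ and $(\sigma_2,\sigma_1')$, each is crossing or orthogonal; since crossing is an extremely rigid local configuration (a single edge joining two otherwise disjoint separators), the fact that $s_1$ lies in both $S_1$ and $S_1'$ with the prescribed neighbours on their respective far sides, combined with $3$-connectivity (to route internally disjoint paths through $s_1$) and minimality of all the separations involved, is contradictory unless $e=e'$. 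The exclusion of degenerate separations --- which is why one works with $\T_{\mathrm{nd}}$ rather than $\T_{\mathrm{min}}$, cf.\ \cref{lem: non-degenerate} --- is precisely what validates this local analysis, and it transfers to locally finite graphs without change.

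The step I expect to be the main obstacle is not any single computation but the availability of minimal separations. In a finite graph every separation of $\T$ dominates an element of $\T_{\mathrm{min}}$ automatically, whereas in an infinite graph this fails exactly for evasive tangles, which is why the statement is restricted to region tangles: for these, $(\T,\preceq)$ is well-founded, so (cf.\ \cref{rem: non-degenerate-tangle}) every separation of $\T$ --- in particular the corner separation $\mu$ above, and each tightening used in the reductions --- lies above some element of $\T_{\mathrm{min}}$. This is exactly what is needed to turn the orientation-and-minimality arguments into contradictions: if $\mu\in\T$ with $\mathrm{ord}(\mu)\le 3$, then $\mu$ dominates some $\tau\in\T_{\mathrm{min}}$ with $\tau\preceq\sigma_i$ for $i\in\sg{1,2}$, and since $\sigma_1\neq\sigma_2$ we must have $\tau\neq\sigma_i$ for at least one $i$, contradicting minimality of that $\sigma_i$. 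With this replacement in place, the remaining verifications are routine adaptations of \cite{Gro16}.
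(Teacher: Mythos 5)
The paper itself gives no proof of this lemma: it simply states that ``Lemma 4.16 from \cite{Gro16} generalizes to region tangles of order 4 of locally finite graphs'' and leaves the verification to the reader. Your proposal is therefore a reconstruction of Grohe's argument together with a check of where, precisely, the finite-graph hypothesis is used and how it is replaced. In that respect it is doing more than the paper does, and it does it correctly: you identify the two genuine issues (the tightness reduction $S_i=N(Y_i)=N(Z_i)$ for minimal separations, and the corner/submodularity computation) and observe that both only use finiteness of the separators, not of the graph; and you correctly pin down the one place where finiteness of $G$ is actually exploited in Grohe's proof, namely the fact that every element of $\T$ dominates an element of $\T_{\mathrm{min}}$ (so that the corner separation $\mu$, once shown to lie in $\T$ with order $\le 3$, can be pushed down to a minimal separation and compared with $\sigma_1,\sigma_2$). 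Replacing this by the well-foundedness of $\preceq$ on a region tangle is exactly the substitution the paper's authors have in mind, and it is the reason the lemma is stated only for region tangles.

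Two small remarks. First, your invocation of $\tau\in\T_{\mathrm{min}}$ below $\mu$ is slightly superfluous: once you have $\mu\in\T$ and $\mu\preceq\sigma_1$ and $\mu\preceq\sigma_2$, minimality of each $\sigma_i$ already gives $\mu=\sigma_1=\sigma_2$ directly, contradicting $\sigma_1\neq\sigma_2$; the detour through $\tau$ is harmless but unnecessary. (And the step $\mu\preceq\sigma_i$, which you state a bit tersely, does check out: $(S\cup Z)(\mu)\subseteq S_i\cup Z_i$ always, and in the equality case one finds $S_i\subseteq S(\mu)$ with $|S(\mu)|\le 3=|S_i|$, so $S(\mu)=S_i$.) Second, your treatment of the ``moreover'' statement (that $\Exnd{\T}$ is a matching) is the most hand-wavy part --- ``crossing is an extremely rigid local configuration... is contradictory unless $e=e'$'' is a gesture towards Grohe's Corollary 4.20 rather than an argument. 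That part of Grohe's proof is also purely local (it never quantifies over the whole graph), so the deferral is justified, but as written it would not stand on its own; since the paper also just cites Grohe for it, this is not a gap relative to the paper, only relative to a self-contained proof.
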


\begin{figure}[htb]
  \begin{subfigure}[b]{.5\linewidth}
        \centering
        \begin{tikzpicture}[scale=0.7]
          \draw[thick] (0,0) rectangle (5,5);
          \draw[draw=none,fill=lightgray] (0,3) rectangle (5,5);
          \draw[draw=none,fill=lightgray] (2,2) rectangle (5,3);
          \draw[draw=none,fill=lightgray] (3,0) rectangle (5,2);
          \draw[thick] (0,0) rectangle (5,5);
          \draw (2,0) rectangle (3,5);
          \draw (0,2) rectangle (5,3);
          \node (Y1) at (1,-.5) {$Y_1$};
          \node (S1) at (2.5,-.5) {$S_1$}; 
          \node (Z1) at (4,-.5) {$Z_1$}; 
          \node (Y2) at (-.5,1) {$Y_2$}; 
          \node (S2) at (-.5,2.5) {$S_2$}; 
          \node (Z2) at (-.5,4) {$Z_2$}; 
        \end{tikzpicture}
        \caption{Orthogonal separations}
        \label{fig:orthogonal}
    \end{subfigure}%
    \begin{subfigure}[b]{.5\linewidth}
        \centering
        \begin{tikzpicture}[scale=0.7]
          \draw[draw=none,fill=lightgray] (0,3) rectangle (5,5);
          \draw[draw=none,fill=lightgray] (3,0) rectangle (5,3);
          \draw[draw=none,fill=lightgray] (0,2) rectangle (2,3);
          \draw[draw=none,fill=lightgray] (2,0) rectangle (3,2);
          \draw[thick] (0,0) rectangle (5,5);
          \draw (2,0) rectangle (3,5);
          \draw (0,2) rectangle (5,3);
          \node[draw=black, circle, fill=black,inner sep = 2pt] (s11) at (1,2.5) {};
          \node[draw=black, circle, fill=black,inner sep = 2pt] (s12) at (3.66,2.5) {};
          \node[draw=black, circle, fill=black,inner sep = 2pt] (s13) at (4.33,2.5) {};
          \node[draw=black, circle, fill=black,inner sep = 2pt] (s21) at (2.5,1) {};
          \node[draw=black, circle, fill=black,inner sep = 2pt] (s22) at (2.5,3.66) {};
          \node[draw=black, circle, fill=black,inner sep = 2pt] (s23) at
          (2.5,4.33) {};

          \node (Y1) at (1,-.5) {$Y_1$};
          \node (S1) at (2.5,-.5) {$S_1$}; 
          \node (Z1) at (4,-.5) {$Z_1$}; 
          \node (Y2) at (-.5,1) {$Y_2$}; 
          \node (S2) at (-.5,2.5) {$S_2$}; 
          \node (Z2) at (-.5,4) {$Z_2$}; 
          \draw (s11) -- (s21);
        \end{tikzpicture}
        \caption{Crossing separations}
        \label{fig:crossing}
    \end{subfigure}
    \caption{Interaction between minimal separations. The white zones represent empty sets while the grey represent potentially non-empty sets.}
\end{figure}

In \cite{Gro16}, orthogonal sets of separations are presented as the nice case,
as they allow to efficiently find quasi-$4$-connected regions. We show that, up
to some additional assumptions, this observation still holds in the locally
finite case. We recall that for a tangle $\mathcal{T}$ of order 4, $\mathcal T_{\mathrm{nd}}$ denote its set of minimal non-degenerate separations.

\begin{lemma}
 \label{lem: ortho}
 Let $G$ be a locally finite $3$-connected graph.
 Let $\mathcal T$ be a region $G$-tangle of order $4$. Assume that $\mathcal T_{\mathrm{nd}}$ is orthogonal.
 Then $X_{\mathcal T}\neq \emptyset$ and the torso $G\llbracket X_{\mathcal T} \rrbracket$ has size $3$ or is a quasi-$4$-connected minor of $G$.
\end{lemma}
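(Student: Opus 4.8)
The plan is to use orthogonality of $\mathcal T_{\mathrm{nd}}$ to present $G$ as a ``star'' around $X_{\mathcal T}$, and then to check quasi-$4$-connectedness of the central torso directly from the tangle axioms. Throughout I use that, since $G$ is $3$-connected, every proper separation of $G$ lying in $\mathcal T$ has order exactly $3$ and every component of its first (or third) part has the whole separator as its neighbourhood; in particular such separations are tight, so $G\llbracket X_{\mathcal T}\rrbracket$ coincides with the tree-decomposition torso built below. First I would record the elementary consequence of orthogonality: if $(Y_1,S_1,Z_1)$ and $(Y_2,S_2,Z_2)$ are orthogonal members of $\mathcal T$, then $Y_1\cup S_1\subseteq Z_2\cup S_2$, $Y_1\cap Y_2=\emptyset$, $S_1\subseteq Z_2\cup S_2$, and $G$ has no edge between $Y_1$ and $Y_2$. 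Writing $\{(Y_i,S_i,Z_i):i\in I\}$ for the proper members of $\mathcal T_{\mathrm{nd}}$ (all of order $3$), it follows that the $Y_i$ are pairwise disjoint and pairwise non-adjacent, that $S_i\subseteq X_{\mathcal T}$ for every $i\in I$, and, via the identity $X_{\mathcal T}=\bigcap_{(Y,S,Z)\in\mathcal T_{\mathrm{nd}}}(Z\cup S)$ of \cref{rem: non-degenerate-tangle}, that $V(G)=X_{\mathcal T}\sqcup\bigsqcup_{i\in I}Y_i$ with $N_G(Y_i)=S_i$ for each $i$. In particular $X_{\mathcal T}\neq\emptyset$ (it equals $V(G)$ if $I=\emptyset$, and contains the nonempty set $S_i$ otherwise), and $|X_{\mathcal T}|\geq 3$, with equality precisely in the first alternative of the statement. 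So assume $|X_{\mathcal T}|\geq 4$ and set $H:=G\llbracket X_{\mathcal T}\rrbracket$.

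I would then note that $H$ is a faithful minor of $G$ and is $3$-connected. The star with centre-bag $X_{\mathcal T}$ and one leaf-bag $S_i\cup Y_i$ per $i\in I$ is a tree-decomposition of $G$ whose edge-separations are the $(Y_i,S_i,Z_i)$ and their reverses, all of order $3$ and non-degenerate --- the $(Y_i,S_i,Z_i)$ by definition of $\mathcal T_{\mathrm{nd}}$, and a reverse $(Z_i,S_i,Y_i)$ cannot be degenerate because \cref{rem: degenerate-tangle} would then place it in $\mathcal T$, contradicting $(Y_i,S_i,Z_i)\in\mathcal T$ via \ref{tangle: it2} --- so \cref{lem: torso-minor} gives the first assertion (with $H=G$ trivially when $I=\emptyset$). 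For $3$-connectedness: in $H$ each $S_i$ induces a triangle (torso edges), hence lies entirely on one side of any separator of $H$ of size at most $2$, so a separation of $H$ of order at most $2$ lifts to one of $G$ of the same order by pushing each $Y_i$ to the appropriate side, contradicting $3$-connectedness of $G$.

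The main work is to show $H$ is quasi-$4$-connected, via two claims. \textbf{Claim A:} every proper separation $(A,S',B)$ of $H$ with $|S'|=3$ has, after swapping $A$ and $B$ if necessary, $A=\{v\}$ for a single vertex $v$ with $N_G(v)=S'$, and $G[S']$ edgeless. To prove it, lift $(A,S',B)$ to a separation $(A^*,S',B^*)$ of $G$: each $S_i$ induces a triangle in $H$, so it lies on one side of $S'$ or equals $S'$, and each $Y_i$ can be assigned to the corresponding side; thus $A^*\cap X_{\mathcal T}=A$ and $B^*\cap X_{\mathcal T}=B$. By \ref{tangle: it1} one orientation lies in $\mathcal T$; swapping $A,B$ if needed, say $(A^*,S',B^*)\in\mathcal T$. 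If $(A^*,S',B^*)$ were non-degenerate, \cref{rem: non-degenerate-tangle} would give $(Y'',S'',Z'')\in\mathcal T_{\mathrm{nd}}$ with $(Y'',S'',Z'')\preceq(A^*,S',B^*)$, hence $A^*\subseteq Y''$; but $X_{\mathcal T}\subseteq Z''\cup S''$, so $A=A^*\cap X_{\mathcal T}=\emptyset$, contradicting properness. So $(A^*,S',B^*)$ is degenerate, i.e.\ $|A^*|=1$ and $G[S']$ is edgeless; then $A=A^*=\{v\}$ and $N_G(v)\subseteq A^*\cup S'=\{v\}\cup S'$ forces $N_G(v)=S'$ by $3$-connectedness. \textbf{Claim B:} for every $S'$ of size $3$, $H-S'$ has at most two components. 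Otherwise let $C_1,\dots,C_m$ be the components, $m\geq3$: if $m\geq4$, Claim A applied to $(C_1\cup C_2,\,S',\,C_3\cup\dots\cup C_m)$ is violated (both parts have at least two vertices); and if $m=3$, applying Claim A to $(C_i,\,S',\,\bigcup_{k\neq i}C_k)$ for each $i$ forces $C_i=\{v_i\}$ with $N_G(v_i)=S'$, so $v_1,v_2,v_3$ are pairwise non-adjacent. Now for each pair $\{i,k\}$ the triple $(\{v_i,v_k\},\,S',\,V(G)\setminus(S'\cup\{v_i,v_k\}))$ is a non-degenerate order-$3$ separation of $G$, and exactly as in Claim A its $\mathcal T$-orientation must have $\{v_i,v_k\}$ as its third coordinate; applying \ref{tangle: it2} to these three separations, for the pairs $\{1,2\},\{1,3\},\{2,3\}$, yields a contradiction, because the sets $\{v_1,v_2\},\{v_1,v_3\},\{v_2,v_3\}$ have empty common intersection and no edge of $G$ has an endpoint in each of them. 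Combining Claims A and B: whenever $H-S'$ ($|S'|=3$) is disconnected it has exactly two components, one of them a single vertex; together with $3$-connectedness this is exactly quasi-$4$-connectedness. I expect Claim B, together with the bookkeeping in Claim A of how the pendant pieces $Y_i$ attach to the separator under the lift, to be the most delicate part; the degenerate case $I=\emptyset$ (where $H=G$) is covered by the same argument with trivial lifts.
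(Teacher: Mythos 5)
Your proof is correct, and it is close in spirit to the paper's argument (lift a $3$-separation of the torso $H=G\llbracket X_{\mathcal T}\rrbracket$ back to $G$, apply the tangle axiom $(\mathcal T 1)$ together with \cref{rem: non-degenerate-tangle}, and use degeneracy to force a singleton side), but the organisation into Claims A and B is a genuine strengthening. The paper's proof establishes, for every proper order-$\le 3$ separation $(Y_0,S_0,Z_0)$ of $H$, that $|Y_0|\le 1$ or $|Z_0|\le 1$, and then asserts that this ``immediately implies'' quasi-$4$-connectedness. That implication is not quite automatic: a $3$-connected graph $H$ in which some $3$-separator $S'$ yields exactly three singleton components (e.g.\ $K_{3,3}$) satisfies ``$|Y_0|\le 1$ or $|Z_0|\le 1$ for every proper $3$-separation'' and yet fails the ``exactly two components'' clause of quasi-$4$-connectedness. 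Your Claim B closes exactly this gap: by applying $(\mathcal T2)$ to the three separations $(\,\cdot\,,S',\{v_i,v_k\})$, whose third coordinates have empty common intersection and which span no edge (as $v_1,v_2,v_3$ are pairwise non-adjacent), you show that three singleton components cannot occur. This is a useful extra ingredient that the paper does not make explicit. A few further points where you are more careful than the paper, all correct: you verify that the reverse edge-separations $(Z_i,S_i,Y_i)$ of the star decomposition are non-degenerate (via \cref{rem: degenerate-tangle}), which is needed to invoke \cref{lem: torso-minor}; you note that in a $3$-connected graph every proper separation of order exactly $3$ is automatically tight, so each $S_i$ induces a triangle in $H$ and the lift $(A^*,S',B^*)$ really is a separation of $G$; and you handle the degenerate case $I=\emptyset$ (where $H=G$) explicitly. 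The only small remark: when you say the case $S_i=S'$ can send $Y_i$ ``to either side'' of the lift, that ambiguity is harmless for the argument — whichever lift one chooses, the orientation in $\mathcal T$ is either non-degenerate (contradicting properness of the torso separation) or degenerate with singleton $Y$-side, so the conclusion $A=\{v\}$ (or $B=\{v\}$) follows regardless — but it would be worth stating this once, since a reader might wonder whether the case analysis depends on the choice.
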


\begin{proof}
 If every separation of order $3$ in $G$ is degenerate, then $G$ is quasi-$4$-connected and all the separations of $\mathcal T_{\mathrm{nd}}$ are non-proper. It follows that $G=G\llbracket X_{\mathcal T}\rrbracket$ and the desired properties hold.
 
 Assume now that $G$ has a proper non-degenerate separation $(Y,S,Z)$ of order $3$. 
 As $\mathcal T$ is a region tangle, there is a separation $(Y',S',Z')\in \mathcal T_{\mathrm{min}}$ such that $(Y',S',Z')\preceq (Y,S,Z)$. As observed in \cref{rem: non-degenerate-tangle}, $(Y',S',Z')\in \mathcal T_{\mathrm{nd}}$. We claim that $S'\subseteq X_{\mathcal T}$ so $X_{\mathcal T}\neq \emptyset$: let $\Sepi{0}\in \mathcal T_{\mathrm{nd}}\setminus{\sg{\Sepp}}$. As $\Sepi{0}$ and $\Sepp$ are orthogonal, we must have: $S'\cap Y_0 = \emptyset$ so $S'\subseteq Z_0\cup S_0$. As $\mathcal T$ is a region tangle, the equality $X_{\mathcal T} = \bigcap_{(Y,S,Z)\in \mathcal T_{\mathrm{nd}}} (Z\cup S)$ holds, and 
 thus we proved that $S'\subseteq X_{\mathcal T}$, and so $X_{\mathcal T}\neq \emptyset$. Moreover, as $G$ is $3$-connected, the separations of $\mathcal T_{\mathrm{min}}$ have order $3$ so $|X_{\mathcal T}|\geq |S'|\geq 3$.
 
 We now assume that $|X_{\mathcal T}|\geq 4$ and show that $G\llbracket X_{\mathcal T}\rrbracket$ is quasi-$4$-connected. Since $G$ is 3-connected and $|X_{\mathcal T}|\geq 4$, $G\llbracket X_{\mathcal T}\rrbracket$ is 3-connected (any proper separation of order at most 2 in $G\llbracket X_{\mathcal T}\rrbracket$ would induce a proper separation of order at most 2 in $G$). 
 If $|X_{\mathcal T}|= 4$, then $G\llbracket X_{\mathcal T}\rrbracket$ is clearly also quasi-$4$-connected so we can assume that $|X_{\mathcal T}|\geq 5$. Suppose that $G\torso{X_{\mathcal T}}$ is not $4$-connected and let $\Sepi{0}$ be a proper separation of $G\llbracket X_{\mathcal T}\rrbracket$ of order at most $3$. We will prove that $|Y_0|=1$ or $|Z_0|=1$, which will immediately imply that $G\torso{X_{\T}}$ is quasi-$4$-connected.
 We let $Y_1$ be the union of all connected components of $G- S_0$ that intersect $Y_0$ or have a neighbor in $Y_0$. Let  $S_1:=S_0$ and $Z_1:=V(G)\setminus (S_0\cup Y_1)$. By definition of the torso $G\llbracket X_{\mathcal T}\rrbracket$,
 $\Sepi{1}$ is a proper separation of order at most $3$ in $G$, hence we must have $|S_0|=3$ as $G$ is $3$-connected. Assume first that $(Y_1, S_1, Z_1)\in \mathcal T$. If $\Sepi{1}$ is non-degenerate, then $X_{\mathcal T}\cap Y_0 \subseteq X_{\mathcal T} \cap Y_1 = \emptyset$ by definition of $X_{\mathcal T}$. It follows that $Y_0=\emptyset$, which contradicts the assumption that $(Y_0, S_0, Z_0)$ is proper.
 If $\Sepi{1}$ is degenerate, then $|Y_0|\leq |Y_1|= 1$ so as $\Sepi{0}$ is proper we must have $|Y_0|=1$.
 The case $(Z_1, S_1, Z_1)\in \T$ is symmetric. 
 Hence we proved that every separation $\Sep$ of $G\llbracket X_{\mathcal T}\rrbracket$ of order at most $3$ satisfies $|Y|\leq 1$ or $|Z|\leq 1$ so we are done. 
 
 Finally the fact that $G\llbracket X_{\mathcal T}\rrbracket$ is a minor of $G$ easily follows from \cref{lem: torso-minor}: we consider the tree-decomposition $(T,\mathcal V)$ where $T$ is a star with a central vertex $z_0$ and one edge $z_0z_i$ for each $\Sepi{i}\in \T_{\mathrm{nd}}$. We let $V_{z_0}:=X_{\T}$ and $V_{z_i}:= Y_i \cup S_i$ for each $\Sepi{i}\in \T_{\mathrm{nd}}$. The fact that $(T,\mathcal V)$ is a tree-decomposition follows from the orthogonality of $\T_{\mathrm{nd}}$. Hence by \cref{lem: torso-minor}, $G\torso{X_{\T}}$ is a minor of $G$.
\end{proof}

Whenever $\Tnd$ is not orthogonal, \cref{lem: ortho} does not hold anymore and if we want to obtain a canonical tree-decomposition, we will need to consider a larger set, whose torso is not necessarily quasi-$4$-connected, but can be defined uniquely from the structural properties of $\T$, which will ensure that the resulting decomposition is canonical.
For every region tangle $\mathcal T$ of order $4$ we let:
$$R_{\mathcal T} := \left(\bigcup_{(Y,S,Z)\in \mathcal T_{\mathrm{nd}}}S\right)\cup \left(\bigcap_{\substack{(Y,S,Z)\in \mathcal T_{\mathrm{nd}}}} Z\right).
$$
The set $R_{\T}$ corresponds to the set called $R^{(0)}$ in \cite[Section 4.5]{Gro16}.
Note that we always have $X_{\mathcal T}\subseteq R_{\mathcal T}$ and that equality holds when $\mathcal T_{\mathrm{nd}}$ is orthogonal. To illustrate the definition of $R_{\T}$, it is helpful to go back to the  graph $G$ on \cref{fig: Gro2} (left). Then $G$ has a unique tangle $\T$ of order $4$, the set $\Tnd$ is the set of  separations $\Sep$ of order $3$ where $Y$ is a triangular face of $G$, $S=N(Y)$ and $Z=V(G)\setminus (Y\cup S)$. Hence on this example, the set of crossedges $\Exnd{G}$ is the set of edges joining two triangular faces, and $R_{\T}=V(G)$. 

\medskip

While the proof of the following result was originally written for finite graphs, it immediately generalizes to locally finite graphs.
\begin{lemma}[Lemma 4.32 in \cite{Gro16}]
 \label{lem: Grominor}
 If $G$ is a locally finite 3-connected graph and if $\T$ is a region tangle of order $4$ in $G$, then
 $G\torso{R_{\T}}$ is a faithful minor of $G$.
\end{lemma}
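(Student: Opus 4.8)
The plan is to reproduce the proof of \cite[Lemma~4.32]{Gro16}, checking that each step survives the passage to locally finite graphs; this is mostly routine, since every ingredient it needs --- \cref{lem: cross}, \cref{lem: ortho}, \cref{lem: non-degenerate}, \cref{lem: torso-minor} and \cref{rem: degenerate-tangle} --- has already been established above for locally finite $3$-connected graphs, and the argument itself only ever manipulates finitely many separations, or a single component of $G-R_{\T}$ together with its bounded neighbourhood, at a time.

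First I would dispose of the orthogonal case: if $\Tnd$ is orthogonal then $R_{\T}=X_{\T}$ (as observed right after the definition of $R_{\T}$), and the statement follows from \cref{lem: ortho} --- more precisely from its proof, which exhibits a faithful model of $G\torso{X_{\T}}$. So assume $\Tnd$ is not orthogonal. By \cref{lem: cross}, any two distinct separations of $\Tnd$ are then crossing or orthogonal, and the set $M:=\Exnd{\T}$ of their crossedges is a matching of $G$. The next step is to record the shape of $R_{\T}$: using the identity $X_{\T}=\bigcap_{\Sep\in\Tnd}(Z\cup S)$ from \cref{rem: non-degenerate-tangle} together with the definition of $R_{\T}$, an elementary computation gives $R_{\T}=X_{\T}\sqcup V(M)$, where $V(M)$ is the set of endpoints of the crossedges; moreover, every component $C$ of $G-R_{\T}$ is contained in the small side $Y_i$ of a \emph{unique} separation $\Sepi i\in\Tnd$, and $N_G(C)\subseteq S_i\cup(Y_i\cap V(M))$. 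Since $M$ is a matching, the crossedges incident with $\Sepi i$ use pairwise distinct vertices of $S_i$, so $\Sepi i$ crosses at most $|S_i|\le 3$ other separations; hence $Y_i\cap V(M)$, and therefore $N_G(C)$, is bounded in size.

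With this in hand I would build a tree-decomposition $(T,\mathcal V)$ of $G$ whose tree is a star with centre $z_0$ and bag $V_{z_0}:=R_{\T}$, together with one leaf $z_i$ carrying the bag $V_{z_i}:=Y_i\cup S_i$ for each $\Sepi i\in\Tnd$ with $Y_i\ne\emptyset$ (the non-proper ones contribute nothing new). That this is a valid tree-decomposition follows from the previous paragraph: $V(G)=R_{\T}\cup\bigcup_i Y_i$, no edge joins $Y_i$ to $Z_i$, and the pairwise intersections of the $V_{z_i}$ consist only of crossedge endpoints, hence lie in $V_{z_0}=R_{\T}$. For each leaf $z_i$ the reversed separation $(Z_i,S_i,Y_i)$ cannot belong to $\T$ (since $\Sepi i\in\T$ and a tangle orients each separation only once), so it is non-degenerate by \cref{rem: degenerate-tangle}, and then \cref{lem: non-degenerate} shows that the leaf torso $G\torso{Y_i\cup S_i}$ is a faithful minor of $G$.

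The remaining, and main, difficulty is to upgrade this into a faithful model of the central torso $G\torso{R_{\T}}=G\torso{V_{z_0}}$. Following the scheme of the proof of \cref{lem: torso-minor}, it is enough to show that for every leaf $z_i$ the outer torso $G\torso{Z_i\cup S_i\cup(Y_i\cap V(M))}$, obtained from $G$ by contracting the components of $G-R_{\T}$ lying in $Y_i$, is a faithful minor of $G$, and then to glue the resulting models along the star $T$ (the gluing being routine, as distinct outer torsos overlap only inside $R_{\T}$). Proving that each outer torso is a faithful minor is the crux: its adhesion $S_i\cup(Y_i\cap V(M))$ may have size up to $6$, so \cref{lem: non-degenerate} does not apply directly, and when a component $C$ of $G-R_{\T}$ is contracted one must realise \emph{every} new clique edge placed on $N_G(C)$ in $G\torso{R_{\T}}$ by an edge of $G$ between the relevant branch sets, which forces a careful way of splitting $C$ among the vertices of $N_G(C)$. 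This is exactly where Grohe uses the minimality of the separations in $\Tnd$ and the fact that $M$ is a matching (equivalently, the restricted interaction of crossing separations, \cref{lem: cross}); a convenient way to organise the argument is to contract $M$ first, which uncrosses all pairs of $\Tnd$ and so reduces essentially to the orthogonal situation of \cref{lem: ortho}, and then to lift the obtained model back through $M$. All of these manipulations are finitary, so they transfer verbatim from the finite setting of \cite{Gro16}, which completes the proof.
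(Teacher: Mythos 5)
The paper itself does not give a proof of this lemma: immediately before the statement it simply remarks that Grohe's finite proof of \cite[Lemma~4.32]{Gro16} transfers unchanged to locally finite graphs, because the argument only ever manipulates boundedly many separations or a single component of $G-R_\T$ at a time. Your stated plan --- reproduce Grohe's argument and check each step is local --- is therefore exactly what the paper intends, and your opening reductions (the orthogonal case via \cref{lem: ortho}, the decomposition $R_\T = X_\T \sqcup V(M)$, uniqueness of the $\Sepi{i}\in\Tnd$ whose $Y$-side contains a given component of $G-R_\T$) are all in order.

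The ``convenient reorganisation'' you propose at the end, however, is a genuinely different strategy from Grohe's and it has a gap. Suppose you contract $M$, obtain a faithful model $(M'_v)_v$ of $G\Contr{M}\torso{R_\T\Contr{M}}$ in $G\Contr{M}$ from the orthogonal case, and try to ``lift it back through $M$.'' For each crossedge $uw\in M$ the branch set $M'_{s_{u,w}}$ expands to a set $(M'_{s_{u,w}})\Expand{M}\ni u,w$, which must now be split into two connected branch sets $M_u\ni u$ and $M_w\ni w$. There is no reason a split exists that realises every edge of $G\torso{R_\T}$ incident with $u$ \emph{and} every edge incident with $w$: if some $x\in R_\T$ is adjacent in $G\torso{R_\T}$ to both $u$ and $w$ but the only $G$-edges from $(M'_{s_{u,w}})\Expand{M}$ to the corresponding branch set for $x$ leave from the $u$-side of the split, then $wx$ cannot be realised. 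So the contraction merely relocates the clique-edge-realisation problem you correctly identified rather than sidestepping it. A secondary issue is that you take the leaves of the star to be the separations $\Sepi{i}$, inflating the adhesion sets $S_i\cup(Y_i\cap V(M))$ to size up to $6$; the relevant input (Grohe's Lemma~4.31, cited in the proof of \cref{lem: cross-star}) says $N_G(C)=\fc(S_i)$ has size exactly $3$ for each component $C$ of $G-R_\T$, so the argument should be run at the level of components, where the adhesion stays at $3$ and \cref{lem: non-degenerate}-style reasoning is available.
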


For each $\Sep\in \T_{\mathrm{nd}}$, the \emph{fence} $\mathrm{fc}(S)$ of $S$ in $G$ is
the union of 
\begin{itemize}
    \item the subset of vertices of $S$ that are not the endpoint of some crossedge of
$\mathcal T$, and
\item the subset of vertices $s'$ such that $ss'$ is a crossedge of $\mathcal T$ and $s\in S$.
\end{itemize}
In particular, as the crossedges form a matching in $G$ (\cref{lem: cross}), $|\mathrm{fc}(S)|=|S|=3$ for each $\Sep \in \T_{\mathrm{nd}}$.
A consequence of \cref{lem: cross} is the following:
\begin{lemma}
\label{lem: cross-star} 
Let $G$ be a locally finite $3$-connected graph and $\mathcal T$ be a region $G$-tangle of order $4$. Then $G$ has a tree-decomposition $(T, \mathcal V)$ of adhesion $3$ where $\mathcal{V}=(V_t)_{t\in V(T)}$ and $T$ is a star with central vertex $z_0$ such that $V_{z_0}=R_{\mathcal T}$.
If moreover, $\mathcal T$ is the unique  $G$-tangle of order $4$, then $(T,\mathcal V)$ is canonical and every bag except possibly $V_{z_0}$ is finite.
\end{lemma}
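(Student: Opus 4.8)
The plan is to take $T$ to be the star with centre $z_0$ and one leaf $z_i$ for each separation $\Sepi{i}$ in an enumeration $\Tnd=\sg{\Sepi{i}:i\in I}$ of $\Tnd$, with bags $V_{z_0}:=R_{\mathcal T}$ and $V_{z_i}:=Y_i\cup\fc(S_i)$. (If $\Tnd=\emptyset$, then the empty-intersection convention gives $R_{\mathcal T}=V(G)$ and the single-node decomposition works.) Before checking anything, I would record two facts, used throughout. First, by \cref{lem: cross} any two distinct separations $\Sepi{i},\Sepi{j}$ of $\Tnd$ are crossing or orthogonal, and in both cases $Y_i\cap Y_j=\emptyset$ (for orthogonal separations because $Y_i\cap Y_j\subseteq S_i\cap S_j$ while $Y_i\cap S_i=\emptyset$). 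Second, $\fc(S_i)\subseteq R_{\mathcal T}$ for each $i$, since a vertex of $\fc(S_i)$ either lies in $S_i$ or is the partner $s'$ of a crossedge-endpoint $s\in S_i$, in which case $s'$ lies in the separator $S_j$ of the crossing separation $\Sepi{j}$, and both $S_i,S_j\subseteq R_{\mathcal T}$. A short computation (again via \cref{lem: cross}) shows in addition that the only vertices of $Y_i$ lying in $\bigcup_{j\in I}S_j$ are those crossedge partners, so $Y_i\cap R_{\mathcal T}\subseteq\fc(S_i)$ and hence $V_{z_i}\cap V_{z_0}=\fc(S_i)$, a set of size at most $3$. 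This already yields that $T$ is a star with centre $z_0$, that $V_{z_0}=R_{\mathcal T}$, and that the adhesion is at most $3$.

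I would then verify that $(T,\mathcal V)$ is a tree-decomposition. The bags cover $V(G)$: if $v\notin R_{\mathcal T}$ then $v\notin\bigcap_{i}Z_i$ and $v\notin\bigcup_{i}S_i$, so $v\in Y_i\subseteq V_{z_i}$ for some $i\in I$. Since $T$ is a star, the connectivity condition reduces to $V_{z_i}\cap V_{z_j}\subseteq V_{z_0}$ for distinct $i,j\in I$, which follows from $Y_i\cap Y_j=\emptyset$, $\fc(S_i),\fc(S_j)\subseteq R_{\mathcal T}$ and $Y_i\cap R_{\mathcal T}\subseteq\fc(S_i)$. The one delicate point is that every edge lies in some part. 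Edges with both endpoints in $R_{\mathcal T}$ lie in the central part, so let $uv\in E(G)$ with $u\notin R_{\mathcal T}$; then $u\in Y_i\setminus S_i$ for some $i$, and as there is no edge between $Y_i$ and $Z_i$ we get $v\in Y_i\cup S_i$. If $v\in Y_i$ or $v\in\fc(S_i)$, the edge lies in $G[V_{z_i}]$. Otherwise $v\in S_i$ is a crossedge-endpoint, so there is a crossing separation $\Sepi{j}\in\Tnd$ with crossedge $vs'$ and $v\in S_i\cap Y_j$; but then $u\notin Y_j$ (since $Y_i\cap Y_j=\emptyset$) and $u\notin S_j\subseteq R_{\mathcal T}$, so $u\in Z_j$, whence $uv$ would be an edge between $Y_j$ and $Z_j$, a contradiction. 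Thus this case never occurs, and all edges are covered.

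For the ``moreover'' part I would assume that $\mathcal T$ is the unique $G$-tangle of order $4$. Every $\gamma\in\Aut(G)$ permutes the $G$-tangles of order $4$, hence fixes $\mathcal T$; consequently it fixes $\T_{\mathrm{min}}$ and $\Tnd$ and preserves the set of crossedges and the fences --- all of these being defined purely from $G$ and $\mathcal T$ through the $\gamma$-equivariant data $\preceq$, degeneracy, and the edge set --- so it maps $R_{\mathcal T}$ to $R_{\mathcal T}$ and sends each $V_{z_i}=Y_i\cup\fc(S_i)$ to $V_{z_{j}}$ with $\Sepi{j}=\Sepi{i}\cdot\gamma$. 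Letting $\gamma$ fix $z_0$ and send $z_i\mapsto z_j$ accordingly defines an action of $\Aut(G)$ on $T$ with $V_t\cdot\gamma=V_{t\cdot\gamma}$ for every $t$, i.e.\ $(T,\mathcal V)$ is canonical. Finally, since $|\fc(S_i)|\le 3$, it remains to check that each $Y_i$ is finite. If some $Y_i$ were infinite, then as $G$ is connected and locally finite with $|S_i|\le 3$, the graph $G[Y_i]$ would have only finitely many components, one of which, say $C$, would be infinite; $C$ would contain a ray, belonging to an end $\omega$ that lives in $Y_i$. Then $\T_\omega^4$ would be a $G$-tangle of order $4$, hence equal to $\mathcal T$, but $(Y_i,S_i,Z_i)\in\mathcal T=\T_\omega^4$ forces $\omega$ to live in a component of $Z_i$; combined with $\omega$ living in $Y_i$ this produces infinitely many disjoint paths between $Y_i$ and $Z_i$, each of which meets the at most $3$ vertices of $S_i$, a contradiction. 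The step I expect to be the main obstacle is the edge-coverage verification: it is exactly where replacing the separators $S_i$ by their fences $\fc(S_i)$ is essential --- keeping the crossedge-endpoints would violate the adhesion bound, while dropping them is legitimate only thanks to the crossing structure of \cref{lem: cross}.
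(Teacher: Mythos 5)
Your proof is correct, and the high-level architecture coincides with the paper's: a star around $R_{\T}$ (so $V_{z_0}=R_{\T}$), canonicity obtained from uniqueness of $\T$ (hence $\Aut(G)$-invariance of $\Tnd$, the crossedges and the fences), and finiteness of the leaf bags by showing that an infinite leaf would spawn a second order-$4$ tangle. The genuine difference is in how the leaves are built and the tree-decomposition properties verified. The paper indexes its leaves by the connected components $C$ of $G-R_{\T}$, sets $V_{z_C}:=C\cup N(C)$, and imports the key fact $N(C)=\fc(S)$ (with $C\subseteq Y$ for a unique $\Sep\in\Tnd$) from Grohe's Lemma~4.31, an external result it only asserts ``extends to the locally finite case''; this at once yields adhesion $3$ and, combined with $3$-connectivity, tightness of the edge-separations. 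You instead index your leaves directly by $\Tnd$, set $V_{z_i}:=Y_i\cup\fc(S_i)$, and re-derive everything you need --- that $\fc(S_i)\subseteq R_{\T}$, that $Y_i\cap R_{\T}\subseteq\fc(S_i)$, and especially the edge-coverage argument ruling out an edge from $Y_i\setminus R_{\T}$ to a crossedge endpoint of $S_i$ --- from \cref{lem: cross} alone. That last step is precisely what Grohe's Lemma~4.31 is doing for the paper, so your version is more self-contained, at the cost of slightly coarser leaves: a single $Y_i$ may absorb several components of $G-R_{\T}$, which is harmless for this statement. One small repair: the ``$\Tnd=\emptyset$'' caveat never fires --- when $G$ is $4$-connected one gets $\Tnd=\{(\emptyset,\emptyset,V(G))\}$, so your construction produces a single spurious empty leaf rather than an empty index set; simply discarding leaves with $Y_i\setminus\fc(S_i)=\emptyset$ handles this and also ensures the edge-separations are proper and tight (your own argument shows all neighbours of $Y_i\setminus\fc(S_i)$ lie in $Y_i\cup\fc(S_i)$, and $G$ is $3$-connected), a property the paper records in its proof and relies on when feeding this decomposition to \cref{prop: refinement}.
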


\begin{proof}
 We let:  
 $$V(T):=\sg{z_0}\cup \sg{z_C : C \text{ connected component of } G- R_{\T}}
 $$
 where we choose the $z_C$'s to be pairwise distinct nodes. We let $T$ be the star with vertex set $V(T)$ and central vertex $z_0$, and we define $\mathcal V=(V_t)_{t\in V(T)}$ by setting $V_{z_0}:= R_{\mathcal T}
 $, and for each connected component $C$ of $G- R_{\T}$: 
 $V_{z_C}:=C\cup N(C)$. It is not hard to check that $(T, \mathcal V)$ is a tree-decomposition of $G$. By \cite[Lemma 4.31]{Gro16} (whose proof extends to the locally finite case), for each component
 $C$ of $G- R_{\T}$ there exists a unique separator $S$ such that $\Sep\in \T_{\mathrm{nd}}$ for some separation $\Sep$, $N(C)=\fc(S)$ and $C\subseteq Y$. This implies that $(T,\mathcal{V})$ has adhesion $3$, so in particular its edge-separations are tight.

We now prove the second part of \cref{lem: cross-star} and assume that $\mathcal T$ is the unique  tangle of order $4$ of $G$. Then $\mathcal T$ is $\Aut(G)$-invariant, and $(T,\mathcal V)$ is clearly canonical. If some $V_{z_C}$ is infinite for some $z_C\neq z_0$, then as $G$ is locally finite, $G[V_{z_C}]$ has at least one infinite connected component, and hence there exists some end $\omega$ living in $G[V_{z_C}]$. 
In particular, $\omega$ induces some $G$-tangle $\mathcal T_{\omega}$ of order $4$. We let $\Sep\in \T_{\mathrm{nd}}$ be the separation given by \cite[Lemma 4.31]{Gro16} such that $N(C)=\fc(S)$ and $C\subseteq Y$.
Then $\Sep$ distinguishes $\mathcal T_{\omega}$ from $\mathcal T$, which contradicts the uniqueness of $\mathcal T$ in $G$. 
\end{proof}

Note that in the non-orthogonal case, the tree-decomposition from \cref{lem: cross-star} is not the same as the one from \cite{Gro16}, as the torso $G\torso{R_{\T}}$ associated to the center of the star might not be quasi-$4$-connected. However, we will prove in \cref{sec: planar} that $G\torso{R_{\T}}$ still enjoys the same useful properties as a quasi-transitive quasi-$4$-connected graph, namely it is either planar or has bounded treewidth. We note that the crucial ingredient that allows us to obtain a canonical tree-decomposition in the second part of \cref{lem: cross-star} (contrary to Grohe's decomposition) is the assumption that $\mathcal{T}$ is the unique $G$-tangle of order 4. So one of the most important steps in the proof of our main results will be a reduction to the case where graphs have a single tangle of order 4.

\subsection{Contracting a single crossedge}
\label{sec: single}

In what follows, we let $G$ be a locally finite $3$-connected graph, and
$\mathcal T$ be a region tangle of order $4$ in $G$. Recall that by \cref{lem: cross} the set $\Exnd{\T}$ of the crossedges forms a
matching. We will see that contracting a crossedge results in a
3-connected graph $G'$ that has a tangle $\T'$ of order 4 induced by
$\T$~\cite[Subsection 4.5]{Gro16}. More precisely, $\T'$ contains as a subset the projection of
$\T$ with respect to the minor $G'$ of $G$. We
give here an overview of the lemmas stated in~\cite[Subsection 4.5]{Gro16} which
all hold when $G$ is locally finite instead of finite, by using the exact same
proofs. The only additional property that we need in the locally finite case is
that $\T'$ is still a region tangle, which is proved in \cref{lem: regiontangle}
below.
 
In the remainder of this subsection, we let $\Sepi{1}$ and $\Sepi{2}$ be two
crossing separations of $\T_\mathrm{nd}$ with crossedge $s_1s_2$. \emph{Contracting $s_1s_2$}
consists in deleting $s_1$ and $s_2$ and adding a new
vertex $s'$ whose neighborhood is equal to $N_G(s_1)\cup N_G(s_2)\setminus \{s_1,s_2\}$. We denote by $G'$ the graph obtained after contracting $s_1s_2$. The
\emph{projection} (referred to as \emph{contraction} in~\cite{Gro16}) of a set $X$
of vertices of $G$ is defined as
$$X^{\vee}:= \left\{\begin{array}{ll}
X & \text{ if } X \cap \{s_1,s_2\} = \emptyset\\ 
X\setminus\{s_1,s_2\} \cup \{s'\} & \text{ if } X \cap \{s_1,s_2\} \neq \emptyset.\\ 
\end{array}\right.
$$
Given a set $X'$ of vertices of $G'$, the \emph{expansion} $X'_\wedge$ of $X'$ is defined as
$$X'_{\wedge}:= \left\{\begin{array}{ll}
X' & \text{ if } s' \notin X' \\ 
X'\setminus\{s'\} \cup \{s_1, s_2\} & \text{ if } s' \in X' .\\ 
\end{array}\right.
$$

Observe that for all $X' \subseteq V(G')$, we have $(X'_\wedge)^\vee = X'$ and for all $X \subseteq V(G)$,
we have $X \subseteq (X^\vee)_\wedge$ (where the inclusion might be strict). We also define for every $\Sep \in \separ_{<4}(G)$:
$$\Sep^{\vee}:= \left\{\begin{array}{ll}
(Y^{\vee}\setminus \sg{s'}, S^{\vee}, Z^{\vee}\setminus \sg{s'}) & \text{ if } S \cap \{s_1,s_2\} \neq \emptyset\\ 
(Y^{\vee}, S^{\vee}, Z^{\vee}) & \text{ if } S \cap \{s_1,s_2\} = \emptyset.\\ 
\end{array}\right.
$$

Note that $\Sep^{\vee}$ is exactly the projection $\pi_{\mathcal M}\Sep$ with respect to the model
$\M = (\sg{v}_\wedge)_{v \in V(G')}$ of $G'$ in $G$. 
\smallskip

In the context of finite graphs,~\cite{Gro16} proves the following lemmas that
extend directly to the locally finite case:

\begin{lemma}[Corollary 4.24 in \cite{Gro16}] \label{cor: G3conn}
  The graph $G'$ resulting from the contraction of $s_1s_2$ is 3-connected. 
\end{lemma}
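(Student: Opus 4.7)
The plan is to follow the proof of \cite[Corollary 4.24]{Gro16} verbatim and observe that it adapts to the locally finite setting without change, as every step is local around the crossedge $s_1s_2$: it only uses $3$-connectivity of $G$ together with the behaviour of the tangle $\mathcal{T}$ on a bounded number of separations of order at most $3$, none of which require finiteness of $G$.

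Argue by contradiction: assume $G'$ admits a vertex cut $S^* \subseteq V(G')$ with $|S^*| \leq 2$, and let $T := (S^*)_\wedge \subseteq V(G)$, so $|T| \leq 3$. If $G - T$ is connected, then for any $u, v \in V(G') \setminus S^*$ one can choose preimages $u', v' \in V(G) \setminus T$ (taking $u' = u$ when $u \neq s'$, and $u' \in \{s_1, s_2\}$ when $u = s'$, noting that this forces $s' \notin S^*$ and hence $s_1, s_2 \notin T$). Any $u'$-$v'$-path in $G - T$ then projects through the contraction to a $u$-$v$-walk in $G' - S^*$, whose internal vertices lie either in $V(G) \setminus \{s_1, s_2\}$ or equal $s' \notin S^*$. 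Hence $G' - S^*$ is connected, a contradiction.

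Therefore $G - T$ is disconnected, and $3$-connectivity of $G$ forces $|T| = 3$. If $s' \notin S^*$, then $T = S^*$ yields $|S^*| = 3$, contradicting $|S^*| \leq 2$. So $s' \in S^*$, and writing $S^* = \{s', v\}$ we have $T = \{s_1, s_2, v\}$. In particular, $G$ admits a proper separation $(Y^*, T, Z^*)$ of order $3$ whose separator contains the crossedge $s_1s_2$.

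The final step is to rule out the existence of such a $3$-separator using the crossing structure of $\Sepi{1}$ and $\Sepi{2}$: recall $s_1 \in S_1 \cap Y_2$, $s_2 \in S_2 \cap Y_1$, $Y_1 \cap Y_2 = \emptyset$, and $S_1 \cap S_2 = \emptyset$. Pairing $(Y^*, T, Z^*)$ (suitably oriented) with $\Sepi{1}$ and $\Sepi{2}$ and applying the tangle axioms \ref{tangle: it1} and \ref{tangle: it2} of $\mathcal{T}$, one obtains either a non-degenerate separation of $\mathcal{T}$ strictly below $\Sepi{1}$ or $\Sepi{2}$ with respect to $\preceq$---contradicting their minimality in $\mathcal{T}_{\mathrm{nd}}$---or a triple of separations of $\mathcal{T}$ whose small sides are pairwise disjoint and joined by no edge, violating \ref{tangle: it2}. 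This combinatorial case analysis is the main (and only) delicate point, but it inspects only the finite subgraph on $S_1 \cup S_2 \cup T$ together with how the (possibly infinite) components of $G - T$ are distributed among the two sides of each $\Sepi{i}$; all of these are well-defined for locally finite $G$, so Grohe's argument carries over without modification.
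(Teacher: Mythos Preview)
Your proposal is correct and in the same spirit as the paper's treatment: the paper does not give its own proof of this lemma at all, but simply cites \cite[Corollary~4.24]{Gro16} and asserts (in the sentence preceding it) that Grohe's argument ``extends directly to the locally finite case.'' Your sketch makes explicit the reduction to a separator $\{s_1,s_2,v\}$ containing the crossedge and then, like the paper, defers the final combinatorial contradiction to Grohe; this is exactly the intended approach, and your observation that the argument is local and uses only finitely many separations is precisely the reason the extension is immediate.
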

\begin{lemma}[Lemmas 4.26 and 4.27 in \cite{Gro16}]
\label{lem: Gro-T'}
There exists a tangle $\mathcal T'$ of order $4$ in $G'$ containing the projection of $\mathcal T$ with respect to the model $\M = (\sg{v}_\wedge)_{v \in V(G')}$.
\end{lemma}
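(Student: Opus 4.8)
The plan is to reproduce Grohe's construction of $\mathcal{T}'$ from \cite[Subsection 4.5]{Gro16} essentially verbatim, the point being that every step is finitary, and to leave the one genuinely new ingredient for the locally finite setting — that $\mathcal{T}'$ is again a \emph{region} tangle — to \cref{lem: regiontangle}.

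First I would define the orientation $\mathcal{T}'$. For a separation $(Y',S',Z')\in\separ_{<4}(G')$ with $s'\notin S'$, say $s'\in Y'$, the triple $(Y'_\wedge,S',Z'_\wedge)$ is a separation of $G$ of the same order: since $N_{G'}(s')=N_G(s_1)\cup N_G(s_2)\setminus\sg{s_1,s_2}$ is contained in $Y'\cup S'$, neither $s_1$ nor $s_2$, nor any vertex of $Y'_\wedge$, has a neighbour in $Z'_\wedge$ in $G$. I would then put $(Y',S',Z')\in\mathcal{T}'$ precisely when $(Y'_\wedge,S',Z'_\wedge)\in\mathcal{T}$. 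This orients every separation of $G'$ of order at most $2$ (these are non-proper, since $G'$ is $3$-connected by \cref{cor: G3conn}, so one side is empty and the orientation is forced) and every separation of order $3$ whose separator avoids $s'$. For a separation of order exactly $3$ through $s'$ I would use Grohe's rule, which exploits the crossing structure of $\Sepi{1},\Sepi{2}\in\mathcal{T}_{\mathrm{nd}}$ with crossedge $s_1s_2$: such a separation expands to an order-$4$ separation of $G$ through $\sg{s_1,s_2}$, and \cref{lem: cross} (two minimal separations of $\mathcal{T}$ are crossing or orthogonal, and the crossedges of $\mathcal{T}$ form a matching) singles out which of its two sides $\mathcal{T}$ lies on.

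Next I would verify the tangle axioms for $\mathcal{T}'$: axiom \ref{tangle: it1} follows once the cases of the definition are seen to be mutually consistent, and for \ref{tangle: it2} one expands three given separations of $\mathcal{T}'$ back to $G$ and invokes the fact that $\mathcal{T}$ is a tangle of order $4$ there — a common vertex of three small sides projects to a common vertex after contracting $s_1s_2$, and the edge $s_1s_2$ is lost only when both its endpoints project to the single vertex $s'$, so no new ``bad triple'' is created. Finally, to see that $\mathcal{T}'$ contains the projection of $\mathcal{T}$, I would take $\Sep\in\mathcal{T}$ and check $\Sep^\vee\in\mathcal{T}'$: if $S\cap\sg{s_1,s_2}=\emptyset$ then $s_1,s_2$ lie on a common side of $\Sep$ (they are adjacent), so $(\Sep^\vee)_\wedge=\Sep\in\mathcal{T}$ and $\Sep^\vee\in\mathcal{T}'$; and if $S$ meets $\sg{s_1,s_2}$, then $\Sep^\vee$ is an order-$3$ separation of $G'$ through $s'$, to which Grohe's rule assigns the side containing the projection of $Z$.

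The step I expect to be the real obstacle is \emph{not} the passage from finite to locally finite graphs — every tangle axiom concerns a single separation or a triple of separations of bounded order, so Grohe's arguments transfer word for word — but rather the bookkeeping for order-$3$ separations of $G'$ whose separator contains $s'$, where the orientation cannot be read off an order-$<4$ separation of $G$ and one must invoke the crossing structure of $\Sepi{1}$ and $\Sepi{2}$ through \cref{lem: cross}. The one property that genuinely does not come for free here, namely that $\mathcal{T}'$ is well-founded rather than evasive, is isolated in \cref{lem: regiontangle} and is not needed for the present statement.
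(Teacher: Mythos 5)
Your proposal matches the paper's treatment: the paper does not reprove this lemma but simply observes that Grohe's construction and proofs of his Lemmas~4.26--4.27 are entirely finitary (each tangle axiom concerns at most three separations of bounded order) and therefore carry over verbatim to the locally finite setting, with the genuinely new content — that $\T'$ is again a region tangle — deferred to \cref{lem: regiontangle}. You correctly identify both points, and your unpacking of Grohe's definition (expand-and-test for separations avoiding $s'$, and a special rule exploiting the crossing structure for order-$3$ separations through $s'$) is faithful to that construction; the only place your sketch is slightly loose is the verification of \ref{tangle: it2}, where a separation through $s'$ expands to an order-$4$ separation of $G$ that is not in the domain of $\T$ and so cannot be fed directly into $\T$'s axioms — but since you explicitly defer the bookkeeping for separations through $s'$ to Grohe's argument, this does not affect the correctness of the approach.
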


Note that the projection of $\mathcal T$ with respect to $\M$ is exactly the set $\sg{\Sep^{\vee}: \Sep \in \mathcal T}$.
In the remainder of this subsection, we let $\mathcal T'$ be the tangle given by Lemma \ref{lem: Gro-T'}. In \cite{Gro16}, the author gives an explicit definition of $\T'$, but for the sake of clarity we only summarize here the properties of $\T'$ that will be of interest for our purposes.

Note that the inclusion $\sg{\Sep^{\vee}: \Sep \in \T}\subseteq \T'$ is strict in general, as some separations from $\separ_{<4}(G')$ might not be projections of separations from $\separ_{<4}(G)$. The next lemma intuitively states that every separation of $\T'$ is close to an element from $\sg{\Sep^{\vee}: \Sep \in \T}\subseteq \T'$.
\begin{lemma}[Definition of $\T'$ and Lemmas 4.23 and 4.25 in \cite{Gro16}]
\label{lem: essential-sep}
 For every separation $\Sepp\in \T'$ such that $s'\in S'$ and $G'[Z']$ is connected, there exists a separation $\Sep\in \T$ such that $S^{\vee}=S'$ and $Z\setminus S_{\vee}= Z\setminus \sg{s_1,s_2}= Z'$.
\end{lemma}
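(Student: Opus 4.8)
The present statement is the combination of \cite[Lemmas 4.23 and 4.25]{Gro16} with the explicit definition of $\mathcal T'$ given in \cite[Section 4.5]{Gro16}; the plan is to verify that Grohe's proof transfers to locally finite $G$, which it does because every step involves only boundedly many vertices at a time. It is convenient to first isolate the easy part of the argument.

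Given $\Sepp\in\mathcal T'$ with $s'\in S'$, form the expansion $(Y',S'_{\wedge},Z')$, where $S'_{\wedge}=(S'\setminus\{s'\})\cup\{s_1,s_2\}$. This is a separation of $G$: an edge of $G$ between $Y'$ and $Z'$ cannot be incident to $s_1$ or $s_2$ (neither lies in $Y'\cup Z'$), hence it would persist as an edge of $G'$ between $Y'$ and $Z'$, contradicting that $\Sepp$ is a separation of $G'$. Writing $\Sep:=(Y',S'_{\wedge},Z')$ one has $S^{\vee}=S'$ and $Z\setminus\{s_1,s_2\}=Z'$ by construction, so it remains only to produce a lift with these two properties that lies in $\mathcal T$. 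If $|S'|\le 2$, then $\Sep$ has order at most $3$ and is therefore oriented by the order-$4$ tangle $\mathcal T$; it cannot be that $(Z',S'_{\wedge},Y')\in\mathcal T$, since projecting $(Z',S'_{\wedge},Y')$ gives $(Z',S',Y')$, which would then lie in the projection of $\mathcal T$, hence in $\mathcal T'$ by \cref{lem: Gro-T'}, contradicting $\Sepp\in\mathcal T'$ (no tangle contains both a separation and its reverse). Hence $\Sep\in\mathcal T$, and the hypothesis that $G'[Z']$ is connected is not needed in this case.

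The remaining case $|S'|=3$, where $\Sep$ has order $4$ and so is not oriented by $\mathcal T$, is the technical core of \cite{Gro16}. Here one exploits that $G$ is $3$-connected, that $s_1s_2$ is the crossedge of the two fixed crossing separations $\Sepi{1},\Sepi{2}\in\mathcal T_{\mathrm{nd}}$, and that $G'[Z']$ is connected, in order to show that one of $s_1,s_2$ — say $s_i$ — has no neighbour in $Z'$; moving $s_i$ out of the separator then yields a \emph{proper} separation $\Sep:=(Y'\cup\{s_i\},\,S'_{\wedge}\setminus\{s_i\},\,Z')$ of order $3$, still with $S^{\vee}=S'$ and $Z=Z'$ (so $Z\setminus\{s_1,s_2\}=Z'$). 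Since $\Sep^{\vee}=\Sepp\in\mathcal T'$, exactly the projection argument of the previous paragraph forces $\Sep\in\mathcal T$, as wanted. The precise surgery, and the verification that $s_i$ as above exists, is carried out in \cite[Lemmas 4.23 and 4.25]{Gro16}.

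I expect this order-$4$ reduction to be the only delicate point, and it presents no new difficulty in the infinite setting: all separators involved have size at most $4$, neighbourhoods are finite because $G$ is locally finite, and the only connectivity input is Menger's theorem for cuts of bounded size, which is valid in arbitrary graphs. Thus Grohe's proofs of his Lemmas 4.23 and 4.25 apply verbatim. The one ingredient of \cite[Section 4.5]{Gro16} that genuinely requires a fresh argument when $G$ is infinite — that $\mathcal T'$ is again a \emph{region} tangle, not merely a tangle — plays no role here and is treated separately in \cref{lem: regiontangle}.
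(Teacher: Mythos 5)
Your proof is correct and follows essentially the same route as the paper's sketch: both of you defer to Grohe's Lemmas 4.23 and 4.25 (together with the explicit definition of $\T'$) for the substantive content and check that the surrounding argument carries over to locally finite $G$. Your organization is a mild streamlining: you peel off the trivial case $|S'|\le 2$ (where $3$-connectedness of $G'$ forces $Y'=\emptyset$) and then, for $|S'|=3$, invoke Grohe to move one endpoint of the crossedge out of the separator and close the loop with the projection/orientation argument via \cref{lem: Gro-T'}; the paper instead phrases the same step via the unique component $C$ of $G-S'_\wedge$ furnished by Grohe's Lemma 4.25 and identifies $Z'$ with $C$ using the connectedness of $G'[Z']$. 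Both presentations produce the same lift $\Sep\in\T$ (with $Z=Z'$ and exactly one of $s_1,s_2$ in $S$), and neither re-proves Grohe's lemmas.
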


As Lemma \ref{lem: essential-sep} is not exactly stated this way in \cite{Gro16}, we briefly sketch how to obtain it. If $\Sepp\in \T'$ is such that $s'\in S'$, then by \cite[Lemma 4.25]{Gro16} there exists a (unique) connected component $C$ of $G\setminus S'_{\vee}$ such that the separations $\Seppp$ of $\T'$ such that $S''=S'$ are exactly the ones such that $C\subseteq Z''$, and for which every separation $\Sep\in \T$ such that $S^{\vee}=S'$ satisfies $C\subseteq Z$. \cite[Lemmas 4.23, 4.25]{Gro16} and the fact that $\T$ is a tangle ensure the existence of a separation $\Sep\in \T$ such that $S^{\vee}=S'$ and $C=Z\setminus \sg{s_1,s_2}=Z\setminus S$. In particular by Lemma \ref{lem: Gro-T'} the projection $\Sep^{\vee}=(Y\setminus S', S', C)$ is in $\T'$ so if we assume that $G'[Z']$ is connected, the choice of $C$ imposes $C\subseteq Z'$, thus $Z'=C$. It implies that $\Sep$ satisfies the property described in Lemma \ref{lem: essential-sep}.

\begin{lemma}\label{lem: regiontangle}
 $\T'$  is a region tangle. 
\end{lemma}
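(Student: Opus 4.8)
The plan is to argue by contradiction: suppose $\T'$ is an evasive tangle, i.e.\ it contains an infinite strictly $\prec$-decreasing sequence of separations $\Sepppi{0}\succ \Sepppi{1}\succ \cdots$ in $G'$, and derive a contradiction with the fact that $\T$ is a region tangle in $G$. Since $\T$ has order $4$ and $G$ is $3$-connected, by \cref{rem: non-degenerate-tangle} every separation of $\T$ dominates (in $\prec$) some minimal separation of $\T$; the key point of region-ness is precisely the well-foundedness of $\prec$ restricted to $\T$. So it suffices to lift the infinite descending chain in $G'$ to an infinite descending chain in $G$.

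First I would reduce to a chain of a convenient shape. Passing to a subsequence, I may assume each $\Seppi{i}$ in the chain has $G'[Z'_i]$ connected (replace $Z'_i$ by the component of $G'- S'_i$ containing the relevant end/region; this only decreases the separation in $\prec$ and keeps it in $\T'$ because $\T'$ is a tangle, using $(\mathcal T2)$), and also that the chain is \emph{cofinal} in the "direction" of $\T'$ in the sense that $\bigcup_i(Y'_i)$ is all of $V(G')$ except a bounded core. For each $i$, I then split into two cases according to whether $s'\in S'_i$ or not. If $s'\notin S'_i$ (and $s'\notin Z'_i$, or symmetrically), then $\Seppi{i}$ expands verbatim to a separation $(\,(Y'_i)_\wedge, (S'_i)_\wedge, (Z'_i)_\wedge\,)=(Y'_i,S'_i,Z'_i)$ of $G$ of the same order, which lies in $\T$ (its projection is $\Seppi{i}\in\T'$, and by \cref{lem: Gro-T'} / \cref{lem: essential-sep} the expansion is in $\T$). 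If $s'\in S'_i$, I invoke \cref{lem: essential-sep}: since I arranged $G'[Z'_i]$ connected, there is $\Sepi{i}\in\T$ with $S_i^{\vee}=S'_i$ and $Z_i\setminus\{s_1,s_2\}=Z'_i$; in particular $|S_i|=|S'_i|\le 3$ (the crossedge $s_1s_2$ has at most one endpoint becoming $s'$, but since $s'\in S_i'$ and $S_i^\vee=S_i'$, $S_i$ contains at least one of $s_1,s_2$; the order does not increase beyond $3$ because $\T$ has order $4$). So in all cases I obtain $\Sepi{i}\in\T$ with $Z_i$ essentially equal to $(Z'_i)_\wedge$ up to the pair $\{s_1,s_2\}$, and $S_i$ essentially equal to $(S'_i)_\wedge$.

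The main obstacle — and the heart of the argument — is to check that the resulting sequence $(\Sepi{i})_i$ can be made strictly $\prec$-decreasing (or at least has no finite $\prec$-decreasing cofinal refinement), contradicting well-foundedness of $\prec$ on $\T$. The delicate points are: (a) the expansion of the single crossedge can, a priori, turn a strict $\prec$-inequality in $G'$ into an equality in $G$ (when the only change between $Z'_i$ and $Z'_{i+1}$ "hides" inside $\{s_1,s_2\}$), but this can happen for at most finitely many consecutive indices since $s_1,s_2$ is a single fixed pair, so after passing to a further subsequence the chain stays strictly decreasing; (b) I must handle uniformly the indices where $s'\in S'_i$ versus $s'\notin S'_i$ — but note $s'\in S'_i$ for at most one "level" of the chain once $G'[Z'_i]$ is connected and the chain is monotone, because the $Z'_i$ shrink and eventually $s'\notin Z'_i\cup S'_i$, so for all large $i$ we are in the clean verbatim-expansion case. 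Combining (a) and (b): for all sufficiently large $i$, $\Sepi{i}=(Y'_i,S'_i,Z'_i)$ exactly (expansion trivial), these lie in $\T$, and $\Sepi{i}\succ\Sepi{i+1}$ strictly because $S'_i\cup Z'_i\supsetneq S'_{i+1}\cup Z'_{i+1}$ already in $G'$ and the sets are literally the same in $G$. This infinite strictly decreasing chain inside $\T$ contradicts the assumption that $\T$ is a region tangle, i.e.\ that $\prec$ is well-founded on $\T$. Hence $\T'$ is well-founded, i.e.\ a region tangle, as claimed.

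A cleaner packaging, which I would actually write up, avoids subsequence gymnastics: observe directly that for any $\Sepp\in\T'$ with $G'[Z']$ connected, \cref{lem: essential-sep} (or trivial expansion when $s'\notin S'$) produces $\Sep\in\T$ with $S'\cup Z' \subseteq (S\cup Z) \subseteq S'\cup Z'\cup\{s_1,s_2\}$ and $S\subseteq S'\cup\{s_1,s_2\}$; one checks that the map $\Sepp\mapsto\Sep$ is, up to the bounded ambiguity of the single pair $\{s_1,s_2\}$, monotone for $\prec$, so an infinite $\prec$-descending chain in $\T'$ would yield an infinite $\prec$-descending chain in $\T$. Since $\T$ is well-founded, so is $\T'$.
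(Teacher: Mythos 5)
Your overall strategy—assume $\T'$ has an infinite strictly $\prec$-descending chain and lift it back to $\T$—is the same as the paper's, but your reduction step contains a genuine error. You assert in point (b) that, because the $Z'_i$ shrink, ``eventually $s'\notin Z'_i\cup S'_i$,'' so that for all large $i$ the expansion is verbatim. This is false: a strictly $\prec$-decreasing chain in $\T'$ can perfectly well have $s'\in Z'_n$ for \emph{every} $n$ (the tangle may orient each separation toward a region containing $s'$), and nothing forces $s'$ to be expelled. What the paper actually uses is the much weaker and provable fact that only \emph{finitely many} $n$ can have $s'\in S'_n$; this follows from \cref{lem: TWcut} once the chain is normalized to consist of tight separations (possible because $G'$ is $3$-connected by \cref{cor: G3conn}, so at most one term is improper). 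After discarding those finitely many indices one extracts a further subsequence with either $s'\in Y'_n$ for all $n$ or $s'\in Z'_n$ for all $n$, and in \emph{both} of these cases the expansion $((Y'_n)_\wedge, S'_n, (Z'_n)_\wedge)$ is an order-$3$ separation of $G$ lying in $\T$ (via \cref{lem: Gro-T'}) with strictness of the chain preserved. Your detour through \cref{lem: essential-sep} and the requirement that $G'[Z'_i]$ be connected are unnecessary once this subsequence extraction is done correctly, since then $s'\notin S'_n$ for all remaining $n$.

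Your ``cleaner packaging'' paragraph also has a problem: the displayed inclusions $S'\cup Z'\subseteq S\cup Z\subseteq S'\cup Z'\cup\{s_1,s_2\}$ compare a subset of $V(G')$ with a subset of $V(G)$; these are not comparable as written (for instance $s'$ may lie in $S'\cup Z'$ but never lies in $S\cup Z$). Even after fixing the identification, pinning down ``monotone up to bounded ambiguity'' still requires disposing of the indices with $s'\in S'_n$, which brings you back to the \cref{lem: TWcut} argument you omitted.
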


\begin{proof}
%   The proof that $\T'$ is a tangle of order 4 is identical to the finite case
%   in~\cite{Gro16}, so we only need to prove that $\T'$ remains well-founded under $\preceq$. 
  
  Assume
  for the sake of contradiction that $\T'$ contains an infinite strictly decreasing sequence of
  separations $\Seppi{n}_{n\in \NN}$. By \cref{cor: G3conn}, $G'$ is 3-connected, so the only possible non-proper separation $\Sepi{n}$ is for $n=0$, thus we may assume that all the
  separations $\Sepi{n}$ are tight. By \cref{lem: TWcut}, there are finitely many integers $n$
  for which $s' \in S'_n$. Up to extracting an infinite subsequence, one can assume that for all $n$,
  either $s' \in Y'_n$ or $s' \in Z'_n$. If there exists $N$ such that $s' \in
  Y'_N$ then for all $n \ge N$ we must have $s'\in Y_n$ by definition of $\preceq$. Up to extracting another infinite subsequence, we can assume that either $s' \in Y'_n$ for
  all $n$ or $s' \in Z'_n$ for all $n$. As a result, and because $\mathcal T'$ contains the projection of $\mathcal T$ with respect to $\M$,
  $((Y'_n)_\wedge, S'_n,
  (Z'_n)_\wedge)_{n \in \NN}$ is an infinite decreasing sequence of
separations of order 3 in $\T$, contradicting the fact that $\T$ is well-founded.
\end{proof}

We conclude this subsection with the following result relating the degeneracy of minimal separations in $G$ and $G'$. Its proof is the same as the proof of~\cite[Lemma 4.28]{Gro16}, which directly translates to the locally finite case. To be more precise, we also need the additional assumption that $\T'$ is a region tangle to make the proof work, which is given by \cref{lem: regiontangle}.

\begin{lemma}[Lemma 4.28 and Corollary 4.29 in~\cite{Gro16}]\label{lem:region_contracted_tangle}
  Either $G'$ is 4-connected and $\T'_{\mathrm{min}} =  \{(\emptyset, \emptyset, V(G'))\}$, or 
  $$\T'_{\mathrm{min}} = \{\Sep^\vee : \Sep \in \T_{\mathrm{min}} \text{ and }S^\vee \text{ is a
    separator of } G'\}.$$

   In the latter case, for all $\Sep \in \T_{\mathrm{min}}$, $\Sep$ is non-degenerate
  if and only if $\Sep^\vee$ is non-degenerate. Moreover, $\Exnd{\T'}=\Exnd{\T}\setminus \sg{s_1s_2}$.
\end{lemma}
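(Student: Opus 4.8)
The plan is to follow the proof of \cite[Lemma 4.28 and Corollary 4.29]{Gro16} essentially verbatim, checking at each step that finiteness of $G$ is used only through facts we have already re-established for locally finite graphs: $G'$ is $3$-connected (\cref{cor: G3conn}), $\T'$ is a tangle of order $4$ containing the projection $\sg{\Sep^\vee : \Sep\in\T}$ of $\T$ (\cref{lem: Gro-T'}), the description of the separations of $\T'$ with $s'$ in the separator (\cref{lem: essential-sep}), the finiteness of tight separations through a fixed vertex (\cref{lem: TWcut}), and — the one genuinely new input — the fact that $\T'$ is again a region tangle (\cref{lem: regiontangle}). The first step is the easy alternative: if $G'$ is $4$-connected, then $G'-S'$ is connected for every $S'$ with $|S'|\le 3$, so every separation of $G'$ of order at most $3$ is non-proper; since a tangle contains no separation $\Sep$ with $Z=\emptyset$ (by \ref{tangle: it2} applied with $\Sepi{i}=\Sep$), every separation in $\T'$ has $Y'=\emptyset$, and on such separations $\preceq$ is just inclusion of separators, so $\T'_{\mathrm{min}}=\sg{(\emptyset,\emptyset,V(G'))}$.

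So assume $G'$ is not $4$-connected; by \cref{cor: G3conn} it is $3$-connected and hence has proper separations of order exactly $3$, and since $G$ itself has such a separation (e.g.\ $\Sepi{1}$), every element of $\T_{\mathrm{min}}$ is proper of order $3$. I would then prove the two inclusions describing $\T'_{\mathrm{min}}$. For $\supseteq$, let $\Sep\in\T_{\mathrm{min}}$ with $S^\vee$ a separator of $G'$; then $\Sep^\vee\in\T'$ by \cref{lem: Gro-T'}, and one checks $\Sep^\vee$ is proper of order $3$; to see it is $\preceq$-minimal, take $\Sepp\in\T'$ with $\Sepp\preceq\Sep^\vee$ and lift it to a separation of $G$ — directly via the expansion $(\,\cdot\,)_\wedge$ when $s'\notin S'$, and via \cref{lem: essential-sep} when $s'\in S'$ (where the naive expansion has order $4$) — obtaining a separation of $\T$ of order $3$ that is $\preceq\Sep$, hence equal to $\Sep$ by minimality, so $\Sepp=\Sep^\vee$. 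For $\subseteq$, let $\Sepp\in\T'_{\mathrm{min}}$; using \cref{lem: essential-sep}/expansion produce $\Sep\in\T$ with $\Sep^\vee=\Sepp$, then use that $\T$ is a region tangle to choose $\Sep_0\in\T_{\mathrm{min}}$ with $\Sep_0\preceq\Sep$; monotonicity of $(\,\cdot\,)^\vee$ under $\preceq$ gives $\Sep_0^\vee\in\T'$ with $\Sep_0^\vee\preceq\Sepp$, so minimality of $\Sepp$ forces $\Sep_0^\vee=\Sepp$, and in particular $S_0^\vee$ is a separator of $G'$.

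For the non-degeneracy equivalence I would carry out the short case analysis of \cite[Lemma 4.28]{Gro16} according to $S\cap\sg{s_1,s_2}$ (which, since $S^\vee$ is a $3$-separator, contains at most one of $s_1,s_2$): contracting $s_1s_2$ changes neither the order of such a separation, nor — after using that $s_1\in S_1$, $s_2\in S_2$ with $\Sepi{1},\Sepi{2}$ crossing, which pins down how $s_1,s_2$ sit relative to a minimal separation of $\T$ — whether its $Y$-side is a single vertex, nor whether the torso on its small side is edgeless; hence $\Sep$ is degenerate iff $\Sep^\vee$ is. Finally $\Exnd{\T'}=\Exnd{\T}\setminus\sg{s_1s_2}$ follows from the description of $\mathcal T'_{\mathrm{nd}}$ just obtained together with \cref{lem: cross}: the crossedge $s_1s_2$ is destroyed by the contraction, every other crossedge of $\T$ survives as a crossedge of $\T'$, and no new crossedge appears (this is exactly \cite[Corollary 4.29]{Gro16}).

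I expect the main obstacle to be the bookkeeping in the case $s'\in S'$ of the two inclusions: there one cannot simply expand back to a separation of $G$ of order $3$, so one must invoke \cref{lem: essential-sep} and carefully verify that $\preceq$ is preserved in both directions under $(\,\cdot\,)^\vee$ and the relevant (partial) expansions. Everything else is Grohe's analysis translated unchanged; the only place where the argument of \cite{Gro16} would genuinely break in the infinite setting is the step ``pick a minimal separation below a given one'', which is precisely why we need \cref{lem: regiontangle} (together with \cref{lem: TWcut} to control the tight separations involved).
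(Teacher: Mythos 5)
Your proposal takes exactly the paper's approach: the paper gives no independent proof and simply asserts that Grohe's argument for Lemma 4.28 and Corollary 4.29 carries over to the locally finite setting once one knows that $\T'$ is again a region tangle (\cref{lem: regiontangle}). You correctly single out that fact as the one genuinely new ingredient (used whenever one must choose a minimal, hence tight, separation below a given one in $\T'$ before lifting it via \cref{lem: essential-sep}), and the rest of your reconstruction of Grohe's analysis is consistent with what the authors had in mind.
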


\subsection{Contracting all the crossedges}
\label{sec: all}

In the previous subsection we studied the consequences of contracting a single crossedge in $G$. However, in our application we will need to contract \emph{all} crossedges of $\Exnd{\T}$ (which form a matching in $G$). We now study how this affects $G$.

Before going further, we will need to introduce some notation, extending the
notation from \cite{Gro16} to the infinite case. For convenience we write
$M := \Exnd{\T}$ (and recall that $M$ is a matching in $G$). For every subset
$L\subseteq M$ of crossedges, we let $\GL$ be the graph obtained from $G$ after
contracting each edge $uv\in L$ into a new vertex $s_{u,v}$. Note that the order in which the edges are
contracted is irrelevant in the definition of $\GL$.

We denote
$\overline L = M \setminus L$. In this section we will also often use the notation $L-L'$ instead of $L\setminus L'$, to avoid any possible confusion when reading superscripts (for instance we will write $G^{\backslash L-L'/}$ instead of $G^{\backslash L\setminus L'/}$). 

For every $L \subseteq M$, for every vertex $x \in V(G)$, we let 
$$x\Contr{L}:= \left\{\begin{array}{ll}
x & \text{ if } x \in X
\setminus V(L)\\ 
s_{u,v} & \text{ if } x \text{ is the endpoint of a crossedge } uv \in L.\\ 
\end{array}\right.$$

For every subset $X\subseteq V(G)$ of vertices, we let $X\Contr{L} :=
\{x\Contr{L} :x \in X\}$ be the projection of $X$ to $\GL$.

\begin{remark}\label{rem:contract_comut}
  Note that for every disjoint subsets $K, L \subset M$ and for all $X \in V(G)$,
  $X\Contr{K\cup L} = (X\Contr{L})\Contr{K} = (X\Contr{K})\Contr{L}$.
\end{remark}

For every
$X'\subseteq V(\GM)$ and $L \subseteq M$, let $X'\Expand{L}$ denote the maximal
set $X\subseteq V(G\Contr{\overline L})$ such that $X\Contr{L}=X'$.
In other words $X'\Expand{L}$ is the set of vertices obtained after ``uncontracting'' the
edges of $L$ in $X$. Note that with the notation introduced above we have $G=
G\Contr{\emptyset}$. Given a separation $\Sep$ of $G$, we define $$\Sep\Contr{L}
:= \left(Y\Contr{L}\setminus S\Contr{L},S\Contr{L},Z\Contr{L}\setminus S\Contr{L}\right).$$

% $$\T\Contr{L} := \{\Sep\Contr{L} :\Sep \in \T\}.$$

Note that when $L=\sg{s_1s_2}$ consists of a single edge, we recover the notions of the previous subsection; with our previous notation this gives: $x\Contr{L}=x^{\vee}$, $X\Contr{L}=X^{\vee}$ and $\Sep\Contr{L}=\Sep^{\vee}$.

For each finite subset of crossedges $L\subseteq M$, and every enumeration $(e_1, \ldots, e_\ell)$ of the edges of $L$, we let $\mathcal T\Contr{(e_1, \ldots, e_\ell)}$ denote the tangle of $G\Contr{L}$ obtained after iteratively applying Lemma \ref{lem: Gro-T'} to the graphs $G_0:=G, G_1, \ldots, G_\ell$ with $G_i:=G\Contr{\sg{e_1, \ldots, e_i}}$ for each $i\in [\ell]$.
\begin{lemma}[Lemma 4.30 (5) in \cite{Gro16}]
\label{lem: Gro-enum} 
For every enumeration $(e_1, \ldots, e_\ell)$ of a finite set $L\subseteq M$ of crossedges and every permutation $\sigma$ of $[\ell]$, $\mathcal T\Contr{(e_1, \ldots, e_\ell)}=\mathcal T\Contr{(e_{\sigma(1)}, \ldots, e_{\sigma(\ell)})}$.
\end{lemma}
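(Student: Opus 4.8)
## Proof proposal for \cref{lem: Gro-enum}

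The plan is to reduce to the case of two consecutive edges. Since any permutation $\sigma$ of $[\ell]$ can be written as a product of adjacent transpositions, and since we may view the iterated contraction $\mathcal T\Contr{(e_1,\dots,e_\ell)}$ as applying the two-edge statement along the sequence, it suffices to prove the following: if $L=\sg{e,f}$ consists of two distinct crossedges of $M$, then applying \cref{lem: Gro-T'} first to $e$ and then to $f$ yields the same tangle of $G\Contr{L}$ as applying it first to $f$ and then to $e$. Once this is established, given two enumerations of a finite set $L$ differing by an adjacent transposition at positions $i,i+1$, the graphs $G_0,\dots,G_{i-1}$ and the tangles on them agree for both enumerations (they only depend on $\sg{e_1,\dots,e_{i-1}}$, by the inductive hypothesis applied to this smaller set, or trivially when $i=1$); then the two-edge statement applied to the pair $(e_i,e_{i+1})$ inside $G_{i-1}$ shows that the tangles on $G_{i+1}$ agree; and from $G_{i+1}$ onward the contraction steps are identical. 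A general permutation is handled by composing such moves.

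For the two-edge base case, I would proceed as follows. Write $G$, $\T$, and let $e=s_1s_2$, $f=t_1t_2$ be two distinct crossedges; since $M$ is a matching (\cref{lem: cross}), these four vertices are distinct. Let $G'=G\Contr{\sg e}$ with new vertex $s'$, and $G''=G\Contr{\sg{e,f}}$. By \cref{rem:contract_comut}, $V(G'')$ and its incidences are the same whether we contract $e$ then $f$ or $f$ then $e$, so the two resulting graphs on which the final tangle lives are literally equal; the only thing to check is that the two tangles coincide as subsets of $\separ_{<4}(G'')$. Here I would use that, by \cref{lem: Gro-T'}, each of the two iterated tangles contains the projection of $\T$ with respect to the relevant model of $G''$ in $G$, and by \cref{rem:contract_comut} these two models coincide (the model is $(\sg v\Expand{\sg{e,f}})_{v\in V(G'')}$ in both cases, up to the identification $(X^\vee)^{\vee}$ versus the symmetric composition, which agree by the remark). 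So both iterated tangles contain the common set $\sg{\Sep\Contr{\sg{e,f}}:\Sep\in\T}$. Finally, a tangle of order $4$ is determined by its intersection with $\separ_{3}(G'')$ together with the trivially-oriented lower-order separations (by \ref{tangle: it1} a tangle is an orientation, and for order $<3$ the orientation is forced as in \cref{rem: degenerate-tangle}); and the proof of \cref{lem: Gro-T'} in \cite{Gro16} (see the explicit definition of $\T'$ and \cref{lem: essential-sep}) shows that each separation of order $3$ of $G''$ that is \emph{not} a projection from $G$ is ``attached'' to a uniquely determined projected separation, with its orientation copied from it; since the projected part is symmetric in $e$ and $f$, so is this attachment, and the two tangles agree on all of $\separ_{<4}(G'')$.

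Concretely, to nail down the last point I would invoke \cref{lem: essential-sep} in both graphs: a separation $\Sepp\in\separ_3(G'')$ with, say, $s'\in S''$ and $G''[Z'']$ connected is oriented into the tangle exactly according to the (unique) component $C$ of $G\Contr{\sg f}\setminus S''_{/\sg e\backslash}$ it ``contains'', and this description is manifestly symmetric under swapping the roles of $e$ and $f$ since it is phrased in terms of $G$, $\T$, and the two-element set $\sg{e,f}$; the case $t'\in S''$ is identical, and the case where $S''$ avoids both contracted vertices reduces directly to $\Sep\Contr{\sg{e,f}}$ being a projection. The general case of an arbitrary finite $L$ and arbitrary permutation then follows, as explained, by decomposing $\sigma$ into adjacent transpositions and applying the two-edge statement repeatedly, using \cref{rem:contract_comut} to see that the intermediate graphs match up.

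The main obstacle I anticipate is the bookkeeping in the last paragraph: verifying that the ``non-projected'' order-$3$ separations of $G\Contr L$ really are assigned orientations in a way that depends only on the \emph{set} $L$ and not on the enumeration. This is essentially already contained in Grohe's analysis (his Lemma 4.30), and in the locally finite setting we additionally need \cref{lem: regiontangle} to know each intermediate tangle is a region tangle (so that the minimal-separation descriptions of \cref{lem:region_contracted_tangle} apply at every step); but making the symmetry fully explicit — rather than just asserting ``the same proof as in \cite{Gro16} works'' — requires care with the nested projection/expansion operators $\Contr L$ and $\Expand L$ and the identity $(X\Contr K)\Contr L=X\Contr{K\cup L}$ of \cref{rem:contract_comut}.
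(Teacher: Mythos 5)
The paper gives no proof of this statement: it is simply cited from Grohe's Lemma~4.30(5), and the paper deliberately does not reproduce Grohe's explicit closed-form definition of the tangle $\T'$ on which that proof rests (see the sentence right after \cref{lem: Gro-T'}, where the authors say they only summarize the properties of $\T'$ they need). Your proposal, by contrast, attempts an actual proof from the material available in the paper. The reduction to adjacent transpositions is sound, and the observation that the two iterated tangles live on the literally same graph $G\Contr{L}$ (via \cref{rem:contract_comut}) and both contain the common set $\sg{\Sep\Contr{\sg{e,f}}:\Sep\in\T}$ is correct. The use of $3$-connectivity of $G''$ plus \cref{rem: degenerate-tangle} to reduce to non-degenerate proper order-$3$ separations is also fine.

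The gap is in the two-edge base case, and you partly flag it yourself. To finish, you must show that the ``new'' order-$3$ separations of $G''$ — those whose separator contains $s'$ or $t'$ and which are therefore not projections from $G$ — receive the same orientation along both contraction orders. You appeal to \cref{lem: essential-sep}, but that lemma only describes separations that are \emph{already known to be in} $\T'$; it does not supply a forward rule that, given an arbitrary new separation of $G''$, decides which side the tangle orients it to. The description you then sketch (``the unique component $C$ of $G\Contr{\sg f}$ minus the $e$-expansion of $S''$'') is phrased in terms of one of the two intermediate graphs and is not visibly symmetric under swapping $e$ and $f$ — and it becomes genuinely problematic when both $s'$ and $t'$ lie in $S''$, which is possible since $|S''|=3$. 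Establishing that symmetry is exactly what Grohe's explicit definition of $\T'$ is used for, and your proposal ends up saying ``this is essentially already contained in Grohe's analysis,'' which is the thing a self-contained proof would need to show. So the proposal is a reasonable outline but has a genuine hole precisely where the work lies; the paper avoids the hole by citing Grohe rather than reproving the lemma.
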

In the remainder of the subsection, for every finite subset $L\subseteq M$, we will denote with $\TL$ the unique tangle associated to any enumeration of $L$ given by Lemma \ref{lem: Gro-enum}.

Intuitively when $G$ is finite, one of the main properties of $\TL$ is that separations of $\TL_{\mathrm{nd}}$ are in correspondence with separations of $\mathcal T_{\mathrm{nd}}$, and that the only crossing pairs between elements of $\TL$ correspond to pairs which were already crossing in $\T$. Thus after each contraction, we reduce the number of crossedges, hence when $L=M$, the family $\TL_{\mathrm{nd}}$ must be orthogonal and we can apply results from previous subsections to the graph $\GL$. We now show formally how to extend the relevant proofs of \cite{Gro16} to the locally finite case.

\smallskip

In \cite{Gro16}, the author proved that if $G$ is finite and 3-connected, for
every $L \subseteq M$, the graph $\GL$ is $3$-connected and that $\TL$ is a
tangle of order $4$ induced by $\mathcal T$ in $\GL$. Using the results from the previous subsection, this immediately extends to $\GL$ and $\TL$ when $G$ is locally finite and $L$ is finite, by
induction on the size of $L$.

\begin{theorem}[Generalization of Lemma 4.30 in~\cite{Gro16}]\label{thm:contract tangle1}
  Let $L\subseteq M$ be a finite set of crossedges. Then we have
  \begin{enumerate}
  \item $G\Contr{L}$ is 3-connected
  \item $\T\Contr{L}$ is a region tangle of order 4 of $G\Contr{L}$ such that 
  $$\T\Contr{L}_{\mathrm{min}} = \{\Sep\Contr{L} : \Sep \in \T_{\mathrm{min}} \text{ such that }S\Contr{L} \text{ is a
    separator of } G\Contr{L}\}$$ or $\T\Contr{L}_{\mathrm{min}} = \{(\emptyset,
  \emptyset, V(G'))\}$ if $L = M$ is finite and $G\Contr{L}$ is
  4-connected.
  \item $\Exnd{\T\Contr{L}} = \Exnd{\T}\setminus L$
  \item $\TL$ contains the projection $\sg{(Y,S,Z)\Contr{L}: \Sep\in \T}$ of $\T$ with respect to the model $\M=(\sg{v}\Expand{L})_{v\in V(G\Contr{L})}$.
  \end{enumerate}
\end{theorem}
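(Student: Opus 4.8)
The plan is to proceed by induction on $|L|$. The base case $L=\emptyset$ is immediate, since $G\Contr{\emptyset}=G$ is $3$-connected, $\T\Contr{\emptyset}=\T$ is a region tangle of order $4$ with $\Exnd{\T}=M$ by hypothesis, and the projection with respect to the trivial model is $\T$ itself. For the inductive step, I would fix $L$ with $|L|\ge 1$, assume the statement for all subsets of $M$ of smaller size, and write $L=L'\cup\sg{e}$ with $e\in M\setminus L'$. By \cref{lem: Gro-enum} the tangle $\T\Contr{L}$ does not depend on the chosen enumeration of $L$, so I may compute it by first contracting the edges of $L'$ and then $e$; this identifies $G\Contr{L}$ with $(G\Contr{L'})\Contr{\sg{e}}$ and $\T\Contr{L}$ with the tangle obtained by applying \cref{lem: Gro-T'} to $G\Contr{L'}$, $\T\Contr{L'}$ and the crossedge $e$. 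The induction hypothesis for $L'$ then gives that $G\Contr{L'}$ is $3$-connected, that $\T\Contr{L'}$ is a region tangle of order $4$, and that $\Exnd{\T\Contr{L'}}=M\setminus L'$; in particular $e$ is the crossedge of a crossing pair of separations of $(\T\Contr{L'})_{\mathrm{nd}}$, so every result of \cref{sec: single} applies with $(G,\T,s_1s_2)$ replaced by $(G\Contr{L'},\T\Contr{L'},e)$.

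With this set-up in place, I would read off the first three assertions from the single-crossedge lemmas. \cref{cor: G3conn} gives part (1): $G\Contr{L}=(G\Contr{L'})\Contr{\sg{e}}$ is $3$-connected. \cref{lem: regiontangle} gives that $\T\Contr{L}$ is again a region tangle; this is also what keeps the induction alive, since the single-crossedge lemmas require a region tangle as input. \cref{lem:region_contracted_tangle} gives $\Exnd{\T\Contr{L}}=\Exnd{\T\Contr{L'}}\setminus\sg{e}=(M\setminus L')\setminus\sg{e}=M\setminus L$, which is part (3). For part (4), the induction hypothesis yields $\sg{\Sep\Contr{L'}:\Sep\in\T}\subseteq\T\Contr{L'}$, while \cref{lem: Gro-T'} says $\T\Contr{L}$ contains the one-step projection of $\T\Contr{L'}$; composing these two inclusions and using \cref{rem:contract_comut} coordinatewise on $Y,S,Z$ (recall $L'\cap\sg{e}=\emptyset$), so that $(\Sep\Contr{L'})\Contr{\sg{e}}=\Sep\Contr{L}$, shows $\sg{\Sep\Contr{L}:\Sep\in\T}\subseteq\T\Contr{L}$, which is exactly the projection of $\T$ with respect to the model $\M=(\sg{v}\Expand{L})_{v\in V(G\Contr{L})}$.

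Finally, to pin down $(\T\Contr{L})_{\mathrm{min}}$ in part (2), I would invoke \cref{lem:region_contracted_tangle} once more, which yields a dichotomy. If $G\Contr{L}$ is $4$-connected, then $(\T\Contr{L})_{\mathrm{min}}=\sg{(\emptyset,\emptyset,V(G\Contr{L}))}$; in that case $(\T\Contr{L})_{\mathrm{nd}}$ has no proper separation, hence $\Exnd{\T\Contr{L}}=\emptyset$, so part (3) forces $M\setminus L=\emptyset$, i.e.\ $L=M$, and we land precisely in the exceptional clause. Otherwise, \cref{lem:region_contracted_tangle} gives $(\T\Contr{L})_{\mathrm{min}}=\sg{\Sepp\Contr{\sg{e}}:\Sepp\in(\T\Contr{L'})_{\mathrm{min}}\text{ and }\Sepp\Contr{\sg{e}}\text{ is a proper separation of }G\Contr{L}}$; feeding in the induction hypothesis $(\T\Contr{L'})_{\mathrm{min}}=\sg{\Sep\Contr{L'}:\Sep\in\T_{\mathrm{min}}\text{ and }S\Contr{L'}\text{ is a separator of }G\Contr{L'}}$, rewriting $(\Sep\Contr{L'})\Contr{\sg{e}}=\Sep\Contr{L}$ via \cref{rem:contract_comut}, and observing that ``$\Sep\Contr{L}$ is a proper separation of $G\Contr{L}$'' says exactly that $S\Contr{L}$ is a separator of $G\Contr{L}$, and that it implies ``$S\Contr{L'}$ is a separator of $G\Contr{L'}$'' because the contraction of a non-proper separation is non-proper, I would obtain the description claimed in (2). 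The only genuinely delicate point will be this last reconciliation of the separator/minimality conditions across the two contraction steps; everything else is the bookkeeping already done in the finite case in \cite[Lemma 4.30]{Gro16}, in which finiteness is used nowhere, the only changes being to cite \cref{cor: G3conn,lem: Gro-T',lem: regiontangle,lem:region_contracted_tangle} (the last of these supplying the region-tangle hypothesis these statements need in the locally finite setting) in place of their finite analogues, and to observe that only finitely many crossedges are ever contracted.
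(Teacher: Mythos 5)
Your proof is correct and follows exactly the route the paper takes: the paper states that \cref{thm:contract tangle1} ``immediately extends to $\GL$ and $\TL$ when $G$ is locally finite and $L$ is finite, by induction on the size of $L$,'' using precisely \cref{cor: G3conn,lem: Gro-T',lem: regiontangle,lem:region_contracted_tangle} together with \cref{lem: Gro-enum} at each step. Your identification of the delicate reconciliation of the separator condition across the two contraction steps, and the observation that contraction preserves non-properness, is the right bookkeeping to close the loop.
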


% \ugo{Similarly the following lemma extends Lemma \ref{lem: essential-sep} for finite sets of crossedges.
% \begin{lemma}
%  \label{lem: essential-sep-2}
%  For every finite subset $L\subseteq M$ and every separation $\Sepp\in \TL$, there exists a separation $\Sep\in \T$ such that $Z\Contr{L}\setminus S\Contr{L}\subseteq Z'$ and $S\Contr{L}=S'$. 
% \end{lemma}
% }

We will now extend Theorem \ref{thm:contract tangle1} to the case where $L\subseteq M$ is infinite. Given a set $X \subseteq V(G\Contr{M})$, we denote by
$M(X)\subseteq M$ the subset of edges of $G$ contracted to a vertex in $X$.

\begin{lemma}\label{lem:contract connected}
  The graph $\GM$ is 3-connected.
\end{lemma}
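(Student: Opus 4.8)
The plan is to bootstrap the finite case of \cref{thm:contract tangle1}: I first observe that a separator of bounded order in $\GM$ can only involve boundedly many contracted crossedges, so that $3$-connectedness of $\GM$ is forced by that of the finite contractions $G\Contr{L}$, $L\subseteq M$ finite. If $M$ is finite, the first item of \cref{thm:contract tangle1} with $L=M$ is exactly what we want, so assume $M$ is infinite; then $G$, hence $\GM$, is infinite, and in particular $\GM$ has at least four vertices. Suppose for contradiction that $\GM$ is not $3$-connected; then it admits a proper separation $(Y',S',Z')$ with $|S'|\le 2$, and the goal is to produce from it a proper separation of order at most $2$ in some \emph{finite} contraction of $G$.

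Note that expanding $S'$ all the way down to $G$ is wasteful: it yields a separation of $G$ of order up to $|S'|+|M(S')|\le 4$, which says nothing since $G$ is only $3$-connected. Instead, I would expand only the crossedges that $S'$ actually meets. Put $L_0:=M(S')$, so $|L_0|\le|S'|\le 2$ is finite. Since $M$ is a matching, $M-L_0$ is a matching in $G\Contr{L_0}$, and by \cref{rem:contract_comut} we have $\GM=\bigl(G\Contr{L_0}\bigr)\Contr{M-L_0}$; let $\phi\colon V(G\Contr{L_0})\to V(\GM)$ be the associated contraction map. Because $L_0$ is exactly $M(S')$, no crossedge of $M-L_0$ is contracted to a vertex of $S'$, and from this one checks that $S'$ is (unambiguously) a set of vertices of $G\Contr{L_0}$ and that $\phi^{-1}(S')=S'$. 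Then $(\phi^{-1}(Y'),\,S',\,\phi^{-1}(Z'))$ is the desired separation of $G\Contr{L_0}$: the three parts partition $V(G\Contr{L_0})$ since $\phi^{-1}(S')=S'$; the outer two are nonempty since $\phi$ is surjective and $Y',Z'\ne\emptyset$; and there is no edge between $\phi^{-1}(Y')$ and $\phi^{-1}(Z')$ because $\phi$ sends every edge to an edge or to a single vertex while mapping these two sets into the nonadjacent sets $Y'$ and $Z'$ of $\GM$. This is a proper separation of order $|S'|\le 2$ of the graph $G\Contr{L_0}$, whose $3$-connectedness is guaranteed by the first item of \cref{thm:contract tangle1} (as $L_0$ is finite) --- a contradiction.

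I do not expect a genuine obstacle here: no compactness or K\"onig-type argument is needed, precisely because boundedly many vertices can meet only boundedly many edges of the matching $M$. The only point requiring care is the bookkeeping around the identification of $S'$ as a vertex set of $G\Contr{L_0}$ and the behaviour of the projection/expansion operations under $\phi$ --- which is exactly where the matching property of $M$ and \cref{rem:contract_comut} are used, and which I would write out explicitly.
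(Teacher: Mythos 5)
Your proof is correct and takes essentially the same approach as the paper: pull a putative separator $S'$ of order at most $2$ in $\GM$ back to the finite contraction $G\Contr{L_0}$ with $L_0=M(S')$, and contradict the $3$-connectedness of $G\Contr{L_0}$ guaranteed by \cref{thm:contract tangle1}. Your version simply makes the bookkeeping around $\phi^{-1}$ and the case $M$ finite explicit, whereas the paper states it in one line.
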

\begin{proof}
  Assume for the sake of contradiction that $\GM$ has a  separator $S$ of order at most 2. Then the set $L:= M(S)$ has size at most 2 and $S$ is a separator of order 2
  of 
  $G\Contr{L}$. This contradicts \cref{thm:contract tangle1}.
\end{proof}

We now let $L\subseteq M$ be any (not necessarily finite) subset of crossedges and give a general definition of $\TL$ extending the previous one. For every $\Sepp\in \separ_{< 4}(G\Contr{L})$, we let $L':=M(S')$. Note that $L'$ is finite, and that $(Y'\Expand{\overline L}, S', Z'\Expand{\overline L})$ is a separation of order at most $3$ in $G\Contr{L'}$. We define $\TL$ as the family of separations $\Sepp$ of $G\Contr{L}$ such that $(Y'\Expand{\overline L}, S', Z'\Expand{\overline L})\in \T\Contr{L'}$. Note that when $L$ is finite, $(Y',S',Z')=(Y'\Expand{\overline L}, S', Z'\Expand{\overline L})\Contr{L- L'}$ and thus iterative applications of Lemma \ref{lem: Gro-T'} together with Lemma \ref{lem: Gro-enum} imply that our definition of $\TL$ coincides with the one we gave above for finite subsets  $L\subseteq M$.

Thanks to~\cref{rem:contract_comut}, for all $L\subseteq M$, $\T\Contr{M} =
(\T\Contr{L})\Contr{\overline L}$ and $\GM = (\GL)\Contr{\overline L}$.
We say that a set $X \subseteq V(G)$ \emph{hits the edges of $L$ once} if for
all $e \in L$, $|X \cap e| = 1$.

\begin{lemma}
    $\T\Contr{M}$ is a region tangle of order 4 in $\GM$ such that $\sg{\Sep\Contr{M}: \Sep \in \T}\subseteq \T\Contr{M}$.
\end{lemma}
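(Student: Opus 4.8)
The statement asserts two things about $\T\Contr{M}$: that it is a region tangle of order $4$ in $\GM$, and that it contains the projection $\sg{\Sep\Contr{M}:\Sep\in\T}$. I would handle the ``tangle of order $4$'' part first, then ``region'', then the projection inclusion.

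\emph{Step 1: $\T\Contr{M}$ is a tangle of order $4$.} First note that by \cref{lem:contract connected} the graph $\GM$ is $3$-connected. To verify axioms \ref{tangle: it1} and \ref{tangle: it2}, the key observation is that each involves only finitely many separations, and each separation $\Sepp$ of order $<4$ in $\GM$ only ``sees'' the finite set $L':=M(S')\subseteq M$ of contracted edges meeting its separator. For \ref{tangle: it1}: given $\Sepp\in\separ_{<4}(\GM)$, its expansion $\widehat{(Y',S',Z')}:=(Y'\Expand{\overline{L'}},S',Z'\Expand{\overline{L'}})$ lies in $\separ_{<4}(G\Contr{L'})$, and since $\T\Contr{L'}$ is a tangle (\cref{thm:contract tangle1}), exactly one of it or its reverse is in $\T\Contr{L'}$; since reversing commutes with expansion and with $M(\cdot)$, exactly one of $\Sepp,(Z',S',Y')$ lies in $\T\Contr{M}$. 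For \ref{tangle: it2}: given $\Seppi{1},\Seppi{2},\Seppi{3}\in\T\Contr{M}$, set $L^*:=M(S'_1)\cup M(S'_2)\cup M(S'_3)$, a finite subset of $M$. Each $\Seppi{i}$ is the $\overline{L^*}$-contraction of a separation $\widetilde{\Sepp}_i:=(Y'_i\Expand{\overline{L^*}},S'_i,Z'_i\Expand{\overline{L^*}})$ of $G\Contr{L^*}$ lying in $\T\Contr{L^*}$ (this uses that $M(S'_i)\subseteq L^*$, so the expansion to $G\Contr{L^*}$ agrees with what the definition of $\TL$ for the finite set $L^*$ prescribes — here one should spell out that $\widetilde{\Sepp}_i\Contr{L^*-M(S'_i)}=\Seppi{i}$ as in the remark following the general definition of $\TL$). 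Since $\T\Contr{L^*}$ is a tangle, either $Z'_1\Expand{\overline{L^*}}\cap Z'_2\Expand{\overline{L^*}}\cap Z'_3\Expand{\overline{L^*}}\ne\emptyset$, in which case contracting $\overline{L^*}$ keeps the intersection of the $Z'_i$ nonempty; or there is an edge of $G\Contr{L^*}$ with one endpoint in each $Z'_i\Expand{\overline{L^*}}$, whose image in $\GM$ is an edge (or a single vertex, if both endpoints got contracted together — but then that vertex lies in all three $Z'_i$, again giving nonempty intersection) witnessing \ref{tangle: it2}.

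\emph{Step 2: $\T\Contr{M}$ is a region tangle.} This is the analogue of \cref{lem: regiontangle} and I would argue the same way. Suppose $(\Seppi{n})_{n\in\NN}$ is an infinite strictly $\prec$-decreasing sequence in $\T\Contr{M}$. Since $\GM$ is $3$-connected (\cref{lem:contract connected}), at most one of these is non-proper, so we may assume all $\Seppi{n}$ are proper, indeed tight. By \cref{lem: TWcut} applied in the locally finite graph $\GM$, only finitely many $n$ have any fixed vertex in $S'_n$; so no vertex of $\GM$ lies in infinitely many $S'_n$, but a priori the $S'_n$ need not stabilize globally. To fix this, observe that because the sequence is $\prec$-decreasing, $S'_n\cup Z'_n\supseteq S'_{n+1}\cup Z'_{n+1}$ for all $n$; pick any vertex $x'\in\GM$ lying in $Z'_{n_0}$ for some $n_0$ (possible since $Z'_{n_0}\ne\emptyset$ by \ref{tangle: it2}), and note $x'$ remains in $S'_n\cup Z'_n$ for all $n\ge n_0$ and, by \cref{lem: TWcut}, in $Z'_n$ for all large $n$. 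More to the point: the sets $M(S'_n)$ are all contained in $M(\bigcup_n S'_n)$, and since each $|S'_n|\le 3$ while by \cref{lem: TWcut} each vertex appears in only finitely many $S'_n$, the union $L^\flat:=\bigcup_n M(S'_n)$ is a locally finite union but could be infinite; however the expansions $(Y'_n\Expand{\overline{M(S'_n)}}, S'_n, Z'_n\Expand{\overline{M(S'_n)}})$ do not all live in the same graph. The cleaner route, which I would adopt, mirrors \cref{lem: regiontangle} exactly: pass to $\GL$-side sequences. For each $n$, $\Seppi{n}$ lifts through the model $\M=(\sg{v}\Expand{M})_{v\in V(\GM)}$ of $\GM$ in $G$ to a separation $\widehat{\Sepp}_n:=(Y'_n\Expand{M}\setminus S'_n\Expand{M},\, S'_n\Expand{M},\, Z'_n\Expand{M}\setminus S'_n\Expand{M})$ of $G$ of order $\le 3+|\{e\in M:\text{both ends of }e\text{ map into }S'_n\}|$ — but in fact $S'_n\Expand{M}$ has order at most $2|S'_n|\le 6$, which is not order $3$. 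This is the real subtlety: unlike in \cref{lem: regiontangle} where a single crossedge expands a separator by at most one vertex, here expanding $M$ can double the separator size. I would therefore instead use that $\T\Contr{M}=(\T\Contr{L^\flat_{\mathrm{fin}}})\Contr{\overline{L^\flat_{\mathrm{fin}}}}$ is not directly available for infinite $L^\flat$; rather, for the decreasing sequence, after passing to a subsequence via \cref{lem: TWcut} we may assume $M(S'_n)=M(S'_m)=:L_0$ for all $n,m$ (a fixed finite set) — this requires that there are only finitely many possibilities for $M(S'_n)$, which follows because $\bigcup S'_n$ is finite: indeed $S'_{n+1}\subseteq S'_n\cup Z'_n$ and the separators, being tight and of bounded order in a locally finite graph, by \cref{lem: TWcut} only finitely many tight separations of order $\le 3$ have a vertex in the fixed finite set $S'_0$, hence $\{S'_n\}$ is finite, hence $\{M(S'_n)\}$ is finite. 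Once $M(S'_n)\equiv L_0$, the sequence $(Y'_n\Expand{\overline{L_0}},S'_n,Z'_n\Expand{\overline{L_0}})_n$ is an infinite strictly $\prec$-decreasing sequence in $\T\Contr{L_0}$, contradicting \cref{thm:contract tangle1}(2) which says $\T\Contr{L_0}$ is a region tangle.

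\emph{Step 3: the projection inclusion.} For $\Sep\in\T$, I must show $\Sep\Contr{M}\in\T\Contr{M}$. Let $L':=M(S\Contr{M})$, the finite set of crossedges both of whose endpoints lie in $S$ (equivalently, that got contracted into $S\Contr{M}$); by \cref{lem: cross} these are genuine edges of the matching $M$. By definition of $\T\Contr{M}$, it suffices to show $(\,(Y\Contr{M})\Expand{\overline{L'}}\setminus S\Contr{L'},\, S\Contr{L'},\, (Z\Contr{M})\Expand{\overline{L'}}\setminus S\Contr{L'}\,)\in\T\Contr{L'}$, and one checks this expansion equals $\Sep\Contr{L'}$ (the contraction of $\Sep$ to $\GL$ by the finite set $L'$). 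By \cref{thm:contract tangle1}(4), $\T\Contr{L'}$ contains the projection $\{\Sep\Contr{L'}:\Sep\in\T\}$, so $\Sep\Contr{L'}\in\T\Contr{L'}$, which is what we need. I would write this last step carefully since keeping track of which of $Y,S,Z$ absorbs a half-contracted edge (an endpoint of $e\in M\setminus L'$ with its other endpoint outside $S$) is exactly the bookkeeping handled by the $\Contr{L}$-operator on separations defined just before \cref{lem: Gro-enum}.

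\emph{Main obstacle.} The genuinely non-routine point is Step 2: ensuring that the separators $S'_n$ along a hypothetical infinite decreasing sequence in $\GM$ eventually have a common finite ``footprint'' $M(S'_n)$ in $M$, so that the contradiction can be pulled back to a \emph{finite} contraction $\GL$ where \cref{thm:contract tangle1} applies. Getting this requires combining $3$-connectivity of $\GM$ (to discard the non-proper case), the tightness-finiteness statement \cref{lem: TWcut} (to bound how the separators vary), and the monotonicity $S'_{n+1}\cup Z'_{n+1}\subseteq S'_n\cup Z'_n$ coming from $\prec$. Everything else is a finite-support reduction to the already-established \cref{thm:contract tangle1} and \cref{lem: regiontangle}.
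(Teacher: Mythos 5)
Steps~1 and~3 of your outline are sound and match the paper's approach: in both cases you reduce to a finite subset of contracted edges and invoke \cref{thm:contract tangle1} for $\T\Contr{L}$ with $L$ finite. Step~2, however, contains a genuine gap. You assert that after passing to a subsequence the separators $S'_n$ of an infinite $\prec$-decreasing sequence in $\T\Contr{M}$ satisfy $M(S'_n)\equiv L_0$ for a fixed finite $L_0\subseteq M$, and you justify this by claiming that $\{S'_n : n\in\NN\}$ is finite because by \cref{lem: TWcut} only finitely many tight $3$-separations have a vertex in the fixed finite set $S'_0$. This inference is wrong: \cref{lem: TWcut} only bounds tight separations whose separator contains a \emph{given} vertex, and there is no reason why later separators $S'_n$ should meet $S'_0$ at all. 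On the contrary, the separators of an infinite $\prec$-decreasing sequence in a locally finite graph drift off to infinity toward the end they determine, so $\bigcup_n S'_n$ is typically infinite and the footprints $M(S'_n)$ do not stabilize. The inclusion $S'_{n+1}\subseteq S'_n\cup Z'_n$ that you invoke is true but of no help, since $Z'_n$ is infinite.

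The correct route --- which you started down and then abandoned when you noticed that the naive expansion $(S'_n)\Expand{M}$ can have size up to~$6$ --- is to lift each $\Seppi{n}$ individually to a separation $\Sepi{n}\in\T$ of $G$ of order exactly~$3$ with $\Sepi{n}\Contr{M}=\Seppi{n}$. This is achieved by first replacing $\Seppi{n}$ with a $\prec$-smaller separation having the same separator but for which $\GM[Z'_n]$ is connected, and then applying \cref{lem: essential-sep} iteratively (at most $|M(S'_n)|\le 3$ times) to ``uncontract'' the crossedges hitting $S'_n$ one at a time \emph{without} enlarging the separator. Once the $\Sepi{n}\in\T$ are in hand, \cref{lem: TWcut} is applied correctly: up to a subsequence, consecutive separators $S'_n$ and $S'_{n+1}$ are disjoint in $\GM$, so $S_n$ and $S_{n+1}$ cannot each contain an endpoint of a common crossedge; this yields $(S'_n)\Expand{M}\subseteq Y_{n+1}$ and $(S'_{n+1})\Expand{M}\subseteq Z_n$, hence $\Sepi{n+1}\prec\Sepi{n}$, contradicting that $\T$ is a region tangle. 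The missing ingredient is thus \cref{lem: essential-sep} as the mechanism for pulling separations of $\GM$ back to $G$ at order~$3$, rather than any stabilization of the crossedge footprints.
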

\begin{proof}

 We first prove that $\T\Contr{M}$ is a tangle of order 4. To prove
  \ref{tangle: it1}, let $\Sepp \in \separ_{<4}(G\Contr{M})$ and $L :=
  M(S')$. Then $L$ has size at most $3$, so by \cref{thm:contract
    tangle1}, $\T\Contr{L}$ is a region tangle of order $4$ of $G\Contr{L}$.
  As  $\Sep := (Y'\Expand{\overline L}, S',Z'\Expand{\overline L})$ is
  a separation of order 3 of $G\Contr{L}$ and $\T\Contr{L}$ is a tangle of
  order 4, either $\Sep \in \T\Contr{L}$ or $(Z,S,Y) \in
  \T\Contr{L}$. By definition of $\T\Contr M$ we then have either $(Z',S',Y')\in \T\Contr M$ or $\Sepp\in \T\Contr M$, implying that $\T\Contr M$ satisfies \ref{tangle: it1}.

%   By definition of $\T\Contr{L}$, there exists $\Sep \in \T$ such that $\Sep\Contr{L} = \Sepp$. Then
%   $\Seppp = \Sepp\Contr{\overline L} = \Sep\Contr{M}$ so by definition of
%   $\T\Contr{M}$, $\Seppp\in \T\Contr{M}$. \\
%   Since $L = M(S'')$, $(Z', S', Y')$ is the only separation in $\separ_{<4}(\GL)$ that
%   projects to $(Z'', S'', Y'')$ in $\GM$. Thus, we cannot have also
%   $(Z'', S'', Y'') \in \T\Contr{M}$ as $(Z', S', Y') \notin \T\Contr{L}$ and
%   $\T\Contr{M} = (\T\Contr{L})\Contr{\overline{L}}$. This finishes to prove that
%   $\T\Contr{M}$ satisfies \ref{tangle: it1}.
\medskip  

  To prove \ref{tangle: it2}, let
  $\Seppi{1}, \Seppi{2}, \Seppi{3} \in \T\Contr{M}$. Let
  $L := M(S'_1 \cup S'_2 \cup S'_3)$. Once again $L$ is finite with size at
  most $9$ and for all $i\in \sg{1,2,3}$,
  $\Sepi{i} := ((Y'_i)\Expand{\overline L}, S'_i,(Z'_i)\Expand{\overline
    L})$ is a separation of order 3 of $G\Contr{L}$. 
  
  \begin{claim}
   \label{clm: tangle-expand}
   For every $i\in \sg{1,2,3}$, $\Sepi{i}\in \T\Contr{L}$.
  \end{claim}
  \begin{proofofclaim}
   Assume that $i=1$, the other cases being symmetric. We let $L_1:=M(S_1')$. Then by definition of $\T\Contr M$, $\Sepppi{1}:=((Y'_1)\Expand{\overline{L_1}}, S_1', (Z'_1)\Expand{\overline{L_1}})\in \T\Contr{L_1}$. 
   Our goal is to show that $\Sepi{1}=\Sepppi{1}\Contr{L- L_1}$. As both $L$ and $L_1$ are finite and  $L_1\subseteq L$, iterative applications of Lemmas \ref{lem: Gro-T'} and Lemma \ref{lem: Gro-enum} imply $\T\Contr{L}$ must contain the projection of $\T\Contr{L_1}$ with respect to the model $\M=(\sg{v}\Expand{(L\setminus L_1)})_{v\in V(G)\Contr{L}}$. Thus if we succeed to prove that 
   \begin{equation}\label{eq:1}\Sepi{1}=\Sepppi{1}\Contr{L- L_1},\end{equation} we immediately obtain that $\Sepi{1}\in \TL$, which concludes the claim.

\smallskip

To prove that \eqref{eq:1} holds, note first that every edge of $M$ is contracted in $G\Contr M$ so in particular it has its endpoints in exactly one of the three sets $(Y'_1)\Expand{M}, (S'_1)\Expand{M}$ and $(Z'_1)\Expand{M}$. In particular by definition of $L_1$, the edges of $L_1$ are all disjoint from $(Y'_1)\Expand{M}$ and thus $(Y'_1)\Expand{M}=(Y'_1)\Expand{\overline{L_1}}$. This implies that $Y_1=(Y'_1)\Expand{\overline{L}}=((Y'_1)\Expand{\overline{L_1}})\Contr{L- L_1}=(Y''_1)\Contr{L-L_1}$. As $S'_1$ is disjoint from $Y'_1$ in $G\Contr{M}$, it is also disjoint from $Y_1$ in $G\Contr{L}$ so we have $Y_1=(Y''_1)\Contr{(L\setminus L_1)}\setminus S''_1$. Symmetric arguments give $Z_1=(Z''_1)\Contr{L- L_1}\setminus S''_1$, and as $S''_1=S'_1=S_1$, we get the desired equality.
  \end{proofofclaim}
  
  By Theorem \ref{thm:contract tangle1}, $\TL$ is a region tangle of order $4$ so Claim \ref{clm: tangle-expand} implies that either $Z_1 \cap Z_2 \cap Z_3 \neq \emptyset$ or there exists an edge of $\GL$ with both endpoints in $Z_1\cup Z_2\cup Z_3$.
  If
  $Z_1 \cap Z_2 \cap Z_3 \neq \emptyset$, then
  $Z'_1 \cap Z'_2 \cap Z'_3 = (Z_1 \cap Z_2 \cap Z_3)\Contr{\overline L}
  \neq \emptyset$. Otherwise, there is an edge $e \in E(G\Contr{L})$ that has an
  endpoint in each $Z_i$, in which case, either
  $Z'_1 \cap Z'_2 \cap Z'_3 \neq \emptyset$ if $e \in \overline L$, or
  $e\Contr{\overline L}$ is an edge of $G\Contr{M}$ which has an endpoint in each
  $Z'_i$. This proves~\ref{tangle: it2} % is verified.
  %
  % Let $\Seppp \in \T\Contr{M}$ and $L := M(S'')$. We have \[\left((Y''_1)\Expand{\overline L}, S''_1, (Z''_1)\Expand{\overline L}\right)
  % \in \T\Contr{L},\] so $(Z''_1)\Expand{\overline L} \neq \emptyset$. But $Z''_1 =
  % ((Z''_1)\Expand{\overline L})\Contr{\overline L}$, thus $Z''_1 \neq \emptyset$
  % which proves~\ref{tangle: it3}
  and shows that $\T\Contr{M}$ is a tangle of order 4.  

  \smallskip
  
  We now prove the inclusion $\sg{\Sep\Contr{M}: \Sep \in \T}\subseteq \T\Contr{M}$. Let $\Sep \in \T$ and $L:=M(S)$. By Theorem \ref{thm:contract tangle1} $(4)$, $\Sep\Contr{L}\in \TL$. Write $\Sepp=\Sep\Contr{M}$ and note that $(Y'\Expand{\overline{L}}, S', Z'\Expand{\overline{L}})=\Sep\Contr{L}$, 
  thus by definition of $\T\Contr{M}$, $\Sep\Contr{M}\in \T\Contr{M}$.

  \smallskip

  We now prove that $\T\Contr{M}$ is a well-founded set. For the sake of
  contradiction, let $(\Seppi{n})_{n \in \NN}$ be an infinite decreasing
  sequence of separations of $\T\Contr{M}$. The contradiction will follow from the next claim
  \begin{claim}
   \label{clm: tangle-region}
   There exists an infinite decreasing sequence $(\Sepppi{n})_{n \in \NN}$ in $\T\Contr{M}$ such that for each $n\geq 0$, $\Sepppi{n}=\Sepi{n}\Contr{M}$ for some $\Sepi{n}\in \T$. 
  \end{claim}
  
  \begin{proofofclaim}
  By \cref{lem: TWcut} and because $G\Contr M$ is $3$-connected, for any
  $n \in \NN$, there are finitely many separations $\Seppi{m}$ such that
  $S'_n \cap S'_m \neq \emptyset$. Therefore, up to considering a subsequence,
  we can assume that for all $n$, $S'_n \subseteq Y'_{n+1}$ and
  $S'_{n+1} \subseteq Z'_{n}$. In particular, as by Lemma \ref{lem:contract connected} $G\Contr{M}$ is $3$-connected, $S'_{n+1}$ is included in some connected component $C_n$ of $G\Contr{M}[Z'_n]$. Then we have
  $$\Seppi{n+1}\preceq (V(G\Contr{M})\setminus(S'_n\cup C_n), S'_n, C_n)\preceq \Seppi{n},$$
  implying that $(V(G\Contr{M})\setminus(S'_n\cup C_n), S'_n, C_n)\in \T\Contr{M}$. Hence we may also assume up to replacing $\Seppi{n}$ with $(V(G\Contr{M})\setminus(S'_n\cup C_n), S'_n, C_n)$ that for each $n\geq 0$, $G\Contr{M}[Z'_n]$ is connected.

  For each $n\geq 0$, we let $L_n:=M(S'_n)$. Connectedness of $G\Contr{M}[Z'_n]$ then implies that $G\Contr{L_n}[(Z'_n)\Expand{\overline{L_n}}]$ is connected. Observe that $|L_n|\leq 3$ successive applications of Lemma \ref{lem: essential-sep} imply that there exists some separation $\Sepi{n}\in \T$ such that $S_n\Contr{L_n}=S'_n$ and $Z_n\setminus ((S'_n)\Expand{L_n})=(Z'_n)\Expand{\overline{L_n}}$. We let $\Sepppi{n}:=\Sepi{n}\Contr{M}$. By Theorem \ref{thm:contract tangle1} (4), $\Sepi{n}\Contr{L_n}\in \T\Contr{L_n}$. Moreover, \[\Sepi{n}\Contr{L_n}=((Y''_n)\Expand{\overline{L_n}}, S''_n, (Z''_n)\Expand{\overline{L_n}}),\]
  thus by definition of $\T\Contr{M}$, we have $\Sepppi{n}=(\Sepi{n}\Contr{L_n})\Contr{\overline{L_n}}\in \T\Contr{M}$.

  Note that $S''_n=S_n\Contr{M}=S_n\Contr{L_n}=S'_n$ and 
  $$Z''_n=Z_n\Contr{M}\setminus S'_n=(Z_n\setminus ((S'_n)\Expand{L_n}))\Contr{\overline{L_n}}=((Z'_n)\Expand{\overline{L_n}})\Contr{\overline{L_n}}=Z'_n.$$
  We thus deduce that $\Seppi{n}=\Sepppi{n}$, so $(\Seppi{n})_{n\in \mathbb N}$ is an infinite decreasing sequence in $\T\Contr{M}$ satisfying the desired properties.
\end{proofofclaim}
  
It remains to show how to derive a contradiction from Claim \ref{clm: tangle-region}. For this let $(\Sepppi{n})_{n\in \mathbb N}$ and $(\Sepi{n})_{n\in \mathbb N}$ be as in Claim \ref{clm: tangle-region}. Again, up to considering a subsequence,  we can assume that for all $n$, $S''_n \subseteq Y''_{n+1}$ and $S''_{n+1} \subseteq Z''_{n}$. As
$S''_n \cap S''_{n+1} = \emptyset$, the separators $S_n$ and $S_{n+1}$ cannot
contain two vertices of a common crossedge of $M$. Thus,
$(S''_n)\Expand{M} \subseteq Y_{n+1}$ and $(S''_{n+1})\Expand{M} \subseteq Z_n$
and hence $\Sepi{n+1} \prec \Sepi{n}$. This proves that
$(\Sepi{n})_{n \in \NN}$ is an infinite decreasing sequence of separations of
$G$ with respect to $\T$, contradicting the fact that $\T$ is a region tangle.
\end{proof}

\begin{lemma}
  Either $\GM$ is 4-connected and $\T\Contr{M}_{\mathrm{min}} = \{
  (\emptyset,\emptyset,V(G)\Contr{M})\}$ or 
   $$\T\Contr{M}_{\mathrm{min}} = \{\Sep\Contr{M} :\Sep \in \T_{\mathrm{min}} \text{ such that
   } S\Contr{M} \text{ is a separator of } G\Contr{M}\}.$$

   Finally, we have $\Exnd{G\Contr{M}} = \emptyset$.
\end{lemma}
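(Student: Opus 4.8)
The plan is to reduce every assertion to the finite-contraction statement \cref{thm:contract tangle1}, exploiting that any separation of order $<4$ has a separator incident with only finitely many crossedges of $M$, and then to run over the infinite matching $M$ the arguments used for a single crossedge in \cref{lem: essential-sep,lem:region_contracted_tangle}. First I would treat the easy alternative: if $\GM$ is $4$-connected then, being connected, it has no proper separation of order $\le 3$, so every $\Sep\in\T\Contr{M}$ — which by \ref{tangle: it2} satisfies $Z\ne\emptyset$ — has the form $(\emptyset,S,V(\GM)\setminus S)$ with $|S|\le 3$; since $(\emptyset,\emptyset,V(\GM))\in\T\Contr{M}$ by \ref{tangle: it1} (its reverse has empty last coordinate and hence cannot lie in a tangle) and is $\preceq$-below every such separation, $\T\Contr{M}_{\mathrm{min}}=\{(\emptyset,\emptyset,V(\GM))\}$.

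From now on $\GM$ is not $4$-connected. Using that $\GM$ is $3$-connected (\cref{lem:contract connected}) and that $\T\Contr{M}$ is a region tangle of order $4$, I would first check that every $\Sep\in\T\Contr{M}_{\mathrm{min}}$ is proper of order exactly $3$ with $\GM[Z]$ connected: a disconnected $\GM[Z]$ would let us replace $Z$ by a component to get a strictly $\preceq$-smaller separation still in $\T\Contr{M}$; a minimal separation with $Y=\emptyset$ would equal $(\emptyset,\emptyset,V(\GM))$, which is not minimal here since $\GM$, not being $4$-connected, has a proper separation of order $\le3$ that is $\prec(\emptyset,\emptyset,V(\GM))$ and lies, up to reversal, in $\T\Contr{M}$; and $3$-connectedness forces order $3$. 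For the inclusion ``$\subseteq$'', take such a $\Sep$; then $M(S)$ is finite with $|M(S)|\le 3$, and $|M(S)|$ successive applications of \cref{lem: essential-sep} — each legitimate because the relevant graph minus separator stays connected, exactly as in the proof of \cref{clm: tangle-region} — yield a separation $\sigma\in\T$ with $\sigma\Contr{M}=\Sep$. Choosing $\tau\in\T_{\mathrm{min}}$ with $\tau\preceq\sigma$ (possible since $\T$ is a region tangle), one has $\tau\Contr{M}\in\T\Contr{M}$ (projections of $\T$ lie in $\T\Contr{M}$) and, by a $\preceq$-computation identical to the finite case, $\tau\Contr{M}\preceq\sigma\Contr{M}=\Sep$; minimality of $\Sep$ then gives $\Sep=\tau\Contr{M}$ with $S$ a separator of $\GM$, as required.

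Next I would establish $\Exnd{\GM}=\emptyset$: were $s_1s_2$ a crossedge of $\T\Contr{M}$, arising from two crossing minimal non-degenerate separations of $\T\Contr{M}$ with $s_1\in S_1$ and $s_2\in S_2$, then, setting $L:=M(S_1\cup S_2)$ — a finite set of crossedges — this configuration lifts to a crossing configuration of the region tangle $\T\Contr{L}$ with crossedge $s_1s_2$, so $s_1s_2\in\Exnd{\T\Contr{L}}=M\setminus L$ by \cref{thm:contract tangle1}; but edges of $M\setminus L$ are contracted in $\GM$, contradicting $s_1s_2\in E(\GM)$. By \cref{lem: cross} it follows that $\T\Contr{M}_{\mathrm{min}}$ is orthogonal. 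For the reverse inclusion, let $\Sep\in\T_{\mathrm{min}}$ with $S\Contr{M}$ a separator of $\GM$; then $\Sep\Contr{M}\in\T\Contr{M}$ dominates a minimal separation $\mathcal S$ of $\T\Contr{M}$, unique by orthogonality, and by ``$\subseteq$'' one has $\mathcal S=\tau\Contr{M}$ for some $\tau\in\T_{\mathrm{min}}$; a $\preceq$-argument identical to the finite one of \cite{Gro16}, again obtained by descending to a finite sub-contraction $\GL$ with $L$ the finite set of crossedges of $M$ incident with the separators of $\Sep$ and $\tau$ (so that $\Sep\Contr{L},\tau\Contr{L}\in\T\Contr{L}_{\mathrm{min}}$ by \cref{thm:contract tangle1}), forces $\tau\Contr{M}=\Sep\Contr{M}$, so $\Sep\Contr{M}=\mathcal S$ is minimal.

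The step I expect to be the main obstacle is the bookkeeping making the infinite contraction interact correctly with the partial order $\preceq$ and with the notions of minimal separation, non-degeneracy and crossedge: in each case one must descend to a suitable finite sub-contraction $\GL$ (legitimate precisely because the separators in play have order $\le 3$, hence meet only finitely many crossedges of $M$), invoke the finite statements \cref{thm:contract tangle1,lem: essential-sep}, and then verify that the relevant property survives the passage between $G$, $\GL$ and $\GM$; the only genuinely new ingredient over the finite setting — that the contracted tangles remain region tangles, so that minimal separations exist below every separation — is already supplied by \cref{lem: regiontangle} and the preceding lemma, which is why these verifications, though somewhat lengthy, are routine in spirit.
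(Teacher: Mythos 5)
Your proposal follows the same overall strategy as the paper: reduce every assertion to the finite‐contraction statement \cref{thm:contract tangle1} by descending to a finite sub‐contraction $G\Contr{L}$ that captures the relevant separators, and then transfer back. The treatment of the easy $4$‐connected alternative, the argument for $\Exnd{\GM}=\emptyset$ (lift to $\GL$ with $L=M(S_1\cup S_2)$, invoke item~(3) of \cref{thm:contract tangle1}, observe that an element of $M\setminus L$ cannot survive as an edge of $\GM$), and the reverse inclusion ``$\supseteq$'' (lift the hypothesis $\tau\Contr{M}\preceq\Sep\Contr{M}$ to $\tau\Contr{L}\preceq\Sep\Contr{L}$ with $L=M(S_\tau\cup S)$, then apply item~(2) to force equality) all match the paper's proof in substance.

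The one place you diverge from the paper, and the one step I would flag, is the direct inclusion ``$\subseteq$''. The paper fixes $L:=M(S')$, forms the \emph{expansion} $\Sepi{0}=(Y'\Expand{\overline L},S',Z'\Expand{\overline L})$ of $\Sep$ in $G\Contr{L}$, proves directly that $\Sepi{0}$ is minimal in $\T\Contr{L}$, and then invokes \cref{thm:contract tangle1}(2) to produce $\tau\in\T_{\mathrm{min}}$ with $\tau\Contr{L}=\Sepi{0}$, hence $\tau\Contr{M}=\Sep$. Your route is different: you lift $\Sep$ all the way to some $\sigma\in\T$ via repeated applications of \cref{lem: essential-sep}, pick $\tau\in\T_{\mathrm{min}}$ with $\tau\preceq\sigma$, and assert ``by a $\preceq$‐computation identical to the finite case'' that $\tau\Contr{M}\preceq\sigma\Contr{M}$. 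This is the weak point. Contracting $M$ is \emph{not} monotone for $\preceq$ in general: if $u\in S_\tau$ has its $M$‐partner $v$ in $Y_\tau\cap(S_\sigma\cup Z_\sigma)$, then $(S_\tau\cup Z_\tau)\Contr{M}$ and $(S_\sigma\cup Z_\sigma)\Contr{M}$ can collide even though $S_\tau\cup Z_\tau\subsetneq S_\sigma\cup Z_\sigma$, and then one still needs $S_\tau\Contr{M}\subseteq S_\sigma\Contr{M}$, which there is no a priori reason to expect. The claim does in fact hold under your hypotheses, but the verification is not a mechanical $\preceq$‐computation: for $\tau\in\T_{\mathrm{nd}}$ one must invoke \cref{lem: cross} (orthogonality/crossing dichotomy) to show that this ``collapse'' configuration cannot occur, and for degenerate $\tau$ one must use $|Y_\tau|=1$ together with the properness of $\Sep$ to exclude the remaining bad case. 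As written, your proposal glosses over exactly the point where the order fails to behave well under contraction, which is the one genuinely delicate step; the paper's route, by minimizing inside $\T\Contr{L}$ and then pulling back through the already‐established finite \cref{thm:contract tangle1}(2), sidesteps having to carry a $\preceq$‐relation through the contraction of the full matching $M$. If you flesh out that computation along the lines above, your argument is complete and correct; without it, a reader cannot check the step.

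Two minor remarks: you prove $\Exnd{\GM}=\emptyset$ before the reverse inclusion and then use orthogonality, whereas the paper proves the reverse inclusion independently — either ordering is fine — and the $4$‐connected alternative, which you treat carefully, is simply left implicit in the paper.
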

\begin{proof}

  Assume that $G\Contr{M}$ is not 4-connected. We first prove the direct
  inclusion. Let $\Sepp \in \T\Contr{M}_{\mathrm{min}}$, $L := M(S')$ and 
  $\Sepi{0} := (Y'\Expand{\overline L}, S', Z'\Expand{\overline L})$. Then by definition of $\T\Contr{M}$, $\Sepi{0} \in \T\Contr{L}$. We prove that $\Sepi{0}$ is a minimal
  element of $\T\Contr{L}$. Assume for a contradiction that there exists
  $\Sepi{1} \prec \Sepi{0}$ in $\T\Contr{L}$. Then
  $\Sepi{1}\Contr{\overline L} \preceq \Sepi{0}\Contr{\overline L}=\Sepp$ and
  $\Sepi{1}\Contr{\overline L} \neq \Sepp$ because
  $\overline L \cap S' = \emptyset$. Moreover, by Theorem \ref{thm:contract tangle1} $(2)$, we may assume that $\Seppi{1}$ is minimal and that $\Sepi{1}=\Seppi{1}\Contr{L}$ for some $\Seppi{1}\in \T$.
  Thus as 
  $\Sepi{1}\Contr{\overline L}=(\Seppi{1}\Contr{L})\Contr{\overline{L}}=\Seppi{1}\Contr{M}$, we must have
  $\Sepi{1}\in \T\Contr{M}$, contradicting the minimality of $\Sepi{0}$
  in $\T\Contr{L}$. Hence $\Sepp = \Sepi{0}\Contr{\overline{L}}$ for some
  $\Sepi{0} \in \T\Contr{L}_{\mathrm{min}}$. Again we can apply \cref{thm:contract tangle1} and write $\Sepi{0}=\Sep\Contr{L}$ for some
  $\Sep \in \T_{\mathrm{min}}$. Thus we have:
  $$\Sepp =  (\Sep\Contr{L})\Contr{\overline{L}} = \Sep\Contr{M},$$
  so we are done with the direct inclusion.
  
  Conversely, let $\Sepi{1} \in \T_{\mathrm{min}}$ such that $S_1\Contr{M}$ is a
  separator of $G\Contr{M}$. Note that by previous inclusion and because $\T\Contr{M}$ is a region tangle, it is enough to prove that for any $\Sepi{2}\in \T$ such that $\Sepi{2}\Contr{M}\preceq\Sepi{1}\Contr{M}$, we have $\Sepi{2}\Contr{M}=\Sepi{1}\Contr{M}$.
  Let $\Sepi{2} \in \T$ such that $\Sepi{2}\Contr{M}
  \preceq \Sepi{1}\Contr{M}$, $L := M(S_1\cup S_2)$ and $\overline L
  := M \setminus L$. For $i \in \{1,2\}$, the edges in $\overline{L}$ are either
  contained in $Y_i$ or in $Z_i$. Note that for each $i$ and $L \subseteq {M}$: 
  $$S_i\Contr{L}\cup (Z_i\Contr{L}\setminus S_i\Contr{L}) = (Z_i\cup S_i)\Contr{L}$$
  and
  $$S_i\Contr{L}\cup (Y_i\Contr{L}\setminus S_i\Contr{L}) = (Y_i\cup S_i)\Contr{L}.$$
  
  Thus as $\Sepi{1} \Contr{M} \preceq
  \Sepi{2}\Contr{M}$, we have $(S_1\cup Z_1)\Contr{M}\subseteq (S_2\cup Z_2)\Contr{M}$. By previous remark that no edge of $\overline{L}$ is contained in $Z_i$, this implies that
  $(S_1 \cup Z_1)\Contr{L} \subseteq (S_2 \cup
  Z_2)\Contr{L}$. Likewise $(S_1 \cup Y_1)\Contr{L} \subseteq (S_2 \cup
  Y_2)\Contr{L}$. As a result, $\Sepi{1} \Contr{L} \preceq
  \Sepi{2}\Contr{L}$. By Theorem \ref{thm:contract tangle1} $(2)$, $\Sepi{1}\Contr{L} \in \T\Contr{L}_{\mathrm{min}}$, thus we have
  $\Sepi{1}\Contr{L} = \Sepi{2}\Contr{L}$ and $\Sepi{1}\Contr{M} =
  \Sepi{2}\Contr{M}$, showing that $\Sepi{1}\Contr{M}\in
  \T\Contr{M}_{\mathrm{min}}$.

  We now prove that $\Exnd{G\Contr{M}} = \emptyset$. Assume that there are two
  crossing non-degenerate minimal 3-separations $\Sepppi{1}$ and $\Sepppi{2}$ in
  $\T\Contr{M}$, let $L = M(S''_1 \cup S''_2)$. For $i \in \{1,2\}$, all
  crossedges of $\GL$ lie in $Y''_i$ or in $Z''_i$, hence
  $\Seppi{i} = ((Y''_i)\Expand{\overline{L}}, S''_i, (Z''_i)\Expand{\overline{L}})$ is
  the only 3-separation of $\separ_{<4}(\GL)$ such that
  $\Seppi{i}\Contr{\overline{L}} = \Sepppi{i}$. Since
  $\Sepppi{i} \in \T_{\mathrm{min}}\Contr{M}$, from what we just proved, we must
  have $\Seppi{i} \in \T_{\mathrm{min}}\Contr{M}$. Note that the separations
  $\Seppi{i}$ are non-degenerate in $\GL$. Furthermore, as $L = M(S''_1 \cup
  S''_2)$, $\Seppi{1}$ and $\Seppi{2}$ must be also crossing in $\GL$, but this
  contradicts $\Exnd{T\Contr{L}} = \Exnd \T \setminus L$ (third item
  of~\cref{thm:contract tangle1}).
\end{proof}

For each $L\subseteq M$, we let $\RL:= R_{\mathcal T}\Contr{L}$.
Note that for each $L\subseteq M$, $(R_{\TL})\Expand{L}= R_{\T}$.
Thus together with \cref{lem: ortho}, this immediately gives the following, which is the locally finite extension of one of the main results from \cite{Gro16}:

\begin{theorem}
\label{thm: GroRegion}
 Let $G$ be a locally finite $3$-connected graph, and let $\mathcal T$ be a region tangle  of order $4$ in $G$. Let $M:=\Exnd{\T}$ be the set of crossedges between non-degenerate minimal separations of $\mathcal T$. Then the graph $G^{\backslash M/}\torso{R^{\backslash M/}}$ is a quasi-$4$-connected minor of $G$.
\end{theorem}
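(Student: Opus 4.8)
The plan is to obtain the statement by feeding the structural facts about $\GM$ and $\T\Contr{M}$ established in this subsection into \cref{lem: ortho}. First I would collect the ingredients already at hand: $\GM$ is locally finite (contracting the matching $M$ turns each contracted pair into a vertex whose neighbourhood is a union of two finite sets, and leaves every other neighbourhood finite) and $3$-connected by \cref{lem:contract connected}; moreover $\T\Contr{M}$ is a region tangle of order $4$ in $\GM$, and $\Exnd{G\Contr{M}}=\emptyset$, both by the two preceding lemmas of this subsection.

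Next I would check that $\T\Contr{M}_{\mathrm{nd}}$ is orthogonal. By \cref{lem: cross} applied to the region tangle $\T\Contr{M}$ of the locally finite $3$-connected graph $\GM$, any two distinct minimal separations of $\T\Contr{M}$ are either crossing or orthogonal; a crossing pair of non-degenerate minimal separations would by definition have a crossedge, contributing an edge to $\Exnd{G\Contr{M}}$. Since $\Exnd{G\Contr{M}}=\emptyset$, no such pair exists, so the separations of $\T\Contr{M}_{\mathrm{nd}}$ are pairwise orthogonal.

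Now I would apply \cref{lem: ortho} with $G$ replaced by $\GM$ and $\T$ by $\T\Contr{M}$: since $\T\Contr{M}_{\mathrm{nd}}$ is orthogonal, $X_{\T\Contr{M}}\neq\emptyset$ and the torso $\GM\torso{X_{\T\Contr{M}}}$ has size $3$ or is a quasi-$4$-connected minor of $\GM$; as $\GM$ is itself a minor of $G$ (it is obtained from $G$ by contracting edges), it is a minor of $G$. It only remains to identify this torso with $\GM\torso{R^{\backslash M/}}$. Orthogonality of $\T\Contr{M}_{\mathrm{nd}}$ gives $X_{\T\Contr{M}}=R_{\T\Contr{M}}$ (the observation following the definition of $R_{\mathcal T}$), while the identity $(R_{\TL})\Expand{L}=R_{\T}$ noted just above, taken with $L=M$ and followed by the projection $\Contr{M}$ (using $(X'\Expand{M})\Contr{M}=X'$), gives $R_{\T\Contr{M}}=R_{\T}\Contr{M}=R^{\backslash M/}$. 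Hence $\GM\torso{R^{\backslash M/}}$ is, exactly as in \cref{lem: ortho}, of size $3$ or a quasi-$4$-connected minor of $G$, which is the claimed statement.

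Since essentially all the technical work has already been carried out in the earlier lemmas of \cref{sec: all}, I do not expect a serious obstacle here; the only points requiring a little care are the bookkeeping with the contraction/expansion operators — namely that ``forming $R$'' commutes with contracting all crossedges, so that $R^{\backslash M/}=R_{\T\Contr{M}}$ — and the observation that it is exactly the vanishing of $\Exnd{G\Contr{M}}$ (hence the orthogonality of $\T\Contr{M}_{\mathrm{nd}}$) that makes $R$ and $X$ coincide and licenses the use of \cref{lem: ortho}.
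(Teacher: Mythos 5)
Your proposal is correct and matches the paper's own argument: the paper derives the theorem by exactly this chain (\cref{lem:contract connected} and the preceding lemmas give $3$-connectedness of $\GM$, a region tangle $\T\Contr{M}$ with $\Exnd{G\Contr{M}}=\emptyset$ hence orthogonal $\T\Contr{M}_{\mathrm{nd}}$, then \cref{lem: ortho} plus the identity $R_{\T\Contr{M}}=R_\T\Contr{M}$). You also correctly noticed that \cref{lem: ortho} leaves open the possibility that the torso has size exactly $3$, a degenerate case the theorem's statement silently elides; that is an imprecision in the paper rather than a flaw in your reasoning.
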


In order to obtain a proof of \cref{thm: Grofini} in the locally finite case, one can either reuse the arguments from \cite[Section 5]{Gro16}, or equivalently adapt our proof from \cref{sec: proof}.

\subsection{Planarity after uncontracting crossedges}\label{sec: planar}

The difficulty is that in general, nice properties of $G\Contr{M}\torso{R\Contr{M}}$ are not satisfied anymore by $G\torso{R_{\T}}$. To circumvent this and find a quasi-$4$-connected region in $G$, it is proved in \cite{Gro16} that for every subset $X'$ of $R_{\T}$ obtained by deleting one endpoint of each edge of $M$, the graph $G\torso{X'}$ is isomorphic to $G\Contr{M}\torso{R\Contr{M}}$.
However we cannot choose such a subset $X'$ canonically in general, as illustrated in \cref{ex: gro}.
Despite the fact that uncontracting the edges of $M$ does not preserve the quasi-$4$-connectivity of torsos, we now prove that at least planarity is preserved by this operation.

\begin{proposition}
 \label{prop: planar}
 If $G\Contr{M}\torso{R\Contr{M}}$ is planar, then so is $G\torso{R_{\T}}$.
\end{proposition}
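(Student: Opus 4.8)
The plan is to recognise $G\torso{R_{\T}}$ as the graph obtained from the planar graph $G\Contr{M}\torso{R\Contr{M}}$ by reversing the contraction of every crossedge — i.e.\ by a collection of vertex‑splits — and then to check that each of these splits is compatible with a planar embedding. More precisely, I would first establish the un‑contraction picture: every endpoint of a crossedge belongs to some $S$ with $(Y,S,Z)\in\Tnd$, hence to $R_{\T}$, so $M\subseteq E(G[R_{\T}])$; consequently $G-R_{\T}=G\Contr{M}-R\Contr{M}$, these graphs have the same connected components $C$, and $N_{G\Contr{M}}(C)=N_G(C)\Contr{M}$. Next one checks that no fence $\fc(S)$ with $(Y,S,Z)\in\Tnd$ contains both endpoints of a crossedge: if $s_1s_2\in M$ is the crossedge of a crossing pair $\Sepi{1},\Sepi{2}\in\Tnd$ and $s_1,s_2\in S$ for some $(Y,S,Z)\in\Tnd$, then comparing $(Y,S,Z)$ with $\Sepi{1}$ and with $\Sepi{2}$ via \cref{lem: cross} forces $(Y,S,Z)\in\{\Sepi{1},\Sepi{2}\}$ — orthogonality would put $s_2$ (resp.\ $s_1$) in $S\cap S_1$ although $s_2\in Y_1$ (resp.\ in $S\cap S_2$ although $s_1\in Y_2$), and being crossing would make $S\cap S_1$ (resp.\ $S\cap S_2$) empty — which is absurd since $s_2\notin S_1$ and $s_1\notin S_2$. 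Hence $|\fc(S)\Contr{M}|=3$ for each $(Y,S,Z)\in\Tnd$, and comparing the star tree‑decompositions given by \cref{lem: cross-star} applied to $(G,\T)$ and to $(G\Contr{M},\T\Contr{M})$ yields $G\Contr{M}\torso{R\Contr{M}}=(G\torso{R_{\T}})\Contr{M}$; equivalently $G\torso{R_{\T}}$ is obtained from $G\Contr{M}\torso{R\Contr{M}}$ by replacing, for each $s_1s_2\in M$, the vertex $s_{s_1,s_2}$ by the edge $s_1s_2$, re‑attaching each edge formerly incident to $s_{s_1,s_2}$ to $s_1$ or to $s_2$ according to the corresponding neighbour in $G\torso{R_{\T}}$.

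The core of the argument is then to show that each such split preserves planarity. Fix a crossedge $s_1s_2$ coming from crossing separations $\Sepi{1},\Sepi{2}\in\Tnd$, so $s_1\in S_1\cap Y_2$ and $s_2\in S_2\cap Y_1$. Since $s_1\in Y_2$ and $s_2\in Y_1$ we have $N_G(s_1)\subseteq Y_2\cup S_2$ and $N_G(s_2)\subseteq Y_1\cup S_1$; because $(Y_1\cup S_1)\cap(Y_2\cup S_2)=\{s_1,s_2\}$ by the definition of crossing separations and no fence contains both $s_1$ and $s_2$, the vertices $s_1$ and $s_2$ have no common neighbour in $G\torso{R_{\T}}$, so the split is clean: the edges at $s_{s_1,s_2}$ in $G\Contr{M}\torso{R\Contr{M}}$ split disjointly into a set $A$ re‑attached to $s_1$ and a set $B$ re‑attached to $s_2$. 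It remains to see that $A$ and $B$ occur as two complementary arcs of the rotation around $s_{s_1,s_2}$ in a planar embedding of the ($3$‑connected, hence essentially uniquely embeddable) graph $G\Contr{M}\torso{R\Contr{M}}$; placing $s_1$ and $s_2$ in the two face‑corners thus created and drawing $s_1s_2$ between them then gives a planar embedding of $G\torso{R_{\T}}$. I expect to derive this arc condition from the local separation structure at the crossedge: since $s_2\in\fc(S_1)$ (the crossedge‑partner of $s_1\in S_1$), the vertex $s_{s_1,s_2}=s_2\Contr{M}$ lies on the triangle $\fc(S_1)\Contr{M}$, whose two edges at $s_{s_1,s_2}$ lie in $B$; symmetrically $s_1\in\fc(S_2)$ puts $s_{s_1,s_2}$ on the triangle $\fc(S_2)\Contr{M}$, whose two edges at $s_{s_1,s_2}$ lie in $A$; and, using that the neighbours of $s_2$ other than $s_1$ lie \emph{on the $Z_2$-side} of $\Sepi{2}$ while those of $s_1$ other than $s_2$ lie \emph{on the $Z_1$-side} of $\Sepi{1}$, together with the fact that these two triangles are faces of the embedding (here quasi‑$4$‑connectivity of $G\Contr{M}\torso{R\Contr{M}}$ from \cref{thm: GroRegion} is used), one should be able to pin down $A$ and $B$ as the two complementary arcs. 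Since the crossedges form a matching, the local data at distinct contracted vertices do not interact and all the splits can be carried out simultaneously.

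Finally, to handle the (possibly infinite) graph $G\torso{R_{\T}}$ I would use that a countable graph is planar as soon as each of its finite subgraphs is: any finite subgraph of $G\torso{R_{\T}}$ involves only finitely many crossedges and is obtained by finitely many splits from a finite subgraph of the planar graph $G\Contr{M}\torso{R\Contr{M}}$, to which the argument above applies. The hard part will be precisely the arc condition — verifying that in the essentially unique planar embedding of $G\Contr{M}\torso{R\Contr{M}}$ the edges inherited from $s_1$ and those inherited from $s_2$ really do appear consecutively around $s_{s_1,s_2}$ — and it is there that Grohe's finer results on crossedges and fences (\cref{lem: cross}, the description of $R_{\T}$, and the behaviour of the fences $\fc(S)$ under contraction of $M$) enter in an essential way.
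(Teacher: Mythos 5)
Your proposal takes a genuinely different route from the paper, and the crucial step is left as an expectation rather than a proof, so there is a real gap.

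You aim to produce a planar embedding of $G\torso{R_{\T}}$ directly, by splitting each contracted vertex $s_{s_1,s_2}$ of $G\Contr{M}\torso{R\Contr{M}}$ into the edge $s_1s_2$ and checking the standard criterion: the two edge-bundles $A$ (going to $s_1$) and $B$ (going to $s_2$) must form complementary arcs in the rotation around $s_{s_1,s_2}$. You yourself flag that verifying this ``arc condition'' is ``the hard part'' and say only that you ``expect to derive'' it. That expectation is exactly where the argument is incomplete: even after observing that $s_{s_1,s_2}$ lies on the two triangles $\fc(S_1)\Contr{M}$ and $\fc(S_2)\Contr{M}$, one still has to rule out, for instance, a separating triangle through $s_{s_1,s_2}$ placing a neighbour from $B$ between the two $A$-edges in the rotation; quasi-$4$-connectivity alone does not obviously preclude this, nor do you address what happens if the fence vertices $\fc(S_1)\Contr{M}\setminus\{s_{s_1,s_2}\}$ and $\fc(S_2)\Contr{M}\setminus\{s_{s_1,s_2}\}$ overlap or collapse under the contraction of $M$. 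You also invoke an essentially unique embedding, which is a property of $3$-connected \emph{finite} graphs; transferring this to the locally finite setting needs a word of justification.

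The paper sidesteps all of this. Instead of exhibiting an embedding, it argues contrapositively via Wagner's theorem: \cref{lem: planar2} shows that any $K_5$- or $K_{3,3}$-model in $H=G\torso{R_{\T}}$ can be pushed forward to a $K_5$- or $K_{3,3}$-model in $H'=G'\torso{R'}$ after contracting a single crossedge, using only (i) $\deg_H(s_1)=3$ with $N_H(s_1)=\{s_2\}\cup(\fc(S_2)\setminus\{s_1\})$ (\cref{clm: s1}) and (ii) $\fc(S_2)$ being a triangle. This argument never has to reason about rotations or faces and is robust to all the degeneracies you would have to rule out. The reduction to the infinite case (\cref{lem: planar1}) is then a compactness step close in spirit to your ``every finite subgraph is planar'' observation, but phrased in terms of finite subsets $L\subseteq M$ to dovetail with the one-edge-at-a-time induction.

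In short: your plan is coherent and could plausibly be completed, but as written it defers its key claim (the arc/rotation condition), whereas the paper's minor-transfer argument proves the result outright with only the two local structural facts about crossedges and fences. If you want to pursue the embedding route, the missing work is to pin down the rotation at $s_{s_1,s_2}$; a cleaner path is to note that $\deg_{H'}(s_{s_1,s_2})\le 4$ (from $\deg_H(s_1)=\deg_H(s_2)=3$) and use the edges $u_1v_1\in E(H')$, $u_2v_2\in E(H')$ together with a Jordan-curve argument to exclude the interleaved rotation, after first handling the possible coincidences among $u_1,v_1,u_2,v_2$.
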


This is obtained by combining the following two lemmas:

\begin{lemma}
 \label{lem: planar1}
 For every subset $L\subseteq M$, we denote $\mean{L}:=M\setminus L$. Assume that for every finite subset $L\subseteq M$, $G\Contr{\mean{L}}\torso{R_{\T}\Contr{\mean{L}}}$ is planar.
 Then $G\torso{R_{\T}}$ is also planar.
\end{lemma}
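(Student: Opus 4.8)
The plan is to deduce planarity of $G\torso{R_{\T}}$ from the planarity of all its "finite approximations" $G\Contr{\mean{L}}\torso{R_{\T}\Contr{\mean{L}}}$, where $L$ ranges over the finite subsets of $M$. Since $M$ is a matching and $L\subseteq M$ is finite, the graph $G\Contr{\mean{L}}\torso{R_{\T}\Contr{\mean{L}}}$ is obtained from $G\torso{R_{\T}}$ by contracting the finitely many edges of $L$ that lie inside $R_{\T}$ (note $L\subseteq M$ are crossedges, hence edges of $G$ with both endpoints in $R_{\T}$ by definition of $R_{\T}$, so they survive in the torso); equivalently, $G\torso{R_{\T}}$ is obtained from $G\Contr{\mean{L}}\torso{R_{\T}\Contr{\mean{L}}}$ by "splitting" each contracted vertex $s_{u,v}$ back into the edge $uv$. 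So the statement is essentially: a locally finite graph all of whose finite contractions along a fixed matching are planar, is itself planar.

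The main tool will be a compactness argument (Kuratowski/Wagner plus König's lemma, or the de Bruijn–Erdős theorem). Suppose for contradiction that $H:=G\torso{R_{\T}}$ is not planar. By Wagner's theorem $H$ contains a $K_5$ or $K_{3,3}$ minor; since $H$ is locally finite and a minor of a countable graph, this minor is witnessed by a \emph{finite} subgraph $H_0\subseteq H$ that already has a $K_5$- or $K_{3,3}$-minor. Let $L:=\{uv\in M: u,v\in V(H_0)\}$, a finite subset of $M$. I would then argue that $H\Contr{\mean L}=G\Contr{\mean L}\torso{R_{\T}\Contr{\mean L}}$ still contains a $K_5$- or $K_{3,3}$-minor: contracting an edge of $M$ inside $H_0$ only contracts an edge of the finite subgraph $H_0$, and contracting an edge of a graph with a $K_5$- or $K_{3,3}$-minor... wait — contracting an edge does preserve having a fixed minor, since if $X$ is a minor of $H_0$ then $X$ is a minor of $H_0/e$? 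That is true: minors are preserved under edge contraction. Hmm, but that is the wrong direction; I need: if $X$ is a minor of $H_0$, is $X$ a minor of $H_0/e$? Yes — contracting an edge cannot destroy a minor; more precisely, the class of graphs containing a fixed $X$-minor is closed under edge contraction. Actually the cleanest statement: $X \preceq H_0$ implies $X \preceq H_0/e$ \emph{fails} in general (e.g. contracting can merge branch sets); so I should instead be more careful and choose $H_0$ to be a subdivision-free model. The clean route: take a finite model $\mathcal N=(N_x)_{x\in V(K)}$ of $K\in\{K_5,K_{3,3}\}$ in $H$; for each crossedge $uv\in M$ with $u,v\in\bigcup_x N_x$, contracting $uv$ sends this to a model of $K$ in $H\Contr{\mean L}$ (the branch sets project to branch sets, remaining connected, and the edges between branch sets persist since endpoints are merged, not separated). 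Hence $H\Contr{\mean L}$ has a $K$-minor, so $G\Contr{\mean L}\torso{R_{\T}\Contr{\mean L}}$ is non-planar, contradicting the hypothesis.

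Thus the proof structure is short once the bookkeeping is right: (1) recall $G\torso{R_{\T}}$ is locally finite and that, for finite $L\subseteq M$, the contraction $G\Contr{\mean L}\torso{R_{\T}\Contr{\mean L}}$ is obtained from $G\torso{R_{\T}}$ by contracting the finite matching $L\cap\binom{R_{\T}}{2}$; (2) assume non-planarity and extract a finite $K_5$- or $K_{3,3}$-model $\mathcal N$ in $G\torso{R_{\T}}$; (3) let $L$ be the (finite) set of crossedges with both endpoints inside the support of $\mathcal N$; (4) push $\mathcal N$ forward through the contraction to get a $K_5$- or $K_{3,3}$-model in $G\Contr{\mean L}\torso{R_{\T}\Contr{\mean L}}$; (5) conclude. \textbf{The main obstacle} I anticipate is step (1)/(4): making sure the identification $G\Contr{\mean L}\torso{R_{\T}\Contr{\mean L}} \cong (G\torso{R_{\T}})/(L\cap\binom{R_{\T}}{2})$ is correct — i.e.\ that contracting crossedges commutes appropriately with taking the torso on $R_{\T}$, and that no extra edges or identifications appear. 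This requires unwinding the definitions of $R_{\T}$, of $\Contr{L}$, and of the torso $G\torso{\cdot}$, and checking that the edges of $M$ we contract indeed lie inside $R_{\T}$ (true because every crossedge has an endpoint in some separator $S$ with $\Sep\in\T_{\mathrm{nd}}$, hence is contained in $\bigcup S \subseteq R_{\T}$, using \cref{lem: cross} and the definition of $R_{\T}$), so that $R_{\T}\Contr{\mean L}$ is genuinely the image of $R_{\T}$ and the torso operation is unaffected by the contractions happening outside $R_{\T}$.
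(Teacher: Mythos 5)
Your high-level plan is the same as the paper's: pass to a finite $K_5$- or $K_{3,3}$-model, pick a finite $L\subseteq M$ depending on its support, and argue that the model survives in $G\Contr{\mean L}\torso{R_{\T}\Contr{\mean L}}$, contradicting the hypothesis. And you correctly flag and resolve the one hidden subtlety, namely that every crossedge has both endpoints in $R_\T$.

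However, there is a genuine confusion in the contraction step. By the paper's definition, $G\Contr{\mean L}$ contracts the edges of $\mean L = M\setminus L$, not the edges of $L$. Several phrases in your writeup invert this: you say that ``contracting an edge of $M$ inside $H_0$ only contracts an edge of the finite subgraph $H_0$'' and that ``for each crossedge $uv\in M$ with $u,v\in\bigcup_x N_x$, contracting $uv$ sends this to a model of $K$ in $H\Contr{\mean L}$.'' Those sentences describe contracting edges of $L$ (crossedges entirely inside the support), whereas passing to $H\Contr{\mean L}$ contracts precisely the \emph{other} crossedges. So the reasoning as written does not establish the claim. Moreover, with your choice $L=\{uv\in M : u,v \text{ in the support}\}$, the set $\mean L$ still contains crossedges with exactly one endpoint in a branch set; contracting such an edge does move vertices of your model, and you would have to check separately that this neither merges two branch sets nor disconnects one. (It does not, since $M$ is a matching and the other endpoint lies outside the support, but this case is never addressed.)

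The paper sidesteps all of this by taking $L$ to be the set of crossedges with \emph{at least one} endpoint in the support $X$ (that is, $L=M(X\Contr{M})$). Then $\mean L$ consists only of crossedges entirely disjoint from $X$, so contracting $\mean L$ leaves every branch set untouched and the model carries over verbatim. I would recommend adopting this choice of $L$ (or, if you keep yours, adding the extra case analysis) and, in either case, fixing the description of which edges $\Contr{\mean L}$ contracts.
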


\begin{proof}
Assume for the sake of contradiction that $G$ enjoys the properties described above but that $G\torso{R_{\T}}$ is not planar. Then by Wagner's theorem \cite{wagner37}, $G\torso{R_{\T}}$ admits $F$ as a minor, for some $F\in\sg{K_{5},K_{3,3}}$.
Note that we can find a model $(V_v)_{v\in V(F)}$ of $F$ such that each set
$V_v$ is finite. Then $X:=\bigcup_{v\in V(F)} V_v$ is a finite subset of $V(G)$ and as $G$ is locally finite, there are only finitely
many edges in $M\left(X\Contr{M}\right)$ (recall that for each subset
$X' \subseteq V(\GM)$,  $M(X')$ is the set of crossedges of $M$
  that contract to a vertex in $X'$).
We let $L:=M\left(X\Contr{M}\right)$ denote this finite set of  edges and note that the sets $V_v$ are also subsets of $V(G\Contr{L})$. It follows that $(V_v)_{v\in V(F)}$ is also a model of $F$ in $G\Contr{\mean{L}}\torso{R\Contr{\mean{L}}}$, a contradiction.
\end{proof}

\begin{lemma}[Planar contraction of a single crossedge]
\label{lem: planar2}
  Let $G$ be locally finite and 3-connected, and $\T$ be a region tangle of order 4 in $G$. Let
  $\Sepi{1}$ and $\Sepi{2}$ be two minimal non-degenerate crossing separations
  of $\T$. Let $s_1s_2$ be the corresponding crossedge and $G'$ be the graph obtained from $G$ after contracting
  $s_1s_2$. Let $\Seppi{i}:=\Sepi{i}^{\vee}$ be the projection of $\Sepi{i}$ to $G'$ for each $i\in\sg{1,2}$.
  Let $R := R_{\T} \subseteq V(G)$ and $R' := R^\vee$. If $G'\llbracket R' \rrbracket$ is
  planar, then so is $G\llbracket R \rrbracket$. 
\end{lemma}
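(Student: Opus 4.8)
The plan is to realise $G\torso R$ as a single vertex split of $G'\torso{R'}$ — expanding $s'$ back into the edge $s_1s_2$ — and to use the crossing structure to show that this split fits inside one face of a planar embedding of $G'\torso{R'}$. First I would record the precise relation between the two torsos. Since $s_1\in S_1$ and $s_2\in S_2$, both endpoints of the crossedge lie in $R=R_\T$, so $V(G'\torso{R'})=R^\vee$ with $s'$ replacing $\{s_1,s_2\}$; the components of $G'-R'$ are exactly those of $G-R$, for such a component $C$ one has $N_{G'}(C)=N_G(C)^\vee$ and hence $\fc_{G'}(S_C^\vee)=\fc_G(S_C)^\vee$, and $E(G')=E(G)^\vee$. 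Putting these together, $G'\torso{R'}$ is exactly $G\torso R$ with the edge $s_1s_2$ contracted; equivalently, $G\torso R$ is obtained from $G'\torso{R'}$ by splitting $s'$ into two adjacent vertices $s_1,s_2$ and distributing the edges at $s'$ between them.

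Next I would determine how the edges at $s'$ are distributed. Write $B:=Y_2\setminus\{s_1\}$, $A:=Y_1\setminus\{s_2\}$, $P:=S_1\setminus\{s_1\}=\{p_1,p_2\}$, $Q:=S_2\setminus\{s_2\}=\{q_1,q_2\}$. Since $s_1\in Y_2$ and $s_2\in Y_1$, the separations $\Sepi1,\Sepi2$ give $N_G(s_1)\subseteq Y_2\cup S_2=\{s_1,s_2\}\cup B\cup Q$ and $N_G(s_2)\subseteq Y_1\cup S_1=\{s_1,s_2\}\cup A\cup P$; in particular $N_G(s_1)\cap N_G(s_2)=\emptyset$. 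Moreover, by Grohe's analysis of $G-R_\T$ (every component $C$ lies in the $Y$-side of a unique $\Sep\in\T_{\mathrm{nd}}$, with $N(C)=\fc(S)\subseteq R$), any component of $G-R$ contributing a torso-edge at $s_1$ is contained in $B$ and has its fence inside $\{s_1\}\cup(B\cap R)\cup Q$, and symmetrically for $s_2$. Hence $N_{G\torso R}(s_1)\subseteq\{s_2\}\cup(B\cap R)\cup Q$ and $N_{G\torso R}(s_2)\subseteq\{s_1\}\cup(A\cap R)\cup P$, these neighbourhoods meet only in the edge $s_1s_2$, and $N_{G\torso R}(s_1)\cap P=N_{G\torso R}(s_2)\cap Q=\emptyset$. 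Restricting the projections $\Sepi1^\vee,\Sepi2^\vee$ to $R'$ then yields two separations $(A\cap R,\{s',p_1,p_2\},Z_1\cap R)$ and $(B\cap R,\{s',q_1,q_2\},Z_2\cap R)$ of $G'\torso{R'}$ — one checks via the same component argument that no torso-edge crosses either of them. These two separations are nested and both have $s'$ in their separator, and they exhibit $G'\torso{R'}$ as the union of a core $\Pi_0$ on $\{s'\}\cup P\cup Q\cup(Z_1\cap Z_2\cap R)$ and two ears $\Pi_A$ on $(A\cap R)\cup\{s',p_1,p_2\}$, $\Pi_B$ on $(B\cap R)\cup\{s',q_1,q_2\}$; crucially $s'$ has at most $4$ neighbours in $\Pi_0$, all in $P\cup Q$, with the $Q$-ones going to $s_1$ and the $P$-ones to $s_2$ under the split.

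Finally I would build a planar embedding of $G\torso R$ from one of $G'\torso{R'}$: such an embedding induces embeddings of $\Pi_0,\Pi_A,\Pi_B$, and in $\Pi_A$ (resp.\ $\Pi_B$) relabelling $s'$ as $s_2$ (resp.\ $s_1$) changes nothing, so the only issue is to split $s'$ inside the core $\Pi_0$, i.e.\ to arrange the rotation at $s'$ so that its (at most two) edges to $Q$ form an arc disjoint from its (at most two) edges to $P$, and then to split $s'$ in a small disc and re-attach the ears. If $\deg_{\Pi_0}(s')\le 3$ this is automatic, since with at most three edges at $s'$ any two of them are consecutive — equivalently, the vertex of $\{s_1,s_2\}$ that receives at most two neighbours is reinserted into the face obtained by deleting the relevant edge(s) at $s'$, which still has both its would-be neighbours on its boundary. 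If $\deg_{\Pi_0}(s')=4$, so $s'\sim p_1,p_2,q_1,q_2$, I would use that $\{s',q_1,q_2\}$ (resp.\ $\{s',p_1,p_2\}$) is a separator of $G'\torso{R'}$ to force $s'q_1,s'q_2$ to be consecutive in the rotation at $s'$ (the $B\cap R$ part is drawn inside one face-region whose boundary runs through $s',q_1,q_2$), and I would treat the remaining delicate sub-case, where both ears are trivial, by a direct planarity argument on the small graph $\Pi_0$ — appealing if necessary to $3$-connectedness of $G'\torso{R'}$ with Whitney's uniqueness theorem and to the non-degeneracy of $\Sepi1,\Sepi2\in\T_{\mathrm{nd}}$. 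The main obstacle is precisely this control of the cyclic order at $s'$: the abstract identity $G'\torso{R'}=G\torso R/s_1s_2$ does not by itself imply planarity of $G\torso R$, and it is the explicit separations $\Sepi1^\vee,\Sepi2^\vee$ together with the core/ear decomposition that make the split safe.
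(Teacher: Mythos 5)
Your approach is genuinely different from the paper's: you try to construct a planar embedding of $G\torso{R}$ directly by splitting $s'$ and controlling the rotation at $s'$, whereas the paper runs a contrapositive minor-transfer argument via Wagner's theorem (if $H:=G\torso{R}$ contains a $K_5$- or $K_{3,3}$-minor, then so does $H':=G'\torso{R'}$). Both routes are reasonable, but yours has a real gap and a misstatement.

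The key fact that the paper isolates first (its Claim inside the proof, relying on \cite[Lemma 4.33]{Gro16}) is that
$N_H(s_1)=\{s_2\}\cup(\fc(S_2)\setminus\{s_1\})$ and that $\fc(S_2)$ is a \emph{triangle} in $H$ (and symmetrically for $s_2$ with $\fc(S_1)$). Two consequences you do not use: (i) $s_1$ and $s_2$ have degree exactly $3$ in $H$, and their two non-$s_2$ (resp.\ non-$s_1$) neighbours $u_2,v_2$ (resp.\ $u_1,v_1$) are \emph{adjacent} to each other; (ii) the set $\fc(S_2)\setminus\{s_1\}$ is in general \emph{not} $Q=S_2\setminus\{s_2\}$ — it replaces crossedge endpoints of $S_2$ by their partners, which lie in $Y_2\cap R$. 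So your assertion that the four edges of $s'$ in $\Pi_0$ go "the $Q$-ones to $s_1$ and the $P$-ones to $s_2$" is not correct in general; $s_1$ may have no neighbour in $Q$ at all, and the edges of $s'$ you need to distribute may well run into what you call $\Pi_A,\Pi_B$ rather than $\Pi_0$. This undermines the core/ear bookkeeping on which your final step rests.

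More importantly, the step that actually controls the cyclic order at $s'$ is not closed. You correctly observe that splitting $s'$ is only safe if the neighbours destined for $s_1$ and those destined for $s_2$ occupy disjoint consecutive arcs in the rotation, and that this is the crux of the lemma. But your proposed mechanism — "$\{s',q_1,q_2\}$ is a separator, hence $s'q_1,s'q_2$ are consecutive", plus a "direct argument on $\Pi_0$ appealing to $3$-connectedness and Whitney in the delicate sub-case" — does not follow; a $3$-separator through $s'$ does not by itself force consecutiveness of the two edges $s'q_1,s'q_2$ (that implication needs the separated side to be nonempty, which you explicitly say may fail), and the "delicate sub-case" is left unresolved. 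What actually makes the split safe is the adjacency $u_2v_2\in E(H')$ and $u_1v_1\in E(H')$ coming from the triangles $\fc(S_1),\fc(S_2)$: then in any planar embedding the $3$-cycle $s'u_1v_1$ separates the plane and $u_2,v_2$ (joined by an edge) lie on the same side, so $s'u_2,s'v_2$ are consecutive in the rotation at $s'$, and the split is always possible. Without that triangle fact, your argument does not go through; with it, your constructive route could be made to work, but it would essentially reprove the paper's Claim along the way, and the paper's minor-transfer argument is shorter once that Claim is in hand.
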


\begin{proof}
  We let $H := G\llbracket R\rrbracket$ and $H' := G'\llbracket R'
  \rrbracket$ and for $i \in \{1,2\}$, we write $S_{i} = \{s_i, t_i, r_i\}$ such that $s_1s_2$ is the crossedge between $\Sepi{1}$ and $\Sepi{2}$. Since
  $\Sepi{1}$ and $\Sepi{2}$ are crossing, the edge $s_1s_2$ belongs to $E(G[R])\subseteq E(H)$. Note that in particular we have $6= |S_1\cup S_2| \leq |R|$.

  \begin{claim}
  \label{clm: s1}
    The neighborhood of $s_1$ in $H$ is:
    $$N_H(s_1)=\sg{s_2}\cup(\fc(S_2)\setminus \sg{s_1}),$$
    and $\fc(S_2)$ is a triangle in $H$.
  \end{claim}
  \begin{proofofclaim}
  Note that by definition of the torso, the projection $(Y_i\cap R, S_i \cap R, Z_i \cap R)$ of $\Sepi{i}$ to $H$ is a separation of $H$. Hence the only possible neighbors of $s_1$ in $H$ must lie in $(R\cap Z_1\cap Y_2)\cup\sg{t_2, r_2}$ (see \cref{fig:crossing}). 
  
Note that as $G$ is $3$-connected, $H$ must also
be $3$-connected: 
this comes from the fact that $|V(H)|\geq 6$ and from the observation that any
separator $S\subseteq R=V(H)$ of $H$ is also a separator of $G$. Thus in particular every vertex of $H$ has degree at
least $3$.
  
  Then, as $|\sg{s_2}\cup(\fc(S_2)\setminus \sg{s_1})|=3$, it is enough to prove that $N_H(s_1)\subseteq \sg{s_2}\cup(\fc(S_2)\setminus \sg{s_1})$ as equality will be immediately implied as $d_H(s_1)\geq 3$. For this we let $t\in N_H(s_1)\setminus \sg{s_2}$. We distinguish two cases:
  \begin{itemize}
   \item If $t\in S_2$, then without loss of generality let $t=t_2$. First note
     that if $t$ is not an endpoint of some crossedge then $t\in S_2\cap
     \fc(S_2)$ and there is nothing to prove (see \cref{fig: planar}). Thus we assume that there exists a crossedge $t_2s_3$ incident to $t_2$ for some $s_3$ and we prove that this case implies a contradiction, which will imply the desired inclusion. As $t_2\neq s_2$ and $\Exnd{\T}$ is a matching, we have $s_3\neq s_1$ and there exists $\Sepi{3}\in \T_\mathrm{nd}$ that crosses $\Sepi{2}$ via the crossedge $t_2s_3$. As $\Sepi{1}$ and $\Sepi{2}$ cross, we have $s_1\in Y_2$. As $\Sepi{2}$ and $\Sepi{3}$ cross, we have $t_2\in Y_3$ and $S_3\setminus \sg{s_3}\subseteq Z_2$. As we assumed that $s_1t_2\in E(H)$, $s_1\neq s_3$ and as $t_2\in Y_3$, we must have $s_1\in Y_3\cup (S_3\setminus \sg{s_3})$. This implies a contradiction as $Y_2\cap Y_3=\emptyset$ and $Y_2\cap (S_3\setminus \sg{s_3})=\emptyset$. 
%    we have $s_3\in Z_1\cap Y_2$. Thus as $\Sepi{2}$ and $\Sepi{3}$ are crossing, $Y_2\cap Y_3= \emptyset$ so $s_1\notin Y_3$. Moreover we also have: $Y_2\cap S_3=\sg{s_3}$ and $S_2\cap Z_3=\sg{t_2}$, and as $s_1\neq s_3$, this implies that $s_1\notin S_3$ so $s_1\in Y_3$. But then we have a contradiction as $s_1\in Y_3$, $t_2\in Z_3$ and we assumed first that $s_1t_2\in E(H)$.
   
   \item If $t\notin S_2$, then we must have: $t\in R\cap Y_2\cap Z_1$ and by
     definition of $R$, as $t\notin \bigcap_{\Sep\in \Tnd}Z$, this means that $t\in S_3$ for some $\Sepi{3}\in \Tnd \setminus \sg{\Sepi{1}, \Sepi{2}}$. By \cref{lem: cross}, $\Sepi{2}$ and $\Sepi{3}$ are either orthogonal or crossing. If we were in the former case, then we should have $S_3\cap Y_2=\emptyset$, which is impossible as $t\in S_3\cap Y_2$. Hence $\Sepi{2}$ and $\Sepi{3}$ are crossing, and if $s_3$ denotes the endpoint of the crossedge between $S_2$ and $S_3$, as $S_3\cap Y_2=\sg{s_3}$ we must have $t=s_3$ so we are done as $s_3\in \fc(S_2)\setminus\sg{s_1}$. 
  \end{itemize}

  The fact that $\fc(S_2)$ forms a clique follows from \cite[Lemma 4.33]{Gro16}.
  \end{proofofclaim}

Note that by symmetry, \cref{clm: s1} also implies that we have $N_H(s_2)=\sg{s_1}\cup(\fc(S_1)\setminus \sg{s_2})$ and that $\fc(S_1)$ is a clique.
  
  \begin{figure}[htb]
  \centering
    \begin{tikzpicture}[scale=0.7]
      \begin{scope}
        
        \draw[draw=none,fill=lightgray] (0,1.5) rectangle (7,5);
        \draw[draw=none,fill=lightgray] (1.5,1.5) rectangle (7,3);
        \draw[draw=none,fill=lightgray] (1.5,0) rectangle (7,1.5);
        \draw[draw=none,fill=white] (1.5,1.5) rectangle (2.7,2.5);        
        \draw[thick] (0,0) rectangle (7,5);
        \draw (1.5,0) rectangle (2.7,5);
        \draw (0,1.5) rectangle (7,2.5);
        \draw (4.5,0) rectangle (5.7,5);

        \node[draw=black, circle, fill=black,inner sep = 2pt] (s11) at (.75,2) {};
        \node[draw=black, circle, fill=black,inner sep = 2pt] (s12) at (3.5,2) {};
        \node[draw=black, circle, fill=black,inner sep = 2pt] (s13) at (6.25,2)
        {};
        \node[draw=black, circle, fill=black,inner sep = 2pt] (s21) at (2,.75) {};
        \node[draw=black, circle, fill=black,inner sep = 2pt] (s22) at (2,3.18) {};
        \node[draw=black, circle, fill=black,inner sep = 2pt] (s23) at
        (2,4.33) {};
        \node[draw=black, circle, fill=black,inner sep = 2pt] (s3) at
        (5,.75) {};
        
        \node[right=-.1cm of s11] {$s_2$};
        \node[right=-.1cm of s12] {$t_2$};
        \node[right=-.1cm of s13] {$r_2$};
        \node[below=-.1cm of s21] {$s_1$};
        \node[right=-.1cm of s22] {$t_1$};
        \node[right=-.1cm of s23] {$r_1$};
        \node[right=-.1cm of s3] {$s_3$};

        % \path (s1) edge[bend left = 50, dashed] (s22);
        % \path (s1) edge[bend left = 50, dashed] (s23);
        \path (s21) edge[bend right = 20, dashed] (s12);
        \path (s21) edge[bend right = 20, dashed] (s3);
        \path (s12) edge[bend right = 20, dashed] (s3);
        % \path (s22) edge[bend left = 50, dashed] (s23);

        \draw (s11) -- (s21);
        \draw (s13) -- (s3);
        \node (Y1) at (.75,-.5) {$Y_1$};
        \node (S1) at (2,-.5) {$S_1$}; 
        \draw[decorate,decoration={brace,mirror}] (2.7,-.05) -- (7,-.05); 
        \node (Z1) at (4.75,-.5) {$Z_1$}; 
        \node (Y2) at (-.5,.75) {$Y_2$}; 
        \node (S2) at (-.5,2) {$S_2$}; 
        \node (Z2) at (-.5,3.5) {$Z_2$};
        \node (22) at (5.1,5.4) {$S_3$};
    \end{scope}

    \begin{scope}[xshift=10cm]
         
        \draw[draw=none,fill=lightgray] (0,2.5) rectangle (7,5);
        \draw[draw=none,fill=lightgray] (1.5,1.5) rectangle (7,3);
        \draw[draw=none,fill=lightgray] (2.7,0) rectangle (7,1.5);
        \draw[thick] (0,0) rectangle (7,5);
        \draw (1.5,0) rectangle (2.7,5);
        \draw (0,1.5) rectangle (7,2.5);
        \draw (4.5,0) rectangle (5.7,5);

        \node[draw=black, circle, fill=black,inner sep = 2pt] (s1) at (2,2) {};
        \node[draw=black, circle, fill=black,inner sep = 2pt] (s12) at (3.5,2) {};
        \node[draw=black, circle, fill=black,inner sep = 2pt] (s13) at (6.25,2) {};
        \node[draw=black, circle, fill=black,inner sep = 2pt] (s22) at (2,3.18) {};
        \node[draw=black, circle, fill=black,inner sep = 2pt] (s23) at
        (2,4.33) {};
        \node[draw=black, circle, fill=black,inner sep = 2pt] (s3) at
        (5,.75) {};
        
        \node[right=-.1cm of s1] {$s'$};
        \node[right=-.1cm of s12] {$t_2$};
        \node[right=-.1cm of s13] {$r_2$};
        \node[right=-.1cm of s22] {$t_1$};
        \node[right=-.1cm of s23] {$r_1$};
        \node[right=-.1cm of s3] {$s_3$};

        % \path (s1) edge[bend left = 50, dashed] (s22);
        % \path (s1) edge[bend left = 50, dashed] (s23);
        \path (s1) edge[bend right = 50, dashed] (s12);
        \path (s1) edge[bend right = 50, dashed] (s3);
        \path (s12) edge[bend right = 50, dashed] (s3);
        % \path (s22) edge[bend left = 50, dashed] (s23);

        \draw (s13) -- (s3);
        \node (Y1) at (.75,-.5) {$Y'_1$};
        \node (S1) at (2,-.5) {$S'_1$};
        \draw[decorate,decoration={brace,mirror}] (2.7,-.05) -- (7,-.05); 
        \node (Z1) at (4.75,-.5) {$Z_1$}; 
        \node (Y2) at (-.5,.75) {$Y'_2$}; 
        \node (S2) at (-.5,2) {$S'_2$}; 
        \node (Z2) at (-.5,3.5) {$Z_2$};
        \node (22) at (5.1,5.4) {$S_3$};
 
    \end{scope}

      \end{tikzpicture}
      \caption{Left: The graph $G$ when $S_2$ is incident to exactly $2$ crossedges. Here $t_2$ is part of no crossedge and $S_2$
        and $S_3$ are crossing via the crossedge $r_2s_3$. Hence,
        $\fc(S_2) = \{s_1, t_2,s_3\}$.\\ Right: The graph $G'$ obtained after
        contracting the crossedge $s_1s_2$. The dashed edges are edges that
        appear in $H$ and $H'$ respectively. The situation is identical when $S_2$ is incident to $3$ crossedges, but harder to illustrate in $2$ dimensions.}
    \label{fig: planar}
  \end{figure}

  Recall that by Wagner's theorem~\cite{wagner37}, a graph is planar if and only if it is
  $K_5$ and $K_{3,3}$-minor free. Hence, it is enough to prove that if $H$ contains $K_5$ or
  $K_{3,3}$ as a minor, then so does $H'$. We write $\fc(S_i)=\sg{s_{3-i}, u_i, v_i}$ for $i\in \sg{1,2}$, and we recall that  the vertex of $H'$ resulting from the contraction of $s_1$ and $s_2$ is denoted by $s'$.
  \begin{claim}
    If $H$ contains a $K_5$-minor, then so does $H'$.
  \end{claim}
  \begin{proofofclaim}
 Let $(V_1, \dots V_5)$
  be a model of $K_5$ in $H$. Let $V_1', \dots V_5'$ be the projection of the
  sets $V_i$ to $H'$. If $s_1$ and $s_2$ are in the same set $V_i$, then $(V_1', \dots V_5')$ is
  also a model of $K_5$ in $H'$, so we can assume that the vertices $s_1$ and $s_2$  belong to distinct sets
  $V_i$, say $s_1\in V_1$ and $s_2\in V_2$. As by \cref{clm: s1}, $s_1$ has degree 3 in $H$, we have $V_1 \neq
  \{s_1\}$, so $V_1$ contains one neighbor of $s_1$ distinct of $s_2$, say $u_2$. Since $u_2$ and $v_2$ are
  adjacent in $H$, the edge $u_2s'$ in $H'$ has an endpoint in $V_1'\setminus \{s'\}$ and an endpoint in $V'_2$. Moreover as $u_2v_2\in E(H')$, the set $V_1'\setminus \{s'\}$ is connected in $H'$. Thus $(V_1'\setminus \{s'\}, V_2', V_3', V_4', V_5')$
  is a model of $K_5$ in $H'$, as desired.
  \end{proofofclaim}
  \begin{claim}
    If $H$ contains a $K_{3,3}$-minor, then $H'$ contains a $K_{3,3}$-minor or a $K_5$-minor.
  \end{claim}
  \begin{proofofclaim}
  Let $(V_1, \dots V_6)$
  be a model of $K_{3,3}$ in $H$, such that $V_i$ is adjacent to $V_j$ if $i$
  and $j$ have different parities. Let $V_1', \dots V_6'$ be their projection to
  $H'$. If $s_1$ and $s_2$ are in the same set $V_i$, then $(V_1', \dots V_6')$ is
  also a model of $K_{3,3}$ in $H'$, so we can assume that the vertices $s_1$ and $s_2$  belong to distinct sets
  $V_i$, say $s_1\in V_1$ and $s_2\notin V_1$.

  If $u_2 \in V_1$, then the edges $s'u_2$ and $u_2v_2$ in $H'$ ensure that
  $V_1' \setminus \{s'\}$ remains connected and that $(V_1' \setminus \{s'\},
  V_2', \dots V_6')$ is a model of {$K_{3,3}$} in $H'$. Thus we can assume that $u_2 \notin V_1$ and
  similarly $v_2 \notin V_1$. Since $V_1$ is connected, we must have $V_1 =
  \{s_1\}$.

  As $s_1$ has degree three and $V_1$ is adjacent to $V_2$, $V_4$ and $V_6$,
  this implies that $s_2$, $u_2$ and $v_2$ must belong to different sets $V_{2i}$,
  say $s_2 \in V_2$, $u_2 \in V_4$ and $v_2 \in V_6$. By applying the same
  reasoning as for $s_2$, we obtain $V_2= \sg{s_2}$, $u_1 \in V_3$ and $v_1 \in V_5$. But then
  $(\{s'\}, V_3', V_4', V_5', V_6')$ is a model of $K_5$ in $H'$.
  \end{proofofclaim}
  
  This concludes the proof of Lemma \ref{lem: planar2}.
\end{proof}

\begin{proof}[Proof of \cref{prop: planar}]
 Assume that $G\Contr{M}\torso{R\Contr{M}}$ is planar and for every  $L\subseteq M$,  set $\mean{L}:=M\setminus L$. Then, using \cref{lem: planar2}, we can easily prove by induction
 on $|L|\in \mathbb{N}$ that for any \emph{finite} set $L\subseteq M$, $G\Contr{\mean{L}}\torso{R_{\T}\Contr{\mean{L}}}$
 is planar. In order to be able to use  induction, we also
 need to observe that for the contraction of a single crossedge, the equality
 $R_{\T'}= R_{\T}^{\vee}$ holds. This is proved in \cite[Section 4.5]{Gro16} and
 can be deduced from item (2) of \cref{thm:contract tangle1}. We thus
 conclude by \cref{lem: planar1} that $G\torso{R_{\T}}$ is also planar.
\end{proof}

\section{The structure of quasi-transitive graphs avoiding a minor}\label{sec: struct}

\subsection{Main results}

Our main result in this section is the following more precise version of \cref{intro:main}.
\begin{theorem}
 \label{thm: mainCTTD}
 Let $G$ be a locally finite graph excluding $\Kinf$ as a minor and let $\Gamma$ be a group with a quasi-transitive action on $G$. Then there is an integer $k$ such that $G$ admits a $\Gamma$-canonical tree-decomposition $(T,\mathcal V)$, with $\mathcal V=(V_t)_{t\in V(T)}$,  whose torsos $G\llbracket V_t\rrbracket$  either have size at most $k$ or are $\Gamma_t$-quasi-transitive $3$-connected planar minors of $G$.
 Moreover, the edge-separations of $(T,\mathcal V)$ are tight.
\end{theorem}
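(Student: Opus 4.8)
The plan is to obtain $(T,\mathcal V)$ through a bounded sequence of $\Gamma$-canonical refinements, each time gluing the pieces back together with \cref{cor: refinement}. Two facts will be used throughout. First, once a $\Gamma$-canonical tree-decomposition has finite adhesion and tight edge-separations, \cref{rem: nbaretes} shows that $\Gamma$ acts quasi-transitively on the underlying tree, so there are only finitely many isomorphism types of torsos; this makes every ``bounded size'' bound below uniform (yielding the integer $k$), and by \cref{lem: quasitrans} every torso is quasi-transitive under the relevant stabilizer. Second, by \cref{thm: Tutte}, \cref{lem: torso-minor} and \cref{lem: Grominor}, all torsos we produce are minors of $G$, hence again $\Kinf$-minor-free, so the construction can be iterated. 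We begin by applying \cref{thm: Tutte}: $G$ has a $\Gamma$-canonical tree-decomposition of adhesion $\le 2$ with tight edge-separations whose torsos are minors of $G$ that are $3$-connected, cycles, or complete graphs on $\le 2$ vertices. Finite cycles and $K_{\le 2}$'s are of bounded size, and a double ray has bounded treewidth (handled at the end); so it remains to decompose each infinite $3$-connected torso $H$, which is locally finite, $\Kinf$-minor-free, and quasi-transitive under a suitable group.

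Apply \cref{thm: tree-tangles} to $H$ with $k=4$. Since a degenerate separation lies in every order-$4$ tangle (\cref{rem: degenerate-tangle}) and in no tangle of smaller order, niceness forces all its edge-separations to be non-degenerate of order $\le 3$; hence by \cref{lem: torso-minor} each torso is a faithful minor of $H$, and a short separator-shortcutting argument shows each torso is $3$-connected or has at most $3$ vertices. By the lifting of tangles (\cref{lem: RS}, \cref{rem: proj-inj}), distinct order-$4$ tangles of a torso lift to distinct order-$4$ tangles of $H$; as these are all distinguished by the decomposition, each torso carries at most one order-$4$ tangle. It thus suffices to decompose an infinite, locally finite, $3$-connected, quasi-transitive, $\Kinf$-minor-free graph $G_0$ with at most one tangle of order $4$. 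If $G_0$ has no order-$4$ tangle, or a single evasive one, then --- using that an end of degree $\ge 4$ induces a region tangle of order $4$ and that two distinct ends of degree $3$ induce two distinct evasive tangles --- every end of $G_0$ is thin, so by \cref{thm: tw-ends} $G_0$ has finite treewidth. Otherwise $G_0$ has a unique order-$4$ tangle $\T$, which is a region tangle; then $\T$ is $\Aut(G_0)$-invariant and $M:=\Exnd{\T}$ is an $\Aut(G_0)$-invariant matching (\cref{lem: cross}). By \cref{lem: cross-star}, $G_0$ has a canonical tree-decomposition of adhesion $3$ that is a star with central bag $R_\T$ and finite leaf bags. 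By \cref{thm: GroRegion}, contracting $M$ turns $G_0\torso{R_\T}$ into a quasi-$4$-connected, locally finite, quasi-transitive minor of $G_0$, which by Thomassen's theorem \cite{Thomassen92} is planar or has finite treewidth; in the first case \cref{prop: planar} gives that $G_0\torso{R_\T}$ itself is planar, in the second (since uncontracting a matching at most doubles the treewidth) it has finite treewidth. As before, $G_0\torso{R_\T}$ is $3$-connected or has $\le 3$ vertices, and by \cref{lem: Grominor} it is a minor of $G_0$.

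Combining these steps by iterated applications of \cref{cor: refinement} --- all the tree-decompositions involved have finite adhesion and tight, pairwise distinct edge-separations, so its hypotheses hold --- yields a $\Gamma$-canonical tree-decomposition of $G$ of adhesion $\le 3$ with tight edge-separations whose torsos are minors of $G$, each of bounded size, $3$-connected planar, or of bounded treewidth (this already establishes \cref{thm: main2}). Finally, by the last item of \cref{thm: tw-ends}, every bounded-treewidth torso admits a canonical tree-decomposition with tight edge-separations and finite width; one more application of \cref{cor: refinement} substitutes these in, replacing every bounded-treewidth torso by bounded-size ones (for a torso we wish to keep, such as a planar $G_0\torso{R_\T}$, we feed in the trivial tree-decomposition instead). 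The outcome is a $\Gamma$-canonical tree-decomposition with tight edge-separations in which every torso is of bounded size or a $\Gamma_t$-quasi-transitive $3$-connected planar minor of $G$, which is the assertion.

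The main obstacle is the non-orthogonal region-tangle subcase. As \cref{ex: gro} illustrates, Grohe's decomposition selects one endpoint of each crossedge, and this choice cannot be made canonically; our fix is to keep both endpoints --- that is, to use $R_\T$ rather than a transversal of the matching $M$ --- which costs us the quasi-$4$-connectivity of $G_0\torso{R_\T}$. The key point that must then be recovered is that $G_0\torso{R_\T}$ is still planar whenever the contracted quasi-$4$-connected torso is; equivalently, that uncontracting a matching of crossedges preserves planarity. This is precisely \cref{lem: planar2}/\cref{prop: planar}, and is the main new ingredient of the argument. A second, pervasive difficulty is the bookkeeping needed to keep every intermediate decomposition $\Gamma$-canonical and with tight, pairwise distinct edge-separations, so that \cref{cor: refinement} can be applied at each stage.
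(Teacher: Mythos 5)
Your proof follows the same overall route as the paper's: Tutte decomposition (\cref{thm: Tutte}), refinement by the tree-decompositions of \cref{thm: tree-tangles} distinguishing all order-4 tangles, reduction to infinite $3$-connected quasi-transitive torsos with a unique order-4 tangle, Grohe's crossedge contraction and \cref{thm: GroRegion}, Thomassen's dichotomy (\cref{cor:quasiplanartw}), the planarity-preservation under uncontraction (\cref{prop: planar}), and a final refinement trading bounded-treewidth torsos for bounded-size ones via \cref{thm: tw-ends} and \cref{cor: refinement}. The one genuine deviation is how you establish that the unique order-4 tangle of each infinite torso is a region tangle. The paper proves, via \cref{prop: thin-ends} (which it flags as an independent technical contribution), that every end of such a torso has degree at least $4$, and concludes from this that the tangle is well-founded. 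You instead case-split directly: if the unique order-4 tangle were evasive (or nonexistent), there could be no end of degree $\ge 4$ — any such end would give a region tangle, necessarily distinct — so every end is thin and \cref{thm: tw-ends} puts the torso in the bounded-treewidth bucket, to be absorbed in the final refinement. This bypasses \cref{prop: thin-ends} entirely, at the cost of losing the by-product (used nowhere else in the proof) that every end of a planar infinite torso has degree at least $4$; it is a legitimate and slightly leaner alternative at that step. The remaining points you spell out explicitly — the separator-shortcutting argument for $3$-connectivity of torsos, the uniformity of $k$ from \cref{rem: nbaretes}, and the quasi-transitivity of the contracted graph — are all present, either explicitly or implicitly, in the paper's argument and are handled correctly.
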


\begin{remark}\label{rem:4.2}
A natural question is whether we can bound the maximum size $k$ of a finite bag in  \cref{thm: mainCTTD} by a function of the forbidden minor, when $G$ excludes some finite minor instead of the countable clique $K_\infty$. By taking the free product of the cyclic groups $\mathbb{Z}_k$ and $\mathbb{Z}$, with their natural sets of generators, we obtain a 4-regular Cayley graph consisting of cycles of length $k$ arranged in a tree-like way. This graph has no $K_4$ minor, but in any canonical tree-decomposition, each cycle $C_k$ has to be entirely contained in a bag, and thus there is no bound on the size of a bag as a function of the forbidden minor in \cref{thm: mainCTTD}. We can replace $\mathbb{Z}_k$ in this construction by the toroidal grid $\mathbb{Z}_k\times \mathbb{Z}_k$, and obtain a Cayley graph with no $K_8$-minor, such that the bags in any (non-necessarily canonical) tree-decomposition of finitely bounded adhesion are arbitrarily large.
\end{remark}

We will also prove the following version of \cref{intro:main2} at the same time. 
\begin{theorem}
\label{thm: main2}
 Let $G$ be a locally finite graph excluding $\Kinf$ as a minor and let $\Gamma$ be a group with a quasi-transitive action on $G$. Then there is an integer $k$ such that $G$ admits a $\Gamma$-canonical tree-decomposition $(T,\mathcal V)$, with $\mathcal V=(V_t)_{t\in V(T)}$, of adhesion at most $3$, and whose torsos $G\torso{V_t}$ are $\Gamma_t$-quasi-transitive minors of $G$ which are either planar or have treewidth at most $k$. The edge-separations of $(T,\mathcal V)$ are all non-degenerate.
\end{theorem}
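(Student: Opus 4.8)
The plan is to assemble the tree-decomposition of $G$ through three successive rounds of refinement, each time producing a $\Gamma$-canonical tree-decomposition of $G$ of adhesion at most $3$ by gluing, via \cref{cor: refinement}, canonical tree-decompositions of the torsos onto the current one. Four properties will be maintained at every stage: the decomposition is $\Gamma$-canonical; its edge-separations are tight and non-degenerate; every torso $G\torso{V_t}$ is a (faithful) minor of $G$; and every torso is $\Gamma_t$-quasi-transitive and locally finite. These survive because the gluing of \cref{cor: refinement} only reuses adhesion sets of the pieces (so tightness and bounded adhesion persist), because subdivisions do not change edge-separations, because faithful minors compose, and because \cref{lem: quasitrans} turns boundedness of adhesion together with $\Gamma$-quasi-transitivity of $G$ into $\Gamma_t$-quasi-transitivity of each torso (local finiteness of torsos following from \cref{lem: TWcut}); being a minor of $G$, every torso is moreover $\Kinf$-minor-free.

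Round one: I would apply \cref{thm: Tutte} to get a $\Gamma$-canonical tree-decomposition of $G$ of adhesion at most $2$ whose torsos are finite (complete graphs on at most two vertices, or finite cycles --- no infinite ``cycle'' torso can occur, since a double ray is not $2$-connected) or $3$-connected; by \cref{rem: nbaretes} there are finitely many $\Gamma$-orbits of nodes, so the finite torsos have bounded size. It thus suffices to refine the $3$-connected torsos. Round two: for such a torso $H$, apply \cref{thm: tree-tangles} with $k=4$ to obtain an $\Aut(H)$-canonical (hence $\Gamma_t$-canonical) tree-decomposition of $H$ efficiently distinguishing, and nice with respect to, all tangles of order at most $4$. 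Since $H$ is $3$-connected, all its proper separations have order at least $3$, so all proper edge-separations here have order exactly $3$; by \cref{rem: degenerate-tangle} a degenerate separation lies in every order-$4$ tangle and so distinguishes none, whence niceness forces non-degeneracy, and \cref{lem: torso-minor} makes each torso a faithful minor of $H$. Each such torso has at most one tangle of order $4$: by \cref{rem: proj-inj} two of them would lift to distinct order-$4$ tangles of $H$ not distinguished by the decomposition. Gluing via \cref{cor: refinement}, we are reduced to the case where $G$ is $3$-connected, locally finite, $\Gamma$-quasi-transitive, $\Kinf$-minor-free, and has at most one tangle $\mathcal T$ of order $4$.

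In this reduced situation I would distinguish three cases. If $\mathcal T$ does not exist, or exists and is evasive, then $G$ has no region tangle of order $4$; since a thick end, or an end of degree at least $4$, would induce one, every end of $G$ has degree at most $3$, and \cref{thm: tw-ends} gives that $G$ has finite treewidth --- done. If $\mathcal T$ is a region tangle, it is $\Aut(G)$-invariant, so the matching $M:=\Exnd{\mathcal T}$ is $\Aut(G)$-invariant (\cref{lem: cross}), and \cref{lem: cross-star} yields a canonical tree-decomposition of $G$ that is a star with central bag $R_{\mathcal T}$ and finite leaf bags. By \cref{lem: Grominor} the central torso $G\torso{R_{\mathcal T}}$ is a faithful minor of $G$, and \cref{lem: quasitrans} makes it quasi-transitive and locally finite. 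By \cref{thm: GroRegion} the graph $G\Contr{M}\torso{R\Contr{M}}$ is a quasi-$4$-connected, quasi-transitive, locally finite minor of $G$, so by Thomassen's theorem \cite{Thomassen92} it is planar or has bounded treewidth. In the planar case, \cref{prop: planar} gives that $G\torso{R_{\mathcal T}}$ is planar; in the bounded-treewidth case, $G\torso{R_{\mathcal T}}$ is obtained from $G\Contr{M}\torso{R\Contr{M}}$ by uncontracting the matching $M$ (replacing each contracted vertex by its two preimages in every bag of a width-$w$ tree-decomposition gives width at most $2w+1$) and re-adding torso edges, so it too has bounded treewidth. Gluing back the star via \cref{cor: refinement}, and merging the finitely many (again \cref{rem: nbaretes}) sizes of finite torsos and treewidths of bounded-treewidth torsos into a single bound $k$, yields the tree-decomposition claimed in \cref{thm: main2}.

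The hard part is the region-tangle case with $M\neq\emptyset$, i.e.\ the non-orthogonal case: there the canonically defined torso $G\torso{R_{\mathcal T}}$ is genuinely \emph{not} quasi-$4$-connected (as in \cref{ex: gro}), so Thomassen's dichotomy has to be transported from the contracted graph through the uncontraction of $M$ --- for planarity this is precisely \cref{prop: planar} (whose proof rests on the single-crossedge analysis of \cref{sec: single,sec: all,sec: planar}), and for treewidth it is the elementary bound above; a secondary, purely bookkeeping difficulty is tracking $\Gamma_t$-canonicity and quasi-transitivity of torsos across the nested refinements, which is exactly what \cref{lem: canonical,lem: quasitrans,cor: refinement} are built for. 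Finally, \cref{thm: mainCTTD} follows from \cref{thm: main2} by one more refinement: every torso of treewidth at most $k$ (in particular every finite cycle; and note that the double ray $\mathbb Z$, when present, already appears split into $K_2$-blocks) admits by \cref{thm: tw-ends} a canonical tree-decomposition with tight edge-separations and finite bags, and \cref{cor: refinement} combines these to replace all bounded-treewidth torsos by finite ones while leaving the $3$-connected planar torsos untouched; $k$ exists once more because there are finitely many orbits of bags, in accordance with \cref{rem:4.2}.
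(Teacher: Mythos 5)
Your proposal is correct and follows essentially the same route as the paper: a Tutte-style round to reduce to $3$-connected torsos, a round of canonical tangle-distinguishing (\cref{thm: tree-tangles}) to reduce to torsos with a unique tangle of order~$4$, then the crossedge/star decomposition around $R_{\mathcal T}$ (\cref{lem: cross-star,thm: GroRegion,prop: planar}), with $\Gamma$-canonicity threaded through via \cref{lem: canonical,lem: quasitrans,cor: refinement}. The one place you diverge from the paper is in dispensing with \cref{prop: thin-ends}: the paper uses it to show directly that the unique tangle $\T_t$ of an infinite torso is always a region tangle (every end of $G_t$ has degree~$\ge 4$), so that \cref{lem: cross-star} applies unconditionally; you instead treat the evasive case separately by observing that uniqueness then forces all ends to have degree at most~$3$, whence \cref{thm: tw-ends} gives bounded treewidth and the trivial decomposition suffices. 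Both are valid; your version is slightly more economical in that it trades a use of \cref{prop: thin-ends} for a use of \cref{thm: tw-ends}, though the two in fact rely on the same underlying quasi-transitivity phenomenon (a unique end of degree~$3$ contradicts quasi-transitivity, so your evasive case is actually vacuous — you need not know that to argue as you do, but it is the reason the paper's shortcut is available).
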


\begin{remark}
 \label{rem: one-end}
If we carefully consider the proof of \cref{thm: main2}, we can check that if $G$ has only one end (as in the example of \cref{fig: Tangles}), then the tree-decomposition we obtain has adhesion $3$ and consists of a star with one infinite bag associated to its central vertex $z_0$, and finite bags on its branches.
In particular, $G\torso{V_{z_0}}$ cannot have bounded treewidth, as otherwise it would have more than one end, hence it must be planar.
Thus, \cref{thm: main2} implies that every one-ended locally finite quasi-transitive graph that excludes a minor can be obtained from a one-ended quasi-transitive planar graph by attaching some finite graphs on it along separators of order at most $3$. 
\end{remark}

\subsection{Tools}

Our proof of \cref{thm: mainCTTD,thm: main2} mainly consists in an application of \cref{thm: GroRegion} together with the following  result of Thomassen:

\begin{theorem}[Theorem 4.1 in \cite{Thomassen92}]
\label{thm: Thomassenq4c}
Let $G$ be a locally finite, quasi-transitive, quasi-4-connected graph $G$. If $G$ has a thick end, then $G$ is either planar or admits the countable clique $K_{\infty}$ as a minor.
\end{theorem}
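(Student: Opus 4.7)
The plan is to prove the contrapositive: assuming $G$ is non-planar, I construct $\Kinf$ as a minor. The starting point is the thick end $\omega$: by a classical theorem of Halin, there exist infinitely many pairwise vertex-disjoint rays $(R_i)_{i \in \NN}$ all belonging to $\omega$. Combined with quasi-$4$-connectedness, these rays provide an infrastructure near $\omega$ through which many internally disjoint paths can be routed between prescribed finite regions. Non-planarity and Kuratowski's theorem then yield a finite subgraph $H_0 \subseteq G$ containing a subdivision of $K_5$ or $K_{3,3}$; since $G$ has only finitely many $\Aut(G)$-orbits of vertices but infinitely many vertices accumulating near $\omega$, the orbit of $H_0$ under $\Aut(G)$ gives infinitely many translates, from which one can extract an infinite family $(H_n)_{n \geq 1}$ of pairwise vertex-disjoint copies of the non-planar obstruction whose supports drift towards $\omega$.

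The next step is to wire up these copies using quasi-$4$-connectedness. For any fixed $n$, I would use the fact that every $3$-separator of $G$ has a trivial side, combined with the abundance of disjoint rays belonging to $\omega$, to produce $n$ among the $H_i$'s together with pairwise internally disjoint paths connecting every two of them; contracting each $H_i$ and each connecting path to a single vertex then yields a $K_n$-minor. By construction these $K_n$-minors have branch sets of bounded ``local'' complexity and their connecting paths are all routed through a common backbone of disjoint rays of $\omega$.

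The main obstacle will then be upgrading these arbitrarily large $K_n$-minors to a single $\Kinf$-minor, since in general a locally finite graph may contain $K_n$-minors for every $n$ without containing $\Kinf$ as a minor. To overcome this I would exploit the uniformity provided by quasi-transitivity by performing the construction in a nested way: at every stage, the same backbone of disjoint rays $(R_i)$ is used, and the branch sets for stage $n+1$ are refinements of those for stage $n$. A compactness argument (König's lemma applied to a finitely branching tree of consistent partial choices of branch sets) should then produce a single infinite family of pairwise disjoint finite branch sets $(V_i)_{i \in \NN}$ with an edge between every pair, yielding the desired $\Kinf$-minor. The delicate point is ensuring that the combinatorial choices made in routing the paths can be made consistent across stages, which is where both the thick end (for inexhaustible routing resources) and quasi-$4$-connectedness (for avoiding small bottlenecks) play an essential role.
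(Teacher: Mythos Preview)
The paper does not prove this theorem; it is quoted verbatim from Thomassen's 1992 paper (as indicated by the attribution ``Theorem 4.1 in \cite{Thomassen92}'') and used as a black box input to \cref{cor:quasiplanartw}. There is therefore no proof in the present paper to compare your proposal against.

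That said, your outline has a genuine gap precisely where you flag it. You correctly note that ``$K_n$-minor for every $n$'' does not by itself yield a $K_\infty$-minor, and you propose to bridge this via a K\"onig-type compactness argument on a tree of ``consistent partial choices of branch sets''. But for K\"onig's lemma to apply, that tree must be finitely branching, and you give no mechanism for this: at every stage the space of possible branch sets and connecting paths is infinite, and nothing in your construction forces the choices at stage $n+1$ to come from a finite extension of those at stage $n$. Your phrase ``the branch sets for stage $n+1$ are refinements of those for stage $n$'' is in fact the wrong direction for a nested construction---you would need the stage-$n$ branch sets to be \emph{subsets} of the stage-$(n+1)$ ones, built incrementally, and arranging this while keeping all the routing internally disjoint is the entire difficulty.

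Thomassen's original argument avoids this compactness issue altogether by proceeding more structurally: he first shows that a locally finite quasi-transitive quasi-$4$-connected graph with a thick end must contain a subdivision of a specific infinite planar ``grid-like'' graph, and then shows that any non-planar graph containing such a subgraph admits $K_\infty$ as a minor. The grid structure provides, in one stroke, the infinite family of disjoint branch sets together with the routing infrastructure for all the pairwise connections---so the $K_\infty$-minor is built directly rather than assembled as a limit of $K_n$-minors.
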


A direct consequence of \cref{thm: Thomassenq4c} is the following, which will be our base case in what follows.

\begin{corollary}\label{cor:quasiplanartw}
 Let $G$ be a quasi-transitive, quasi-4-connected, locally finite graph which excludes the countable clique $K_{\infty}$ as a minor.  Then $G$ is planar or has finite treewidth.
\end{corollary}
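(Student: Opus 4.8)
The plan is to simply combine \cref{thm: Thomassenq4c} with the characterization of bounded treewidth in \cref{thm: tw-ends}, splitting according to whether $G$ has a thick end. Note first that $G$ is connected, since being quasi-$4$-connected it is in particular $3$-connected, so \cref{thm: tw-ends} applies to $G$.

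First I would treat the case where $G$ has a thick end. Then \cref{thm: Thomassenq4c} applies directly (its hypotheses are exactly those of the corollary together with the existence of a thick end), so $G$ is planar or contains $\Kinf$ as a minor. Since we assumed $G$ excludes $\Kinf$ as a minor, we conclude that $G$ is planar.

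In the remaining case $G$ has no thick end, i.e.\ every end of $G$ is thin; this also covers the case where $G$ has no end at all (equivalently, $G$ is finite). By the equivalence between the first two items of \cref{thm: tw-ends}, a connected quasi-transitive locally finite graph all of whose ends are thin has finite treewidth, so $G$ has finite treewidth. In both cases $G$ is planar or has finite treewidth, which completes the proof.

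There is essentially no obstacle here: the statement is a formal consequence of the two cited results, and the only point requiring (minor) care is checking that the connectedness hypothesis of \cref{thm: tw-ends} is met, which is immediate from $3$-connectedness.
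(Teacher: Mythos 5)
Your proof is correct and takes essentially the same approach as the paper's: both combine \cref{thm: Thomassenq4c} with \cref{thm: tw-ends}, differing only in that you case-split on the existence of a thick end while the paper assumes non-planarity and derives that all ends are thin.
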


\begin{proof}
Assume that $G$ is non-planar. As $G$ is $K_{\infty}$-minor free, by \cref{thm: Thomassenq4c}, all its ends have have finite degree. Then by \cref{thm: tw-ends}, $G$ has finite treewidth.
\end{proof}

Thomassen proved that if a quasi-transitive graph has only one end, then this end must be thick \cite[Proposition 5.6]{Thomassen92}. We prove the following generalization, which might be of independent interest.

\begin{proposition}
\label{prop: thin-ends}
Let $k\ge 1$ be an integer, and let $G$ be a locally finite quasi-transitive graph. Then $G$ cannot have exactly one end of degree exactly $k$. 
\end{proposition}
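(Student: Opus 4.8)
\textbf{Proof plan for Proposition~\ref{prop: thin-ends}.}
The plan is to argue by contradiction: suppose $G$ is locally finite and quasi-transitive and has exactly one end $\omega$ of degree exactly $k$. The first thing I would do is reduce to the case where $G$ is connected and has more than one end (if $G$ had only one end in total, that end would have to be thick by \cite[Proposition 5.6]{Thomassen92}, contradicting $\deg(\omega)=k<\infty$; and if $G$ had two ends it would be quasi-isometric to $\mathbb Z$, in which case both ends have degree $1$, so $k=1$ and there are two ends of degree $k$, not one). So $G$ has infinitely many ends. Since $\omega$ is thin of degree $k$, the remarks after \cref{thm: tw-ends} in the preliminaries give a finite vertex set of size at most $k$ separating $\omega$ from any other end; I would fix such a separation of minimum order, say of order $\ell\le k$, witnessing that $\omega$ is ``captured'' by a tight separation of order $\ell$.

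The core idea is a counting/orbit argument. Let $\Gamma=\Aut(G)$ (or any quasi-transitively acting group). Consider the set $\Omega_k$ of ends of $G$ of degree exactly $k$; by hypothesis $|\Omega_k|=1$, so $\omega$ is fixed by all of $\Gamma$. Now I would take a tight separation $(Y,S,Z)$ of minimum order $\ell$ with $\omega$ living in (a component of) $Z$ and $Y$ on the ``small'' side; minimality of $\ell$ and the fact that $\omega$ has degree $k$ force $\ell\ge$ (something close to) $k$, and in fact one can arrange $|S|=k$ using that $\deg(\omega)=k$ exactly (there are $k$ disjoint rays in $\omega$ and a separator of size $k$ realizing them, by Halin's theorem cited in the preliminaries). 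The stabilizer $\Stab_\Gamma(\omega)=\Gamma$ acts on the collection of all such ``$\omega$-capturing'' tight separations of order $k$, and by \cref{lem: TWcut} there are only finitely many $\Gamma$-orbits of tight separations of order $\le k$. I would then push the separation ``towards'' $\omega$: among all tight order-$k$ separations $(Y,S,Z)$ with $\omega$ living in $Z$, consider one minimal with respect to $\preceq$ (or with $Y\cup S$ maximal), which exists because $G$ is locally finite and $\omega$ lives in a unique component of $G-S$ for each such $S$. Translating this minimal separation by $\Gamma$ keeps $\omega$ fixed, hence keeps $\omega$ on the $Z$-side; combined with quasi-transitivity this produces infinitely many vertices that must all lie in the finitely many $\Gamma$-orbits of the (bounded) ``boundary region'' $Y\cup S$, giving the contradiction — essentially: the $Y$-side is finite by minimality and local finiteness, but $\Gamma_\omega=\Gamma$ acts quasi-transitively on $V(G)$ while fixing the finite side, which is impossible unless $V(G)$ is finite.

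Let me restate the contradiction mechanism more carefully, since that is the delicate point. Having fixed a $\preceq$-minimal tight separation $(Y,S,Z)$ of order $k$ with $\omega$ living in $Z$, I claim $Y$ is finite: if $Y$ were infinite, then since $G$ is locally finite $G[Y]$ contains a ray, which belongs to some end $\omega'\ne\omega$ (as $\omega$ lives in $Z$), and $\omega'$ is separated from $\omega$ by $S$; but then one could find a tight separation strictly $\prec (Y,S,Z)$ still capturing $\omega$ — more precisely, the degree-$k$ hypothesis and minimality will be used to rule this out, possibly after replacing $k$ by the minimal separation order $\ell$ and re-running the argument. Then $Y\cup S$ is a finite set fixed setwise by no nontrivial obstruction: every $\gamma\in\Gamma$ fixes $\omega$, hence maps $(Y,S,Z)$ to another tight order-$k$ $\omega$-capturing separation; by minimality and a standard ``nested corner'' / submodularity argument (uncrossing two $\prec$-minimal separations both capturing the same end yields a nested one, so the $\prec$-minimal such separation is unique), $\gamma$ fixes $(Y,S,Z)$, so $\Gamma$ stabilizes the finite set $Y\cup S$. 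This contradicts quasi-transitivity of $\Gamma$ on the infinite vertex set $V(G)$ (a group stabilizing a finite set can have only finitely many orbits among those vertices but infinitely many vertices lie outside $Y\cup S$ and are not reachable).

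\textbf{Main obstacle.} The technical heart — and the step I expect to be hardest — is establishing \emph{uniqueness} of the $\prec$-minimal tight $\omega$-capturing separation of order $k$ (equivalently, that all such minimal separations are nested, so their ``infimum'' towards $\omega$ is well-defined and $\Gamma$-fixed). This requires an uncrossing argument in the spirit of \cite{TW}/\cite{Thomassen92}: given two crossing minimal separations both with $\omega$ on the large side, submodularity of the separator function produces a corner separation of order $\le k$ that still captures $\omega$ and is $\preceq$ one of them, contradicting minimality unless they were already nested. One must be careful that ``$\omega$ lives in the $Z$-side'' is preserved under taking corners, which uses that $\omega$ is a single fixed end; and one must handle the degenerate possibility that the minimal order $\ell$ of an $\omega$-capturing separation is strictly less than $k$, in which case the whole argument should be run with $\ell$ in place of $k$ and the degree-exactly-$k$ hypothesis re-enters to show $\ell=k$ after all (or directly yields a second end of degree $k$ on the small side, the other way to close the contradiction). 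Once uniqueness is in hand, the orbit-counting conclusion via \cref{lem: TWcut} is routine.
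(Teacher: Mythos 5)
Your argument hinges on choosing a $\preceq$-minimal tight separation of order $k$ that captures $\omega$, and this is precisely where it breaks down: such a minimal element does not exist. Because $\omega$ is a thin end of degree $k$, one can build an infinite strictly $\prec$-decreasing sequence of tight order-$k$ separations $(Y_0,S_0,Z_0)\succ(Y_1,S_1,Z_1)\succ\cdots$, each with $\omega$ living in $Z_i$; this ``onion peeling'' towards $\omega$ is, in fact, the very first step of the paper's own proof. Local finiteness and \cref{lem: TWcut} only ensure that each vertex appears in the separator of finitely many tight separations; they do not bound the chain from below, and the $Y$-sides grow without limit. Consequently there is no $\preceq$-minimal $\omega$-capturing separation, your claim that $Y$ is finite has no object to apply to, and the intended contradiction (a finite $\Gamma$-invariant set inside an infinite vertex set) never materialises. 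The uncrossing/uniqueness step would likewise have to be performed on a family unbounded from below, so it cannot produce the single canonical separation you want to fix.

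The paper turns the absence of a minimum into the engine of the proof rather than an obstacle. From the decreasing chain $(Y_i,S_i,Z_i)_{i\ge 0}$ capturing $\omega$, \cref{lem: TWcut} gives finitely many $\Gamma$-orbits of tight order-$k$ separations, hence indices $i<j$ and $g\in\Gamma$ with $(Y_i,S_i,Z_i)\cdot g=(Y_j,S_j,Z_j)$. The automorphism $g$ thus acts as a translation along the chain. Iterating $g$ produces a strictly $\prec$-decreasing sequence of translates whose concatenated systems of $k$ disjoint paths yield $k$ disjoint rays in an end $\omega'$ of degree exactly $k$; if $\omega'=\omega$, iterating $g^{-1}$ instead produces a strictly $\prec$-increasing sequence whose path systems yield $k$ disjoint rays towards an end $\omega''$ of degree exactly $k$ living on the opposite side of $S_0$, so $\omega''\neq\omega$. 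In short, the paper's argument is dynamical (find a translating automorphism and run it both ways to exhibit a second thin end of the same degree) rather than the fixed-point/uniqueness argument you attempt. Your preliminary reduction to infinitely many ends is not wrong in spirit, but it is also unnecessary for the paper's argument, which works directly.
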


\begin{proof}
Assume without loss of generality that $G$ is connected, since otherwise each component of $G$ is also quasi-transitive locally finite, and we can restrict ourselves to a single component containing an end of degree exactly $k$.
Let $\Gamma$ be a group acting quasi-transitively on $G$.
Assume that $G$ has an end $\omega$ of degree exactly $k$ for some integer $k\ge 1$. As explained in \cite[Section 4]{TW}, there exists an infinite sequence of sets $S_0, S_1, \ldots$ of size $k$ such that for each $i\geq 0$, $S_{i+1}$ belongs to the component $G_i$ of $G_{i-1}- S_i$ where $\omega$ lives (where we set $G_0:=G$), and such that there exist $k$ vertex-disjoint paths $P_{1,i},\ldots, P_{k,i}$ from the $k$ vertices of $S_i$ to the $k$ vertices of $S_{i+1}$. By concatenating these paths, we obtain $k$ vertex-disjoint rays in $G$ living in $\omega$. 
As $G$ is connected and locally finite, note that up to extracting a subsequence of $(S_i)_{i\geq 0}$, we may assume that the $k$ paths $P_{1,i},\ldots, P_{k,i}$ are in the same component of $G- (S_i\cup S_{i+1})$.
Hence if we set $\Sepi{i}:=(G- (G_i\cup S_i),S_i, G_i)$ for each $i\geq 1$, $\Sepi{i}$ is a tight separation such that for each $i\geq 1$, $\Sepi{i+1}\prec\Sepi{i}$. Hence by \cref{lem: TWcut}, as there are only finitely many $\Gamma$-orbits of tight separations of size $k$, there exist $i<j$ and $g\in \Gamma$ such that $\Sepi{i}\cdot g= \Sepi{j}$. Assume without loss of generality that $(i,j)=(0,1)$. Note that by definition of $\prec$, the action of $g$ preserves the order $\prec$, i.e.\ for each $\Sep\prec \Sepp$, we must have $\Sep \cdot g\prec \Sepp \cdot g$. 
We now consider the sequence of separations $\Seppi{i}_{i\geq 0}$ defined for each $i\geq 0$ by: 
$\Seppi{i}:=\Sepi{0}\cdot g^{i}$. Then the sequence $\Seppi{i}_{i\geq 0}$ is strictly decreasing according to $\prec$.
Recall that there exist $k$ vertex-disjoint paths from $S_0$ to $S_1$ that extend to $k$ disjoint rays belonging to $\omega$. Then for each $i\geq 0$, there exist $k$ vertex-disjoint paths from $S'_i$ to $S'_{i+1}$ such that their concatenations consists in $k$ vertex-disjoint rays that belong to some end $\omega'$ of degree exactly $k$ (the fact that the end has degree at most $k$ follows from the fact that all the sets $S_i'$ are separators of size $k$ in $G$). If $\omega'\neq \omega$ then we are done, so we assume that $\omega'=\omega$. Now, observe that the sequence $\Sepppi{i}_{i\geq 0}$ defined for each $i\geq 0$ by $\Sepppi{i}:=\Sepi{0}\cdot g^{-i}$ also satisfies that for each $i\geq 0$, there exists $k$ vertex-disjoint paths $P''_{j,i}:=P_{j,0}\cdot g^{-i}$ for $j\in [k]$ from $S''_{i+1}$ to $S''_i$. If we consider the $k$ vertex-disjoint rays obtained from the concatenation of the paths $P''_{j,k}$, these rays must belong to the same end $\omega''$ as for each $i$, the paths $P''_{j,k}$ are in the same component of $G-(S''_i \cup S''_{i+1})$. The end $\omega''$ must have degree exactly $k$ as each $\Sepppi{i}$ is a separation of order $k$.
Moreover the sequence $\Sepppi{i}_{i\geq 0}$ is strictly increasing according to $\prec$, hence $\omega$ and $\omega''$ cannot live in the same component of $G- S_0$. Thus we found an end $\omega''$ distinct from $\omega$ of degree $k$.
\end{proof}

\cref{prop: thin-ends} and its proof are reminiscent of Halin's classification of the different types of action an automorphism of a quasi-transitive locally finite graph $G$ can have  on the ends of $G$ \cite[Theorem 9]{Halin73}. However it is not clear for us whether \cref{prop: thin-ends} can be seen as an immediate corollary of Halin's work.

\subsection{Proof of \cref{thm: mainCTTD,thm: main2}}
\label{sec: proof}
Let $G$ be a locally-finite
quasi-transitive graph excluding $\Kinf$ as a minor and let $\Gamma$ be a group inducing a quasi-transitive action on $G$. Let $(T,\mathcal V)$,  with $\mathcal V=(V_t)_{t\in V(T)}$, be a $\Gamma$-canonical tree-decomposition  of adhesion at most 2 obtained by applying \cref{thm: Tutte} to $G$. By \cref{lem: quasitrans}, for each $t\in V_t$, $\Gamma_t$ acts quasi-transitively on $G_t:=G\llbracket V_t\rrbracket$. Moreover, as $G_t$ is a minor of $G$, it must also exclude $\Kinf$ as a minor.

We let $t_1,\ldots, t_m$ be representatives of the orbits of $V(T)/\Gamma$. 
For each finite torso $G_{t_i}$ of $(T, \mathcal V)$, we define $(\widetilde{T}_{t_i},\widetilde{\mathcal{V}}_{t_i})$ as the trivial tree-decomposition of $G_{t_i}$ (in which the tree $\widetilde{T}_{t_i}$ contains a single node). For each infinite, $3$-connected torso $G_{t_i}$ of $(T, \mathcal V)$, we let $(\widetilde{T}_{t_i}, \mathcal{\widetilde{V}}_{t_i})$ be a 
$\Gamma_{t_i}$-canonical tree-decomposition of $G_{t_i}$ obtained by applying \cref{thm: tree-tangles} to $G_{t_i}$, i.e.\ $(\widetilde{T}_{t_i}, \mathcal{\widetilde{V}}_{t_i})$
distinguishes efficiently all the tangles of $G_{t_i}$ of order $4$. By \cref{rem: relevant}, the edge-separations of $(\widetilde{T}_{t_i}, \mathcal{\widetilde{V}}_{t_i})$ in $G_{t_i}$ are all distinct. By \cref{rem: degenerate-tangle}, the edge-separations of $(\widetilde{T}_{t_i}, \mathcal{\widetilde{V}}_{t_i})$ in $G_{t_i}$ are non-degenerate. 
Hence by \cref{lem: torso-minor},  the torsos of $(\widetilde{T}_{t_i},\mathcal{\widetilde{V}}_{t_i})$ are minors of $G_{t_i}$.  We now use \cref{cor: refinement} and find a refinement $(T_1,\mathcal V_1)$ of $(T,\mathcal V)$ with respect to some family $(T_t,\mathcal V_t)_{t\in V(T)}$ of $\Gamma_t$-canonical tree-decompositions of $G_t$
such that the construction $t\mapsto (T_t,\mathcal V_t)_{t\in V(T)}$ is $\Gamma$-canonical and  such that for each $i\in I$, $(T_{t_i}, \mathcal V_{t_i})$ is a  subdivision of $(\widetilde{T}_{t_i}, \mathcal{\widetilde{V}}_{t_i})$. 
Since the construction $t\mapsto (T_t, \mathcal V_t)_{t\in V(T)}$ is $\Gamma$-canonical, for each $t\in V(T_1)$ the decomposition $(T_t, \mathcal V_t)$ is $\Gamma_t$-canonical and efficiently distinguishes  the tangles of order $4$ of $G_t$ (by a slight abuse of notation, we keep denoting by $G_t$ the torso of the tree-decomposition $(T_1,\mathcal V_1)$ associated to the node $t\in V(T_1)$).
Note that by construction, the adhesion sets of $(T_1,\mathcal V_1)$ have size at most $3$ and all the edge-separations are tight. Moreover, the torsos of each tree-decomposition $(T_t, \mathcal V_t)$ are minors of $G_t$ for each $t\in V(T)$, and as the torsos of $(T,\mathcal V)$ are minors of $G$, we also have that the torsos of $(T_1,\mathcal V_1)$ are minors of $G$. In particular, they also exclude $\Kinf$ as a minor. Moreover, by \cref{lem: quasitrans}, for each $t\in V(T_1)$, $\Gamma_t$ acts quasi-transitively on  $G_{t}$. By \cref{lem: TWcut}, since all edge-separations of $(T_1,\mathcal V_1)$ are tight and have order at most 3, the graph $G_t$ is locally finite for each $t\in V(T_1)$.

\begin{claim}\label{clth}For each $t\in V(T_1)$ such that $G_t$ is infinite, $G_t$ is 3-connected and has a unique tangle $\mathcal{T}_t$ of order 4. Moreover $\mathcal{T}_t$ is a $\Gamma_t$-invariant region tangle and every end of $G_t$ has degree at least 4.
\end{claim}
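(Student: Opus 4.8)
The plan is to first pin down where $G_t$ comes from and then establish the four assertions of the claim more or less independently. By construction, an infinite torso $G_t$ of $(T_1,\mathcal V_1)$ is an infinite torso $H\torso{V_t}$ of the tree-decomposition $(\widetilde T,\widetilde{\mathcal V})$ obtained by applying \cref{thm: tree-tangles} with $k=4$ to some infinite $3$-connected torso $H$ of the Tutte-type decomposition of \cref{thm: Tutte}. I would recall from the construction that $(\widetilde T,\widetilde{\mathcal V})$ has adhesion at most $3$, all its edge-separations are tight and non-degenerate, and it distinguishes all tangles of order at most $4$ of $H$; that $G_t$ is a \emph{faithful} minor of $H$ (\cref{lem: torso-minor}); that $G_t$ is locally finite (\cref{lem: TWcut}); and that $\Gamma_t$ acts quasi-transitively on $G_t$ (\cref{lem: quasitrans}), so in particular $G_t$ is infinite, locally finite, quasi-transitive, and $K_\infty$-minor-free.

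Next I would check $G_t$ is $3$-connected. Suppose it has a proper separation $(Y,S,Z)$ with $|S|\le 2$. Each adhesion set of $(\widetilde T,\widetilde{\mathcal V})$ meeting $V_t$ is a clique of $G_t$ of size at most $3$, hence cannot meet both $Y$ and $Z$, so it lies in $Y\cup S$ or in $Z\cup S$. Assigning, for every neighbour $t'$ of $t$ in $\widetilde T$, the union of the bags strictly beyond $t'$ to the side of $Y$ or $Z$ to which the adhesion set $V_t\cap V_{t'}$ belongs produces a proper separation of $H$ of order $|S|\le 2$; the tree-decomposition property guarantees there is no edge across it, contradicting $3$-connectivity of $H$ (which has at least $4$ vertices). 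Since $G_t$ is infinite it has at least $4$ vertices, so $G_t$ is $3$-connected.

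The crux — and the step I expect to be the main obstacle — is uniqueness of a tangle of order $4$. Fix a faithful model $\mathcal M=(M_v)_{v\in V_t}$ of $G_t$ in $H$. For an order-$4$ tangle $\mathcal T$ of $G_t$, let $\widehat{\mathcal T}:=\{\sigma\in\separ_{<4}(H):\pi_{\mathcal M}\sigma\in\mathcal T\}$ be its lift, an order-$4$ tangle of $H$ by \cref{lem: RS}. I would show that $\widehat{\mathcal T}$ orients every edge-separation of $(\widetilde T,\widetilde{\mathcal V})$ towards the node $t$: writing the edge-separation of an edge $e$ as $(Y_e,S_e,Z_e)$ with $t$ in the component of $\widetilde T-e$ corresponding to $Z_e$, the tree-decomposition property gives $V_t\cap Y_e=\emptyset$, so no part $M_v$ with $v\in V_t$ is contained in $Y_e$, whence $\pi_{\mathcal M}(Y_e,S_e,Z_e)=(\emptyset,S',Z')$ with $|S'|\le|S_e|\le 3$; its reverse $(Z',S',\emptyset)$ cannot lie in $\mathcal T$ (apply axiom~\ref{tangle: it2} three times), so by~\ref{tangle: it1} we get $(\emptyset,S',Z')\in\mathcal T$, i.e.\ $(Y_e,S_e,Z_e)\in\widehat{\mathcal T}$. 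If now $\mathcal T_1\ne\mathcal T_2$ were two order-$4$ tangles of $G_t$, then $\widehat{\mathcal T_1}\ne\widehat{\mathcal T_2}$ (lifting is injective by \cref{rem: proj-inj}, $G_t$ being a faithful minor of a torso of a finite-adhesion tree-decomposition), and since $(\widetilde T,\widetilde{\mathcal V})$ distinguishes all order-$\le 4$ tangles of $H$, some edge-separation would orient $\widehat{\mathcal T_1}$ and $\widehat{\mathcal T_2}$ oppositely — impossible, as both point towards $t$. Hence $G_t$ has at most one tangle of order $4$.

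Finally I would settle existence, the type of $\mathcal T_t$, and the end degrees by a short analysis of the ends of $G_t$. Being infinite, locally finite and connected, $G_t$ has an end $\omega$, and $\mathcal T_\omega^4$ is an order-$4$ tangle (this is the folklore fact recalled above), so combined with the previous paragraph $G_t$ has a \emph{unique} order-$4$ tangle $\mathcal T_t$; it is $\Gamma_t$-invariant because $\Gamma_t=\Stab_\Gamma(V_t)$ (\cref{rem: stab}, the bag being infinite and the adhesion finite) acts on $G_t$ by automorphisms and so permutes its single order-$4$ tangle. Then $G_t$ has no end of degree exactly $3$: such an end $\omega$ would, by \cref{prop: thin-ends}, be accompanied by a second end $\omega'\ne\omega$ of degree exactly $3$, and since these two ends of degree $3$ are separated by a set of size $3$, the tangles $\mathcal T_\omega^4$ and $\mathcal T_{\omega'}^4$ would be distinct order-$4$ tangles of $G_t$, contradicting uniqueness. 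Since a tangle of order $4$ is either evasive or a region tangle, and in a $3$-connected graph an evasive order-$4$ tangle is the tangle induced by an end of degree $3$, it follows that $\mathcal T_t$ is a region tangle. Finally, no end of $G_t$ has degree at most $2$ (if $G_t$ is one-ended its end is thick by \cite[Proposition~5.6]{Thomassen92}; if $G_t$ has at least two ends, an end of degree $\le 2$ would be separated from some other end by a set of size $\le 2$, giving a proper order-$\le 2$ separation of the $3$-connected $G_t$), and no end has degree $3$ as just shown, so every end of $G_t$ has degree at least $4$.
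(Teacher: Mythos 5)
Your proof is correct and follows essentially the same route as the paper's: argue $3$-connectivity from the tree-decomposition, lift order-$4$ tangles of $G_t$ injectively via a faithful model, observe they all orient every edge-separation toward $t$ so that the tangle-distinguishing property yields uniqueness, obtain existence from an end, and deduce $\Gamma_t$-invariance from uniqueness, the end-degree bound from \cref{prop: thin-ends}, and the region-tangle property from the end-degree bound. The only (cosmetic) differences are that you lift into the $3$-connected Tutte torso $H$ rather than into $G$ and you spell out the $3$-connectivity of $G_t$ and the exclusion of ends of degree $\leq 2$, both of which the paper handles more tersely.
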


\begin{proofofclaim} Consider a node $t\in V(T_1)$ such that $G_t$ is infinite. As all torsos are cycles, subgraphs of complete graphs of size at most 3, or 3-connected, $G_t$ itself is 3-connected. Since $G_t$ is connected and infinite, it contains some end $\omega$. Let $\mathcal T_{t}:=\sg{\Sep, |S|\leq 3 \text{ and } \omega \text{ lives in }Z}$ be defined in $G_t$. Note that $\mathcal T_{t}$ is a tangle of order 4 in $G_t$. As $G_t$ is a minor of $G$, by \cref{lem: RS} every tangle $\T'$ of order $4$ in $G_t$ induces a tangle $\T$ of order $4$ in $G$, and by \cref{rem: proj-inj} this mapping is injective. Moreover, note that if $\Sep$ is an edge-separation of $(T_1,\mathcal V_1)$ such that $V_t\subseteq Z\cup S$, then if $\mathcal M$ is any faithful model of $G_t$ in $G$, the projection $\Sepp:= \pi_{\mathcal M}\Sep$ is such that $Y'=\emptyset$. Thus $\Sepp\in \T'$, hence $\Sep\in \T$. This means that every edge-separation of $(T_1,\mathcal V_1)$ is oriented toward $t$ by $\T$. Hence if $G_t$ admits two distinct tangles $\T'_1, \T'_2$ of order $4$, the two associated tangles $\T_1, \T_2$ given by \cref{lem: RS} must be distinct and not distinguished by $(T_1, \mathcal V_1)$, a contradiction. This proves the existence and uniqueness of a tangle $\T_t$ of order $4$ in $G_t$.

Note that as $\Gamma_t$ acts on $G_t$ and $\mathcal T_{t}$ is the unique tangle of order 4 in $G_t$, the tangle $\mathcal T_{t}$ is $\Gamma_t$-invariant (as a family of separations). 

We can also observe that if the end $\omega$ in $G_t$ has degree at most 3, then by \cref{prop: thin-ends}, $G_t$ has another end $\omega'$ of degree at most 3 and the construction of $\mathcal T_{t}$ using the end $\omega'$ instead of $\omega$ yields a different tangle of order 4, which contradicts the uniqueness of $\mathcal T_{t}$. So every end of $G_t$ has degree at least 4.

It remains to prove that $\mathcal T_{t}$ is a region tangle. If not we can find an infinite decreasing sequence of separations of order 3 in $G_t$, and this sequence defines an end of degree 3 in $G_t$, which contradicts the fact that every end of $G_t$ has degree at least 4.
\end{proofofclaim}

We will need to decompose further the infinite torsos of the tree-decomposition $(T_1,\mathcal{V}_1)$.
Let $t\in V(T_1)$ be such that $G_t$ is infinite, and  let $\mathcal T_t$ be the region tangle of order $4$ in $G_t$ given by \cref{clth}.  We let $M_t:=\Exnd{\T_t}$ denote the set of crossedges  of $\mathcal T_t$, $(T'_t,\mathcal V'_t)$ be the $\Gamma_t$-canonical tree-decomposition of $G_t$ given by \cref{lem: cross-star}, and $z_0\in V(T_t)$ be the center of the star $T'_t$. By \cref{lem: quasitrans,lem: Grominor}, the graph $H:=G_t\torso{V'_{z_0}}$ is a $\Gamma_t$-quasi-transitive faithful minor of $G_t$, thus it must also exclude $\Kinf$ as a minor. 

Now we observe that $\Gamma_t$ induces a quasi-transitive group action on $H\Contr{M_t}$: for each $w\in V(H\Contr{M_t})$ and every $\gamma \in \Gamma_t$, we set:
$$ w \cdot \gamma:= \begin{cases}
	s_{u \cdot \gamma, v\cdot \gamma}&\text{if $w=s_{u,v}$, for some $\sg{u,v}\in M_t$, and}\\
	w\cdot \gamma &\text{otherwise},\\
\end{cases}$$
where we recall that the notation $s_{u,v}$, for $\sg{u,v}\in M_t$, is
introduced at the beginning of \cref{sec: all}. As $M_t$ is $\Gamma_t$-invariant, we easily see that the mapping $\gamma$ defines a bijection over $V(H\Contr{M_t})$. We let the reader check that it gives a graph isomorphism of $H\Contr{M_t}$. Note that the number of $\Gamma_t$-orbits of $V(H\Contr{M_t})$ is at most the number of $\Gamma_t$-orbits of $V(H)$, hence it must be finite.

\smallskip

As $H\Contr{M_t}$ is a minor of $H$, it also excludes the countable clique $K_\infty$ as a minor. It follows from \cref{thm: GroRegion}  that $H\Contr{M_t}$ is quasi-$4$-connected.
Hence, by \cref{cor:quasiplanartw}, $H\Contr{M_t}$  either has finite treewidth or it is planar. 
It is not hard to observe that the treewidth of $H$ is at most twice the treewidth of $H\Contr{M_t}$ so in particular if we are in the first case, $H$ has also bounded treewidth.
In the second case, \cref{prop: planar} implies that $H$ is also planar. In both cases, we obtain that $(T_t', \mathcal V_t')$ is a $\Gamma_t$-canonical tree-decomposition of $G_t$ with non-degenerate edge-separations, adhesion $3$ and where each torso is a minor of $G_t$ and has either bounded treewidth or is planar.
Eventually we can use \cref{prop: refinement} together with \cref{lem: canonical} as we did before to find a tree-decomposition $(T^{*}, \mathcal V^{*})$ of $G$ with the properties of \cref{thm: main2}.

We now explain how to derive \cref{thm: mainCTTD}: every torso $G\torso{V_t}$ of $(T^{*}, \mathcal V^{*})$ which is neither finite nor planar must have bounded treewidth, hence by \cref{thm: tw-ends} it must admit a $\Gamma_t$-canonical tree-decomposition where each torso has bounded width. Exactly as before we can apply  \cref{cor: refinement} to find a refinement of $(T^{*}, \mathcal V^{*})$ with the properties of \cref{thm: mainCTTD}.
\qed

\section{Applications}\label{sec: appli}

\subsection{The Hadwiger number of quasi-transitive graphs}\label{sec: hadwiger}

%A crucial property of tree-decompositions is the following: if for some integer $k$, $G$ has a tree-decomposition $(T,(V_t)_{t\in V(T)})$ such that for every $t\in V(T)$, the torso $G\torso{V_t}$ is $K_k$-minor-free, then $G$ itself is $K_k$-minor-free. 

We say that a graph $H$ is \emph{singly-crossing} if $H$ can be embedded in the plane with a single edge-crossing.
    It was observed by Paul Seymour that Theorem \ref{thm: main2} bears striking similarities with a structure theorem of Robertson and Seymour \cite{RS93-1} related to the exclusion of a singly-crossing graph as a minor. Their theorem states that if $H$ is singly-crossing, then there is a constant $k_H$ such that any graph excluding $H$ as a minor has a tree-decomposition with adhesion at most 3 in which all torsos are planar or have treewidth at most $k_H$. On the other hand, for any integer $k$ there is a finite singly-crossing graph $H_k$ such that any graph with a tree-decomposition with adhesion at most 3 in which all torsos are planar or have treewidth at most $k$ must exclude  $H_k$ as a minor (this can be seen by taking $H_k$ to be a 4-connected triangulation of a sufficiently large grid, and adding an edge between two non-adjacent vertices lying on incident faces).
Using this observation, the following strengthening of \cref{intro:had} is now an immediate  consequence of \cref{thm: main2}.

\begin{theorem}
\label{thm: had}
For every locally finite quasi-transitive graph $G$ avoiding the countable clique $K_\infty$ as a minor, there is a finite singly-crossing  graph $H$ such that $G$ is $H$-minor-free. In particular there is an integer $k$ such that $G$ is $K_k$-minor-free.
\end{theorem}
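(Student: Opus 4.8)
The statement to prove is \cref{thm: had}: for every locally finite quasi-transitive $G$ avoiding $K_\infty$ as a minor, there is a finite singly-crossing graph $H$ with $G$ being $H$-minor-free; in particular $G$ is $K_k$-minor-free for some finite $k$. The plan is to derive this directly from \cref{thm: main2} together with the combinatorial observation (attributed to Seymour) that classes admitting tree-decompositions of adhesion $\le 3$ whose torsos are planar or of bounded treewidth are exactly characterised, up to boundedness of the treewidth parameter, by the exclusion of a singly-crossing minor.

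First I would apply \cref{thm: main2} to $G$ to obtain an integer $k$ and a (canonical) tree-decomposition $(T,\mathcal V)$ of adhesion at most $3$ whose torsos $G\torso{V_t}$ are each either planar or of treewidth at most $k$. The key point is that $k$ is a single integer valid for the whole decomposition. Next I would invoke the structural fact recalled just before the statement: for each integer $k$ there is a finite singly-crossing graph $H_k$ (concretely, a $4$-connected triangulation of a large enough grid with one extra edge added between two non-adjacent vertices on incident faces) such that \emph{any} graph which has a tree-decomposition of adhesion at most $3$ in which every torso is planar or has treewidth at most $k$ must be $H_k$-minor-free. Setting $H := H_k$ for the $k$ produced by \cref{thm: main2} then immediately gives that $G$ is $H$-minor-free. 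The ``in particular'' clause follows because $H$ being finite is in particular a minor of some complete graph $K_{|V(H)|}$, so $G$ is $K_{|V(H)|}$-minor-free as well; one may of course take the smallest such $k$.

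The one substantive point to justify carefully is the claim that a graph with a tree-decomposition of adhesion $\le 3$ and torsos that are planar or of treewidth $\le k$ cannot contain $H_k$ as a minor. For this I would argue as follows. Suppose such a graph $G'$ contained $H_k$ as a minor, and fix a model of $H_k$. Since $H_k$ is $4$-connected (hence $3$-connected) and the adhesion of the decomposition is at most $3$, a standard argument shows that for every edge $tt'$ of $T$, the separator $V_t\cap V_{t'}$ of size $\le 3$ cannot separate the branch sets of the model into two nonempty parts in a way that would split $H_k$; more precisely, the model of a $3$-connected graph is ``confined'' to a single part of a tree-decomposition of adhesion $\le 3$ after contracting the small separators, so $H_k$ is a minor of some single torso $G\torso{V_t}$ (possibly after using the $\le 3$ vertices of an adhesion set as a contracted blob). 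But a planar graph has no $H_k$-minor since $H_k$ is non-planar (it has a vertex pair joined across a crossing in every planar drawing, i.e.\ crossing number $1$ but not $0$), and a graph of treewidth $\le k$ has no $H_k$-minor provided $H_k$ was chosen with treewidth exceeding $k$ — which is guaranteed by taking the underlying grid large enough, since large grids have large treewidth and adding one edge changes treewidth by at most one. This contradiction establishes the claim.

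The main obstacle I anticipate is precisely this confinement step: making rigorous that a $3$-connected (indeed $4$-connected) minor of a graph with a tree-decomposition of adhesion $\le 3$ lives inside a single torso. The clean way to handle it is to pass to torsos: one shows that if $H_k$ is a minor of $G'$ and $(T,\mathcal V)$ has adhesion $\le 3$, then $H_k$ is a minor of $G'\torso{V_t}$ for some $t$, by an induction on $T$ that repeatedly uses the fact that a $3$-connected graph's model, when cut by a separator of size $\le 3$, must have all but a ``small'' piece on one side, and that small piece can be absorbed into the adhesion set of the torso. This is exactly the kind of reasoning underlying \cref{lem: torso-minor} and the tangle machinery of \cref{sec: Tangles}, so it can be quoted or reproduced briefly. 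Once this is in place, the rest of \cref{thm: had} is a one-line consequence of \cref{thm: main2} and the choice of $H_k$.
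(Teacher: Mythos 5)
Your proposal is correct and follows exactly the paper's route: apply \cref{thm: main2} to obtain the adhesion-$\le 3$ tree-decomposition with torsos planar or of treewidth at most $k$, then observe that a $4$-connected singly-crossing graph $H_k$ (a triangulation of a large grid plus a non-planarity-forcing edge) with treewidth $> k$ cannot be a minor of such a graph, since $4$-connectedness confines any $H_k$-model to a single torso. The paper states the confinement step without proof ("this can be seen by taking\ldots"), whereas you sketch it; otherwise the two proofs are identical.
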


%\begin{proof}
%    We apply \cref{thm: main2} to $G$, and obtain a tree-decomposition $(T,(V_t)_{t\in V(T)})$ of $G$ where all torsos are planar or have treewidth at most $s$, for some fixed integer $s$. In particular all torsos are $K_{k}$-minor-free with $k=\max\{5,s+1\}$ and the observation above shows that $G$ itself is $K_k$-minor-free.
%\end{proof}

Note that in this application we have not used explicitly the property that the underlying tree-decomposition was canonical, but it is used implicitly in the sense that this is what garantees that the treewidth of the torsos is uniformly bounded in \cref{thm: main2}.

\subsection{Accessibility in quasi-transitive graphs avoiding a minor}\label{sec: vaccess}

We now prove \cref{intro:vacc}, which we restate here for convenience.

\begin{theorem}
\label{thm: access}
    Every locally finite quasi-transitive graph avoiding the countable clique $K_\infty$ as a minor is vertex-accessible.
\end{theorem}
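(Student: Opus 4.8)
The plan is to deduce \cref{thm: access} from the structure theorem \cref{thm: main2} together with known accessibility results for the two types of torsos that appear (finite-treewidth graphs and planar graphs), using the fact that a tree-decomposition of bounded adhesion whose parts are all vertex-accessible yields a vertex-accessible graph. So first I would fix a group $\Gamma$ acting quasi-transitively on $G$, apply \cref{thm: main2} to obtain an integer $k$ and a $\Gamma$-canonical tree-decomposition $(T,\mathcal V)$ of adhesion at most $3$ whose torsos $G\torso{V_t}$ are $\Gamma_t$-quasi-transitive minors of $G$, each either planar or of treewidth at most $k$. Since each torso is $\Gamma_t$-quasi-transitive and locally finite, I can invoke the existing results: if $G\torso{V_t}$ has bounded treewidth it is vertex-accessible by \cref{thm: tw-ends} (all its ends are thin, and in fact a bounded-width canonical tree-decomposition immediately bounds the size of separators between ends), and if $G\torso{V_t}$ is planar it is vertex-accessible by the theorem of Dunwoody \cite{Dunwoody07} (equivalently \cite{HamannPlanar,HamannAccessibility}) on quasi-transitive planar graphs. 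In either case there is a uniform bound $k'$ (depending only on $k$) on the number of vertices needed to separate any two ends of any torso, because there are only finitely many torsos up to the action of $\Gamma$.

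The second step is to transfer vertex-accessibility from the torsos to $G$ itself along the tree-decomposition. Here I would argue as follows. Let $\omega_1,\omega_2$ be two distinct ends of $G$. Each end $\omega_i$ either lives eventually inside a single part $G[V_t]$ of the decomposition (equivalently, all of its rays are captured by the tangle of order at most $4$ concentrated at some fixed node $t$), or it is ``spread out'' over infinitely many parts. In the first case, if $\omega_1$ and $\omega_2$ both live in the same part $G[V_t]$, they induce distinct ends of the torso $G\torso{V_t}$ (two rays equivalent in the torso but not in $G$ would have to be separated by an adhesion set, but adhesion sets have size at most $3$, so this can only merge boundedly many ends — more carefully, an end of $G$ living in $V_t$ corresponds to an end of $G[V_t]$, and distinct ends of $G$ living in $V_t$ give distinct ends of $G\torso{V_t}$), and then a separator of size at most $k'$ in the torso, together with the at most $3\cdot k'$ vertices of the adhesion sets it might touch, separates $\omega_1$ from $\omega_2$ in $G$; so a set of size at most $C(k)$ suffices. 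In the second case, or when $\omega_1$ and $\omega_2$ live in different parts, there is an edge $e=tt'$ of $T$ on the ``path'' of the decomposition between where $\omega_1$ and $\omega_2$ concentrate, and the corresponding adhesion set (of size at most $3$) already separates $\omega_1$ from $\omega_2$ in $G$. In all cases the separator has size bounded by a constant depending only on $k$, which is exactly vertex-accessibility.

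For the bookkeeping of the first case, I would use the standard fact that, because the tree-decomposition has finite adhesion and the torsos are faithful minors of $G$ with $V_t \subseteq V(G)$, an end $\omega$ of $G$ that does not get separated off by any adhesion set corresponds canonically to an end of some torso $G\torso{V_t}$: indeed, $\omega$ then lives in an infinite component of $G[V_t]$ after deleting finitely many vertices, and rays of $\omega$ can be pushed into the torso (replacing any excursion of a ray into a branch $G[Y_s]$, $s\neq t$, by a path through the corresponding adhesion set, which survives as an edge in the torso). This gives a well-defined map from such ends of $G$ to ends of the torso; it is injective because two rays that are inequivalent in $G$ remain separated in the torso by the image of a finite separating set of $G$ restricted to $V_t$, plus the boundedly many adhesion-set vertices. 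Since there are finitely many $\Gamma$-orbits of nodes of $T$ (by \cref{rem: nbaretes}, as $(T,\mathcal V)$ is $\Gamma$-canonical with tight — here non-degenerate — edge-separations of bounded order), the accessibility bounds of the torsos are uniform, completing the argument.

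The step I expect to be the main obstacle is making the end-correspondence and the ``spread out'' versus ``localized'' dichotomy fully rigorous — in particular, checking that an end of $G$ not separated by adhesion sets really does induce a genuine (and distinct) end of the appropriate torso, and tracking how the vertex count of a separator in the torso inflates when lifted back to $G$ through the at most three vertices of each relevant adhesion set. The rest is a routine assembly of \cref{thm: main2}, \cref{thm: tw-ends}, and Dunwoody's planar accessibility result; one should also note, as the authors do, that this statement then immediately yields \cref{intro:acc} via \cref{thm: TW}.
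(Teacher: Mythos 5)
Your proposal is correct and follows essentially the same route as the paper: apply \cref{thm: main2}, use the quasi-transitive planar case (Dunwoody) and the bounded-treewidth case (\cref{thm: tw-ends}) to get a uniform accessibility bound on each torso, and then combine with the size-$\le 3$ adhesion sets that separate ends living in different parts. The paper in fact states this step even more tersely than you do, implicitly using the standard correspondence between ends of $G$ captured at a node $t$ and ends of $G\torso{V_t}$ and the fact that a torso separator is also a $G$-separator under bounded adhesion; your more cautious bookkeeping (the ``spread out'' dichotomy, the injectivity of the end map, the possible inflation of the separator through adhesion sets) is sound and simply spells out details the paper takes for granted.
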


\begin{proof}
    Let $G$ be a locally finite graph avoiding the countable clique $K_\infty$ as a minor, with a group $\Gamma$ acting quasi-transitively on $G$. Let $(T,\mathcal{V})$, with $\mathcal{V}=(V_t)_{t\in V(T)}$, be a $\Gamma$-canonical tree-decomposition of $G$ of adhesion at most 3 obtained by applying \cref{thm: main2} to $G$. In particular all torsos are quasi-transitive minors of $G$, and all non-planar torsos  have bounded treewidth. Observe that any two ends living in different parts of the tree-decomposition are separated  by the separator of size at most 3 of an edge-separation of the tree-decomposition. Consider any node $t\in V(T)$. If $G\torso{V_t}$ is planar then since it is quasi-transitive (by \cref{lem: quasitrans}) and locally finite, $G\torso{V_t}$ is vertex-accessible \cite[Theorem 3.8]{Dunwoody07} and thus there is an integer $k_t$ such that all pairs of ends lying in $G\torso{V_t}$  can be separated by a set of at most $k_t$ vertices.  If $G\torso{V_t}$ has bounded treewidth then by \cref{thm: tw-ends} there is a integer $k_t$ such that all ends of $G\torso{V_t}$  have degree at most $k_t$, and thus all pairs of ends lying in $G\torso{V_t}$ can be separated by a set of at most $k_t$ vertices. As $V(T)/\Gamma$ is finite; there is only a finite number of possible values for the integers $k_t$, $t\in V(T)$, and thus their maximum $k$ is well-defined. We have proved that every pair of ends in $G$ can be separated by a set of at most $\max\{k,3\}$ vertices, which concludes the proof.
\end{proof}

\subsection{Finite presentability of minor-excluded groups}\label{sec: access}

A \emph{walk} in a graph $G$ is a finite sequence of vertices $W=(v_1,\ldots, v_k)$ where for each $i\in [k-1]$, $v_i=v_{i+1}$ or $v_iv_{i+1}\in E(G)$. We call $W$ a \emph{closed walk} when $v_1= v_k$ or $v_1v_k \in E(G)$. We let $\mathcal W(G)$ denote the set of closed walks of $G$. If $W$ is a closed walk that contains a \emph{spur}, i.e.\ if there is some $i$ such that $v_{i-1}=v_{i+1}$, then we say that  $W'=(v_1,\ldots, v_{i-1},v_{i+1},\ldots, v_k)$ is obtained from $W$ by  \emph{deleting the spur}. The inverse operation of \emph{adding a spur} consists in adding a neighbor of $v$ between $v_i$ and $v_{i+1}$ in the walk $W$ if $v_i=v_{i+1}=v$. Similarly, by \emph{deleting a repetition} we mean replacing $W$ by $(v_1,\ldots,v_{i-1},v_{i+1},\ldots, v_k)$ if $v_i=v_{i+1}$, and the inverse operation of \emph{adding a repetition} consists in replacing $W$ by $(v_1,\ldots,v_{i-1},v_i,v_i,v_{i+1},\ldots, v_k)$, for some $1\le i \le k$.
The \emph{rotation} of $W$ is the walk $(v_2,v_3\ldots, v_k,v_1)$, and
the \emph{reflection} of $W$ is the walk $(v_k,v_{k-1}\ldots,
v_2,v_1)$.

\smallskip

If $W=(v_1,\ldots, v_k)$ and $W'=(v'_1,\ldots,v'_\ell)$ are two walks such that $v_k=v'_1$, then their \emph{sum} is the walk $W\cdot W':=(v_1,\ldots,v_k=v'_1, \ldots, v'_\ell)$. We will say that a set of closed walks $\mathcal W$ \emph{generates} another set of closed walks $\mathcal{W}'$ if every element of $\mathcal W'$ can be obtained from elements of $\mathcal W$ by adding and deleting spurs and repetitions, and performing sums, reflections and rotations. 

\medskip

The following result was proved in \cite[Theorem 5.12]{HamannPlanar} when $G$ is a quasi-transitive locally finite planar graph. We reuse some of the arguments of the proof of \cite[Proposition 5.9]{HamannPlanar} and combine them with our structure theorem to extend the result to graphs excluding the countable clique $K_\infty$ as a minor.

\begin{theorem}
 \label{thm: finpres}
 Let $G$ be a locally finite graph excluding the countable clique $K_{\infty}$ as a minor and let $\Gamma$ be a group acting quasi-transitively on $G$. Then the set of closed walks of $G$ admits a $\Gamma$-invariant generating set with finitely many $\Gamma$-orbits.
\end{theorem}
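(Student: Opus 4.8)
The plan is to deduce \cref{thm: finpres} from the structure theorem \cref{thm: main2} together with the combining machinery for canonical tree-decompositions developed in \cref{sec: III}, by an induction that mirrors the proof of \cref{thm: mainCTTD,thm: main2} itself. The key observation is that "the closed walks of $G$ admit a $\Gamma$-invariant generating set with finitely many $\Gamma$-orbits" is a property that behaves well along canonical tree-decompositions of finite adhesion: if $(T,\mathcal V)$ is a $\Gamma$-canonical tree-decomposition of $G$ with tight edge-separations of bounded order, then every closed walk of $G$ can be written (via sums, rotations, reflections, adding/deleting spurs and repetitions) as a combination of closed walks each living inside a single part $G[V_t]$, together with a bounded family of "tripod" closed walks associated to the adhesion sets (using that each adhesion set has size at most $3$ and that $G$ is connected, fix for each adhesion set a canonical choice of paths between its vertices inside the two sides). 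Formally one wants a lemma of the following shape: if for each $t$ in a set of orbit representatives the closed walks of the torso $G\torso{V_t}$ admit a $\Gamma_t$-invariant generating set with finitely many $\Gamma_t$-orbits, then so do the closed walks of $G$. I would first isolate and prove this lemma; it is essentially the walk-theoretic analogue of \cref{lem: canonical}/\cref{cor: refinement}, and the arguments of \cite[Proposition 5.9]{HamannPlanar} carry the main content.

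Granting this reduction lemma, the proof of \cref{thm: finpres} proceeds by applying \cref{thm: main2} to obtain a $\Gamma$-canonical tree-decomposition $(T,\mathcal V)$ of adhesion at most $3$ with tight (indeed non-degenerate) edge-separations, whose torsos $G\torso{V_t}$ are $\Gamma_t$-quasi-transitive minors of $G$ that are planar or have treewidth at most $k$. By \cref{lem: quasitrans} and \cref{lem: TWcut} there are finitely many $\Gamma$-orbits of nodes, so it suffices to verify the hypothesis of the reduction lemma for each torso. There are two cases. If $G\torso{V_t}$ is planar, then it is a quasi-transitive locally finite planar graph and \cite[Theorem 5.12]{HamannPlanar} gives directly a $\Gamma_t$-invariant generating set of its closed walks with finitely many $\Gamma_t$-orbits. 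If $G\torso{V_t}$ has treewidth at most $k$, then by \cref{thm: tw-ends} it admits a $\Gamma_t$-canonical tree-decomposition with tight edge-separations and finite width; applying the reduction lemma again to this tree-decomposition reduces to the case of the (finite) torsos of that decomposition, and a finite graph trivially has finitely many closed walks up to rotation/reflection, hence its closed walks have a finite generating set, which can be made $\Gamma_t'$-invariant by closing under the (finite) group action.

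The main obstacle I expect is the precise bookkeeping in the reduction lemma: one must show that the "interface" closed walks coming from the adhesion sets can be chosen in a $\Gamma$-invariant way and that only finitely many $\Gamma$-orbits of them are needed, and one must be careful that a closed walk of $G$ which repeatedly crosses between many parts can indeed be decomposed into the prescribed pieces using only the allowed moves. The key tools are that adhesion sets have size at most $3$ (so bounded-length connecting paths exist and can be chosen canonically, e.g. shortest paths with a fixed tie-break that is $\Gamma$-equivariant after passing to the finitely many orbits, or simply by choosing one representative per orbit and transporting), that the tree-decomposition has finite adhesion so the two notions of stabilizer coincide on infinite bags (\cref{rem: stab}), and that torsos are faithful minors so edges of torsos not present in $G$ can be replaced by short paths through the relevant component. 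A secondary technical point is handling the nested application of the lemma for bounded-treewidth torsos, but since \cref{thm: tw-ends} supplies a canonical tree-decomposition of finite width this is a finite descent and presents no real difficulty. Once the reduction lemma is in place, the rest is routine assembly, exactly paralleling \cref{sec: proof}.
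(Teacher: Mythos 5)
Your overall approach matches the paper's: obtain a canonical tree-decomposition from the structure theorem, then build a generating set for $\mathcal W(G)$ from generating sets for the torsos plus finitely many $\Gamma$-orbits of ``interface'' walks coming from the adhesion sets, using \cite[Theorem 25]{HamannPlanar} for the planar torsos. One cosmetic difference: the paper applies \cref{thm: mainCTTD} directly (so torsos are finite or planar, no further descent needed), while you apply \cref{thm: main2} and then descend once more via \cref{thm: tw-ends} inside the bounded-treewidth torsos. Both variants work; yours just trades one more application of the gluing lemma for a slightly coarser starting decomposition.

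There is, however, one underspecified step that deserves attention, because as written it cannot be patched the way you suggest. You write that the connecting paths inside adhesion sets can be chosen via ``shortest paths with a fixed tie-break that is $\Gamma$-equivariant after passing to the finitely many orbits, or simply by choosing one representative per orbit and transporting.'' Neither half quite works on its own: a $\Gamma$-equivariant choice of path for each adhesion pair $\{x,y\}$ need not exist (the stabilizer of $\{x,y\}$ can permute the candidate paths nontrivially), and ``transporting'' a fixed representative path $P_j$ via some $\gamma$ with $\{x_j,y_j\}\cdot\gamma=\{x,y\}$ is ambiguous since $\gamma$ is not unique. The paper's proof accepts this ambiguity: it defines $f(x,y)$ by an arbitrary choice of such $\gamma$, and then compensates by explicitly adding to the generating set a $\Gamma$-invariant family $\mathcal C$ of ``correction'' closed walks of the form $P\cdot P'^{-1}$ for $P,P'$ both in the finite set $\mathcal P_{x,y}$ of possible transported paths (with finitely many $\Gamma$-orbits by local finiteness and \cref{lem: TWcut}). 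This set $\mathcal C$ is what makes Claim 5.6's computation go through, where one has to handle $f(W\cdot\gamma)\neq f(W)\cdot\gamma$. Your proposal correctly flags this bookkeeping as the main obstacle and points to the right reference, but the mechanism needed is this correction set, not an equivariant tie-break.
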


\begin{proof}
 We consider a $\Gamma$-canonical tree-decomposition $(T,\mathcal V)$, with $\mathcal V=(V_t)_{t\in V(T)}$,  given by \cref{thm: mainCTTD}. 
We let
$A$ denote the set of pairs $\sg{x,y}$ of vertices of $G$ for which there exists an edge-separation $\Sep$ of $(T,\mathcal V)$ such that $x,y\in S$ and $xy\notin E(G)$. By \cref{rem: nbaretes}, as the edge-separations associated to $(T,\mathcal V)$ are tight, $E(T)/\Gamma$ is finite.
As $(T,\mathcal V)$ has finitely bounded adhesion, this implies that there is a finite number of
$\Gamma$-orbits of $A$. We let $\sg{x_1,y_1}, \ldots, \sg{x_\ell, y_\ell}$ 
be representatives of these orbits. For each $j \in [\ell]$ we let $P_{j}$ be a path from $x_j$ to $y_j$ (which always exists, since the edge-separations are tight). For each $\{x,y\}\in A$, we consider the representative $\sg{x_j, y_j}$ in the $\Gamma$-orbit of $\sg{x,y}$, and we define $f(x,y)$ as the image of  the path $P_{j}$ under an automorphism that maps $\sg{x_j, y_j}$ to $\sg{x, y}$. Note that $f(x,y)$ is an $(x,y)$-path in $G$.

We let $G^+$ be the graph obtained from $G$ by adding all possible edges $xy$ such that $xy\in E(G\torso{V_t})$ for some $t\in V(T)$. In other words the edge-set of $G^+$ is exactly $E(G)\uplus A$. For each walk $W$ in $G^+$, we define the walk $f(W)$ in $G$ as the walk obtained from $W$ by replacing every edge $(x,y)$ of $W$ such that $\sg{x,y}\in A$ by $f(x,y)$ (this definition extends the definition of $f$ above, which applied to walks $(x,y)$ of length 1 in $G^+$). For each set of walks $\mathcal S\subseteq \mathcal W(G^+)$, we let $f(\mathcal S):=\sg{f(W)\in \mathcal W(G), W\in \mathcal S}$.

\medskip

\begin{claim}
 \label{clm: fin-presaux}
 For every $W\in \mathcal W(G^+)$, 
 if $W_1, \ldots, W_k\in \mathcal W(G^+)$ generate $W$ in $\mathcal W(G^+)$,
 then $f(W)$ is generated by $f(W_1), \ldots, f(W_k)$ in $\mathcal W(G)$.
\end{claim}
\begin{proofofclaim}
 Let $W\in \mathcal W(G^+)$ be generated by $W_1, \ldots, W_k\in \mathcal W(G^+)$. We prove by induction on the number of operations needed to generate $W$ from $W_1,\ldots, W_k$ that $f(W)$ is generated by the closed walks $f(W_1), \ldots, f(W_k)$.
 
 If $W = W_i$ for some $i\in [k]$, then the result is immediate. 
 Assume that $W$ is obtained from some closed walk $W'$ after performing a rotation on $W'$, and that $W'$ is generated by $W_1, \ldots, W_k$. We write $W=(v_1,\ldots,v_r)$. If $v_1=v_2$ or $v_1v_2\in E(G)$, then $f(W)$ is obtained after performing a single rotation on $f(W')$. If $v_1v_2\in E(G^+)\setminus E(G)= A$, then  $f(W)$ is obtained after performing $|f(v_1,v_2)|$ rotations to $f(W')$.
 In any case if we assume by the induction hypothesis that $f(W')$ is generated by $f(W_1), \ldots, f(W_k)$, we are immediately done. The case where $W$ is obtained after performing a reflection on  $W'$ or adding/removing a repetition on a walk $W'$ is even simpler.

 \smallskip
 
 Now assume that $W$ is the concatenation of two walks $W',W''\in \mathcal W(G^+)$ for which the induction hypothesis holds. Then we observe by definition of $f$ that
 $f(W) = f(W')\cdot f(W'')$.
 Then again we conclude by the induction hypothesis that $f(W)$ is generated by $f(W_1), \ldots, f(W_k).$
 
 Assume now that $W= (v_1, \ldots, v_{i-1}, v_i, v_{i+1}, \ldots,  v_r)$ is obtained from 
 $W'=(v_1, \ldots, \allowbreak v_{i-1}, v_{i+1}, \ldots, v_r)$ 
 after adding the spur $v$ between $v_{i-1}$ and $v_{i+1}$ with $v_{i-1}=v_{i+1}$. We let $x:=v_{i-1}$ and $y:=v_{i}$ and distinguish two cases:
 \begin{itemize}
  \item If $xy\in E(G)$, then we observe that by definition of $f$, $f(W)$ is obtained from $f(W')$ after adding the same spur so we are done using the induction hypothesis on $W'$.
  \item If $xy\in A$, then $f(W)$ must be of the form $U_1\cdot f(x,y)\cdot f(x,y)^{-1} \cdot U_2$, where $U_1\cdot U_2= f(W')$. This means that $f(W)$ can be obtained from $f(W')=U_1\cdot U_2$ by adding $|f(x,y)|$ spurs, hence the induction hypothesis on $W'$ implies that $f(W)$ is generated by $f(W_1), \ldots, f(W_k)$.
 \end{itemize}
 
 Finally assume that $W=(v_1, \ldots, v_{i-1}, v_{i+1}, \ldots, v_r)$ is obtained from $W'=(v_1, \ldots, \allowbreak v_{i-1}, v_i, v_{i+1}, \ldots,  v_r)$ after deleting the spur $v_i$. Again we let $x:=v_{i-1}=v_{i+1}$ and $y:=v_{i}$ and distinguish two cases:
 \begin{itemize}
  \item If $xy\in E(G)$, then as above, $f(W)$ is obtained from $f(W')$ after the removal of a spur and we are immediately done by applying the induction hypothesis on $W'$.
  \item If $xy\in A$, then we claim that $f(W)$ is generated by $f(W')$ as $f(W)=U_1\cdot U_2$, with $U_1:=f((v_1,\ldots,x))$ and $U_2:=f((x,\ldots,v_r))$, and $f(W')=U_1\cdot f(x,y)\cdot f(x,y)^{-1}\cdot U_2$. This shows that $f(W)$ is obtained from $f(W')$ after deleting $|f(x,y)|$ spurs and we can conclude by the induction hypothesis applied to $W'$ that $f(W)$ is generated by $f(W_1), \ldots, f(W_k)$.
 \end{itemize}
This concludes the proof of \cref{clm: fin-presaux}.
\end{proofofclaim}

\begin{claim}
\label{clm: Gen-torsos}
 $\mathcal W(G)$ is generated by $\bigcup_{t\in V(T)} f(\mathcal W(G\torso{V_t}))$.
\end{claim}

\begin{proofofclaim}
 Let $W\in \mathcal W(G)$. First, note that $W$ can be generated in $G^+$ by closed walks of $\bigcup_{t\in V(T)} \mathcal W(G\torso{V_t})$. This comes from the following observation: fix any edge $t_1t_2$ in $T$, with associated separation $\Sep$ in $G$. Then any closed walk $W$ in $G$ can be written as the sum of closed walks in $G^+[Y\cup S]$ and $G^+[S\cup Z]$, followed by the removal of spurs corresponding to the edges of the adhesion $S$. 
 Thus we proved that $\mathcal W(G)$ is generated by $\bigcup_{t\in V(T)} \mathcal W(G\torso{V_t})$ in $G^+$.
 
 Now observe that as for each $W\in \mathcal W(G)$, $f(W)=W$, \cref{clm: fin-presaux} implies that $\mathcal W(G)$ is generated by $\bigcup_{t\in V(T)} f(\mathcal W(G\torso{V_t}))$ in $G$.
\end{proofofclaim}

As $G$ is locally finite, note that for every pair $\sg{x,y}\in A$, there are only finitely many paths of the form $P_j\cdot \gamma$ for some $(j,\gamma)\in [\ell]\times \Gamma$ having $x$ and $y$ as endpoints. For each $\sg{x,y}\in A$, we let $\mathcal P_{x,y}$ denote the set of all such paths and $\mathcal C_{x,y}$ denote the set of all closed walks of the form $P\cdot P'^{-1}$  with $P,P'\in \mathcal P_{x,y}$. Then $\mathcal C_{x,y}$ is finite for each $\sg{x,y}\in A$ and the set
$$\mathcal C:=\bigcup_{\sg{x,y}\in A}\mathcal C_{x,y}$$
is a $\Gamma$-invariant subset of $\mathcal W(G)$ with a finite number of $\Gamma$-orbits. We also consider the set of closed walks $\mathcal C'$ of $\mathcal W(G^+)$ of the form $xPy$ for each $\sg{x,y}\in A$ and $P\in \mathcal P_{x,y}$. Note that $f(\mathcal C')\subseteq \mathcal C$.

By \cref{rem: nbaretes}, $V(T)/\Gamma$ is finite. 
As for every $t\in V(T)$, $G\llbracket V_t\rrbracket$ is either finite or $\Gamma_t$-quasi-transitive planar, by \cite[Theorem 25]{HamannPlanar} the set $\mathcal W(G\torso{V_{t}})$ of closed walks of $G\llbracket V_{t}\rrbracket=G^+[V_t]$ has a generating set of cycles  with finitely many $\Gamma_t$-orbits.
We consider representatives $t_1,\ldots, t_m$ of each of the finitely many orbits $V(T)/\Gamma$, and for each $i\in [m]$, we let $\mathcal{W}_i$ be a finite set of closed walks of $G\llbracket V_{t_i}\rrbracket=G^+[V_{t_i}]$ such that $\mathcal W_i \cdot \Gamma_{t_i}$ generates $\mathcal W(G\torso{V_{t_i}})$.

\begin{claim}
 \label{clm: Gen}
 The set 
 $$\left(\bigcup_{i=1}^m f(\mathcal W_i) \right)\cdot \Gamma \cup \mathcal C$$ 
 generates $\bigcup_{t\in V(T)} f(\mathcal W(G\torso{V_t}))$ in $\mathcal W(G)$.
\end{claim}
\begin{proofofclaim}
 First, note that for each $i\in[m]$, \cref{clm: fin-presaux} implies that $f(\mathcal W_i \cdot \Gamma_{t_i})$ generates $f(\mathcal W(G\torso{V_{t_i}}))$.

 We first let $i\in [m]$ and show that $f(\mathcal W_i)\cdot \Gamma_{t_i}\cup \mathcal C$ generates $f(\mathcal W_i \cdot \Gamma_{t_i})$. We let $W_i\in \mathcal W_i$ and $\gamma\in\Gamma_{t_i}$. One has to be careful as in general the walks $f(W_i\cdot \gamma)$ and $f(W_i)\cdot \gamma$ are not the same. Nevertheless we show that $f(W_i\cdot \gamma)$ is generated by $f(W_i)\cdot \gamma$ and by the walks of $\mathcal C$, which is enough to conclude. Indeed, it is not hard to see that $f(W_i\cdot \gamma)$ is generated in $\mathcal W(G^+)$ by $f(W_i)\cdot \gamma$ and by the walks $xPy\in \mathcal C'$ for each pair $\sg{x,y}\in A$ of consecutive vertices of $W_i$, where $P\in \mathcal P_{x,y}$. Thus by \cref{clm: fin-presaux}, $f(W_i \cdot \gamma)=f(f(W_i\cdot \gamma))$ is generated by $f(W_i)\cdot \gamma=f(f(W_i)\cdot \gamma)$ and by the walks of $f(\mathcal C')\subseteq \mathcal C$.

 To conclude with the proof of the claim we let $t\in V(T)$, and $(i, \gamma)\in [m]\times \Gamma$ be such that $t=t_i\cdot \gamma$. We let $W\in \mathcal W(G\torso{V_t})$. Then there exist $\gamma\in \Gamma$ and $W'\in \mathcal W(G\torso{V_{t_i}})$ such that $W=W'\cdot \gamma$. The exact same arguments as in previous paragraph also apply to prove that $f(W)$ is generated by $f(W')\cdot \gamma$ together with the walks of $\mathcal C$. As we just proved above that $f(W')$ can be generated by finitely elements from $f(\mathcal W_i)\cdot \Gamma_{t_i}\cup \mathcal C$, we conclude that $f(W)$ can be generated by finitely elements from 
 $$(f(\mathcal W_i)\cdot \Gamma_{t_i}\cup \mathcal C)\cdot \gamma \cup \mathcal C\subseteq (f(\mathcal W_i)\cdot \Gamma)\cup \mathcal C,$$
 as desired.
\end{proofofclaim}

Combining \cref{clm: Gen-torsos,clm: Gen}, we obtain that $\mathcal W(G)$ is generated by $\left(\bigcup_{i=1}^m f(\mathcal W_i) \right)\cdot \Gamma \cup \mathcal C$ in $\mathcal W(G)$.
As $\mathcal C$ has a finite number of $\Gamma$-orbits, this  concludes the proof of \cref{thm: finpres}.
\end{proof}

We obtain the following consequence of \cref{thm: finpres} in the group setting, which is a restatement of  \cref{intro:fp}.

\begin{corollary}
 \label{cor: finpres}
 Let $\Gamma$ be a finitely generated $K_\infty$-minor-free group. Then $\Gamma$ is finitely presented.
\end{corollary}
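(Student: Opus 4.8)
The plan is to deduce \cref{cor: finpres} from \cref{thm: finpres} by translating the statement about closed walks of a Cayley graph into a presentation of the group. Let $\Gamma$ be a finitely generated $K_\infty$-minor-free group, and fix a finite symmetric generating set $S$ such that $G:=\Cay(\Gamma,S)$ is $K_\infty$-minor-free. Recall that $\Gamma$ acts transitively on $G$ by right multiplication, so in particular $G$ is locally finite and quasi-transitive. Applying \cref{thm: finpres} with this action, we obtain a $\Gamma$-invariant generating set $\mathcal{W}_0$ of $\mathcal{W}(G)$ with finitely many $\Gamma$-orbits; let $W_1,\ldots,W_r$ be representatives of these orbits, chosen so that each $W_i$ is a closed walk based at the neutral element $1_\Gamma$ (we may do this by rotating each orbit representative, since rotations are among the allowed operations and the action is transitive, so every orbit contains a walk through $1_\Gamma$).

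First I would recall the standard correspondence between closed walks in a Cayley graph and relators. Each edge of $G$ incident to a vertex $g$ corresponds to left-multiplication by some generator $a\in S$ (since $h=a\cdot g$); reading the labels along a closed walk $W$ based at $1_\Gamma$ yields a word $w(W)$ over $S$ which, viewed as an element of $F(S)$, maps to the identity in $\Gamma$ — i.e. $w(W)\in N(R)$ for the trivial set of relators $R$ containing only the words $aa^{-1}$. Conversely, every word representing the identity in $\Gamma$ is read off some closed walk based at $1_\Gamma$. The operations allowed in the definition of ``generates'' for closed walks translate exactly into the moves that generate the normal closure: adding/deleting spurs and repetitions corresponds to free reduction/insertion of $aa^{-1}$ (i.e. working in $F(S)$), sums correspond to concatenation of words, reflections correspond to taking inverses, and rotations of a walk based at $1_\Gamma$ correspond to cyclic conjugation $w\mapsto g^{-1}wg$ where $g$ is the group element reached after the first step — hence to conjugation by an element of $\Gamma$, which is a product of generators.

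Next I would set $R:=\{aa^{-1}: a\in S\}\cup\{w(W_1),\ldots,w(W_r)\}$, a finite set, and claim that $\langle S\mid R\rangle$ is a presentation of $\Gamma$. The map $F(S)\to\Gamma$ is surjective since $S$ generates, so it suffices to show its kernel is exactly $N(R)$. The inclusion $N(R)\subseteq\ker$ is clear since each $w(W_i)$ represents the identity. For the reverse inclusion, take $w\in F(S)$ with $w=1$ in $\Gamma$; then $w$ is (freely equivalent to) the word read along some closed walk $W$ based at $1_\Gamma$. By \cref{thm: finpres}, $W$ is generated, in $\mathcal{W}(G)$, from the $\Gamma$-translates of $W_1,\ldots,W_r$ via spurs, repetitions, sums, reflections and rotations. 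Translating each step through the dictionary above, each $\Gamma$-translate $W_i\cdot g$ (after rotating it to be based at $1_\Gamma$) contributes a conjugate of $w(W_i)^{\pm1}$ by an element of $\Gamma$, i.e.\ an element of $N(R)$; and each allowed operation corresponds to a move that stays within the coset $wN(R)$ in $F(S)$. It follows that $w\in N(R)$, so $\ker=N(R)$ and $\Gamma=\langle S\mid R\rangle$ is finitely presented.

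The main obstacle will be the bookkeeping in the dictionary: one must check carefully that a rotation of a closed walk not based at $1_\Gamma$, and more importantly a \emph{$\Gamma$-translate} $W_i\cdot g$ of an orbit representative, is accounted for by a conjugate of $w(W_i)$ in the free group — this requires re-basing the translated walk at $1_\Gamma$ (which only changes it by rotation, hence by a conjugation in $F(S)$) and observing that the label word of $W_i\cdot g$ based at $1_\Gamma$ is a conjugate of $w(W_i)$ by the element joining $1_\Gamma$ to the new basepoint. One also needs that the allowed move ``add/delete a repetition'' (inserting $v_i=v_{i+1}$) is harmless, which is immediate since it does not change the label word at all. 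None of these points is deep, but they are where the proof has genuine content beyond \cref{thm: finpres}; alternatively one can invoke the general principle (e.g.\ as in \cite{HamannPlanar}) that a finitely generated group is finitely presented iff the closed walks of one (equivalently any) of its locally finite Cayley graphs admit an $\operatorname{Aut}$-invariant — indeed $\Gamma$-invariant — generating set with finitely many orbits, which is exactly the output of \cref{thm: finpres}.
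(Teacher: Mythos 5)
Your proposal is correct and takes essentially the same route as the paper: both apply \cref{thm: finpres} to a locally finite Cayley graph $\Cay(\Gamma,S)$, pick finitely many orbit representatives of the generating set of closed walks, and read off their label words as the finite set of relators. The paper condenses the closed-walk-to-relator dictionary into a single "it is not hard to check," whereas you flesh out exactly how spurs/repetitions, sums, reflections and rotations correspond to free reduction, concatenation, inversion and conjugation in $F(S)$ — a helpful but non-essential elaboration of the same argument.
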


\begin{proof}
 We let $G:=\Cay(\Gamma,S)$ be a locally finite Cayley graph of $\Gamma$ which is $K_\infty$-minor-free, and consider the right action of $\Gamma$ on $G$. Let $\mathcal W$ denote a finite set of representative of the $\Gamma$-orbits of the $\Gamma$-invariant generating set of closed walks of $G$ obtained by applying \cref{thm: finpres} to $G$. Note that for every $W\in \mathcal W$, if $r_W$ denotes the sequence of labels of $W$  with respect to $S$, the set of closed walks of $G$ labeled by $r_W$ is exactly the orbit $W\cdot \Gamma$ of $W$. Thus it is not hard to check that $\langle S\,|\, r_W, W\in \mathcal W\rangle$ is a finite presentation of $\Gamma$.
\end{proof}

 Note that \cref{intro:acc} can now be deduced either from \cref{cor: finpres}, combined with the result of Dunwoody \cite{Dunwoody1985} stating that every finitely presented group is accessible, or from \cref{intro:vacc}, combined with the result of Thomassen and Woess \cite{TW} stating that groups admitting a vertex-accessible Cayley graphs are accessible.

\subsection{The domino problem in minor-excluded groups}\label{sec: Domino}

In this section we  prove the following version of \cref{intro:domino}.

\begin{theorem}
\label{thm: Domino}
Let $\Gamma$ be a finitely generated group excluding the countable clique $K_\infty$ as a minor. Then the domino problem is undecidable on $\Gamma$ if and only if 
\begin{itemize}
    \item $\Gamma$ is one-ended, or
    \item $\Gamma$ has an infinite number of ends and has a  one-ended planar subgroup which is finitely generated.
\end{itemize}
In particular, these situations correspond exactly to the cases where $\Gamma$ is not virtually free.
\end{theorem}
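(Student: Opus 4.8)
The plan is to prove the theorem in two halves: first that, for a finitely generated $K_\infty$-minor-free group $\Gamma$, the disjunction in the statement is equivalent to $\Gamma$ not being virtually free, and second that the domino problem on $\Gamma$ is undecidable if and only if $\Gamma$ is not virtually free. Combining the two gives the displayed equivalence, and the ``in particular'' clause is exactly the first half.

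For the combinatorial equivalence, recall that a finitely generated group has $0$, $1$, $2$ or infinitely many ends. A group with $0$ or $2$ ends is virtually free, and a one-ended group is \emph{not} virtually free: a virtually free group has a locally finite Cayley graph of bounded treewidth, hence (by \cref{thm: tw-ends}) all its ends are thin, whereas by Thomassen's result (see also \cref{prop: thin-ends}) the unique end of a one-ended quasi-transitive locally finite graph is thick. So assume $\Gamma$ has infinitely many ends. By \cref{intro:acc}, $\Gamma$ is the fundamental group of a finite graph of groups with finite edge-groups and finitely generated vertex-groups, each with at most one end, and by \cref{thm: multi-ended} $\Gamma$ is virtually free precisely when all these vertex-groups are finite. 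If $\Gamma$ is not virtually free, some vertex-group $H$ is one-ended; $H$ is a finitely generated subgroup of $\Gamma$ and hence is again $K_\infty$-minor-free (heredity of $K_\infty$-minor-freeness under finitely generated subgroups, which follows from the accessibility and graph-of-groups structure of such groups), so by \cref{thm: main2} and \cref{rem: one-end} $H$ acts quasi-transitively on a one-ended locally finite planar graph; classical results on finitely generated planar groups (building on \cite{Droms}) then show that $H$ is virtually the fundamental group of a closed surface, in particular $H$ contains a finitely generated one-ended planar subgroup. Thus $\Gamma$ is either one-ended, or has infinitely many ends together with such a subgroup, which is the asserted disjunction.

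For the domino problem, one direction is immediate: a virtually free group has a locally finite Cayley graph of bounded treewidth, and the domino problem is decidable for such groups \cite{Aubrun18subshift, Pichel09}; contrapositively, undecidability forces $\Gamma$ not to be virtually free. For the converse, suppose $\Gamma$ is not virtually free. Applying the previous paragraph to $\Gamma$ itself when $\Gamma$ is one-ended, and to a one-ended vertex-group when $\Gamma$ has infinitely many ends, we obtain a finitely generated subgroup $P\le\Gamma$ which is virtually the fundamental group of a closed surface. If that surface is Euclidean then $P$ is virtually $\mathbb{Z}^2$, whose domino problem is undecidable by Berger \cite{Berger}; otherwise it is a closed hyperbolic surface, whose fundamental group has undecidable domino problem by \cite{Aubrun18surface}. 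Since undecidability of the domino problem is a commensurability invariant and is preserved when passing to a finitely generated overgroup \cite{BS, Aubrun18subshift}, the domino problem on $\Gamma$ is undecidable.

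The step I expect to be the main obstacle is the implication, for a one-ended finitely generated $K_\infty$-minor-free group $H$, from ``$H$ acts quasi-transitively on a one-ended planar graph'' (supplied by \cref{thm: main2} and \cref{rem: one-end}) to ``$H$ is virtually the fundamental group of a closed surface'': this requires realizing the quasi-transitive action on a ($3$-connected) planar graph as a cocompact discontinuous action on a $2$-manifold, invoking the classification of planar discontinuous groups, and controlling vertex-stabilizers. Two further points that must be handled with care, though they are standard, are the heredity of $K_\infty$-minor-freeness under finitely generated subgroups, and the effectivity of the reductions of the domino problem along commensurability and along finitely generated subgroups, so that undecidability transfers in the required direction.
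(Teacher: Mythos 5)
Your overall strategy (split into the combinatorial equivalence and the decidability equivalence, reduce the one-ended case to surface groups via planarity, and pass undecidability up along finitely generated subgroups) matches the paper's, and the decidability half is essentially the paper's argument. The substantive difference is how you get planarity in the one-ended case, and this is where the gap you yourself flag actually lives. The paper sidesteps your ``main obstacle'' entirely: instead of routing through \cref{thm: main2} and \cref{rem: one-end} (which only give a quasi-transitive action of the group on a one-ended planar \emph{torso} of a tree-decomposition, not planarity of the Cayley graph), it invokes \cref{thm: Th92} (Thomassen's Theorem 5.7): a locally finite, \emph{transitive}, one-ended, non-planar graph contains $K_\infty$ as a minor. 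Applied to the Cayley graph of a one-ended $K_\infty$-minor-free group, this immediately yields that the Cayley graph itself is planar, so the group is a planar group, and \cref{thm: Zieschang} then gives a surface subgroup. Your route would still need to convert a quasi-transitive action on a planar graph with non-Cayley structure into a planar discontinuous group action, which is a real missing step; it is not just harder, it is unnecessary given the tools already in the paper.

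A second, smaller issue: your justification for heredity of $K_\infty$-minor-freeness under passage to finitely generated subgroups — ``follows from the accessibility and graph-of-groups structure'' — is not the right reason and is not substantiated. The paper uses Babai's theorem (\cref{thm: Babai}): every finitely generated subgroup $\Gamma'\leq\Gamma$ has a Cayley graph that is a minor of a given Cayley graph of $\Gamma$, so the vertex-group inherits $K_\infty$-minor-freeness directly. You should cite \cref{thm: Babai} rather than gesturing at accessibility. Finally, two cosmetic points: \cref{thm: aubrun} already covers genus $g\geq 1$ uniformly, so the Euclidean/hyperbolic case split and the appeal to Berger are redundant; and once you know (as the paper does) that the one-ended group literally \emph{contains} a surface group, you only need the subgroup-to-overgroup transfer of undecidability \cite[Prop.~9.3.30]{Aubrun18subshift}, not a separate commensurability-invariance argument.
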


We will need the following results on one-ended planar groups. One-ended planar groups are exactly infinite \emph{planar discontinuous groups}, i.e.\ groups that act properly discontinuously on simplicial complexes homeomorphic to $\mathbb{R}^2$ where every nontrivial automorphism fixes neither a face nor maps an edge $(u,v)$ to its inverse $(v,u)$ (see \cite[Section 4]{Zieschang80} for a complete survey on these groups).

\begin{theorem}[\cite{BN, Fox}]
\label{thm: Zieschang}
 Every planar discontinuous group contains the fundamental group of a closed orientable surface as a subgroup of finite index.
\end{theorem}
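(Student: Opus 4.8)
The plan is to combine the classical structure theory of planar discontinuous groups with the solution of Fenchel's conjecture, followed by a covering-space argument. Let $\Gamma$ be a planar discontinuous group, so $\Gamma$ acts properly discontinuously and cocompactly by simplicial homeomorphisms on a complex $X$ homeomorphic to $\mathbb{R}^2$, and no nontrivial element of $\Gamma$ fixes a face or inverts an edge. First I would record the structure of point stabilizers: proper discontinuity makes each vertex stabilizer $\Gamma_v$ finite, and $\Gamma_v$ acts on the link of $v$ — a combinatorial circle — in a way that, by the hypotheses on faces and edges, is free on the edges of the link and orientation-compatible; hence $\Gamma_v$ is cyclic. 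By cocompactness there are only finitely many conjugacy classes of maximal finite (cyclic) subgroups, and $\Gamma$ carries a presentation of Fuchsian type (the cone-point torsion relators together with one long surface relator).

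Second, I would invoke the Fenchel conjecture, established by Bungaard--Nielsen and Fox \cite{BN, Fox}: such a group $\Gamma$ has a torsion-free normal subgroup $\Gamma_0$ of finite index. Concretely, one uses residual finiteness of $\Gamma$ together with the finiteness of the set of conjugacy classes of maximal finite cyclic subgroups: a homomorphism of $\Gamma$ onto a finite group that is injective on a representative of each such class has torsion-free kernel, since every torsion element of $\Gamma$ lies in a conjugate of one of these subgroups and is therefore not killed.

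Third, being torsion-free and of finite index, $\Gamma_0$ still acts properly discontinuously and cocompactly on $X\cong\mathbb{R}^2$, and now \emph{freely} — a nontrivial element with a fixed point would generate a nontrivial finite (cyclic) stabilizer. Hence $X\to\Sigma:=X/\Gamma_0$ is a covering map, and since $X$ is simply connected it is the universal cover, so $\Sigma$ is a compact surface without boundary, that is, a closed surface, with $\pi_1(\Sigma)\cong\Gamma_0$. As $\Gamma_0$ is infinite, $\Sigma\neq S^2$, so $\Gamma_0$ is the fundamental group of a closed surface of positive genus (if $\Sigma$ is orientable) or with at least one crosscap (if not). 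If $\Sigma$ is orientable we are done; if $\Sigma$ is non-orientable, pass to the orientation double cover $\widetilde{\Sigma}\to\Sigma$: then $\widetilde{\Sigma}$ is a closed orientable surface and $\pi_1(\widetilde{\Sigma})$ has index $2$ in $\Gamma_0$, hence index $2\,[\Gamma:\Gamma_0]$ in $\Gamma$, so $\Gamma$ contains the fundamental group of a closed orientable surface as a finite-index subgroup, as required.

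The main obstacle is the second step, the production of the torsion-free finite-index subgroup, which is exactly Fenchel's conjecture and the content of \cite{BN, Fox}; the first step — recognising a planar discontinuous group as a group of Fuchsian type, which is what makes that theorem applicable — rests on the classical structure theory, while the third and fourth steps are routine covering-space arguments. The one delicate point is orientability: a priori $X/\Gamma_0$ could be non-orientable, which is why the orientation double cover is taken at the end; alternatively one could arrange already in the second step that $\Gamma_0$ lie in the orientation-preserving subgroup of $\Gamma$ (when the latter is a proper subgroup), so that $X/\Gamma_0$ is orientable and the fourth step becomes unnecessary.
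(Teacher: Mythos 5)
This statement is quoted in the paper from Bungaard--Nielsen and Fox \cite{BN,Fox} (the solution of Fenchel's conjecture) and is not proved there, so there is no internal proof to compare against; your sketch is essentially the standard derivation that lies behind that citation. Your outline is sound: the genuinely hard step is exactly the one you delegate to \cite{BN,Fox} (a torsion-free subgroup $\Gamma_0$ of finite index), and the remaining steps --- free proper cocompact action on the plane, hence $X/\Gamma_0$ is a closed surface with $\pi_1 \cong \Gamma_0$, followed by the orientation double cover if needed --- are routine. Two small caveats: you assume cocompactness, which is not explicit in the paper's definition of a planar discontinuous group but is what makes the quotient closed and is satisfied in the paper's application (one-ended planar groups; without it the torsion-free subgroup is free and only the degenerate genus-$0$ reading of the statement survives); and your claim that point stabilizers are \emph{cyclic} is not forced by the stated hypotheses, since a simplicial reflection whose axis runs along edges fixes no face and inverts no edge, so dihedral stabilizers are not obviously excluded --- this does not derail the argument, because the torsion-free finite-index subgroup exists just as well in the reflection (NEC) case and the rest of your covering-space reasoning is unchanged.
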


It follows that every one-ended planar group contains the fundamental group of a closed orientable surface of genus $g\ge 1$ as a subgroup (if $g=0$ the fundamental group is trivial and cannot be a subgroup of finite index).
We will  use \cref{thm: Zieschang} in combination with the following result of Aubrun et al.\ \cite{Aubrun18surface} on the domino problem in surface groups (note that the case $g=1$ corresponds to the undecidability of the domino problem in $\mathbb{Z}^2$).

\begin{theorem}[\cite{Aubrun18surface}]
\label{thm: aubrun}
    For any $g\ge 1$, the domino problem in the fundamental group of the closed orientable surface of genus $g$ is undecidable.
\end{theorem}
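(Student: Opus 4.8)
The plan is to reduce, in every case, to the undecidability of the domino problem on a closed orientable surface group, \cref{thm: aubrun}. If $\Gamma$ has $0$ or $2$ ends it is finite or virtually $\mathbb{Z}$, hence virtually free, and its domino problem is decidable; so I would concentrate on the cases where $\Gamma$ has one end or infinitely many ends. The main structural ingredient, which I would prove first, is that \emph{a one-ended finitely generated $K_\infty$-minor-free group $\Gamma$ is planar.} Indeed, let $G=\Cay(\Gamma,S)$ be a $K_\infty$-minor-free Cayley graph of $\Gamma$. By \cref{thm: main2} and \cref{rem: one-end}, $G$ has a $\Gamma$-canonical tree-decomposition which is a star with centre $z_0$, with $V_{z_0}$ infinite, all other bags finite, and $G\torso{V_{z_0}}$ planar. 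Every automorphism of a star fixes its centre, so $\Gamma$ fixes $z_0$, whence $V_{z_0}$ is a nonempty $\Gamma$-invariant subset of $V(G)$; since $\Gamma$ acts transitively on $V(G)$, this forces $V_{z_0}=V(G)$. Therefore $G$ is a spanning subgraph of the planar graph $G\torso{V(G)}$, so $G$ itself is planar.

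The second ingredient is the following remark. \emph{If a finitely generated group $\Delta$ has a finitely generated subgroup $H$ that is one-ended and planar, then the domino problem on $\Delta$ is undecidable and $\Delta$ is not virtually free.} Indeed, $H$ is then a one-ended planar group, i.e.\ an infinite planar discontinuous group, so by \cref{thm: Zieschang} it contains the fundamental group of a closed orientable surface of genus $g$ as a subgroup of finite index, and one-endedness of $H$ forces $g\ge 1$; by \cref{thm: aubrun} the domino problem on this surface group is undecidable. Since the domino problem is a commensurability invariant and since it reduces from any finitely generated subgroup to any overgroup, undecidability propagates to $H$ and then to $\Delta$. Moreover $H$, being one-ended, is not virtually free, and subgroups of virtually free groups are virtually free, so $\Delta$ is not virtually free. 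Taking $H=\Delta=\Gamma$ disposes of the one-ended case: by the first ingredient $\Gamma$ is planar, so the domino problem on $\Gamma$ is undecidable and $\Gamma$ is not virtually free.

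For the case where $\Gamma$ has infinitely many ends, I would use that $\Gamma$ is accessible (\cref{intro:acc}): $\Gamma=\pi_1(\mathcal G)$ for a finite graph of groups $\mathcal G$ with finite edge-groups and vertex-groups of at most one end. The vertex-groups are finitely generated subgroups of $\Gamma$, and being finitely generated subgroups of a $K_\infty$-minor-free group they are again $K_\infty$-minor-free; this relies on the fact that, for a finitely generated group, being $K_\infty$-minor-free does not depend on the chosen generating set and passes to finitely generated subgroups, which follows from a routine bounded-power comparison of Cayley graphs. If some vertex-group $\Gamma_v$ is infinite, then $\Gamma_v$ is one-ended and $K_\infty$-minor-free, hence planar by the first ingredient, so $\Gamma$ has a one-ended planar finitely generated subgroup; by the second ingredient, the domino problem on $\Gamma$ is undecidable and $\Gamma$ is not virtually free. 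If, on the contrary, all vertex-groups are finite, then $\Gamma$ is virtually free by \cref{thm: multi-ended}, and its domino problem is decidable.

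Combining the cases, the domino problem on $\Gamma$ is undecidable precisely when $\Gamma$ is one-ended, or when $\Gamma$ has infinitely many ends and some vertex-group of its accessibility decomposition is infinite; by \cref{thm: multi-ended} this last alternative is equivalent to $\Gamma$ not being virtually free and, as shown above, to $\Gamma$ having a one-ended planar finitely generated subgroup. This gives both the stated equivalence and the ``in particular'' clause. I expect the main difficulty to lie in the structural fact that one-ended $K_\infty$-minor-free finitely generated groups are planar: the subtle step is to upgrade \cref{thm: main2} and \cref{rem: one-end}, stated for quasi-transitive graphs, to the observation that for a transitive Cayley graph the central bag of the canonical star decomposition must be the entire vertex set, so that the planar central torso \emph{is} the Cayley graph. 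A secondary point requiring care is the generating-set independence of $K_\infty$-minor-freeness and its stability under finitely generated subgroups, on which the treatment of the accessibility decomposition rests.
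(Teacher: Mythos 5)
The statement you were asked to prove is \cref{thm: aubrun} itself: the undecidability of the domino problem for the fundamental group of the closed orientable surface of genus $g\ge 1$. In this paper that is an imported result, quoted from \cite{Aubrun18surface} and never proved here; its actual proof is a symbolic-dynamics argument about subshifts on surface groups that has nothing to do with the machinery of this paper. Your proposal does not prove it: you explicitly invoke \cref{thm: aubrun} as an ingredient, so read as a proof of the stated theorem your argument is circular. What you have in fact written is a proof sketch of \cref{thm: Domino}, a different statement which uses \cref{thm: aubrun} as a black box.

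Judged instead as a proof of \cref{thm: Domino}, your sketch follows the paper's case analysis but has one genuine gap. For the one-ended case the paper simply quotes Thomassen's result (\cref{thm: Th92}) to conclude that a transitive one-ended $K_\infty$-minor-free graph is planar, whereas you rederive this from \cref{thm: main2} and \cref{rem: one-end} by noting that transitivity forces the central bag of the canonical star decomposition to equal $V(G)$; that is a legitimate alternative route, though it leans on the informally stated \cref{rem: one-end}. The problem is in the infinitely-many-ends case: you assert that $K_\infty$-minor-freeness is independent of the generating set and passes to finitely generated subgroups ``by a routine bounded-power comparison of Cayley graphs''. Changing generating sets, or passing to a possibly distorted finitely generated subgroup, only yields quasi-isometric or coarse comparisons, and these do not preserve minor-exclusion in any routine way (already planarity of a Cayley graph of $\mathbb{Z}^2$ is destroyed by adding diagonal generators), so this step is unjustified as written. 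The paper avoids the issue with Babai's theorem (\cref{thm: Babai}), which supplies a generating set of each one-ended vertex-group whose Cayley graph is an actual minor of $\Cay(\Gamma,S)$, hence $K_\infty$-minor-free; your argument needs that input, not a bounded-power comparison.
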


For groups with infinitely many ends we will use the existence of finite graphs of groups (which follows from the accessibility of the groups we consider). The following result of Babai \cite{Babai} will be crucial. 

\begin{theorem}[\cite{Babai}]
\label{thm: Babai}
If $\Gamma'$ is a finitely generated subgroup of $\Gamma$, then there exists some finite generating set $S'$ of $\Gamma'$ such that $\Cay(\Gamma',S')$ is a minor of $\Cay(\Gamma,S)$.
\end{theorem}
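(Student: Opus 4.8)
The plan is to produce a finite symmetric generating set $S'$ of $\Gamma'$ together with an explicit model of $\Cay(\Gamma',S')$ inside $\Cay(\Gamma,S)$, whose branch sets are the $\Gamma'$-translates of a single finite connected ``fundamental domain''. One cosmetic preliminary: since all generating sets involved are symmetric, $g\mapsto g^{-1}$ is a graph isomorphism from each left Cayley graph to the corresponding \emph{right} Cayley graph (edges $g\sim gs$), and being a minor is preserved by isomorphism; so I may work in the right Cayley graph, in which \emph{left} multiplication by $\Gamma$, hence by $\Gamma'$, acts by automorphisms, and the orbits of $\Gamma'$ on the vertex set are the right cosets $\Gamma'g$. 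Fix a finite symmetric generating set $A=\{a_1,\dots,a_k\}$ of $\Gamma'$.

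First I would build a finite connected subgraph $Y_0\subseteq\Cay(\Gamma,S)$ containing $1_\Gamma$ and all of $A$: for each $i$ choose a path $\pi_i$ from $1_\Gamma$ to $a_i$ (which exists since $\Cay(\Gamma,S)$ is connected) and set $Y_0:=\bigcup_i\pi_i$. Let $Y:=\bigcup_{\gamma\in\Gamma'}\gamma Y_0$. Because $1_\Gamma\in V(Y_0)$, the pieces $\gamma Y_0$ and $\gamma a_iY_0$ meet at the vertex $\gamma a_i$, so, $A$ being a generating set of $\Gamma'$, the graph $Y$ is connected. The left action of $\Gamma'$ on $\Gamma$ is free and preserves $Y$, so the quotient map $Y\to Y/\Gamma'$ is a covering of graphs. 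The key observation is that $Y/\Gamma'$ is a \emph{finite} graph: every vertex of $Y$ has the form $\gamma x$ with $\gamma\in\Gamma'$ and $x\in V(Y_0)$, and its $\Gamma'$-orbit is $\Gamma'x$, so all $\Gamma'$-orbits of $V(Y)$ are already represented inside the finite set $V(Y_0)$.

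Next I would take a spanning tree $\tau$ of $Y/\Gamma'$, root it at the image of $1_\Gamma$, and lift it through the covering to a subtree $\tilde\tau$ of $Y$ issued from $1_\Gamma$. As $\tau$ is simply connected, the lift is an isomorphism onto $\tau$, so $D:=V(\tilde\tau)$ is a \emph{finite} connected transversal of the $\Gamma'$-orbits on $V(Y)$ with $1_\Gamma\in D$. Set $B_\gamma:=\gamma D$ for $\gamma\in\Gamma'$; these are pairwise disjoint (distinct orbit representatives plus freeness of the action), connected (left-translates of the connected set $D$), and satisfy $\gamma\in B_\gamma$. Since left multiplication by $\gamma^{-1}$ is an automorphism, $\Cay(\Gamma,S)$ has an edge between $B_\gamma$ and $B_{\gamma'}$ precisely when $\gamma^{-1}\gamma'$ belongs to
\[
S':=\{\delta\in\Gamma'\setminus\{1_\Gamma\}:\ \Cay(\Gamma,S)\text{ has an edge joining }D\text{ and }\delta D\}.
\]
This $S'$ is symmetric, and it is finite because the closed neighbourhood of the finite set $D$ is finite and meets only finitely many of the pairwise disjoint sets $\delta D$. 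Consequently $(B_\gamma)_{\gamma\in\Gamma'}$ is a model of $\Cay(\Gamma',S')$ in $\Cay(\Gamma,S)$. Finally, $S'$ generates $\Gamma'$: each path $\pi_{a_i}$ starts in $B_{1_\Gamma}$, ends in $B_{a_i}$, and its successive vertices lie in branch sets, so it traces a walk from $1_\Gamma$ to $a_i$ in $\Cay(\Gamma',S')$; hence $\langle S'\rangle\supseteq\langle A\rangle=\Gamma'$.

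I expect the only genuine difficulty to be the extraction of the finite connected fundamental domain $D$: the observation that $Y/\Gamma'$ is finite — so that $D$ can be taken finite, which in turn forces $S'$ to be finite and the disjoint branch sets to be ``spread out'' — together with the covering-space lifting argument that makes $D$ connected, is the crux. Once these are in place the remaining verifications are routine bookkeeping, the chief nuisance being to keep the left/right conventions for cosets and translates consistent (this is essentially Babai's argument from \cite{Babai}).
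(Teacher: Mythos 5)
The paper offers no proof of this statement to compare against: it is imported verbatim from Babai's paper \cite{Babai}. Your argument is, in substance, the standard proof of that result, and it is essentially correct: the translates $\gamma D$ of a finite connected transversal $D$ of the $\Gamma'$-orbits on the connected $\Gamma'$-invariant subgraph $Y$ give pairwise disjoint connected branch sets, your verifications that $S'$ is finite, symmetric, and generates $\Gamma'$ (by tracing the paths $\pi_i$ through the branch sets) are sound, and the left/right bookkeeping is handled correctly. The one step I would not let stand as written is the claim that $Y\to Y/\Gamma'$ is a covering of graphs. Left multiplication is free on vertices, but edges can be inverted (this happens whenever some conjugate $xsx^{-1}$ with $s\in S$ is an involution lying in $\Gamma'$, e.g.\ already when $S\cap\Gamma'$ contains an involution), and two distinct edges at a vertex can lie in the same $\Gamma'$-orbit (e.g.\ the edges $\{1_\Gamma,s\}$ and $\{1_\Gamma,s^{-1}\}$ when $s\in S\cap\Gamma'$), so the quotient map need not be locally injective and is not a covering in general. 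Fortunately the covering property is not what you actually need: you only use that some subtree of $Y$ maps bijectively onto a spanning tree of the finite quotient, i.e.\ that a finite \emph{connected} set of orbit representatives containing $1_\Gamma$ exists. This is the standard lifting-of-a-spanning-tree fact from Bass--Serre theory, valid for any group action on a connected graph; alternatively, argue directly: take a maximal subtree of $Y$ containing $1_\Gamma$ whose vertices lie in pairwise distinct $\Gamma'$-orbits, and note that if some orbit were missed, a shortest path from the tree to a vertex in a missed orbit could be shortened by translating by a suitable element of $\Gamma'$, a contradiction; since there are at most $|V(Y_0)|$ orbits, this tree is finite. With that substitution the proof is complete.
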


Finally we will need the following result of Thomassen, which is tightly related to \cref{thm: Thomassenq4c}, and which will allow us to reduce the $K_\infty$-minor-free case to the planar case.

\begin{theorem}[Theorem 5.7 in \cite{Thomassen92}]
\label{thm: Th92}
 Let $G$ be a locally finite, transitive, non-planar, one-ended graph. Then $G$ contains the countable clique $K_\infty$ as a minor. 
\end{theorem}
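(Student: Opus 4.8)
The plan is to prove the contrapositive: \emph{if $G$ is a locally finite transitive one-ended graph that excludes $\Kinf$ as a minor, then $G$ is planar}. The point is that, although \cref{thm: main2} was stated for quasi-transitive graphs, the extra strength of \emph{transitivity} collapses its tree-decomposition completely when $G$ is one-ended. There is no circularity in invoking \cref{thm: main2}: its proof rests on Thomassen's quasi-$4$-connected result (\cref{thm: Thomassenq4c}, his Theorem~4.1) and not on the statement to be proved here (his Theorem~5.7).

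First I would fix a group $\Gamma$ acting transitively on $G$ (for instance $\Gamma=\Aut(G)$) and apply \cref{thm: main2} to $(G,\Gamma)$. This yields a $\Gamma$-canonical tree-decomposition $(T,\mathcal V)$ of $G$, with $\mathcal V=(V_t)_{t\in V(T)}$, of adhesion at most $3$, whose torsos are $\Gamma_t$-quasi-transitive minors of $G$ that are planar or of bounded treewidth. Since $G$ is one-ended, \cref{rem: one-end} applies: $T$ is a star with some centre $z_0$, the bag $V_{z_0}$ is infinite, every other bag is finite, and the torso $G\torso{V_{z_0}}$ is planar (it is one-ended, quasi-transitive and locally finite, so by \cref{prop: thin-ends} its unique end is thick, whereas a bounded-treewidth graph has only thin ends by \cref{thm: tw-ends}; hence $G\torso{V_{z_0}}$ has unbounded treewidth and must be planar).

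Now comes the step that genuinely uses transitivity. Since $(T,\mathcal V)$ is $\Gamma$-canonical, every $\gamma\in\Gamma$ induces an automorphism of $T$ with $V_t\cdot\gamma=V_{t\cdot\gamma}$ for all $t\in V(T)$; as $z_0$ is the \emph{unique} node of $T$ with an infinite bag, $\gamma$ must fix $z_0$, and hence $V_{z_0}$ is $\Gamma$-invariant. Because $\Gamma$ acts transitively on $V(G)$ and $V_{z_0}\neq\emptyset$, the $\Gamma$-orbit of any vertex $v\in V_{z_0}$ equals $V(G)$ while remaining inside the $\Gamma$-invariant set $V_{z_0}$, so $V_{z_0}=V(G)$. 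Consequently $G$ is a spanning subgraph of $G\torso{V_{z_0}}=G\torso{V(G)}$, which is planar, and therefore $G$ is planar; this completes the contrapositive, hence the proof. I would emphasise that this last step requires transitivity and not merely quasi-transitivity: attaching to each vertex of the square grid a new copy of $K_5$ sharing only that vertex produces a quasi-transitive, one-ended, locally finite, non-planar graph all of whose blocks are planar or equal to $K_5$, and since every finite complete minor is $2$-connected and hence lies inside a single block, this graph is $\Kinf$-minor-free.

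The one delicate ingredient is the one-ended refinement \cref{rem: one-end} of \cref{thm: main2}; everything else is a short deduction. If one wanted to avoid routing through the full structure theorem, the classical alternative would be to reduce $G$ to a quasi-transitive quasi-$4$-connected minor via \cref{thm: Tutte} and \cref{thm: GroRegion} and then apply \cref{thm: Thomassenq4c} directly. The obstacle there---and the reason the \cref{thm: main2} route is cleaner---is that planarity is \emph{not} preserved when one passes to torsos of an adhesion-$3$ decomposition (gluing two planar graphs along a triangle can create a $K_{3,3}$), so one would have to transport non-planarity through the successive crossedge contractions using \cref{prop: planar}, and also verify that the infinite torso in play is the one carrying the thick end. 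The structure-theorem route packages all of this bookkeeping into \cref{rem: one-end}.
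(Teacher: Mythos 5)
Your proposal is correct within this paper's logical structure, but it takes a genuinely different route from the paper, because the paper does not prove \cref{thm: Th92} at all: it is imported verbatim from Thomassen (Theorem 5.7 of \cite{Thomassen92}) and used as a black box only in the proof of \cref{thm: Domino}. You instead re-derive it from the structure theorem, and your circularity check is the right one and succeeds: the proof of \cref{thm: main2} uses Thomassen only through \cref{thm: Thomassenq4c} (his Theorem 4.1, via \cref{cor:quasiplanartw}) and through \cref{prop: thin-ends}, never through \cref{thm: Th92}, so deducing the latter from \cref{thm: main2} is legitimate. The deduction itself is sound: by \cref{rem: one-end} the constructed decomposition is a star whose unique infinite bag $V_{z_0}$ carries a planar torso; canonicity forces every automorphism to fix $z_0$, so $V_{z_0}$ is $\Aut(G)$-invariant, and transitivity gives $V_{z_0}=V(G)$, whence $G\subseteq G\torso{V_{z_0}}$ is planar. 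Your counterexample (the grid with a pendant $K_5$ at each vertex) correctly shows that transitivity cannot be weakened to quasi-transitivity, since a $2$-connected minor lies inside a single block. What your route buys is that the paper's dependence on Thomassen could in principle be trimmed to his Theorem 4.1 alone; what it costs is invoking the full machinery behind \cref{thm: main2} (Tutte's decomposition, Grohe's tangle analysis, the canonical decompositions of \cite{CTTD}) to recover a statement Thomassen proves directly. The one caveat is that your argument leans on \cref{rem: one-end}, which the paper states only as a remark (``if we carefully consider the proof\dots''); a fully self-contained write-up would have to verify it, in particular that the decomposition produced in the proof has a unique infinite bag and that the central torso is one-ended, so that \cref{prop: thin-ends} together with \cref{thm: tw-ends} rules out bounded treewidth -- the same small verification the remark itself elides, and which your parenthetical justification also passes over.
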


We are now ready to prove \cref{thm: Domino}.

\begin{proof}
Assume that $\Gamma$ has a Cayley graph $G=\Cay(\Gamma,S)$ excluding $\Kinf$ as a minor. If $\Gamma$ has 0 end, then it is finite and the domino problem is decidable. If $\Gamma$ has 2 ends then $\Gamma$ is virtually $\mathbb{Z}$ and the domino problem is also decidable. If $\Gamma$ is one-ended, then since $G$ is transitive and excludes the countable clique $K_\infty$ as a minor, it follows from \cref{thm: Th92} that $G$ is (one-ended) planar.  By \cref{thm: Zieschang}, $\Gamma$ contains the fundamental group $\Gamma'$ of a closed surface of genus $g\ge 1$ as a subgroup. By \cref{thm: aubrun}, the domino problem is undecidable for $\Gamma'$. It was proved that for every finitely generated subgroup $\Gamma'$ of a finitely generated group $\Gamma$, if the domino problem is undecidable for $\Gamma'$ then it is also undecidable for $\Gamma$ \cite[Proposition 9.3.30]{Aubrun18subshift}. This implies that if $\Gamma$ is one-ended, the domino problem is undecidable for $\Gamma$.

Assume now that $\Gamma$ has an infinite number of ends. By \cref{intro:acc}, $\Gamma$ is accessible and by \cref{thm: multi-ended}, if $\Gamma$ is not virtually free then one of the (finitely presented) vertex-groups $\Gamma_u$ (which is a subgroup of $\Gamma$) in its associated finite graph of groups is one-ended. By \cref{thm: Babai}, $\Gamma_u$ must also have a locally finite Cayley graph that excludes $\Kinf$ as a minor. By the paragraph above, the domino problem is undecidable for $\Gamma_u$, and since $\Gamma_u$ is a subgroup of $\Gamma$, the domino problem is also undecidable for $\Gamma$. 
\end{proof}
        
 \section{Open problems}\label{sec:ccl}

The first problem we consider is related to \cref{conj: DP}, which states that a group has a decidable domino problem if and only if it is virtually free (or equivalently, it has bounded treewidth). Using the planar case of \cref{thm: Domino}, this conjecture would be a direct consequence of a positive answer to the following problem.

\begin{problem}\label{pro:2}
    Is it true that every finitely generated group which is not virtually free has a  finitely generated one-ended planar subgroup?
\end{problem}

It was pointed out to us by Agelos Georgakopoulos (personal communication) that  the lamplighter group $L=\mathbb{Z}_2\wr \mathbb{Z}$ provides a negative answer to \cref{pro:2}. This follows from a characterization of all subgroups of $L$ by Grigorchuk and Kravchenko \cite{GK14}, which implies that every finitely generated subgroup of $L$ is either finite or isomorphic to another lamplighter group (and the latter cannot be planar).

\medskip

A related problem on the graph theory side is the following. 

\begin{problem}\label{pro:3}
    Is it true that every locally finite quasi-transitive graph of unbounded treewidth contains  a quasi-transitive planar graph of unbounded treewidth as a subgraph?
\end{problem}

It could be the case that Cayley graphs of lamplighter groups also provide a negative answer to \cref{pro:3}, but this is not clear (the assumption of being quasi-transitive and subgraph containment are both much weaker than being a Cayley graph and subgroup containment).

\medskip

A crucial component of our result is the strategy of Grohe \cite{Gro16} to obtain a canonical tree-decomposition up to the contraction of a matching (see also \cref{thm: GroRegion}). At one point we tried to figure out whether the following was true (in the end it turned out that we did not need to answer these questions, but we believe they might be of independent interest).

\begin{problem}\label{pro:4}
    Let $G$ be a locally finite quasi-transitive graph. Is there a proper coloring of $G$ with a finite number of colors such that the colored graph $G$ itself is quasi-transitive (where automorphisms have to preserve the colors of the vertices)?
\end{problem}

\begin{problem}\label{pro:5}
    Let $G$ be a locally finite quasi-transitive graph. Is there an orientation of the edges of $G$ such that the oriented graph $G$ itself is quasi-transitive (where automorphisms have to preserve the orientation of the edges)?
\end{problem}

Note that \cref{pro:5} has a positive answer for Cayley graphs  whose generating set does not contain any element of order 2. %We believe that in general \cref{pro:4,pro:5} have a negative answer.
An example showing that Problem \ref{pro:4} has a negative answer
  was recently constructed by Hamann and M\"oller. It was then observed
  by Abrishami, Esperet and Giocanti, and independently by Norin and
  Przytycki, that a variant of this example could be used to provide a negative
  answer to Problem \ref{pro:5} as well. 

\bigskip

Finally, we started the introduction of the paper by mentioning the following natural question: do graphs avoiding a fixed minor have a tree-decomposition in the spirit of the Graph Minor Structure Theorem of Robertson and Seymour \cite{RS-XVI}, but with the additional constraint that the decomposition is canonical ? We gave a positive answer to this question for quasi-transitive graphs but the general case is still open (although Example \ref{ex: cycle} and Remark \ref{rem:4.2} give natural limitations to the properties of such a canonical tree-decomposition).

 \bigskip

 \paragraph{\bf Acknowledgement} The authors would like to thank Marthe
 Bonamy and Vincent Delecroix for their involvement in the early stages
of the project and all the subsequent discussions. The second author would like to thank Reinhard Diestel, Raphael Jacobs, Paul Knappe and Matthias Hamann for the helpful discussions and for pointing out a simplification of our initial proof of \cref{thm: finpres}. We also thank Johannes Carmesin and Jan Kurkofka for sharing their recent results on canonical decompositions of 3-connected graphs. Finally, we would like to express our gratitude to the reviewers of the journal version of the paper for their careful reading and the large number of corrections and suggestions. The quality of the paper benefited greatly from their feedback.

\bibliographystyle{alpha}
\bibliography{coolnew}

\begin{thebibliography}{HLMR22}

\bibitem[ABJ18]{Aubrun18subshift}
Nathalie Aubrun, Sebasti{\'a}n Barbieri, and Emmanuel Jeandel.
\newblock {About the Domino Problem for Subshifts on Groups}.
\newblock In V.~Berth{\'e} and M.~Rigo, editors, {\em {Sequences, Groups, and
  Number Theory}}, Trends in Mathematics, pages 331--389. {Birkh{\"a}user,
  Cham}, 2018.

\bibitem[ABM19]{Aubrun18surface}
Nathalie Aubrun, Sebasti{\'{a}}n Barbieri, and Etienne Moutot.
\newblock The domino problem is undecidable on surface groups.
\newblock In Peter Rossmanith, Pinar Heggernes, and Joost{-}Pieter Katoen,
  editors, {\em 44th International Symposium on Mathematical Foundations of
  Computer Science, {MFCS} 2019, August 26-30, 2019, Aachen, Germany}, volume
  138 of {\em LIPIcs}, pages 46:1--46:14. Schloss Dagstuhl - Leibniz-Zentrum
  f{\"{u}}r Informatik, 2019.

\bibitem[Ant11]{Pichel09}
Yago Antol\'in.
\newblock On {C}ayley graphs of virtually free groups.
\newblock {\em Groups Complexity Cryptology}, 3(2):301--327, 2011.

\bibitem[Bab77]{Babai}
László Babai.
\newblock Some applications of graph contractions.
\newblock {\em Journal of Graph Theory}, 1(2):125--130, 1977.

\bibitem[Ber66]{Berger}
Robert~L. Berger.
\newblock The undecidability of the domino problem.
\newblock {\em Memoirs of the American Mathematical Society}, 1966.

\bibitem[BN46]{BN}
Svend Bundgaard and Jakob Nielsen.
\newblock Forenklede beviser for nogle sætninger i fladetopologien.
\newblock {\em Matematisk Tidsskrift. B}, pages 1--16, 1946.

\bibitem[BS18]{BS}
Alexis Ballier and Maya Stein.
\newblock The domino problem on groups of polynomial growth.
\newblock {\em Groups, Geometry, and Dynamics}, 12(1):93--105, 2018.

\bibitem[CHM22]{CTTD}
Johannes Carmesin, Matthias Hamann, and Babak Miraftab.
\newblock Canonical trees of tree-decompositions.
\newblock {\em Journal of Combinatorial Theory, Series B}, 152:1--26, 2022.

\bibitem[CK23]{CK23}
Johannes Carmesin and Jan Kurkofka.
\newblock Canonical decompositions of 3-connected graphs.
\newblock In {\em 64th {IEEE} Annual Symposium on Foundations of Computer
  Science, {FOCS} 2023, Santa Cruz, CA, USA, November 6-9, 2023}, pages
  1887--1920. {IEEE}, 2023.

\bibitem[DD89]{DD}
Warren Dicks and Martin~J. Dunwoody.
\newblock {\em Groups Acting on Graphs}.
\newblock Cambridge Studies in Advanced Mathematics. Cambridge University
  Press, 1989.

\bibitem[DHL18]{DHL18}
Reinhard Diestel, Fabian Hundertmark, and Sahar Lemanczyk.
\newblock Profiles of separations: in graphs, matroids, and beyond.
\newblock {\em Combinatorica}, 39, 11 2018.

\bibitem[Die17]{diestel2017graph}
Reinhard Diestel.
\newblock {\em Graph Theory: 5th edition}.
\newblock Springer Graduate Texts in Mathematics. Springer-Verlag, 2017.

\bibitem[DJM93]{Diestel93}
Reinhard Diestel, H.~A. Jung, and R{\"o}gnvaldur~G. M{\"o}ller.
\newblock On vertex transitive graphs of infinite degree.
\newblock {\em Archiv der Mathematik}, 60:591--600, 1993.

\bibitem[Dro06]{Droms}
Carl Droms.
\newblock Infinite-ended groups with planar {C}ayley graphs.
\newblock {\em Journal of Group Theory}, 9(4):487--496, 2006.

\bibitem[DSS98]{DSS98}
Carl Droms, Brigitte Servatius, and Herman Servatius.
\newblock The structure of locally finite two-connected graphs.
\newblock {\em Electronic Journal of Combinatorics}, 2, 01 1998.

\bibitem[DT99]{DT99}
Reinhard Diestel and Robin Thomas.
\newblock Excluding a countable clique.
\newblock {\em Journal of Combinatorial Theory, Series B}, 76(1):41--67, 1999.

\bibitem[Dun85]{Dunwoody1985}
Martin~J. Dunwoody.
\newblock The accessibility of finitely presented groups.
\newblock {\em Inventiones Mathematicae}, 81:449--458, 1985.

\bibitem[Dun93]{Dunwoody93}
Martin~J. Dunwoody.
\newblock {\em An Inaccessible Group}, volume~1 of {\em London Mathematical
  Society Lecture Note Series}, page 75–78.
\newblock Cambridge University Press, 1993.

\bibitem[Dun07]{Dunwoody07}
Martin~J. Dunwoody.
\newblock Planar graphs and covers.
\newblock {\em arXiv preprint arXiv:0708.0920}, 2007.

\bibitem[Fox52]{Fox}
Ralph~H. Fox.
\newblock On {F}enchel's {C}onjecture about {F}-{G}roups.
\newblock {\em Matematisk Tidsskrift. B}, pages 61--65, 1952.

\bibitem[Fre44]{Freudenthal44}
Hans Freudenthal.
\newblock {\"U}ber die {E}nden diskreter {R}{\"a}ume und {G}ruppen.
\newblock {\em Commentarii Mathematici Helvetici}, 17:1--38, 1944.

\bibitem[GK14]{GK14}
Rostislav Grigorchuk and Rostyslav Kravchenko.
\newblock On the lattice of subgroups of the lamplighter group.
\newblock {\em International Journal of Algebra and Computation},
  24(6):837--877, 2014.

\bibitem[Gro16a]{Gro16}
Martin Grohe.
\newblock Quasi-4-connected components.
\newblock In Ioannis Chatzigiannakis, Michael Mitzenmacher, Yuval Rabani, and
  Davide Sangiorgi, editors, {\em 43rd International Colloquium on Automata,
  Languages, and Programming, {ICALP} 2016, July 11-15, 2016, Rome, Italy},
  volume~55 of {\em LIPIcs}, pages 8:1--8:13. Schloss Dagstuhl -
  Leibniz-Zentrum f{\"{u}}r Informatik, 2016.

\bibitem[Gro16b]{Gro3-conn}
Martin Grohe.
\newblock Tangles and connectivity in graphs.
\newblock In Adrian{-}Horia Dediu, Jan Janousek, Carlos Mart{\'{\i}}n{-}Vide,
  and Bianca Truthe, editors, {\em Language and Automata Theory and
  Applications - 10th International Conference, {LATA} 2016, Prague, Czech
  Republic, March 14-18, 2016, Proceedings}, volume 9618 of {\em Lecture Notes
  in Computer Science}, pages 24--41. Springer, 2016.

\bibitem[Hal65]{HalinGrid}
R.~Halin.
\newblock Über die maximalzahl fremder unendlicher wege in graphen.
\newblock {\em Mathematische Nachrichten}, 30(1-2):63--85, 1965.

\bibitem[Hal73]{Halin73}
Rudolf Halin.
\newblock Automorphisms and endomorphisms of infinite locally finite graphs.
\newblock {\em Abhandlungen aus dem Mathematischen Seminar der Universit{\"a}t
  Hamburg}, 39:251--283, 1973.

\bibitem[Ham18a]{HamannAccessibility}
Matthias Hamann.
\newblock Accessibility in transitive graphs.
\newblock {\em Combinatorica}, 38(4):847--859, 2018.

\bibitem[Ham18b]{HamannPlanar}
Matthias Hamann.
\newblock Planar transitive graphs.
\newblock {\em Electron. J. Comb.}, 25(4):research paper p4.8, 18, 2018.

\bibitem[Ham25]{Ham25}
Matthias Hamann.
\newblock Quasi-transitive {$K_\infty$}-minor free graphs.
\newblock {\em European Journal of Combinatorics}, 124:104056, 2025.

\bibitem[HLMR22]{HamannStallings22}
Matthias Hamann, Florian Lehner, Babak Miraftab, and Tim Rühmann.
\newblock A {S}tallings type theorem for quasi-transitive graphs.
\newblock {\em Journal of Combinatorial Theory, Series B}, 157:40--69, 2022.

\bibitem[Hop44]{Hopf43}
Heinz Hopf.
\newblock Enden offener {R}{\"a}ume und unendliche diskontinuierliche
  {G}ruppen.
\newblock {\em Commentarii Mathematici Helvetici}, 16:81--100, 1943/44.

\bibitem[KPS73]{KPS}
Abraham Karrass, Alfred Pietrowski, and Donald Solitar.
\newblock Finite and infinite cyclic extensions of free groups.
\newblock {\em Journal of the Australian Mathematical Society}, 16(4):458--466,
  1973.

\bibitem[MS83]{MS83}
David~E. Muller and Paul~E. Schupp.
\newblock Groups, the theory of ends, and context-free languages.
\newblock {\em Journal of Computer and System Sciences}, 26(3):295--310, 1983.

\bibitem[RS91]{RSX}
Neil Robertson and Paul Seymour.
\newblock Graph minors. {X}. obstructions to tree-decomposition.
\newblock {\em Journal of Combinatorial Theory, Series B}, 52(2):153--190,
  1991.

\bibitem[RS93]{RS93-1}
Neil Robertson and Paul Seymour.
\newblock Excluding a graph with one crossing.
\newblock In {\em Graph structure theory}, pages 669--675. American
  Mathematical Society, 1993.

\bibitem[RS03]{RS-XVI}
Neil Robertson and Paul Seymour.
\newblock Graph minors. {XVI}. {E}xcluding a non-planar graph.
\newblock {\em Journal of Combinatorial Theory, Series B}, 89(1):43--76, 2003.

\bibitem[Ser80]{Serre}
Jean-Pierre Serre.
\newblock {\em Trees}.
\newblock Springer-Verlag, Berlin, 1980.
\newblock Translated from the French by John Stillwell.

\bibitem[Sta72]{Sta72}
John Stallings.
\newblock {\em Group theory and three-dimensional manifolds}.
\newblock Yale University Press, 1972.

\bibitem[Tho92]{Thomassen92}
Carsten Thomassen.
\newblock The {H}adwiger number of infinite vertex-transitive graphs.
\newblock {\em Combinatorica}, 12:481--491, 1992.

\bibitem[Tut84]{Tutte}
William~T. Tutte.
\newblock {\em Graph Theory}.
\newblock Encyclopedia of Mathematics and its Applications. Cambridge
  University Press, 1984.

\bibitem[TW93]{TW}
Carsten Thomassen and Wolfgang Woess.
\newblock Vertex-transitive graphs and accessibility.
\newblock {\em Journal of Combinatorial Theory, Series B}, 58(2):248--268,
  1993.

\bibitem[Wag37]{wagner37}
Klaus Wagner.
\newblock {\"U}ber eine eigenschaft der ebenen komplexe.
\newblock {\em Mathematische Annalen}, 114(1):570--590, 1937.

\bibitem[Woe89]{Woess89}
Wolfgang Woess.
\newblock Graphs and groups with tree-like properties.
\newblock {\em Journal of Combinatorial Theory, Series B}, 47(3):361--371,
  1989.

\bibitem[ZVC80]{Zieschang80}
Heiner Zieschang, Elmar Vogt, and Hans-Dieter Coldewey.
\newblock {\em Surfaces and planar discontinuous groups. {Revised} and expanded
  transl. from the {German} by {J}. {Stillwell}}, volume 835 of {\em Lect.
  Notes Math.}
\newblock Springer, Cham, 1980.

\end{thebibliography}

\end{document}